\documentclass[12pt,reqno]{amsart}

\usepackage[T1]{fontenc}
\usepackage{lmodern}
\usepackage{mathtools,amssymb,amsthm,bm}
\usepackage[margin=1.2in]{geometry}
\usepackage[colorlinks=true,citecolor=blue,linkcolor=blue,urlcolor=blue]{hyperref}

\numberwithin{equation}{section}

\makeatletter
\def\@settitle{\begin{center}%
  \baselineskip14\p@\relax
  \bfseries
  \@title
  \end{center}%
}
\makeatother

\theoremstyle{plain}
\newtheorem{theorem}{Theorem}[section]
\newtheorem{proposition}[theorem]{Proposition}
\newtheorem{lemma}[theorem]{Lemma}
\newtheorem{corollary}[theorem]{Corollary}
\newtheorem{conjecture}[theorem]{Conjecture}

\theoremstyle{definition}

\newtheorem{problem}[theorem]{Problem}

\theoremstyle{remark}
\newtheorem{remark}[theorem]{Remark}

\newcommand{\T}{\mathbb T}
\newcommand{\R}{\mathbb R}

\newcommand{\ii}{\mathrm i}
\newcommand{\dd}{\mathrm d}
\newcommand{\eps}{\varepsilon}
\newcommand{\Hilb}{\mathcal H}

\newcommand{\Id}{I}

\newcommand{\tr}{\operatorname{tr}}

\newcommand{\norm}[1]{\left\|#1\right\|}
\newcommand{\abs}[1]{\left|#1\right|}
\newcommand{\mean}[1]{\left\langle #1\right\rangle}

\newcommand{\ip}[2]{\left\langle #1,#2\right\rangle}

\begin{document}

\title[Global Regularity for an Active-Line Model]{Arbitrary-Size Global Regularity for a Reduced Oldroyd--B Active-Line Model}
\author[S. Peng]{Sai Peng\\{\small School of Mathematics and Computational Science, Xiangtan University}\\{\small \lowercase{\texttt{pscfd@xtu.edu.cn}}}}
\subjclass[2020]{35Q35, 35S10, 35B35, 35B40, 76A10}
\keywords{Oldroyd--B, active-line equation, global regularity, Hilbert transform, hidden positivity, convolution inequality, Schur functions}

\begin{abstract}
We study a one-dimensional active-line equation motivated by thin stress-sheet
dynamics in the high-Weissenberg Oldroyd--B regime.  A positive periodic line
density \(\rho=m+\eta\) satisfies
\[
  \rho_t+cP_0\!\left\{\rho\Lambda\rho-(\Hilb\rho)\rho_s\right\}
  +\gamma(\rho-m)=0,
  \qquad c,m>0,\quad \gamma\ge0,
\]
where \(\Lambda=\Hilb\partial_s\) and \(P_0f=f-\langle f\rangle\).  Every
strictly positive smooth initial
density of arbitrary size generates a unique global smooth solution.  The
key is the pointwise cancellation obtained after one differentiation: for
\(w=\rho_s\),
\[
 w_t+c\rho\Lambda w-c(\Hilb\rho)w_s+\gamma w=0.
\]
Its maximum principle controls the slope globally, and critical-drift
H\"older and Schauder estimates close all higher derivatives.  We also prove
quantitative small-oscillation stability.  Independently, an exact
fourth-difference sum-of-squares identity gives
\[
  \int_\T \rho^2\Lambda^3\rho\,\dd s\ge0
\]
for every smooth nonnegative density.

The stronger derivative-energy sign leads, in its sharp phase-opposed form,
to the cubic convolution problem
\[
 \mathcal C(x)\le 2AE_3,\qquad
 A=\sum_{n\ge1}x_n,\quad E_3=\sum_{n\ge1}n^3x_n^2,
\]
for nonnegative Fourier amplitudes, where
\[
 \mathcal C(x)=\sum_{a,b\ge1}
 (a+b)(a^2+ab+b^2)x_ax_bx_{a+b}.
\]
The constant is sharp along critical \(n^{-3/2}\) plateaux.  We prove the
inequality for several cutoff-uniform and infinite-support classes, including
selected Schur, dyadic-layer, Mellin, and moment families.  The unrestricted
inequality remains open, but it is not needed for the global regularity
theorem.
\end{abstract}

\maketitle

\section{Introduction}
The Oldroyd--B system is a basic continuum model for dilute polymeric fluids
\cite{Oldroyd1950,BirdArmstrongHassager1987,Renardy2000}.  In the absence of
stress diffusion, the conformation tensor is transported and stretched while
remaining positive definite.  This combination makes the high-Weissenberg
regime analytically difficult.  Available global and continuation results for
related systems use combinations of viscosity, smallness, stress
regularization, or additional structural control
\cite{GuillopeSaut1990,LionsMasmoudi2000,CheminMasmoudi2001,
BarrettBoyaval2011,ConstantinKliegl2012,ElgindiRousset2015,
RenardyThomases2021}; the log-conformation representation gives a natural
coordinate system for the positive cone
\cite{FattalKupferman2004,FattalKupferman2005}.

Motivated by a strongly aligned stress concentration near a periodic material
line, we examine the reduced scalar equation
\begin{equation}\label{eq:rho}
  \rho_t+cP_0\!\left\{\rho\Lambda\rho-(\Hilb\rho)\rho_s\right\}
  +\gamma(\rho-m)=0,
  \qquad \mean{\rho}=m>0,
\end{equation}
with \(c>0\) and \(\gamma\ge0\), where \(\Lambda=\Hilb\partial_s\).
The projection \(P_0f=f-\mean f\) is part of the mass normalization: it
preserves \(\mean\rho=m\) and disappears after one spatial derivative.
Writing \(\rho=m+\eta\) gives
\begin{equation}\label{eq:eta}
  \eta_t+cm\Lambda\eta+\gamma\eta
  =-cP_0\bigl\{\eta\Lambda\eta-(\Hilb\eta)\eta_s\bigr\}.
\end{equation}
The mean supplies half-Laplacian damping.  The quadratic term has an
additional Fourier null structure: interactions of two equal-sign modes
vanish.  The model therefore belongs to the family of one-dimensional
nonlocal equations used to isolate fluid singularity mechanisms
\cite{ConstantinLaxMajda1985,CordobaCordobaFontelos2005}, but its positive
density and one-sided frequency cancellation are specific to the present
setting.

The first main result settles the large-data alternative for the reduced
equation.  Differentiating once and writing \(w=\rho_s\) gives the exact
cancellation
\[
 w_t+c\rho\Lambda w-c(\Hilb\rho)w_s+\gamma w=0.
\]
The maximum principle controls \(\norm w_\infty\) for all time.  Together
with the positive lower barrier, critical-drift H\"older regularity, and
order-one Schauder estimates, this proves global smooth existence for every
strictly positive smooth initial density of arbitrary size.  Small
oscillations also satisfy a quantitative Sobolev estimate.  These statements
concern the reduced equation itself; no compactness theorem from the full
two-dimensional Oldroyd--B system to the active-line ansatz is asserted here.

The main analytic feature is hidden positivity.  Theorem~
\ref{thm:third-order-hidden-positivity} proves, by a translation-scale
sum-of-squares identity,
\[
  \int_\T\rho^2\Lambda^3\rho\,\dd s\ge0
  \qquad(\rho\ge0).
\]
A stronger derivative-energy form is more delicate.  In the phase-opposed
Fourier configuration it reduces to the sharp scalar target
\begin{equation}\label{eq:intro-sharp-convolution}
 \mathcal C(x)\le2AE_3,
 \qquad
 A=\sum_{n\ge1}x_n,\quad
 E_3=\sum_{n\ge1}n^3x_n^2,
\end{equation}
for \(x_n\ge0\), with
\[
 \mathcal C(x)=\sum_{a,b\ge1}
 (a+b)(a^2+ab+b^2)x_ax_bx_{a+b}.
\]
The constant \(2\) is sharp along critical \(n^{-3/2}\) plateaux, so no
fixed Fourier cutoff can settle the problem.  We establish
\eqref{eq:intro-sharp-convolution} for several cutoff-uniform and
infinite-support classes.  The unrestricted inequality remains open.
Importantly, it governs a stronger \(H^1\) monotonicity statement and is not
an obstruction to the global regularity theorem.

Section~2 proves the reduced PDE results.  Section~3 develops the independent
hidden-positivity problem through analytic-disk, critical-scale, shell, and
moment formulations.  The appendix records the local estimates used in the
PDE argument.
\section{The Reduced Equation}
Let \(P_0 f=f-\mean f\).  The Hilbert transform is normalized by \(\widehat{\Hilb f}(k)=-\ii\operatorname{sgn}(k)\hat f(k)\), so \(\widehat{\Lambda f}(k)=|k|\hat f(k)\).  Define
\[
  B(f,g)=P_0\{f\Lambda g-(\Hilb f)g_s\}.
\]
Then \eqref{eq:eta} is \(\eta_t+cm\Lambda\eta+\gamma\eta=-cB(\eta,\eta)\).
For mean-zero functions, we use the equivalent Sobolev norm
\(\norm f_{H^r}=\norm{\Lambda^r f}_2\); all estimates are unchanged if the
standard inhomogeneous norm is used instead.

\begin{lemma}[Fourier null structure]\label{lem:null}
For every nonzero output frequency \(n\),
\[
  \widehat{B(f,g)}(n)
  =\sum_{k+\ell=n}
  \bigl(|\ell|-\operatorname{sgn}(k)\ell\bigr)
  \hat f(k)\hat g(\ell).
\]
Thus each interaction with \(k\ell>0\) vanishes.
\end{lemma}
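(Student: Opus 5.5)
The plan is to compute the Fourier coefficients of each of the two products in \(f\Lambda g-(\Hilb f)g_s\) by the convolution rule, and then collect the multipliers. With the normalization \(\hat f(k)\) for \(f=\sum_k\hat f(k)e^{\ii ks}\), the coefficient of a product is \(\widehat{uv}(n)=\sum_{k+\ell=n}\hat u(k)\hat v(\ell)\). The projection \(P_0\) only removes the zero mode, so for \(n\neq0\) it can be ignored. This is why the statement is restricted to nonzero output frequencies.

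First I will record the symbols of the factors:
\[
 \widehat{\Lambda g}(\ell)=|\ell|\hat g(\ell),\qquad
 \widehat{g_s}(\ell)=\ii\ell\,\hat g(\ell),\qquad
 \widehat{\Hilb f}(k)=-\ii\operatorname{sgn}(k)\hat f(k).
\]
Then
\[
 \widehat{f\Lambda g}(n)=\sum_{k+\ell=n}|\ell|\,\hat f(k)\hat g(\ell),
 \qquad
 \widehat{(\Hilb f)g_s}(n)=\sum_{k+\ell=n}(-\ii\operatorname{sgn}(k))(\ii\ell)\,\hat f(k)\hat g(\ell)
 =\sum_{k+\ell=n}\operatorname{sgn}(k)\,\ell\,\hat f(k)\hat g(\ell).
\]
Subtracting the second sum from the first gives the claimed multiplier \(|\ell|-\operatorname{sgn}(k)\ell\).

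For the null structure, I will argue by cases on the signs of \(k\) and \(\ell\):
\begin{itemize}
 \item If \(k\ell>0\), then \(\operatorname{sgn}(k)=\operatorname{sgn}(\ell)\), so \(\operatorname{sgn}(k)\ell=|\ell|\) and the multiplier vanishes.
 \item If \(k=0\), the multiplier is \(|\ell|\).
 \item If \(k\ell<0\), the multiplier is \(2|\ell|\).
\end{itemize}
This identifies exactly which interactions survive.

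There is no genuine obstacle here. The only points needing care are the sign conventions: \(\ii\cdot(-\ii)=1\), and \(\operatorname{sgn}(0)=0\), which is consistent with \(\Hilb\) annihilating constants. I also need to justify the sum manipulations. For smooth \(f,g\) the coefficients decay rapidly, so the convolution series converge absolutely and the term-by-term rearrangement is legitimate.
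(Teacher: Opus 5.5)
Your proposal is correct and takes essentially the same approach as the paper: apply the convolution rule with the Fourier symbols of \(\Lambda\), \(\Hilb\), and \(\partial_s\) to get the multiplier \(|\ell|-\operatorname{sgn}(k)\ell\), which vanishes whenever \(\operatorname{sgn}(k)=\operatorname{sgn}(\ell)\). You simply make the sign bookkeeping explicit. One small inaccuracy: your case list leaves out \(\ell=0\) with \(k\ne0\), where the multiplier is also zero, so it does not identify \emph{exactly} the surviving interactions; this does not affect the lemma, which only claims that the \(k\ell>0\) interactions vanish.
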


\begin{proof}
The identity follows from the Fourier symbols of \(\Lambda\), \(\Hilb\), and
\(\partial_s\).  If \(k\ell>0\), then
\(|\ell|=\operatorname{sgn}(k)\ell\), so the corresponding multiplier
vanishes.
\end{proof}

\begin{theorem}[Positive-density local theory]\label{thm:lwp}
Let \(r>3/2\) and let \(\rho_0=m+\eta_0\in H^r(\T)\) satisfy
\(\mean{\rho_0}=m\) and \(\rho_0\ge \kappa_0>0\).  Then \eqref{eq:rho}
has a unique solution on a time interval \([0,T]\),
\[
  \eta\in C([0,T];H^r)\cap L^2(0,T;H^{r+1/2}),
\]
and
\[
 \rho(t,s)\ge
 m+e^{-\gamma t}\bigl(\min_\T\rho_0-m\bigr)>0.
\]
The solution depends continuously on the data while the positive lower barrier
and \(H^r\) norm remain controlled.
\end{theorem}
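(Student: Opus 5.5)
The plan is to treat \eqref{eq:rho} as a quasilinear parabolic-type equation in which the positive density supplies the dissipation. Write it in the non-divergence form
\[
 \rho_t+c\rho\Lambda\rho-c(\Hilb\rho)\rho_s+\gamma(\rho-m)=c\mean{\rho\Lambda\rho-(\Hilb\rho)\rho_s},
\]
whose right-hand side depends only on time. As long as \(\rho\ge\kappa>0\), the principal part \(c\rho\Lambda\) is a half-Laplacian with a positive variable coefficient, and \(-c(\Hilb\rho)\partial_s\) is a transport term of the same order. The transport term is handled by commutator estimates.

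\textbf{Step 1: a priori \(H^r\) estimate.} Apply \(\Lambda^r\) to \eqref{eq:eta} and pair with \(\Lambda^r\eta\). The dissipative term is
\[
 \int\Lambda^r(\rho\Lambda\eta)\Lambda^r\eta.
\]
Its main part is \(\int\rho|\Lambda^{r+1/2}\eta|^2\) up to a commutator, obtained by writing \(\Lambda=\Lambda^{1/2}\Lambda^{1/2}\) and commuting \(\rho\) past \(\Lambda^{1/2}\). This main part is bounded below by \(\kappa\norm{\eta}_{H^{r+1/2}}^2\). The Kato--Ponce commutator \([\Lambda^r,\rho]\Lambda\eta\) is bounded by
\[
 \norm{\rho_s}_\infty\norm\eta_{H^r}+\norm\eta_{H^r}\norm{\Lambda\eta}_\infty .
\]
For the transport term \(\int\Lambda^r((\Hilb\rho)\eta_s)\Lambda^r\eta\), the top-order piece \(\int(\Hilb\rho)\partial_s\Lambda^r\eta\,\Lambda^r\eta\) integrates by parts to \(-\tfrac12\int(\Hilb\rho)_s|\Lambda^r\eta|^2\). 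The commutator is again of Kato--Ponce type. Since \(r>3/2\), \(H^r\hookrightarrow C^{1}\)-type controls hold, and interpolation toward \(H^{r+1/2}\) absorbs any residual half derivative. This closes \(\frac{d}{dt}\norm\eta_{H^r}^2+\kappa\norm\eta_{H^{r+1/2}}^2\le C(\kappa,\norm\eta_{H^r})\norm\eta_{H^r}^2\). The time-dependent mean term is harmless, since it is bounded by \(\norm\eta_{H^{1}}^2\). Lemma~\ref{lem:null} is not needed here, but it can simplify the low-order bookkeeping.

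\textbf{Step 2: lower barrier.} Let \(\underline\rho(t)=\min\rho(t,\cdot)\), attained at \(s_*\). There \(\rho_s=0\), and \(\Lambda\rho(s_*)\le0\) by the integral representation of \(\Lambda\) at a minimum. Hence \(c\rho\Lambda\rho\le0\) at \(s_*\) because \(\rho>0\). The mean correction also has a sign: integrating by parts, \(\mean{(\Hilb\rho)\rho_s}=-\mean{\rho\,\Hilb\rho_s}=-\mean{\rho\Lambda\rho}\). So the mean equals \(2\mean{\rho\Lambda\rho}\ge0\). Therefore, in the Dini sense, \(\underline\rho'\ge-\gamma(\underline\rho-m)\), which integrates to the stated barrier. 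Its right-hand side is positive because \(\min\rho_0>0\) and \(m>\min\rho_0\) or equality holds. A continuity argument keeps \(\kappa\) bounded below on the existence interval.

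\textbf{Step 3: construction, uniqueness and continuity.} For existence, use Galerkin truncation, or a vanishing viscosity \(-\eps\partial_s^2\), with the barrier and the estimates uniform in the regularization. Pass to the limit by Aubin--Lions; time continuity in \(H^r\) follows from the Bona--Smith argument. For uniqueness and continuous dependence, estimate the difference of two solutions in \(L^2\), or \(H^{r-1}\). The difference equation has the same dissipative structure with coefficient \(\rho_1\ge\kappa\), plus forcing terms controlled by the \(H^r\) norms. The main obstacle I expect is the coercivity of \(\int\Lambda^r(\rho\Lambda\eta)\Lambda^r\eta\) when \(\rho\) is only \(H^r\). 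This requires the commutator \([\Lambda^{1/2},\rho]\) to be bounded on \(H^{r}\) at low regularity. I would first try the Kenig--Ponce--Vega/Kato--Ponce fractional Leibniz bounds, with \(\norm{\rho_s}_\infty\) controlled by \(r>3/2\).
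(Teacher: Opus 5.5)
Your proposal is correct and follows the paper's route. The paper adds viscosity $-\varepsilon\rho_{ss}$ so the minimum principle survives regularization. It proves the positive-coefficient $H^r$ energy estimate of Lemma~\ref{lem:positive-coefficient-energy}, built from the same $[\Lambda^{1/2},\rho]$ commutator, Kato--Ponce commutators and transport integration by parts that you describe. It obtains the barrier at a minimum point from $\rho_s=0$, $\Lambda\rho\le0$ and $\mean{\rho\Lambda\rho-(\Hilb\rho)\rho_s}=2\mean{\rho\Lambda\rho}\ge0$, and concludes with compactness and a difference estimate.

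Three small points:
\begin{itemize}
\item The obstacle you anticipate is not a real one. You only need $L^2$-boundedness of $[\Lambda^{1/2},\rho]$ acting on $\Lambda^r\eta$, with constant controlled by $\norm{\rho_s}_{L^\infty}$, and this follows directly from the kernel of $\Lambda^{1/2}$.
\item The spatially constant mean term is not merely harmless. It pairs to exactly zero against the mean-zero $\Lambda^{2r}\eta$.
\item If you use Galerkin truncation instead of viscosity, the truncation does not preserve the exact barrier. You would keep $\rho^N\ge\kappa_0/2$ on a short interval by continuity and recover the barrier only for the limit solution.
\end{itemize}
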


\begin{proof}
Add \(-\varepsilon\rho_{ss}\) to the left side of \eqref{eq:rho} and use
smooth positive approximations of the initial data.  The resulting uniformly
parabolic problems have smooth local solutions, and the added viscosity
preserves the scalar maximum principle.  Write the equation for \(\eta\),
including its spatially constant normalization term, as
\[
 \eta_t+c\rho\Lambda\eta-c(\Hilb\rho)\eta_s+\gamma\eta
 =c\mean{\rho\Lambda\rho-(\Hilb\rho)\rho_s}.
\]
The right-hand side vanishes in the mean-zero energy estimate.
As long as \(\rho\ge\kappa>0\), the positive-coefficient estimate in
Lemma~\ref{lem:positive-coefficient-energy} yields
\[
 \frac{\dd}{\dd t}\norm{\eta}_{H^r}^2
 +\frac{c\kappa}{2}\norm{\eta}_{\dot H^{r+1/2}}^2
 \le C_{\kappa,m,c,\gamma,r}
 \bigl(1+\norm{\eta}_{H^r}\bigr)^4.
\]
This differential inequality supplies a uniform local lifespan while the
positive lower bound is retained.  At a
spatial minimum, \(\rho_s=0\) and \(\Lambda\rho\le0\), so the lower Dini
derivative of \(\rho_{\min}\) satisfies
\(D^-\rho_{\min}+\gamma(\rho_{\min}-m)\ge0\).  Compactness gives a solution
with the stated lower barrier.  The corresponding difference estimate gives
uniqueness and continuous dependence.
\end{proof}

\begin{corollary}[No rupture]\label{cor:no-rupture}
A smooth positive solution of \eqref{eq:rho} cannot lose positivity at a finite time.  More precisely,
\[
  \min_s\rho(t,s)\ge m+e^{-\gamma t}\bigl(\min_s\rho_0(s)-m\bigr)
\]
whenever the solution remains smooth.
\end{corollary}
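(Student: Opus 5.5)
The plan is a minimum-principle argument applied directly to the equation \eqref{eq:rho}, with the mean-normalization term handled separately. Let \(\rho\) be smooth on \([0,T]\) and set \(\mu(t)=\min_s\rho(t,s)\). Since \(\mean{\rho}=m\) is conserved, we have \(\mu(t)\le m\) for all \(t\). Expanding \(P_0\), write the equation as
\[
 \rho_t+c\rho\Lambda\rho-c(\Hilb\rho)\rho_s+\gamma(\rho-m)
 =c\mean{\rho\Lambda\rho-(\Hilb\rho)\rho_s}.
\]
The first step is to check the sign of the right-hand side. An integration by parts gives \(\mean{(\Hilb\rho)\rho_s}=-\mean{\rho\,\Hilb\rho_s}=-\mean{\rho\Lambda\rho}\). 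Hence the constant equals \(2c\mean{\rho\Lambda\rho}=2c\sum_k|k||\hat\rho(k)|^2\ge0\). This constant is therefore a nonnegative forcing that can only push \(\rho\) upward.

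The second step is the evaluation at a minimum point. Fix \(t\) and let \(s_*\) be a point with \(\rho(t,s_*)=\mu(t)\). There \(\rho_s=0\). Moreover, using the singular-integral representation
\[
\Lambda\rho(s_*)=\frac{1}{2\pi}\,\mathrm{p.v.}\!\int\frac{\rho(s_*)-\rho(s')}{2\sin^2((s_*-s')/2)}\,\dd s'\le 0,
\]
and since \(\rho(s_*)=\mu>0\) as long as positivity holds, we get \(c\rho\Lambda\rho\le0\) at \(s_*\). The equation then gives \(\rho_t(t,s_*)+\gamma(\mu-m)\ge 0\). By the standard Danskin/Rademacher argument (\(\mu\) is Lipschitz, and its lower Dini derivative is bounded below by \(\min\rho_t\) over the minimizers), we obtain \(D^-\mu+\gamma(\mu-m)\ge0\). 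This is exactly the inequality already recorded in the proof of Theorem~\ref{thm:lwp}.

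The third step is to integrate. The function \(e^{\gamma t}(\mu-m)\) has nonnegative lower Dini derivative, so it is nondecreasing. A continuous function with nonnegative lower Dini derivative is monotone, so no differentiability is needed. This yields \(\mu(t)\ge m+e^{-\gamma t}(\mu(0)-m)\). The right-hand side is at least \(\min(\mu(0),m)=\mu(0)>0\), so positivity can never be lost.

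The one subtle point is circularity: the sign \(\rho\Lambda\rho\le0\) at the minimum uses \(\rho>0\). I would handle this with a continuity argument. Let \(t_*\) be the first time the barrier fails, or the first time \(\mu=0\). On \([0,t_*)\) the argument above applies, and continuity of \(\mu\) then yields the barrier at \(t_*\) as well, which contradicts the choice of \(t_*\). Alternatively, one can note that at a zero minimum the term \(\rho\Lambda\rho\) vanishes anyway, so the inequality persists. I expect no real obstacle here. The only care needed is in the Dini-derivative justification and in confirming the sign of the mean term.
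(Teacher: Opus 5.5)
Your proposal is correct and follows the paper's own argument in Lemma~\ref{lem:barrier-short}. The transport term vanishes at a spatial minimum, the mean term equals $2c\mean{\rho\Lambda\rho}\ge0$ by skew-adjointness of $\Hilb$, the singular-integral representation gives $\Lambda\rho\le0$ there, and comparison with $y'+\gamma(y-m)=0$ yields the barrier. Your Danskin/lower-Dini justification and the continuity argument only make rigorous what the paper states briefly, since it writes $\frac{\dd}{\dd t}\rho(t,s_t)$ and takes positivity of the solution as a hypothesis.
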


\begin{theorem}[Arbitrary-size global regularity]\label{thm:large-global}
Assume \(c>0\) and \(\gamma\ge0\).  Let
\(\rho_0\in C^\infty(\T)\) satisfy
\[
 \rho_0>0,\qquad \frac1{2\pi}\int_\T\rho_0\,\dd s=m>0.
\]
Then the solution of \eqref{eq:rho} exists uniquely for all \(t\ge0\) and
remains smooth and positive.  More precisely,
\begin{equation}\label{eq:large-global-density-bounds}
\begin{aligned}
 m+e^{-\gamma t}\bigl(\min_\T\rho_0-m\bigr)
 &\le \rho(t,s)\\
 &\le
 m+\pi e^{-\gamma t}\norm{(\rho_0)_s}_{L^\infty},
\end{aligned}
\end{equation}
and
\begin{equation}\label{eq:large-global-slope-bound}
 \norm{\rho_s(t)}_{L^\infty}
 \le e^{-\gamma t}\norm{(\rho_0)_s}_{L^\infty}.
\end{equation}
In particular, no finite-time singularity occurs for a strictly positive
smooth initial density of arbitrary size.
\end{theorem}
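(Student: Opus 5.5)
We combine the local theory of Theorem~\ref{thm:lwp} with a continuation argument, so the task is to produce a priori bounds that keep the positive lower barrier and the \(H^r\) norm controlled on every finite interval. The lower bound in \eqref{eq:large-global-density-bounds} is exactly Corollary~\ref{cor:no-rupture}, so positivity is never lost.

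\textbf{Step 1: the slope equation.} Differentiate \eqref{eq:rho} in \(s\). The projection \(P_0\) disappears because the mean is spatially constant. We get
\[
 w_t+c\,\partial_s\{\rho\Lambda\rho-(\Hilb\rho)\rho_s\}+\gamma w=0.
\]
Expanding the bracket gives \(\rho_s\Lambda\rho+\rho\Lambda\rho_s-(\Hilb\rho)_s\rho_s-(\Hilb\rho)\rho_{ss}\). Since \((\Hilb\rho)_s=\Lambda\rho\), the first and third terms cancel. This leaves
\[
 w_t+c\rho\Lambda w-c(\Hilb\rho)w_s+\gamma w=0.
\]

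\textbf{Step 2: maximum principle for the slope.} At a spatial maximum of \(w\) we have \(w_s=0\) and \(\Lambda w\ge0\), using the kernel representation of \(\Lambda\). We also have \(\rho>0\) by Step~0. Hence \(D^+ \max w\le-\gamma\max w\), and symmetrically at the minimum. Rademacher-type Dini arguments, as in the proof of Theorem~\ref{thm:lwp}, then give \eqref{eq:large-global-slope-bound}.

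\textbf{Step 3: upper bound on \(\rho\).} The function \(\eta=\rho-m\) has mean zero, so it vanishes somewhere. Therefore \(|\eta|\le\pi\norm{\rho_s}_\infty\), since any point lies within distance \(\pi\) of a zero on \(\T\) when we go in the appropriate direction. Combined with Step~2, this gives the upper bound in \eqref{eq:large-global-density-bounds}. At this point \(\rho\) lies between two positive constants and is uniformly Lipschitz on \([0,T]\).

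\textbf{Step 4: higher regularity (the main obstacle).} The \(w\)-equation is a linear critical drift–diffusion equation. Its dissipation is \(c\rho\Lambda\), with coefficient bounded above and below. Its drift is \(-c\Hilb\rho\). The Lipschitz bound on \(\rho\) does not give \(\Hilb\rho\in L^\infty\) directly. It does give \(\Hilb\rho\in\) BMO with \(\Hilb\rho_s\) controlled in BMO, so in particular \(\Hilb\rho\in C^\alpha\) for every \(\alpha<1\).

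I would first apply a critical-drift De Giorgi or Caffarelli–Vasseur (or Kiselev–Nazarov) type argument. Both the drift and the variable coefficient are uniformly Hölder, so this yields \(w\in C^{\beta}\) for some \(\beta>0\), uniformly on \([0,T]\). Feeding this back, \(\rho\in C^{1,\beta}\), so \(\Hilb\rho\in C^{1,\beta}\) and \(\rho\) as a coefficient is \(C^{1,\beta}\). Order-one Schauder estimates for \(\partial_t+c\rho\Lambda\) with smooth-enough drift then upgrade \(w\) to \(C^{1,\beta'}\). Iterating on derivatives of the \(w\)-equation bootstraps all \(C^k\) norms.

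Once \(\norm{\rho_s}_{C^\beta}\) is bounded, the energy inequality of Theorem~\ref{thm:lwp}, in its linear-in-high-norm form, prevents \(H^r\) blow-up. The blow-up alternative then yields global existence. Uniqueness follows from Theorem~\ref{thm:lwp}.

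The delicate point is the Hölder step. The operator \(\rho\Lambda\) with merely Lipschitz coefficient, combined with a BMO-type drift at critical scaling, has to be handled carefully. I would localize and freeze \(\rho\) at a point, treating the commutator \((\rho-\rho(s_0))\Lambda w\) as a lower-order perturbation using the Lipschitz bound.
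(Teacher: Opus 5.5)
Your Steps 1–3 are exactly the paper's argument: the same cancellation of $\rho_s\Lambda\rho$ against $(\Hilb\rho)_s\rho_s=(\Lambda\rho)\rho_s$, the same maximum principle applied to $\pm w$, and the same bound $\norm{\rho-m}_{L^\infty}\le\pi\norm{\rho_s}_{L^\infty}$. The outline of Step~4 also matches the paper: a critical-drift H\"older estimate, then order-one Schauder estimates, then iteration and continuation.

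The step that does not go through as you describe it is the H\"older estimate. The methods you name (Caffarelli--Vasseur, Kiselev--Nazarov, and De Giorgi energy methods generally) are built on two structural assumptions: a divergence-free drift, and a divergence-form diffusion whose kernel is symmetric in $(s,y)$. Neither holds here. First, $\partial_s(-c\Hilb\rho)=-c\Lambda\rho\neq0$. Second, $\rho\Lambda$ has kernel $c\rho(t,s)K_0(s-y)$, which is symmetric in $h=s-y$ but not in $(s,y)$. Freezing $\rho$ does not repair this. Treating $(\rho-\rho(s_0))\Lambda w$ as a perturbation is a Schauder-level device: it presupposes control of $\Lambda w$, which you do not have when you only know $w\in L^\infty$. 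It belongs to the second step, where the paper uses Dong--Jin--Zhang.

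The right tool is the non-divergence, Krylov--Safonov-type theory for nonlocal parabolic equations with critical drift (Chang-Lara--D\'avila, as the paper cites). It needs only a kernel symmetric in $h$ with ellipticity constants between $c\kappa$ and $cK$, plus a bounded drift. It needs no regularity of $\rho$, so the ``delicate point'' you flag disappears.

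Relatedly, your claim that a Lipschitz bound does not give $\Hilb\rho\in L^\infty$ is wrong. On $\T$, subtracting $\rho(s)$ under the odd kernel $\cot(h/2)$ gives $\abs{\Hilb\rho}\le C\norm{\rho_s}_{L^\infty}$ directly. That boundedness is exactly the hypothesis the critical-drift theorem uses. The drift $\Hilb\rho$ is H\"older; only its derivative $\Lambda\rho$ is merely BMO.

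For the closing, the energy inequality in Theorem~\ref{thm:lwp} is not linear in the high norm. Lemma~\ref{lem:positive-coefficient-energy} has constants depending on $\norm{\rho}_{H^r}$ and $\norm{\Hilb\rho}_{H^r}$, so it only gives $\frac{\dd}{\dd t}\norm{\eta}_{H^r}^2\le C(1+\norm{\eta}_{H^r})^4$. A bound on $\norm{\rho_s}_{C^\beta}$ does not enter that inequality at all. You therefore have two options.

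The first is to iterate the Schauder estimate on the differentiated $w$-equations, as the paper does.

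The second is to prove the refined Kato--Ponce bounds
\[
 \norm{[\Lambda^r,a]g}_2\lesssim\norm{a_s}_{L^\infty}\norm{\Lambda^{r-1}g}_2+\norm{\Lambda^ra}_2\norm{g}_{L^\infty},
 \qquad a\in\{\rho,\Hilb\rho\}.
\]
Together with $\int_\T(\Hilb\rho)_sG^2\,\dd s=\int_\T(\Lambda\rho)G^2\,\dd s$, these give
\[
\frac{\dd}{\dd t}\norm{\eta}_{\dot H^r}^2\le C\bigl(\kappa,\norm{\rho_s}_{L^\infty}+\norm{\Lambda\rho}_{L^\infty}\bigr)\norm{\eta}_{\dot H^r}^2,
\]
a Beale--Kato--Majda-type criterion. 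Since the H\"older step gives $\Lambda\rho=\Hilb w\in C^\beta\subset L^\infty$, this closes the argument without any Schauder iteration. It would be a real shortcut relative to the paper, but it is an estimate you would have to prove, not one the paper records.
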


\begin{proof}
The lower bound in \eqref{eq:large-global-density-bounds} is
Corollary~\ref{cor:no-rupture}.  The projection in \eqref{eq:rho} preserves
\(\mean\rho=m\).  Hence, once the slope estimate below is known, the
mean-value theorem on the circle gives
\[
 \norm{\rho(t)-m}_{L^\infty}
 \le\pi\norm{\rho_s(t)}_{L^\infty}
 \le\pi e^{-\gamma t}\norm{(\rho_0)_s}_{L^\infty},
\]
which proves the upper bound.

Set \(w=\rho_s\).  Differentiating \eqref{eq:rho} produces
\[
\begin{aligned}
 w_t
 &+c w\Lambda\rho+c\rho\Lambda w
 -c(\Lambda\rho)w-c(\Hilb\rho)w_s+\gamma w=0.
\end{aligned}
\]
The two mixed stretching terms cancel pointwise.  Hence
\begin{equation}\label{eq:exact-slope-equation}
 \boxed{\;
 w_t+c\rho\Lambda w-c(\Hilb\rho)w_s+\gamma w=0.
 \;}
\end{equation}
At a positive maximum of \(w\), one has \(w_s=0\) and
\(\Lambda w\ge0\).  The maximum principle applied to \(w\) and \(-w\)
therefore yields \eqref{eq:large-global-slope-bound}.

It remains to exclude a loss of higher regularity.  Suppose that the maximal
smooth lifespan is \(T_*<\infty\).  The preceding bounds give constants
\(\kappa,K,L>0\), independent of \(t<T_*\), such that
\[
 \kappa\le\rho\le K,\qquad \norm{\rho_s}_{L^\infty}\le L.
\]
Fix \(0<\alpha<1\).  Then \(\rho\) is uniformly \(C^\alpha\), and boundedness
of the periodic Hilbert transform on \(C^\alpha\) gives the same control for
\(\Hilb\rho\).  After removing the damping by
\(\widetilde w=e^{\gamma t}w\), equation
\eqref{eq:exact-slope-equation} is a uniformly elliptic order-one nonlocal
parabolic equation.  Its kernel is \(c\rho(t,s)K_0(h)\), where the periodic
kernel \(K_0\) of \(\Lambda\) is symmetric in \(h\); the full kernel need not
be symmetric in its spatial variables, but its ellipticity constants lie
between \(c\kappa\) and \(cK\).  The critical drift \(-c\Hilb\rho\) is
bounded.  Interior H\"older estimates for critical-drift
equations \cite{ChangLaraDavila2016} therefore give a uniform parabolic
\(C^\beta\) bound for \(\widetilde w\) on every strip \([\tau,T_*)\), for
some \(\beta>0\).  Because \(\mean\rho=m\) and \(\rho_s=w\), the periodic
zero-mean primitive recovers \(\rho-m\) from \(w\); hence \(\rho\) and
\(\Hilb\rho\) have the same time H\"older control and at least the required
spatial H\"older control.  The order-one Schauder estimates with drift
\cite{DongJinZhang2018} then give a uniform \(C^{1+\beta'}\) bound for
\(w\), for some \(0<\beta'<\beta\).  The coefficients consequently improve
by one spatial derivative.  Differentiating the linear equation for \(w\)
and iterating the same Schauder estimate bounds every spatial derivative on
\([\tau,T_*)\).
The equation bounds the corresponding time derivatives.  Thus
\(\rho(t)\) has a smooth limit as \(t\uparrow T_*\), and the local theory
extends it past \(T_*\), a contradiction.
\end{proof}

\begin{theorem}[Small-oscillation quantitative stability]\label{thm:small}
Let \(r>3/2\).  There exists \(\delta>0\), depending only on
\(m,c,\gamma,r\), such that if \(\norm{\eta_0}_{H^r}\le\delta\), then the
solution of \eqref{eq:eta} is global and
\[
  \sup_{t\ge0}\norm{\eta(t)}_{H^r}^2+
  cm\int_0^\infty
  \norm{\eta(t)}_{\dot H^{r+1/2}}^2\,\dd t
  \le C\norm{\eta_0}_{H^r}^2 .
\]
\end{theorem}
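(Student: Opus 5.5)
The plan is a standard perturbative energy estimate for \eqref{eq:eta}, closed by a continuity argument on top of the local theory in Theorem~\ref{thm:lwp}. Apply \(\Lambda^r\) to \eqref{eq:eta} and pair with \(\Lambda^r\eta\). The linear part gives
\[
 \frac12\frac{\dd}{\dd t}\norm{\eta}_{H^r}^2+cm\norm{\eta}_{\dot H^{r+1/2}}^2+\gamma\norm{\eta}_{H^r}^2
 =-c\ip{\Lambda^rB(\eta,\eta)}{\Lambda^r\eta}.
\]
Because \(\eta\) has mean zero, Poincar\'e gives \(\norm{\eta}_{H^r}\le\norm{\eta}_{\dot H^{r+1/2}}\). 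The damping \(cm\Lambda\) therefore controls the \(H^r\) norm even when \(\gamma=0\).

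The key step is the trilinear bound
\[
 \abs{\ip{\Lambda^rB(\eta,\eta)}{\Lambda^r\eta}}\le C_r\norm{\eta}_{H^r}\norm{\eta}_{\dot H^{r+1/2}}^2 .
\]
I would obtain it in two steps.
\begin{enumerate}
\item Rewrite the pairing in Fourier variables using Lemma~\ref{lem:null}. Only the opposite-sign interactions \(k\ell<0\) survive, and their multiplier is \(2|\ell|\).
\item Split each interaction by which frequency is largest.
\end{enumerate}
When \(|\ell|\) is the top frequency and \(k\) is low, the term is a commutator-type expression \(\ip{\Lambda^r(f\Lambda\eta-(\Hilb f)\eta_s)}{\Lambda^r\eta}\) with \(f\) at low frequency. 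Here the top-order part \(\ip{f\Lambda^{r+1}\eta-(\Hilb f)\partial_s\Lambda^r\eta}{\Lambda^r\eta}\) is not zero, but it is bounded by \(\norm{f}_{L^\infty}\norm{\eta}_{\dot H^{r+1/2}}^2\) after splitting the derivative as \(\Lambda^{1/2}\cdot\Lambda^{1/2}\). I would use a Kato--Ponce/Coifman--Meyer commutator \([\Lambda^{1/2},f]\) to make this precise. The remaining commutator terms cost \(\norm{f}_{W^{1,\infty}}\lesssim\norm{\eta}_{H^r}\), which uses \(r>3/2\). When \(|k|\) is the top frequency, one puts \(r+\tfrac12\) derivatives on \(\hat\eta(k)\) and on the output, and at most one on \(\hat\eta(\ell)\), giving the same bound. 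The alternative is simply to invoke Lemma~\ref{lem:positive-coefficient-energy} in its perturbative form, writing \(\rho=m+\eta\), so that the coefficient \(c\rho\ge cm/2\) and \(\norm{\eta}_{W^{1,\infty}}\) is small. The main obstacle is this commutator bound, in particular getting the right-hand side to be cubic and with no loss beyond \(\dot H^{r+1/2}\).

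Given the trilinear bound, we get
\[
 \frac{\dd}{\dd t}\norm{\eta}_{H^r}^2+\bigl(2cm-2cC_r\norm{\eta}_{H^r}\bigr)\norm{\eta}_{\dot H^{r+1/2}}^2\le0.
\]
Choose \(\delta\) so that \(2cC_r\sqrt{C}\delta\le cm\) (with, say, \(C=1\)). A bootstrap then runs as follows. While \(\norm{\eta(t)}_{H^r}\le2\delta\), the inequality \(\frac{\dd}{\dd t}\norm{\eta}_{H^r}^2+cm\norm{\eta}^2_{\dot H^{r+1/2}}\le0\) shows that \(\norm{\eta}_{H^r}\) is nonincreasing. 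Hence the bound \(2\delta\) is never reached.

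Shrinking \(\delta\) also keeps \(\rho\ge m-C\norm{\eta}_{H^r}\ge m/2\) by Sobolev embedding, so the hypotheses of Theorem~\ref{thm:lwp} persist. Its continuation criterion then gives global existence. Integrating the differential inequality in time yields the stated estimate with \(C=1\) or a fixed multiple of it.
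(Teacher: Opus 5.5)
Your proposal is correct and is essentially the paper's argument: test \eqref{eq:eta} against \(\Lambda^{2r}\eta\), control the nonlinearity by the cubic bound \(C_r\norm{\eta}_{H^r}\norm{\eta}_{\dot H^{r+1/2}}^2\) (recorded in the paper as Lemma~\ref{lem:commutator-short}), absorb half of the dissipation once \(\delta\) is small, and close by continuity using the local theory. Two small remarks. On the bootstrap region \(\norm{\eta}_{H^r}\le2\delta\) you need \(4C_r\delta\le m\), not \(2C_r\delta\le m\), to keep the full \(cm\) dissipation. The trilinear bound you flag as the main obstacle needs no commutator at all: for \(n=k+\ell\) one has \(|n|^{2r}|\ell|\lesssim|n|^{r+1/2}\bigl(|k|^{r+1/2}|\ell|+|\ell|^{r+1/2}\bigr)\), and \(\widehat\eta,\widehat{\Lambda\eta}\in\ell^1\) when \(r>3/2\).
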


\begin{proof}
Testing \eqref{eq:eta} against \(\Lambda^{2r}\eta\) and using
Lemma~\ref{lem:commutator-short} gives
\[
 \frac{\dd}{\dd t}\norm{\eta}_{H^r}^2
 +2cm\norm{\eta}_{\dot H^{r+1/2}}^2
 +2\gamma\norm{\eta}_{H^r}^2
 \le C\norm{\eta}_{H^r}\norm{\eta}_{\dot H^{r+1/2}}^2.
\]
Choosing \(\delta\) small absorbs half of the dissipative term, and the usual
continuity argument gives global control.
\end{proof}

\section{Hidden Positivity and the Sharp Cubic Problem}
\label{sec:hidden-positivity-open}
\subsection{Exact identities and the sharp cubic problem}
The large-data global theorem does not require the following
derivative-energy sign: the slope maximum principle already prevents
singularity formation.  The sign instead asks whether the \(H^1\) energy
obeys an exact monotonicity law and whether that law has a sharp coercive
form.  For smooth positive solutions define
\[
  X(t)=\norm{\rho_s(t)}_2^2,
  \quad
  I_1=\int_\T \rho(\Lambda\rho_s)\rho_s\,\dd s,
  \quad
  I_2=\int_\T(\Lambda\rho)\rho_s^2\,\dd s,
  \qquad
  \mathfrak H(\rho):=2I_1+I_2.
\]
Then
\begin{equation}\label{eq:X}
  \frac12\frac{\dd X}{\dd t}
  =-\gamma X-\frac c2\mathfrak H(\rho).
\end{equation}
We call \(\mathfrak H(\rho)\ge0\) the strong hidden-positivity property.
The section has three distinct logical levels.  Theorem~
\ref{thm:third-order-hidden-positivity} proves the unconditional sign of
\(I_1\).  The later results prove \(\mathfrak H(\rho)\ge0\) on specified
cutoff-uniform or infinite-support classes.  The unrestricted phase-opposed
inequality is stated as Problem~\ref{prob:sharp-cubic} and remains open.
None of these stronger \(H^1\) statements is used in
Theorem~\ref{thm:large-global}.  Quantitative claims identified as
computer-assisted are accompanied by ancillary verification scripts.
Two exact identities isolate the remaining sign issue.  Since
\(\Lambda^3=-\partial_s^2\Lambda\), periodic integration by parts gives
\begin{equation}\label{eq:I1-third-order}
  I_1=\frac12\int_\T \rho^2\Lambda^3\rho\,\dd s.
\end{equation}
Moreover, self-adjointness of \(\Lambda\) and the C\'ordoba--C\'ordoba
inequality applied to \(f=\rho_s\) give
\begin{equation}\label{eq:cordoba-I1-I2}
  2I_1-I_2
  =\int_\T \rho\bigl(2\rho_s\Lambda\rho_s-\Lambda(\rho_s^2)\bigr)\,\dd s
  \ge0.
\end{equation}

The first sign question has a degree-independent answer.

\begin{theorem}[Third-order hidden positivity]
\label{thm:third-order-hidden-positivity}
Let \(\rho\in C^\infty(\T)\) be nonnegative.  Then
\begin{equation}\label{eq:third-order-hidden-positivity}
  \int_\T \rho^2\Lambda^3\rho\,\dd s\ge0,
  \qquad I_1\ge0.
\end{equation}
If, in addition, \(\rho\ge\kappa>0\), then
\begin{equation}\label{eq:I1-coercivity}
  I_1\ge\kappa\norm{\Lambda^{3/2}\rho}_2^2.
\end{equation}
Equality holds if and only if \(\rho\) is constant.
\end{theorem}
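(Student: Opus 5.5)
The plan is to write \(\Lambda^3\) as a positive superposition of fourth differences and then prove a nonnegativity statement for a cubic difference form, scale by scale. The first step is the fourth-difference representation. Let
\[
 \delta_h^4 f(x)=f(x+2h)-4f(x+h)+6f(x)-4f(x-h)+f(x-2h).
\]
Its Fourier multiplier is \(6-8\cos kh+2\cos 2kh=(2-2\cos kh)^2=16\sin^4(kh/2)\ge0\). Integrating against \(|h|^{-4}\) over \(\R\) gives \(c_0|k|^3\) with \(c_0=\int_\R 16\sin^4(u/2)u^{-4}\,\dd u\in(0,\infty)\). After periodization we obtain, on \(\T\),
\[
 \Lambda^3 f(x)=\int_{\T}K(h)\,\delta_h^4 f(x)\,\dd h,\qquad K(h)=c_0^{-1}\sum_{j\in\mathbb Z}|h+2\pi j|^{-4}>0 .
\]
No principal value is needed for smooth \(f\), because \(\delta_h^4f=O(h^4)\). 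By Fubini,
\[
 \int_\T\rho^2\Lambda^3\rho\,\dd s=\int_\T K(h)\,Q_h(\rho)\,\dd h,\qquad Q_h(\rho):=\int_\T\rho(x)^2\,\delta_h^4\rho(x)\,\dd x .
\]
It therefore suffices to prove \(Q_h(\rho)\ge0\) for every fixed \(h\) and every nonnegative \(\rho\).

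The second step, and the main obstacle, is the sign of \(Q_h\). Since \(\delta_h^4=(D_h^2)^2\) with \(D_h^2f=f(\cdot+h)-2f+f(\cdot-h)\) self-adjoint, we have
\[
 Q_h=\int_\T D_h^2(\rho^2)\,D_h^2\rho\,\dd x .
\]
With \(a,b,c=\rho(x-h),\rho(x),\rho(x+h)\), the integrand is \((a^2-2b^2+c^2)(a-2b+c)\). This is not pointwise nonnegative: for instance \(a=0\), \(b=1\), \(c=1.9\) makes it negative. So translation averaging must be used.

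I would look for an exact identity of the form
\[
 Q_h(\rho)=\sum_i\int_\T \rho(x+p_ih)\Bigl(\sum_j\lambda_{ij}\rho(x+q_jh)\Bigr)^2\dd x ,
\]
with nonnegative weights, where \(p_i,q_j\in\{-2,\dots,2\}\). The unknown coefficients are fixed by matching the cubic monomials \(\rho(x)\rho(x+ih)\rho(x+jh)\) modulo translation. The guiding model is the one-variable identity \((u^2-v^2)(u-v)=(u+v)(u-v)^2\). Applying it after writing \(D_h^2=D_h^+D_h^-\) and moving one difference across produces
\[
 \int (\rho(x+h)+\rho(x))\,(D_h^+\rho)\,\cdot(\text{second differences}),
\]
and one then symmetrizes.

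If a single scale \(h\) does not close, I would allow a translation--scale combination. That is, I would use the freedom to trade \(h\) against \(2h\) inside \(\int K(h)\,\dd h\), since \(K\) is decreasing and homogeneous near \(0\), so that a nonnegative combination of \(Q_h\) and \(Q_{2h}\)-type terms becomes a sum of squares. Finding and checking this identity is the step I expect to be hardest. I would first try to solve for the coefficients with small supports \(|p_i|,|q_j|\le2\) by hand, or by linear algebra.

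The last step covers \(I_1\), coercivity and equality. The sign of \(I_1\) follows from \eqref{eq:I1-third-order}. For coercivity, set \(\sigma=\rho-\kappa\ge0\). Since \(\Lambda^3\kappa=0\) and \(\Lambda^3\) is self-adjoint and annihilates constants,
\[
 \int_\T\rho^2\Lambda^3\rho=\int_\T\sigma^2\Lambda^3\sigma+2\kappa\int_\T\sigma\Lambda^3\sigma+\kappa^2\int_\T\Lambda^3\sigma .
\]
The last term vanishes, and the first is \(\ge0\) by the sign just proved applied to \(\sigma\). The middle term equals \(2\kappa\norm{\Lambda^{3/2}\rho}_2^2\). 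Halving via \eqref{eq:I1-third-order} gives \eqref{eq:I1-coercivity}.

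For the equality clause, when \(\kappa>0\) equality forces \(\Lambda^{3/2}\rho=0\), so \(\rho\) is constant. For merely nonnegative \(\rho\), I would read off from the sum-of-squares identity that each square vanishes on the support of its positive weight. Then \(D_h^+\rho=0\) wherever \(\rho(x)+\rho(x+h)>0\), for all \(h\). This propagates constancy across the circle, since a function vanishing somewhere but not identically would have a nonzero increment next to a positive value.
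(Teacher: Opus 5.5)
Your reduction matches the paper's. Writing $\Lambda^3$ as a positive superposition of $(D_h^2)^2$ reduces everything to $Q_h(\rho)=\int_\T D_h^2(\rho^2)\,D_h^2\rho\,\dd x\ge0$ at each fixed $h$. However, this per-scale sign is the whole content of the theorem, and you never prove it. You describe an ansatz and a coefficient search, with a possible fallback to mixing $h$ and $2h$, but you do not produce an identity. The heuristic of moving one difference across also does not give a square: it leaves a first difference paired with a third difference. So there is a genuine gap exactly at the step you flag as hardest.

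The missing identity closes at a single scale and lies in your own ansatz class. Set $a,b,c=\rho(x-h),\rho(x),\rho(x+h)$, $e_0=b-a$, $e_1=c-b$, and $d=D_h^2\rho(x)=e_1-e_0$. The discrete Leibniz rule $a^2-2b^2+c^2=2bd+e_0^2+e_1^2$, together with $a+4b+c=6b+d$, gives the pointwise identity
\[
 (a^2-2b^2+c^2)\,d=\tfrac13(a+4b+c)\,d^2+\tfrac23\bigl(e_1^3-e_0^3\bigr).
\]
Since $e_1(x)=e_0(x+h)$, the last term integrates to zero over $\T$. Hence
\[
 Q_h(\rho)=\frac13\int_\T\bigl(\rho(x-h)+4\rho(x)+\rho(x+h)\bigr)\bigl(D_h^2\rho(x)\bigr)^2\,\dd x\ge0,
\]
which is exactly the paper's sum-of-squares identity.

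With this identity in hand, the rest of your plan goes through.

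\begin{itemize}
\item Your floor-subtraction proof of coercivity is correct. It is a genuine alternative to the paper's argument, which instead uses the weight bound $\rho(x-h)+4\rho(x)+\rho(x+h)\ge6\kappa$.
\item The equality case needs no propagation argument. The weight vanishes only where $\rho(x-h)=\rho(x)=\rho(x+h)=0$, and there $D_h^2\rho(x)=0$ as well. So a vanishing integral forces $D_h^2\rho\equiv0$ for every $h$, i.e.\ $\hat\rho(k)\sin^2(kh/2)=0$ for all $h$, and $\rho$ is constant.
\item Your sketched equality argument presupposes squares $(D_h^+\rho)^2$ with weight $\rho(x)+\rho(x+h)$. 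That structure does not match the identity that actually holds.
\end{itemize}
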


\begin{proof}
For \(h>0\), let \(\tau_hf(s)=f(s+h)\) and set
\[
  L_h=2-\tau_h-\tau_{-h},
  \qquad
  \delta_h^2\rho(s)=\rho(s-h)-2\rho(s)+\rho(s+h)=-L_h\rho(s).
\]
The elementary Fourier-multiplier identity
\[
  |k|^3=\frac{3}{2\pi}\int_0^\infty
  \frac{16\sin^4(kh/2)}{h^4}\,\dd h
\]
follows from \(\int_0^\infty(\sin x/x)^4\,\dd x=\pi/3\).  Hence
\begin{equation}\label{eq:Lambda3-fourth-difference}
  \Lambda^3f=\frac{3}{2\pi}\int_0^\infty
  \frac{L_h^2f}{h^4}\,\dd h .
\end{equation}
At each fixed scale, expansion and translation invariance of \(\dd s\) give
the exact sum-of-squares identity
\begin{equation}\label{eq:fourth-difference-sos}
\begin{aligned}
  \int_\T\rho^2L_h^2\rho\,\dd s
  =\frac13\int_\T
  \bigl(\rho(s-h)+4\rho(s)+\rho(s+h)\bigr)
  \bigl(\delta_h^2\rho(s)\bigr)^2\,\dd s.
\end{aligned}
\end{equation}
Combining \eqref{eq:Lambda3-fourth-difference} and
\eqref{eq:fourth-difference-sos} yields the explicit representation
\begin{equation}\label{eq:I1-positive-representation}
  I_1=\frac{1}{4\pi}\int_0^\infty\frac{1}{h^4}
  \int_\T\bigl(\rho(s-h)+4\rho(s)+\rho(s+h)\bigr)
  \bigl(\delta_h^2\rho(s)\bigr)^2\,\dd s\,\dd h\ge0.
\end{equation}
When \(\rho\ge\kappa\), the weight in \eqref{eq:I1-positive-representation}
is at least \(6\kappa\).  Since \eqref{eq:Lambda3-fourth-difference} and
self-adjointness of \(L_h\) give
\[
  \norm{\Lambda^{3/2}\rho}_2^2
  =\frac{3}{2\pi}\int_0^\infty\frac{1}{h^4}
  \int_\T\bigl(\delta_h^2\rho\bigr)^2\,\dd s\,\dd h,
\]
we obtain \eqref{eq:I1-coercivity}.
The integral converges because \(\delta_h^2\rho=O(h^2)\) as \(h\to0\),
and periodicity controls the integrand as \(h\to\infty\).  If equality
holds, the nonnegative integrand forces \(\delta_h^2\rho\equiv0\) for every
\(h\); equivalently, every nonzero Fourier coefficient of \(\rho\) vanishes.
Thus \(\rho\) is constant.  The converse is immediate.
\end{proof}

The remaining combination has three useful exact reformulations.  Let
\(P_t=e^{-t\Lambda}\) denote the periodic Poisson semigroup.

\begin{lemma}[Cubic and Poisson-semigroup forms]
\label{lem:strong-equivalent-forms}
For every smooth positive periodic \(\rho\),
\begin{equation}\label{eq:I2-cubic-Lambda}
  I_2=\frac13\int_\T(\Lambda\rho)^3\,\dd s,
\end{equation}
and
\begin{equation}\label{eq:strong-semigroup-form}
  2I_1+I_2
  =-\left.\frac{\dd}{\dd t}\right|_{t=0}
  \int_\T(P_t\rho)\abs{\partial_sP_t\rho}^2\,\dd s
  =-\frac49\left.\frac{\dd}{\dd t}\right|_{t=0}
  \norm{\partial_s(P_t\rho)^{3/2}}_2^2.
\end{equation}
\end{lemma}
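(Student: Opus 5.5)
Both identities reduce to two standard facts on \(\T\): the Hilbert transform product rule for mean-zero functions, and the Poisson semigroup relation \(\partial_tP_t\rho=-\Lambda P_t\rho\).

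\emph{Proof of \eqref{eq:I2-cubic-Lambda}.} Write \(u=\Lambda\rho\) and \(v=\Hilb\rho\). Then \(\rho_s=-\Hilb\Lambda\rho=-\Hilb u\), since \(\Hilb\Lambda=\Hilb^2\partial_s=-\partial_s\) on mean-zero functions. Hence
\[
 I_2=\int_\T u(\Hilb u)^2\,\dd s .
\]
Because \(u\) has mean zero, the Cotlar-type identity \((\Hilb u)^2-u^2=2\Hilb(u\Hilb u)\) holds, with no constant correction. Together with \(\int f\Hilb g=-\int(\Hilb f)g\), this gives
\[
 \int_\T u(\Hilb u)^2\,\dd s=\int_\T u^3\,\dd s+2\int_\T u\,\Hilb(u\Hilb u)\,\dd s
 =\int_\T u^3\,\dd s-2\int_\T u(\Hilb u)^2\,\dd s .
\]
Therefore \(3I_2=\int u^3\), which is \eqref{eq:I2-cubic-Lambda}. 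The Cotlar identity itself can be checked on Fourier modes. The product \(F=u+\ii\Hilb u\) has one-sided spectrum, so \(F^2\) is boundary data of an analytic function vanishing at the origin. This means \(\Hilb\) acts on \(F^2\) as multiplication by \(-\ii\), and taking real and imaginary parts gives the identity.

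\emph{Proof of the first equality in \eqref{eq:strong-semigroup-form}.} Differentiate \(\int P_t\rho\,|\partial_sP_t\rho|^2\) at \(t=0\) using \(\partial_tP_t\rho=-\Lambda\rho\) at \(t=0\):
\[
 -\frac{\dd}{\dd t}\Big|_{t=0}\int_\T (P_t\rho)\,|\partial_sP_t\rho|^2\,\dd s
 =\int_\T(\Lambda\rho)\rho_s^2\,\dd s+2\int_\T\rho\,\rho_s\,\partial_s\Lambda\rho\,\dd s .
\]
Since \(\partial_s\Lambda\rho=\Lambda\rho_s\), the second term equals \(2I_1\), and the first is \(I_2\). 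This is pure bookkeeping, with smoothness justifying the differentiation under the integral sign.

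\emph{Proof of the second equality.} Positivity is needed here: \(P_t\rho>0\) because the Poisson kernel is positive. Then \(\partial_s(P_t\rho)^{3/2}=\tfrac32(P_t\rho)^{1/2}\partial_sP_t\rho\), so
\[
 \norm{\partial_s(P_t\rho)^{3/2}}_2^2=\tfrac94\int_\T(P_t\rho)\,|\partial_sP_t\rho|^2\,\dd s ,
\]
and the constant \(\tfrac49\) follows.

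The only real point of the lemma is the Cotlar identity and its use in the cubic reduction. I would verify the sign conventions carefully there, given the normalization \(\widehat{\Hilb f}(k)=-\ii\operatorname{sgn}(k)\hat f(k)\).
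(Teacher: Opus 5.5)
Your proof is correct and is essentially the paper's argument. The paper also uses the periodic Tricomi identity \(q^2-f^2=2\Hilb(fq)\), with \(f=\rho_s\) and \(q=\Lambda\rho\), together with skew-adjointness of \(\Hilb\) and \(\Hilb q=-f\); this is exactly your identity after substituting \(u=q\), \(f=-\Hilb u\). The semigroup equalities are the same direct differentiation using \(u_t=-\Lambda u\) and the pointwise relation \(uu_s^2=\tfrac49\abs{\partial_su^{3/2}}^2\).
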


\begin{proof}
Set \(f=\rho_s\) and \(q=\Hilb f=\Lambda\rho\).  The periodic Tricomi
identity \(q^2-f^2=2\Hilb(fq)\), skew-adjointness of \(\Hilb\), and
\(\Hilb q=-f\) give
\[
  \int_\T q^3\,\dd s
  =\int_\T qf^2\,\dd s+2\int_\T q\Hilb(fq)\,\dd s
  =3\int_\T qf^2\,\dd s=3I_2.
\]
For \(u(t)=P_t\rho\), we have \(u_t=-\Lambda u\), and differentiation gives
\[
  \frac{\dd}{\dd t}\int_\T u u_s^2\,\dd s
  =-\int_\T(\Lambda u)u_s^2\,\dd s
   -2\int_\T u u_s\Lambda u_s\,\dd s.
\]
Evaluation at \(t=0\) proves the first equality in
\eqref{eq:strong-semigroup-form}; the second follows from
\(u u_s^2=(4/9)\abs{\partial_su^{3/2}}^2\).
\end{proof}

\begin{lemma}[Fourier representation of the strong bracket]
\label{lem:strong-fourier-representation}
Let
\[
 \rho(s)=m+\sum_{n\ne0}r_ne^{\ii ns},
 \qquad r_{-n}=\overline{r_n}.
\]
Then
\begin{equation}\label{eq:strong-positive-frequency-form}
\begin{aligned}
 \mathfrak H(\rho)
 =8\pi\biggl[
 m\sum_{n\ge1}n^3\abs{r_n}^2
 +\sum_{a,b\ge1}(a+b)(a^2+ab+b^2)
 \Re\bigl(r_ar_b\overline{r_{a+b}}\bigr)
 \biggr].
\end{aligned}
\end{equation}
In particular, for a finitely supported nonnegative sequence \(x\), define
 \[
 \begin{aligned}
 A&=\sum_{n\ge1}x_n,
 &E_3&=\sum_{n\ge1}n^3x_n^2,\\
 \mathcal C(x)&=\sum_{a,b\ge1}
 (a+b)(a^2+ab+b^2)x_ax_bx_{a+b},
 &\rho_x(s)&=2A-2\sum_{n\ge1}x_n\cos(ns).
 \end{aligned}
 \]
Then \(\rho_x\ge0\) and
\begin{equation}\label{eq:phase-opposed-strong-bracket}
 \mathfrak H(\rho_x)=8\pi\bigl(2AE_3-\mathcal C(x)\bigr).
\end{equation}
\end{lemma}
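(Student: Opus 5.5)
The plan is to expand both terms of \(\mathfrak H(\rho)=2I_1+I_2\) in Fourier space, using the forms \eqref{eq:I1-third-order} and \eqref{eq:I2-cubic-Lambda}:
\[
 2I_1+I_2=\int_\T\rho^2\Lambda^3\rho\,\dd s+\frac13\int_\T(\Lambda\rho)^3\,\dd s .
\]
Both are trilinear forms in \(\rho\). On \(\T=[0,2\pi)\), Parseval gives
\[
 \int_\T f_1f_2f_3\,\dd s=2\pi\sum_{k+\ell+p=0}\hat f_1(k)\hat f_2(\ell)\hat f_3(p).
\]
I set \(r_0=m\). Then
\[
 \int_\T\rho^2\Lambda^3\rho\,\dd s=2\pi\sum_{k+\ell+p=0}|p|^3r_kr_\ell r_p,
 \qquad
 \frac13\int_\T(\Lambda\rho)^3\,\dd s=\frac{2\pi}{3}\sum_{k+\ell+p=0}|k||\ell||p|\,r_kr_\ell r_p .
\]

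\emph{Quadratic part.} Terms in which some index vanishes carry a factor \(m\). In the second sum these vanish, since \(\Lambda\) kills the mean. In the first sum, \(p=0\) gives zero. The terms \(k=0\) or \(\ell=0\) give \(2\cdot 2\pi m\sum_{n\ne0}|n|^3|r_n|^2\), which equals \(8\pi m\sum_{n\ge1}n^3|r_n|^2\) after using \(r_{-n}=\overline{r_n}\). This is the first term of \eqref{eq:strong-positive-frequency-form}.

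\emph{Cubic part.} Here all indices are nonzero. Symmetrizing the first sum over the three slots replaces \(|p|^3\) by \(\frac13(|k|^3+|\ell|^3+|p|^3)\). A nonzero triple with zero sum has exactly two entries of one sign. I parametrize it by \(a,b\ge1\) with entries \(\{a,b,-(a+b)\}\) or their negatives. For a fixed ordered pair \((a,b)\), this accounts for 3 positions of the odd-sign entry times 2 global signs, i.e.\ 6 ordered triples. The two global signs pair into \(2\Re(r_ar_b\overline{r_{a+b}})\).

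The combined symmetric symbol is
\[
 \tfrac13\bigl(a^3+b^3+(a+b)^3+ab(a+b)\bigr).
\]
Using \(a^3+b^3+ab(a+b)=(a+b)(a^2+b^2)\), it simplifies to \(\tfrac23(a+b)(a^2+ab+b^2)\). The cubic part is therefore
\[
 2\pi\cdot 3\cdot\tfrac23\sum_{a,b\ge1}(a+b)(a^2+ab+b^2)\cdot 2\Re\bigl(r_ar_b\overline{r_{a+b}}\bigr),
\]
which is the \(8\pi\) coefficient in \eqref{eq:strong-positive-frequency-form}. This bookkeeping of multiplicities and symmetrization is the only step where errors are likely, so I will verify it on a two-mode example, \(r_1,r_2\ne0\) only.

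\emph{Phase-opposed specialization.} For \(\rho_x\), the mean is \(m=2A\) and \(r_{\pm n}=-x_n\), which is real. Nonnegativity follows from \(\rho_x\ge 2A-2\sum_n x_n=0\). Substituting, the quadratic term becomes \(2A E_3\). Since \(r_ar_b\overline{r_{a+b}}=(-1)^3x_ax_bx_{a+b}\), the cubic term becomes \(-\mathcal C(x)\). This gives \eqref{eq:phase-opposed-strong-bracket}. Finite support makes every sum finite, so the Parseval manipulations need no further justification.
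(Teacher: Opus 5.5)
Your proposal is correct and follows the paper's argument. You expand \(2I_1+I_2=\int_\T\rho^2\Lambda^3\rho\,\dd s+\frac13\int_\T(\Lambda\rho)^3\,\dd s\) as trilinear Fourier sums and symmetrize the multiplier over zero-sum triples, with \((n,-n,0)\) giving the mean term and \((a,b,-a-b)\) giving the cubic term; your multiplicity count \(2\pi\cdot3\cdot\tfrac23\cdot2=8\pi\) is right. Two small justifications should be stated: for a general smooth \(\rho\) the regrouping is legitimate because the rapid decay of \(r_n\) makes the triple sums absolutely convergent, and the Tricomi identity behind \eqref{eq:I2-cubic-Lambda} never uses positivity, so it applies to \(\rho_x\) even though \(\rho_x\) vanishes at \(s=0\).
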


\begin{proof}
Symmetrize the cubic Fourier multipliers over triples summing to zero.
Nonzero triples \((a,b,-a-b)\) give the second term in
\eqref{eq:strong-positive-frequency-form}, while triples
\((n,-n,0)\) give the mean term.  For \(\rho_x\), one has
\(m=2A\) and \(r_n=-x_n\) for \(n\ge1\), which proves
\eqref{eq:phase-opposed-strong-bracket}.  The inequality
\(\rho_x\ge0\) follows from
\(2\sum_nx_n\cos(ns)\le2A\).
\end{proof}

\begin{problem}[Sharp cubic convolution inequality]
\label{prob:sharp-cubic}
For every finitely supported sequence \(x_n\ge0\), prove or disprove
\begin{equation}\label{eq:sharp-cubic-problem}
 \mathcal C(x)
 \le
 2\left(\sum_{n\ge1}x_n\right)
  \left(\sum_{n\ge1}n^3x_n^2\right).
\end{equation}
\end{problem}

It is enough to solve Problem~\ref{prob:sharp-cubic} as stated.  Indeed, if
\(x_n\ge0\), \(A<\infty\), and \(E_3<\infty\), then applying
\eqref{eq:sharp-cubic-problem} to finite truncations and passing to the limit
by monotone convergence gives the same inequality for the full infinite
sequence.  Thus the finite-support formulation already contains the
admissible infinite-support case; the difficulty is uniformity in the cutoff.

\begin{proposition}[All-frequency positivity for power cusps, including the critical endpoint]
\label{prop:strong-power-cusp-positivity}
For \(2/3\le\alpha<1\), let
\[
  g_\alpha(s)=\abs{\sin(s/2)}^\alpha
  =\sum_{n\in\mathbb Z}c_{\abs n}^{(\alpha)}e^{\ii ns}.
\]
Then the strong combination, interpreted by its absolutely convergent
combined Fourier pairing, satisfies
\begin{equation}\label{eq:power-cusp-positive-series}
\begin{aligned}
  2I_1(g_\alpha)+I_2(g_\alpha)
  =4\pi\sum_{n\ge1}
  n c_n^{(\alpha)}c_n^{(2\alpha)}
  \frac{\alpha^2+(6\alpha-4)n^2}{4(2\alpha-1)}>0.
\end{aligned}
\end{equation}
Here the left-hand side denotes this combined pairing; separate convergence
of \(I_1(g_\alpha)\) and \(I_2(g_\alpha)\) is not asserted.
In particular, this is a sign theorem for an infinite Fourier tail; no
finite-degree verification enters the proof.  At the critical endpoint,
the formula reduces to the absolutely convergent strict identity
\begin{equation}\label{eq:critical-power-cusp-positive-series}
 2I_1(g_{2/3})+I_2(g_{2/3})
 =\frac{4\pi}{3}\sum_{n\ge1}
 n c_n^{(2/3)}c_n^{(4/3)}>0.
\end{equation}
\end{proposition}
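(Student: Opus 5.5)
The plan is to reduce everything to Fourier pairings between $g_\alpha$ and explicit powers of $|\sin(s/2)|$. The key identity is $g_\alpha^2=g_{2\alpha}$. This lets me write both $I_1$ and $I_2$ as single series in $c_n^{(\alpha)}c_n^{(2\alpha)}$, add them before summing, and then read off the sign from the signs of the coefficients.

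\textbf{Step 1 ($I_1$).} By \eqref{eq:I1-third-order},
\[
I_1=\tfrac12\int_\T g_{2\alpha}\,\Lambda^3g_\alpha\,\dd s .
\]
Since both functions are even with real coefficients, Parseval formally gives
\[
I_1=2\pi\sum_{n\ge1}n^3c_n^{(\alpha)}c_n^{(2\alpha)} .
\]

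\textbf{Step 2 ($I_2$).} I will write $I_2=\int_\T(\Lambda g_\alpha)(g_\alpha')^2\,\dd s$. A direct computation gives
\[
(g_\alpha')^2=\tfrac{\alpha^2}{4}\bigl(g_{2\alpha-2}-g_{2\alpha}\bigr).
\]
For $\beta=2\alpha>1$, differentiating $g_\beta$ twice gives
\[
g_\beta''=\tfrac{\beta(\beta-1)}4g_{\beta-2}-\tfrac{\beta^2}4g_\beta ,
\]
which is valid distributionally because $\beta-2>-1$ makes $g_{\beta-2}$ integrable. Comparing Fourier coefficients yields
\[
c_n^{(2\alpha-2)}=\frac{2(\alpha^2-n^2)}{\alpha(2\alpha-1)}\,c_n^{(2\alpha)} .
\]
Hence
\[
I_2=2\pi\cdot\tfrac{\alpha^2}{4}\sum_{n\ge1}n\,c_n^{(\alpha)}c_n^{(2\alpha)}\Bigl(\tfrac{2(\alpha^2-n^2)}{\alpha(2\alpha-1)}-1\Bigr).
\]

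\textbf{Step 3 (combine).} Adding $2I_1$ and $I_2$ termwise and simplifying the rational multiplier should produce exactly
\[
4\pi\, n\,c_n^{(\alpha)}c_n^{(2\alpha)}\,\frac{\alpha^2+(6\alpha-4)n^2}{4(2\alpha-1)} .
\]
This is a routine check. The separate series for $I_1$ and $I_2$ need not converge, which is why the statement speaks of the combined pairing. I would therefore justify the combination through the Poisson regularization $P_t g_\alpha$, using Lemma~\ref{lem:strong-equivalent-forms}. For $t>0$ everything is smooth and the identities hold exactly; letting $t\downarrow0$ in the combined series, with factors $e^{-2nt}$, and applying dominated or monotone convergence gives the stated pairing.

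\textbf{Step 4 (sign and convergence).} The classical formula is
\[
c_n^{(\beta)}=\frac{(-1)^n\Gamma(\beta+1)}{2^\beta\,\Gamma(\beta/2+n+1)\,\Gamma(\beta/2-n+1)} .
\]
For $0<\beta<2$ and $n\ge1$ the factor $\Gamma(\beta/2-n+1)$ has sign $(-1)^{n-1}$, so $c_n^{(\beta)}<0$. Applying this with $\beta=\alpha$ and $\beta=2\alpha\in[4/3,2)$ shows that $c_n^{(\alpha)}c_n^{(2\alpha)}>0$. Also $2\alpha-1>0$, and $\alpha^2+(6\alpha-4)n^2>0$ precisely because $\alpha\ge2/3$. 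Every term is therefore positive, which gives strict positivity.

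For convergence, $c_n^{(\beta)}\sim C_\beta n^{-1-\beta}$, so the general term is of order $n^{3}\cdot n^{-2-3\alpha}=n^{1-3\alpha}$, which is summable for $\alpha>2/3$. At $\alpha=2/3$ the $n^2$ part of the multiplier vanishes. The multiplier then reduces to $\tfrac{\alpha^2}{4(2\alpha-1)}=\tfrac{4/9}{4/3}=\tfrac13$, so the terms are $O(n^{-3})$, and \eqref{eq:critical-power-cusp-positive-series} follows with the constant $4\pi/3$.

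\textbf{Main obstacle.} I expect the delicate step to be Step 3 at and near the critical endpoint. There the individual pairings diverge, since $n^3c_n^{(2/3)}c_n^{(4/3)}\sim n^{-1}$, and only the combination converges absolutely. The limit $t\downarrow0$ must therefore be taken after combining. I would first try to show that the regularized combined coefficients converge monotonically in $t$ to the limit coefficients, which all have a common sign, so that monotone convergence applies.
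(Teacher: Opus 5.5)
Your route is the paper's: $g_\alpha^2=g_{2\alpha}$, the identity $(g_\alpha')^2=\frac{\alpha^2}{4}(g_{2\alpha-2}-g_{2\alpha})$, the two-step shift for $c_n^{(2\alpha-2)}/c_n^{(2\alpha)}$, and a termwise sign argument from $c_n^{(\beta)}<0$ for $0<\beta<2$. Two of your steps differ from the paper, and both are harmless variants. You obtain the shift from the equation for $g_\beta''$ rather than from the Gamma ratio; this is legitimate because $g_\beta'$ is continuous for $\beta>1$. You also read off the sign of $\Gamma(\beta/2-n+1)$ directly, where the paper uses a ratio recursion.

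\textbf{A factor-of-two slip in Step~2.} The prefactor of $I_2$ must be $4\pi\cdot\frac{\alpha^2}{4}=\pi\alpha^2$, not $2\pi\cdot\frac{\alpha^2}{4}$. The two-sided Parseval sum over $n\in\mathbb Z$ doubles the positive-frequency sum. Unlike $I_1=\frac12\int_\T g_{2\alpha}\Lambda^3g_\alpha\,\dd s$, the term $I_2$ has no factor $\frac12$ to absorb this doubling. With your constant, the ``routine check'' in Step~3 would actually give
\[
 4\pi\sum_{n\ge1}nc_n^{(\alpha)}c_n^{(2\alpha)}\,\frac{\alpha^2+(14\alpha-8)n^2}{8(2\alpha-1)},
\]
whose critical exponent is $4/7$, not $2/3$. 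With the correct constant the bracket is
\[
 n^2+\frac{\alpha(\alpha-2n^2)}{4(2\alpha-1)}=\frac{\alpha^2+(6\alpha-4)n^2}{4(2\alpha-1)},
\]
as claimed, and your Step~4 then goes through verbatim.

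\textbf{The Poisson justification in Step~3 is wrong as described.} For $t>0$, Lemma~\ref{lem:strong-fourier-representation} applied to $P_tg_\alpha$ gives a series in $e^{-2nt}$. Its $n$th coefficient collects the mean and triad terms with \emph{output} frequency $n$. For instance, its $e^{-2t}$ coefficient is $8\pi c_0^{(\alpha)}(c_1^{(\alpha)})^2$. Your first combined term instead contains $c_1^{(2\alpha)}$, and hence every $c_j^{(\alpha)}$. So $2I_1(P_tg_\alpha)+I_2(P_tg_\alpha)$ is not your combined series multiplied by $e^{-2nt}$. These are genuinely different groupings, because the diagonal and triad parts each diverge for $\alpha<1$. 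Whether the Abel limit equals your series is therefore a separate question.

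You do not need that limit. The statement defines the left-hand side as the combined pairing, and the paper simply computes that pairing and checks its absolute convergence. If you want a regularization anyway, apply $P_t$ only to the linear factors $\Lambda^3g_\alpha$ and $\Lambda g_\alpha$. This multiplies the $n$th combined term by $e^{-nt}$, and dominated convergence then finishes the argument.
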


\begin{proof}
The beta integral gives, for every \(\beta>-1\),
\begin{equation}\label{eq:power-cusp-fourier-coefficients}
  c_n^{(\beta)}
  =\frac{(-1)^n\Gamma(\beta+1)}
  {2^\beta
   \Gamma(\beta/2-n+1)\Gamma(\beta/2+n+1)}.
\end{equation}
If \(0<\beta<2\), then \(c_n^{(\beta)}<0\) for every \(n\ge1\).  Indeed,
\[
  c_1^{(\beta)}<0,
  \qquad
  \frac{c_{n+1}^{(\beta)}}{c_n^{(\beta)}}
  =\frac{n-\beta/2}{n+\beta/2+1}>0
  \quad(n\ge1).
\]

Since \(g_\alpha^2=g_{2\alpha}\) and
\[
  (\partial_sg_\alpha)^2
  =\frac{\alpha^2}{4}
   \left(g_{2\alpha-2}-g_{2\alpha}\right),
\]
self-adjointness of \(\Lambda\), Parseval's identity, and
\eqref{eq:I1-third-order} yield
\begin{equation}\label{eq:power-cusp-pre-reduction}
\begin{aligned}
  2I_1(g_\alpha)+I_2(g_\alpha)
  =4\pi\sum_{n\ge1}c_n^{(\alpha)}
  \left[
    n^3c_n^{(2\alpha)}
    +\frac{\alpha^2n}{4}
     \left(c_n^{(2\alpha-2)}-c_n^{(2\alpha)}\right)
  \right].
\end{aligned}
\end{equation}
The coefficient formula also gives the exact two-step shift
\[
  \frac{c_n^{(2\alpha-2)}}{c_n^{(2\alpha)}}
  =\frac{2(\alpha^2-n^2)}{\alpha(2\alpha-1)}.
\]
Substitution into \eqref{eq:power-cusp-pre-reduction} proves
\eqref{eq:power-cusp-positive-series}.  Both Fourier coefficients in each
product on the right are negative.  For \(\alpha>2/3\), the factor
\(\alpha^2+(6\alpha-4)n^2\) is positive and, since
\(c_n^{(\beta)}=O(n^{-\beta-1})\), the summand is
\(O(n^{1-3\alpha})\), hence absolutely summable.  At \(\alpha=2/3\),
the entire \(n^2\) coefficient vanishes and the remaining summand is
\(O(n^{-3})\).  This proves absolute convergence and
\eqref{eq:critical-power-cusp-positive-series}.  The strict sign follows
term by term throughout the interval, including its critical endpoint.
\end{proof}

\subsection{Analytic-disk and Schur reductions}

The fully phase-opposed class also admits an exact analytic-disk
reformulation.  Unlike a cutoff calculation, this reformulation keeps the
whole tail in one Dirichlet integral and isolates the sharp constant \(2\) as
a single boundary-flux estimate.

\begin{proposition}[Disk--Rellich reduction for an anticoherent tail]
\label{prop:anticoherent-disk-reduction}
Let \(x_n\ge0\) be finitely supported, and put
\[
 A=\sum_{n\ge1}x_n,
 \qquad
 E_3=\sum_{n\ge1}n^3x_n^2,
 \qquad
 f(z)=\sum_{n\ge1}x_nz^n,
 \qquad
 g=Df=zf'(z).
\]
Define
\[
 \mathcal C(x)=\sum_{a,b\ge1}
 (a+b)(a^2+ab+b^2)x_ax_bx_{a+b}
\]
and, on the unit disk, set
\[
 u(z)=A-\Re f(z)\ge0,
 \qquad
 M(r)=\frac1{2\pi}\int_0^{2\pi}
 u(re^{\ii\theta})\abs{g(re^{\ii\theta})}^2\,\dd\theta.
\]
With normalized area measure \(\dd A_0=\dd A/\pi\), one has the exact
identities
\begin{equation}\label{eq:anticoherent-disk-deficit}
\begin{aligned}
 A E_3-\frac12\mathcal C(x)
 &=\int_{\mathbb D}u\abs{g'}^2\,\dd A_0
   -\frac12\Re\int_{\mathbb D}f'g\overline{g'}\,\dd A_0\\
 &=\frac12\int_{\mathbb D}u\abs{g'}^2\,\dd A_0
   +\frac14M'(1)\\
 &=\frac12\left.
   \frac1{2\pi}\int_0^{2\pi}
   u(re^{\ii\theta})\,
   \partial_r\abs{g(re^{\ii\theta})}^2\,\dd\theta
   \right|_{r=1}.
\end{aligned}
\end{equation}
Consequently, the sharp convolution inequality
\begin{equation}\label{eq:anticoherent-sharp-convolution}
 \mathcal C(x)\le2AE_3
\end{equation}
is equivalent to the cutoff-free weighted Rellich estimate
\begin{equation}\label{eq:anticoherent-rellich-target}
 M'(1)+2\int_{\mathbb D}u\abs{g'}^2\,\dd A_0\ge0.
\end{equation}
Equivalently, it is the single boundary-flux sign
\begin{equation}\label{eq:anticoherent-boundary-flux}
 \left.
 \int_0^{2\pi}u(re^{\ii\theta})\,
 \partial_r\abs{g(re^{\ii\theta})}^2\,\dd\theta
 \right|_{r=1}\ge0.
\end{equation}
The same equivalence extends by truncation whenever the displayed sums and
integrals converge absolutely.
\end{proposition}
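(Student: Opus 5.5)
The plan is to verify each line of \eqref{eq:anticoherent-disk-deficit} by direct computation, using only two tools. The first is monomial orthogonality on the disk: \(\int_{\mathbb D}z^j\bar z^k\,\dd A_0=\delta_{jk}/(j+1)\). The second is the pair of Green identities on \(\mathbb D\), applied to the harmonic function \(u\) and the subharmonic function \(|g|^2\). Because \(x\) is finitely supported, \(f\) and \(g\) are polynomials and every integral and boundary term is smooth up to \(r=1\). The equivalences \eqref{eq:anticoherent-rellich-target} and \eqref{eq:anticoherent-boundary-flux} then follow at once, since the left side of \eqref{eq:anticoherent-disk-deficit} is \(\frac12(2AE_3-\mathcal C(x))\). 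The last sentence of the statement follows by applying the finite identities to truncations and passing to the limit under absolute convergence, as in the remark after Problem~\ref{prob:sharp-cubic}.

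\emph{First line.} I would write \(g'=\sum_n n^2x_nz^{n-1}\). Orthogonality gives \(\int_{\mathbb D}|g'|^2\dd A_0=\sum_n n^4x_n^2/n=E_3\), so \(\int u|g'|^2\dd A_0=AE_3-\Re\int f|g'|^2\dd A_0\). In \(\int f|g'|^2\), the product \(x_az^a\cdot b^2x_bz^{b-1}\cdot c^2x_c\bar z^{c-1}\) survives only when \(c=a+b\). It then contributes \((a+b)b^2x_ax_bx_{a+b}\), which symmetrizes in \(a,b\) to \(\frac12(a+b)(a^2+b^2)\). Similarly, \(f'g\overline{g'}\) contributes \(ab(a+b)x_ax_bx_{a+b}\). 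Adding the first term to one half of the second reproduces \(\frac12(a+b)(a^2+ab+b^2)\), which gives \(AE_3-\frac12\mathcal C(x)\).

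\emph{Second line.} I would apply the divergence theorem to \(u|g|^2\) on \(\mathbb D\), which gives \(M'(1)=\frac1{2\pi}\int_{\mathbb D}\Delta(u|g|^2)\,\dd A=\frac12\int_{\mathbb D}\Delta(u|g|^2)\,\dd A_0\). Expanding the Laplacian uses three facts:
\begin{itemize}
\item \(\Delta u=0\);
\item \(\Delta|g|^2=4|g'|^2\);
\item \(\nabla u\cdot\nabla|g|^2=4\Re(\partial u\,\bar\partial|g|^2)=-2\Re(f'g\overline{g'})\), using \(\partial u=-f'/2\).
\end{itemize}
Hence \(M'(1)=2\int u|g'|^2\dd A_0-2\Re\int f'g\overline{g'}\dd A_0\). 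Substituting this for the \(\Re\) term in the first line gives the second line.

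\emph{Third line.} I would use Green's second identity with \(u\) and \(\psi=|g|^2\). Since \(\Delta u=0\), it reads \(2\int u|g'|^2\dd A_0=\frac1{2\pi}\oint(u\,\partial_r\psi-\psi\,\partial_ru)\dd\theta\). On the other hand, differentiating under the integral gives \(M'(1)=\frac1{2\pi}\oint(u\,\partial_r\psi+\psi\,\partial_ru)\dd\theta\). Adding these and dividing by \(4\) cancels the \(\psi\,\partial_ru\) flux. What remains is \(\frac12\cdot\frac1{2\pi}\oint u\,\partial_r|g|^2\,\dd\theta\).

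The only step needing care is the sign and constant bookkeeping in the complex gradient identity \(\nabla\phi\cdot\nabla\psi=4\Re(\partial\phi\,\bar\partial\psi)\) and in the normalization \(\dd A_0=\dd A/\pi\). I would cross-check both against the first line on a two-term example, for instance \(x_1,x_2\ne0\). Nonnegativity of \(u\) is not needed for the identities. It only explains why \eqref{eq:anticoherent-boundary-flux} is a natural weighted target.
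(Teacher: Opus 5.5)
Your proposal is correct and is essentially the paper's argument: monomial orthogonality gives the first line, and the Laplacian of $\Phi=u|g|^2$ together with Green's theorem gives the second (the paper writes this as $\partial_z\partial_{\bar z}\Phi=u|g'|^2-\Re(f'g\overline{g'})$). The only variation is in the third line: you cancel the $|g|^2\partial_r u$ flux by adding Green's second identity to the product-rule formula for $M'(1)$, whereas the paper evaluates that flux as $J=\sum ab(a+b)x_ax_bx_{a+b}$ via $\partial_r u|_{r=1}=-\Re g$ and then compares with the first line; both routes are valid, and yours saves one orthogonality computation.
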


\begin{proof}
Introduce the two ordered triad forms
\[
 L=\sum_{a,b\ge1}a^3x_ax_bx_{a+b},
 \qquad
 J=\sum_{a,b\ge1}ab(a+b)x_ax_bx_{a+b}.
\]
Symmetry in \(a,b\) gives
\begin{equation}\label{eq:anticoherent-C-LJ}
 \mathcal C(x)=2(L+J).
\end{equation}
Monomial orthogonality in \(\mathbb D\) gives, without an estimate,
\begin{equation}\label{eq:anticoherent-area-ledger}
\begin{aligned}
 \int_{\mathbb D}\abs{g'}^2\,\dd A_0&=E_3,\\
 \Re\int_{\mathbb D}f\abs{g'}^2\,\dd A_0&=L+\frac12J,\\
 \Re\int_{\mathbb D}f'g\overline{g'}\,\dd A_0&=J.
\end{aligned}
\end{equation}
The first line of \eqref{eq:anticoherent-disk-deficit} follows from
\eqref{eq:anticoherent-C-LJ}--\eqref{eq:anticoherent-area-ledger}.

For the second line, let \(h=A-f\), so \(u=\Re h\), and set
\(\Phi=u|g|^2\).  Since \(h'=-f'\), direct complex differentiation yields
\[
 \partial_z\partial_{\bar z}\Phi
 =u\abs{g'}^2-\Re\bigl(f'g\overline{g'}\bigr).
\]
Green's identity and the definition of \(M\) give
\[
 \int_{\mathbb D}\partial_z\partial_{\bar z}\Phi\,\dd A_0
 =\frac12M'(1).
\]
Averaging this equality with the first line of
\eqref{eq:anticoherent-disk-deficit} proves the second.  Green's second
identity, applied to the harmonic function \(u\) and the subharmonic
function \(|g|^2\), gives
\[
 2\int_{\mathbb D}u|g'|^2\,\dd A_0
 =
 \left.\frac1{2\pi}\int_0^{2\pi}
 u\,\partial_r|g|^2-|g|^2\partial_r u\,\dd\theta\right|_{r=1}.
\]
Here \(\partial_r u|_{r=1}=-\Re g\), while monomial orthogonality gives
\[
 \frac1{2\pi}\int_0^{2\pi}\Re g\,|g|^2\,\dd\theta=J.
\]
Combining this with the first line of
\eqref{eq:anticoherent-disk-deficit} proves the boundary formula.  The
equivalence of
\eqref{eq:anticoherent-sharp-convolution} and
\eqref{eq:anticoherent-rellich-target}--\eqref{eq:anticoherent-boundary-flux}
is immediate.
\end{proof}

\begin{remark}
The volume term in \eqref{eq:anticoherent-rellich-target} cannot simply be
dropped: there are positive finite spectra for which \(M'(1)<0\).  Thus the
remaining degree-free step is the combined Rellich estimate exactly as
stated, not monotonicity of the boundary mean by itself.

There is also an exact translation-scale ledger for the same boundary flux.
If \(a_n=nx_n\), with \(a_n=0\) outside the positive support, then
\[
 A E_3-\frac12\mathcal C(x)
 =\frac12\sum_{k\ge1}x_k Q_k,
 \qquad
 Q_k=2\sum_{n\ge1}na_n^2
 -\sum_{n\ge1}(2n+k)a_na_{n+k},
\]
and
\[
 Q_k=
 \sum_{n\ge1}\left(n+\frac k2\right)(a_{n+k}-a_n)^2
 +\sum_{n=1}^{k}\left(n-\frac k2\right)a_n^2.
\]
The first term is a positive translation-difference square, whereas the
finite boundary wedge changes sign.  Thus positivity of individual
translation scales is false in general; the combined boundary flux must
cancel these wedges across scales.
\end{remark}

\begin{theorem}[Sharp arbitrary-tail theorem under comonotone spectral envelopes]
\label{thm:anticoherent-monotone-envelope}
Let \(x_n\ge0\) be finitely supported and suppose that the first- and
second-order spectral envelopes are comonotone:
\begin{equation}\label{eq:anticoherent-envelope-comonotonicity}
 \bigl(nx_n-mx_m\bigr)
 \bigl(n^2x_n-m^2x_m\bigr)\ge0
 \qquad(m,n\ge1).
\end{equation}
A convenient sufficient condition is
\begin{equation}\label{eq:anticoherent-envelope-monotonicity}
 (n+1)^2x_{n+1}\le n^2x_n
 \qquad(n\ge1).
\end{equation}
Then
\begin{equation}\label{eq:anticoherent-monotone-sharp}
 \mathcal C(x)\le2A E_3.
\end{equation}
More precisely, every translation scale in the ledger above is nonnegative:
\begin{equation}\label{eq:anticoherent-monotone-Qk}
 Q_k=
 \sum_{n\ge1}
 \bigl(nx_n-(n+k)x_{n+k}\bigr)
 \bigl(n^2x_n-(n+k)^2x_{n+k}\bigr)
 +\sum_{j=1}^{k}j^3x_j^2\ge0.
\end{equation}
The conclusion extends to every infinite sequence satisfying
\eqref{eq:anticoherent-envelope-comonotonicity} for which \(A<\infty\) and
\(E_3<\infty\).  The constant \(2\) in
\eqref{eq:anticoherent-monotone-sharp} is sharp even within this cone.
Consequently, every fully anticoherent spectrum \(r_n=-x_n\) in this class
has nonnegative strong bracket whenever its minimum constraint gives
\(m\ge2A\).
\end{theorem}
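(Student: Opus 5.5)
The plan is to prove the exact identity \eqref{eq:anticoherent-monotone-Qk} for each translation scale, read off the sign of each $Q_k$ from comonotonicity, and then sum the scales using the ledger in the preceding remark. That remark gives
\[
 AE_3-\tfrac12\mathcal C(x)=\tfrac12\sum_{k\ge1}x_kQ_k,
 \qquad
 Q_k=2\sum_{n\ge1}na_n^2-\sum_{n\ge1}(2n+k)a_na_{n+k},
\]
with $a_n=nx_n$. Since $x_k\ge0$, it suffices to show $Q_k\ge0$ for every $k$.

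\textbf{Step 1: the identity for $Q_k$.} I would expand the product
\[
 \bigl(a_n-a_{n+k}\bigr)\bigl(na_n-(n+k)a_{n+k}\bigr)
 =na_n^2+(n+k)a_{n+k}^2-(2n+k)a_na_{n+k}
\]
and sum over $n\ge1$. Reindexing $m=n+k$ gives
\[
 \sum_{n\ge1}(n+k)a_{n+k}^2=\sum_{m\ge1}ma_m^2-\sum_{j=1}^kja_j^2 .
\]
Hence $Q_k$ equals the sum of these products plus $\sum_{j\le k}ja_j^2=\sum_{j\le k}j^3x_j^2$. Since $a_n=nx_n$ and $na_n=n^2x_n$, this is exactly \eqref{eq:anticoherent-monotone-Qk}.

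\textbf{Step 2: sign of $Q_k$.} Hypothesis \eqref{eq:anticoherent-envelope-comonotonicity} with $m=n+k$ makes every product in \eqref{eq:anticoherent-monotone-Qk} nonnegative. The boundary wedge $\sum_{j\le k}j^3x_j^2$ is trivially nonnegative. So $Q_k\ge0$, and \eqref{eq:anticoherent-monotone-sharp} follows.

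\textbf{Step 3: the sufficient condition.} Under \eqref{eq:anticoherent-envelope-monotonicity} the sequence $n^2x_n$ is nonincreasing. Then $nx_n=(n^2x_n)/n$ is also nonincreasing, as a product of two nonnegative nonincreasing sequences. Two nonincreasing sequences are comonotone, which gives \eqref{eq:anticoherent-envelope-comonotonicity}.

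\textbf{Step 4: infinite sequences.} Truncating at $N$ keeps the sequence in the cone. Indeed, pairs with both indices at most $N$ are unchanged. Pairs with one index beyond $N$ give $(nx_n)(n^2x_n)\ge0$ or $0$. Applying the finite result and letting $N\to\infty$ by monotone convergence, as in the remark after Problem~\ref{prob:sharp-cubic}, gives the infinite case. The final assertion then follows from \eqref{eq:phase-opposed-strong-bracket}, or more generally from \eqref{eq:strong-positive-frequency-form} with $r_n=-x_n$, since $mE_3\ge2AE_3\ge\mathcal C(x)$.

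\textbf{Step 5: sharpness, the main obstacle.} The critical plateaux $n^{-3/2}$ are not in the cone, because $n^2x_n$ increases along them. I would instead use $x_n=n^{-\beta}$ with $\beta\downarrow2$. For $\beta\ge2$ this family satisfies \eqref{eq:anticoherent-envelope-monotonicity}, and for $\beta>2$ it has $A<\infty$ and
\[
 E_3=\sum_{n\ge1}n^{3-2\beta}\sim\frac{1}{2\beta-4}\to\infty .
\]
The task is to show $\mathcal C(x)/(2AE_3)\to1$ as $\beta\downarrow2$. I would split $\mathcal C$ into the regions $a\le b$ and $b<a$, and treat them symmetrically. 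In the region where $a$ is fixed and small and $b\to\infty$, one has
\[
 (a+b)(a^2+ab+b^2)\,x_bx_{a+b}=b^3x_b^2\bigl(1+O(a/b)\bigr).
\]
Summing over $b$ gives $x_aE_3+O(a\,x_a)$ uniformly in $\beta$. Doubling for symmetry, this contributes $2AE_3+O\bigl(\sum_a a\,x_a\bigr)$.

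The delicate part is controlling the remaining region where $a$ and $b$ are comparable. There the summand is bounded by roughly $b^{3-3\beta}$, summed over a two-dimensional range. I would check that this contribution, together with the error $\sum_a a\,x_a$ and the tail in $a$ cut off at a level $A_0$, stays $O(\log(1/(\beta-2)))$ or smaller. Since $E_3$ blows up like $1/(\beta-2)$, these terms are then $o(E_3)$, and the ratio tends to $1$.

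Should the bookkeeping fail to close, the fallback is a plateau $x_n=n^{-2}$ for $n\le N$, which also lies in the cone. Its $E_3\sim\log N$ diverges in the same logarithmic way, and the same splitting argument would apply with $N\to\infty$ in place of $\beta\downarrow2$.
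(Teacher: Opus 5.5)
Your Steps 1--4 are the paper's proof almost verbatim. The paper also sets $u_n=nx_n$, $v_n=n^2x_n$, expands $(u_n-u_{n+k})(v_n-v_{n+k})$, and notes that the shifted diagonal omits the first $k$ modes. It gets comonotonicity from $v_n$ and $v_n/n$ both being nonincreasing, handles infinite sequences by truncation, and finishes with $mE_3-\mathcal C(x)\ge(m-2A)E_3$. Your explicit check that truncation preserves the comonotone cone is, if anything, more careful than the paper's one-line remark.

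The real divergence is sharpness. The paper uses exactly your fallback $x_n^{(N)}=n^{-2}\mathbf 1_{n\le N}$. There $A_N=\zeta(2)+o(1)$, $E_{3,N}=H_N$, and the summand of $\mathcal C$ splits exactly as
\[
 \frac1{b^2(a+b)}+\frac1{ab(a+b)}+\frac1{a^2(a+b)}.
\]
The outer two pieces sum to $2\sum_{a<N}a^{-2}(H_N-H_a)=2\zeta(2)H_N+O(1)$, and the middle piece is bounded. No region bookkeeping is needed.

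Your primary family $n^{-\beta}$ with $\beta\downarrow2$ also works. Its small advantage is that it exhibits sharpness on genuinely infinite members of the cone, via your Step 4. Two points in your sketch need correcting, though.

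First, the error $O(\sum_a ax_a)$ equals $O(\zeta(\beta-1))$. This is of the same order $(\beta-2)^{-1}$ as $E_3$, not smaller. So the cutoff $a\le A_0$ is indispensable, not a tidying step.

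Second, Step 2 already gives $\mathcal C\le 2AE_3$, and every summand is nonnegative. You therefore only need a lower bound, and the comparable region never has to be estimated. Keep only the two disjoint symmetric regions $a\le A_0<b$ and $b\le A_0<a$. Use $a^2+ab+b^2\ge b(a+b)$ and $(a+b)^{2-\beta}\ge(2b)^{2-\beta}$ there. This gives
\[
 \mathcal C(x)\ge 2^{3-\beta}\Bigl(\sum_{a\le A_0}a^{-\beta}\Bigr)\Bigl(E_3-\sum_{b\le A_0}b^{3-2\beta}\Bigr).
\]
Since $E_3\to\infty$, it follows that $\liminf_{\beta\downarrow2}\mathcal C/(AE_3)\ge 2\sum_{a\le A_0}a^{-2}/\zeta(2)$ for every $A_0$. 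Letting $A_0\to\infty$ shows the ratio tends to $2$.
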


\begin{proof}
Put \(u_n=nx_n\) and \(v_n=n^2x_n\), and extend both sequences by zero.
For \(c=n+k\), direct expansion gives
\[
 (u_n-u_c)(v_n-v_c)
 =n^3x_n^2+c^3x_c^2-nc(n+c)x_nx_c.
\]
Summing in \(n\), and observing that the shifted diagonal sum omits exactly
the first \(k\) modes, gives the identity
\eqref{eq:anticoherent-monotone-Qk}.  Condition
\eqref{eq:anticoherent-envelope-comonotonicity} says that both differences
in every product have the same sign, so \(Q_k\ge0\).  If instead one assumes
the simpler condition \eqref{eq:anticoherent-envelope-monotonicity}, then
\(v_n\) is nonincreasing and \(u_n=v_n/n\) is nonincreasing as well; hence
the comonotonicity condition follows.  The translation ledger now yields
\eqref{eq:anticoherent-monotone-sharp}.

For an infinite sequence, apply the finite result to the truncations
\(x_n\mathbf 1_{n\le N}\).  The monotonicity assumption is preserved, while
monotone convergence gives the stated limit.  To see sharpness, take
\[
 x_n^{(N)}=n^{-2}\mathbf 1_{n\le N}.
\]
Then \(A_N=\zeta(2)+o(1)\), \(E_{3,N}=H_N\), and
\[
 \begin{aligned}
 \mathcal C(x^{(N)})
 &=\sum_{a+b\le N}
 \left(\frac1{b^2(a+b)}+\frac1{ab(a+b)}
       +\frac1{a^2(a+b)}\right)\\
 &=2\zeta(2)H_N+O(1).
 \end{aligned}
\]
Indeed, the first and third sums equal
\(2\sum_{a<N}a^{-2}(H_N-H_a)\), whereas the middle sum is bounded by
\(2\sum_{c\ge2}H_{c-1}/c^2<\infty\).  Hence
\(\mathcal C(x^{(N)})/(A_NE_{3,N})\to2\).  The final bracket statement is
immediate from \(mE_3-\mathcal C(x)\ge(m-2A)E_3\).
\end{proof}

The disk deficit admits a second degree-free reduction which retains the
full multiplier defect of the probability generating function.  Unlike an
ordinary Cauchy--Schwarz estimate, it does not discard the output support.

\begin{proposition}[Schur-defect reduction for an anticoherent tail]
\label{prop:anticoherent-schur-defect}
Let \(x_n\ge0\) be finitely supported, \(A=\sum_nx_n>0\), and set
\[
 p_n=\frac{x_n}{A},\qquad
 F(z)=\sum_{n\ge1}p_nz^n,\qquad G=DF.
\]
Thus \(F(0)=0\), \(F(1)=1\), and \(\abs{F(z)}\le1\) on the disk.  Define
\begin{equation}\label{eq:anticoherent-schur-defect}
 \mathcal S(p)=
 \int_{\mathbb D}(1-\abs F^2)\abs{G'}^2\,\dd A_0
 -\Re\int_{\mathbb D}F'G\overline{G'}\,\dd A_0.
\end{equation}
Then the sharp deficit has the exact decomposition
\begin{equation}\label{eq:anticoherent-schur-defect-ledger}
 \frac1{A^3}\left(AE_3-\frac12\mathcal C(x)\right)
 =\frac12\mathcal S(p)
  +\frac12\norm{(1-F)G'}_{A^2(\mathbb D)}^2.
\end{equation}
In coefficients,
\begin{equation}\label{eq:anticoherent-schur-defect-coefficients}
\begin{aligned}
 \mathcal S(p)
 &=\sum_{n\ge1}n^3p_n^2
 -\sum_{c\ge2}\frac1c
   \left(\sum_{a+b=c}b^2p_ap_b\right)^2
 -\sum_{a,b\ge1}ab(a+b)p_ap_bp_{a+b}.
\end{aligned}
\end{equation}
Moreover, if \(H=G'\), the first two terms have the Hilbert-space variance
decomposition
\begin{equation}\label{eq:anticoherent-schur-variance}
\begin{aligned}
 \norm H_{A^2}^2-\norm{FH}_{A^2}^2
 &=\sum_{a\ge1}p_a
   \left(\norm H_{A^2}^2-\norm{z^aH}_{A^2}^2\right)\\
 &\quad+\frac12\sum_{a,b\ge1}p_ap_b
   \norm{(z^a-z^b)H}_{A^2}^2.
\end{aligned}
\end{equation}
Consequently, the unrestricted sharp convolution theorem would follow from
the single multiplier-defect estimate
\begin{equation}\label{eq:anticoherent-schur-target}
 \Re\int_{\mathbb D}F'G\overline{G'}\,\dd A_0
 \le \norm H_{A^2}^2-\norm{FH}_{A^2}^2
\end{equation}
for every probability generating polynomial \(F\).  This last estimate is a
strictly stronger target and is not asserted here.
\end{proposition}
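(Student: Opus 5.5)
Everything here is homogeneous, so I will first rescale: with \(x=Ap\), one has \(E_3=A^2\sum n^3p_n^2\) and \(\mathcal C(x)=A^3\mathcal C(p)\). The left side of \eqref{eq:anticoherent-schur-defect-ledger} therefore equals \(\sum n^3p_n^2-\frac12\mathcal C(p)\), and it suffices to prove the case \(A=1\), \(x=p\). By Proposition~\ref{prop:anticoherent-disk-reduction} (first line of \eqref{eq:anticoherent-disk-deficit}) with \(u=1-\Re F\), this deficit equals
\[
\int_{\mathbb D}(1-\Re F)\abs{G'}^2\,\dd A_0-\tfrac12\Re\int_{\mathbb D}F'G\overline{G'}\,\dd A_0 .
\]

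The next step is pointwise algebra. Since
\(\abs{1-F}^2=1-2\Re F+\abs F^2\), we have
\(1-\Re F=\tfrac12(1-\abs F^2)+\tfrac12\abs{1-F}^2\). Substituting this gives
\[
\tfrac12\Bigl[\int(1-\abs F^2)\abs{G'}^2-\Re\int F'G\overline{G'}\Bigr]+\tfrac12\int\abs{1-F}^2\abs{G'}^2,
\]
which is exactly \(\tfrac12\mathcal S(p)+\tfrac12\norm{(1-F)G'}_{A^2}^2\). That proves \eqref{eq:anticoherent-schur-defect-ledger}. The facts \(F(0)=0\), \(F(1)=1\) and \(\abs F\le1\) are immediate from \(p_n\ge0\) and \(\sum p_n=1\).

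For the coefficient form \eqref{eq:anticoherent-schur-defect-coefficients}, I will use monomial orthogonality with \(\norm{z^k}_{A^2}^2=1/(k+1)\) under \(\dd A_0\).

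\begin{itemize}
\item Since \(G'=\sum n^2p_nz^{n-1}\), we get \(\norm{G'}^2=\sum n^3p_n^2\).
\item The product \(FG'\) has coefficient of \(z^{c-1}\) equal to \(\sum_{a+b=c}b^2p_ap_b\). Hence \(\norm{FG'}^2=\sum_c c^{-1}(\sum_{a+b=c}b^2p_ap_b)^2\).
\item The last term is \(J\) from \eqref{eq:anticoherent-area-ledger}, already computed there as \(\sum ab(a+b)p_ap_bp_{a+b}\).
\end{itemize}

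I expect the main bookkeeping hazard to be the index shift in the middle bullet (\(z^{c-1}\) has weight \(1/c\)). It should be checked against the \(E_3\) ledger for consistency.

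For the variance identity \eqref{eq:anticoherent-schur-variance}, I will expand \(\norm{FH}^2=\sum_{a,b}p_ap_b\ip{z^aH}{z^bH}\). I will then use \(2\Re\ip{z^aH}{z^bH}=\norm{z^aH}^2+\norm{z^bH}^2-\norm{(z^a-z^b)H}^2\) and symmetrize in \(a,b\); the imaginary parts cancel because the double sum is real. This gives
\[
\norm{FH}^2=\sum_ap_a\norm{z^aH}^2-\tfrac12\sum_{a,b}p_ap_b\norm{(z^a-z^b)H}^2 .
\]
Since \(\sum p_a=1\), we can write \(\norm H^2=\sum_a p_a\norm H^2\), and subtracting yields the stated decomposition.

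Finally, if \eqref{eq:anticoherent-schur-target} holds, then \(\mathcal S(p)\ge0\). The ledger then makes the deficit nonnegative, which gives \(\mathcal C(x)\le2AE_3\). No real obstacle remains beyond verifying that each inner product is a finite sum of monomials, which holds since \(p\) has finite support.
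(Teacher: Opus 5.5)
Your proposal is correct and follows essentially the same route as the paper. Both arguments reduce to the first disk identity of Proposition~\ref{prop:anticoherent-disk-reduction} with \(u=1-\Re F\), apply the pointwise identity \(2(1-\Re F)=1-\abs F^2+\abs{1-F}^2\), read off the coefficient formula by monomial orthogonality (reusing \(J\) for the cubic term), and obtain the variance identity by expanding \(\norm{FH}_{A^2}^2\) as an average of Bergman shifts.
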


\begin{proof}
After division by \(A\), the first disk identity in
Proposition~\ref{prop:anticoherent-disk-reduction} gives
\[
 \frac1{A^3}\left(AE_3-\frac12\mathcal C(x)\right)
 =\int_{\mathbb D}(1-\Re F)\abs{G'}^2\,\dd A_0
  -\frac12\Re\int_{\mathbb D}F'G\overline{G'}\,\dd A_0.
\]
The scalar identity
\[
 2(1-\Re F)=1-\abs F^2+\abs{1-F}^2
\]
proves \eqref{eq:anticoherent-schur-defect-ledger}.  Monomial
orthogonality gives
\[
 \norm{G'}_{A^2}^2=\sum_n n^3p_n^2,
 \qquad
 \norm{FG'}_{A^2}^2
 =\sum_{c\ge2}\frac1c
   \left(\sum_{a+b=c}b^2p_ap_b\right)^2,
\]
whereas the remaining inner product is the final cubic sum in
\eqref{eq:anticoherent-schur-defect-coefficients}.  This proves the
coefficient formula.

Finally, multiplication by \(F=\sum_ap_az^a\) is the average of the Bergman
shifts \(H\mapsto z^aH\).  Expanding the square of that average gives
\[
 \sum_ap_a\norm{z^aH}_{A^2}^2-\norm{FH}_{A^2}^2
 =\frac12\sum_{a,b}p_ap_b\norm{(z^a-z^b)H}_{A^2}^2.
\]
Adding
\(\sum_ap_a(\norm H_{A^2}^2-\norm{z^aH}_{A^2}^2)\)
proves \eqref{eq:anticoherent-schur-variance}.  The final implication follows
from \eqref{eq:anticoherent-schur-defect-ledger}.
\end{proof}

\begin{proposition}[Half-cylinder flux, sum-free closure, and zero gap]
\label{prop:anticoherent-pgf-half-cylinder}
For a probability generating polynomial, put
\[
 F_t(\zeta)=F(e^{-t}\zeta),\qquad
 G_t=DF_t,
 \qquad \abs\zeta=1.
\]
Then
\begin{equation}\label{eq:anticoherent-pgf-half-cylinder}
 \mathcal S(p)=
 2\int_0^\infty\frac1{2\pi}\int_0^{2\pi}
 \left\{
 (1-\abs{F_t}^2)\abs{DG_t}^2
 -\Re\left(G_t^2\overline{DG_t}\right)
 \right\}\,\dd\theta\dd t.
\end{equation}
Consequently, if the support of \(p\) is sum-free,
\begin{equation}\label{eq:anticoherent-pgf-sum-free-support}
 a,b\in\operatorname{supp}p
 \quad\Longrightarrow\quad
 a+b\notin\operatorname{supp}p,
\end{equation}
then \(\mathcal S(p)>0\).  The conclusion extends to every infinite
sum-free probability law satisfying \(\sum_nn^3p_n^2<\infty\).  In
particular, it applies with arbitrary amplitudes to every probability law
supported on the odd positive integers, or on any sum-free residue class.

There is nevertheless no constant \(c>0\) such that
\(\mathcal S(p)\ge c\norm{G'}_{A^2}^2\) for all probability generating
polynomials.  More precisely, for \(N\ge3\), let
\begin{equation}\label{eq:anticoherent-pgf-two-scale-family}
 p^{(N)}=(1-\varepsilon_N)\delta_1+\varepsilon_N\delta_N,
 \qquad \varepsilon_N=\frac\lambda N,
 \qquad \lambda>0.
\end{equation}
Then
\begin{equation}\label{eq:anticoherent-pgf-two-scale-asymptotic}
 \begin{aligned}
 \norm{G'}_{A^2}^2
 &=\lambda^2N+1+O_\lambda(N^{-1}),\\
 \mathcal S(p^{(N)})
 &=\frac12+\lambda^2+2\lambda^3+O_\lambda(N^{-1}),\\
 \frac{\mathcal S(p^{(N)})}{\norm{G'}_{A^2}^2}
 &=\frac{1+(2\lambda^2)^{-1}+2\lambda}{N}
   +O_\lambda(N^{-2}).
 \end{aligned}
\end{equation}
The leading ratio is minimized at \(\lambda=2^{-1/3}\), where its
coefficient is
\begin{equation}\label{eq:anticoherent-pgf-two-scale-optimal-coefficient}
 1+2^{-1/3}+2^{2/3}.
\end{equation}
\end{proposition}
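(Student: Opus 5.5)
The plan is to prove the three assertions of Proposition~\ref{prop:anticoherent-pgf-half-cylinder} in order: the half-cylinder formula by a change of variables, the sum-free positivity by the coefficient formula \eqref{eq:anticoherent-schur-defect-coefficients} together with the Schur bound \(\abs F\le1\), and the zero gap by direct evaluation of that coefficient formula on the two-scale family.

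\emph{Half-cylinder formula.} I would write the two Bergman integrals in \eqref{eq:anticoherent-schur-defect} in polar coordinates \(z=r\zeta\), \(r=e^{-t}\). Since \(\dd A_0=\pi^{-1}r\,\dd r\,\dd\theta\) and \(r\,\dd r=-r^2\,\dd t\), this gives
\[
\int_{\mathbb D}\Phi\,\dd A_0
=2\int_0^\infty\frac1{2\pi}\int_0^{2\pi}r^2\,\Phi(r\zeta)\,\dd\theta\,\dd t .
\]
The factor \(r^2\) is then absorbed into the integrands.
\begin{itemize}
\item For the first integral, \(DG_t(\zeta)=(zG')(e^{-t}\zeta)\), so \(r^2\abs{G'}^2=\abs{DG_t}^2\).
\item For the second integral, \(zF'=G\) gives \(r^2F'G\overline{G'}=(zF')\,G\,\overline{zG'}=G_t^2\,\overline{DG_t}\).
\end{itemize}
Together with \(F_t(\zeta)=F(e^{-t}\zeta)\), this is exactly \eqref{eq:anticoherent-pgf-half-cylinder}. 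For polynomials every step is a finite computation, so no convergence question arises.

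\emph{Sum-free positivity.} If the support of \(p\) is sum-free, then \(p_ap_bp_{a+b}=0\) for all \(a,b\). Hence the cubic term in \eqref{eq:anticoherent-schur-defect-coefficients} vanishes, and
\[
\mathcal S(p)=\norm{H}_{A^2}^2-\norm{FH}_{A^2}^2=\int_{\mathbb D}(1-\abs F^2)\abs H^2\,\dd A_0,\qquad H=G'.
\]
Because \(F\) is a nonconstant probability generating function with \(F(0)=0\), it satisfies \(\abs F<1\) on the open disk. Moreover \(H\not\equiv0\), since its \(z^0\) coefficient is \(p_1\) and, more generally, \(G\) is nonconstant. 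The integrand is therefore nonnegative and not identically zero, which gives \(\mathcal S(p)>0\).

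For an infinite sum-free law with \(\sum n^3p_n^2<\infty\), I would run the same argument directly. The condition gives \(H\in A^2\), so every quantity above is finite, and the cubic term still vanishes identically. In particular no truncation or renormalization is needed. Since odd integers, or any sum-free residue class, form a sum-free set, arbitrary amplitudes supported there are covered.

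\emph{Zero gap.} For \(p^{(N)}\) with \(N\ge3\), the support \(\{1,N\}\) is sum-free, because \(2,\,N+1,\,2N\notin\{1,N\}\); so again the cubic term is zero. I would compute the remaining two terms with \(\varepsilon=\lambda/N\).
\begin{itemize}
\item The diagonal term is \(\norm{G'}^2=(1-\varepsilon)^2+N^3\varepsilon^2=\lambda^2N+1+O(N^{-1})\).
\item In \(\norm{FG'}^2\), the output \(c=2\) contributes \(\tfrac12(1-\varepsilon)^4=\tfrac12-2\lambda/N+O(N^{-2})\).
\item The output \(c=N+1\) contributes \(\dfrac{\bigl((N^2+1)\varepsilon(1-\varepsilon)\bigr)^2}{N+1}\). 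Expanding in \(1/N\) gives \(\lambda^2N-\lambda^2-2\lambda^3+O(N^{-1})\).
\item The output \(c=2N\) contributes \(\dfrac{(N^2\varepsilon^2)^2}{2N}=O(N^{-1})\).
\end{itemize}
Subtracting gives \(\mathcal S=\tfrac12+\lambda^2+2\lambda^3+O(N^{-1})\). Dividing by \(\norm{G'}^2\) gives the stated ratio \(\bigl(1+(2\lambda^2)^{-1}+2\lambda\bigr)/N+O(N^{-2})\).

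The coefficient \(1+(2\lambda^2)^{-1}+2\lambda\) is minimized where \(-\lambda^{-3}+2=0\), that is at \(\lambda=2^{-1/3}\). Its value there is \(1+2^{-1/3}+2^{2/3}\). Since the ratio tends to zero, no constant \(c>0\) can work.

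The main obstacle is bookkeeping rather than ideas. The \(O(1)\) terms \(-\lambda^2\) and \(-2\lambda^3\) in the \(c=N+1\) contribution come from a delicate expansion: \((N^2+1)^2/(N^2(N+1))=N-1+O(N^{-1})\), together with the factor \((1-\lambda/N)^2\). The cancellation of the leading \(\lambda^2N\) terms in the difference must be tracked carefully, because it determines the constant term of \(\mathcal S\).
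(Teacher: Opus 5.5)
Your proposal is correct and follows the paper's proof. You use the same polar substitution \(z=e^{-t}\zeta\) for the half-cylinder identity and the same orthogonality observation that the cubic contact term vanishes on sum-free support; you read the latter off the coefficient formula \eqref{eq:anticoherent-schur-defect-coefficients} rather than from angular orthogonality at each height \(t\). The two-scale asymptotics come from the same coefficient computation. Your direct treatment of infinite sum-free laws, via \(\mathcal S(p)=\int_{\mathbb D}(1-|F|^2)|G'|^2\,\dd A_0\) with \(G'\in A^2\), is if anything cleaner than the paper's normalized-truncation argument, since it gives strict positivity without a separate limiting step.
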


\begin{proof}
Write \(z=e^{-t+\ii\theta}\).  Since
\(\dd A_0=2e^{-2t}\dd t\,\dd\theta/(2\pi)\) and
\(G'=z^{-1}DG\), the first term in
\eqref{eq:anticoherent-schur-defect} becomes the first half-cylinder term
in \eqref{eq:anticoherent-pgf-half-cylinder}.  Likewise,
\(F'G=z^{-1}G^2\), so the second disk term becomes the second
half-cylinder term.  This proves the identity.

If the support is sum-free, every Fourier frequency in \(G_t^2\) is absent
from \(DG_t\).  Angular orthogonality therefore annihilates the cubic flux
at every \(t\), while the remaining integrand is strictly positive on a set
of positive measure.  Approximation by normalized truncations proves the
infinite-support assertion.

For the two-scale family, put \(q=1-\varepsilon_N\).  It is sum-free for
\(N\ge3\), and direct coefficient summation gives
\begin{align*}
 \norm{G'}_{A^2}^2&=q^2+N^3\varepsilon_N^2,\\
 \norm{FG'}_{A^2}^2
 &=\frac12q^4
 +\frac{(N^2+1)^2}{N+1}q^2\varepsilon_N^2
 +\frac12N^3\varepsilon_N^4.
\end{align*}
The contact term vanishes.  Substituting
\(\varepsilon_N=\lambda/N\) and expanding proves
\eqref{eq:anticoherent-pgf-two-scale-asymptotic}.  Finally, the derivative
of \(1+(2\lambda^2)^{-1}+2\lambda\) vanishes only at
\(\lambda=2^{-1/3}\), proving
\eqref{eq:anticoherent-pgf-two-scale-optimal-coefficient}.
\end{proof}

\begin{remark}[The compensation is genuinely cross-scale]
The angular mean of the integrand in
\eqref{eq:anticoherent-pgf-half-cylinder} need not be nonnegative at a fixed
height.  Indeed, for
\[
 (p_1,p_2,p_3,p_4)=\frac1{100}(77,16,6,1)
\]
its value at \(t=0\) is exactly
\[
 \frac{10020441}{50000000}-\frac{11036}{15625}
 =-\frac{25294759}{50000000}<0,
\]
whereas the integrated defect is
\[
 \mathcal S(p)=\frac{1483975039}{7000000000}>0.
\]
Thus a proof of the unrestricted half-cylinder inequality must transfer
dissipation between radial scales; pointwise positivity in \(t\) is false
even inside the coefficient cone.
\end{remark}

The coefficient cone is essential in this formulation.  In particular,
the conjecture is not an abstract consequence of the Schur property.

\begin{conjecture}[Schur--Rellich boundary flux]
\label{conj:anticoherent-schur-rellich}
Let \(F\) be analytic in a neighborhood of \(\overline{\mathbb D}\) and
satisfy
\[
 F(0)=0,\qquad F(1)=1,\qquad \abs{F(z)}\le1
 \quad(z\in\mathbb D).
\]
Set \(G=DF= zF'(z)\).  Then
\begin{equation}\label{eq:anticoherent-schur-rellich-boundary}
 \mathcal R(F):=
 \left.\frac1{4\pi}\int_0^{2\pi}
 (1-\Re F(re^{\ii\theta}))
 \partial_r\abs{G(re^{\ii\theta})}^2\,\dd\theta
 \right|_{r=1}
 \ge0.
\end{equation}
\end{conjecture}

\begin{proposition}[Hilbert-disk equivalence]
\label{prop:anticoherent-hilbert-disk-equivalence}
Let \(F\) be as in
Conjecture~\ref{conj:anticoherent-schur-rellich}, and on \(\T\) put
\begin{equation}\label{eq:anticoherent-hilbert-disk-density}
 u=1-\Re F.
\end{equation}
Then \(u\) is a real trigonometric polynomial satisfying
\begin{equation}\label{eq:anticoherent-hilbert-disk-constraint}
 \frac1{2\pi}\int_\T u\,\dd s=1,
 \qquad
 u^2+(\Hilb u)^2\le2u,
 \qquad
 u(0)=(\Hilb u)(0)=0.
\end{equation}
Conversely, every real trigonometric polynomial satisfying
\eqref{eq:anticoherent-hilbert-disk-constraint} is obtained from a Schur
polynomial in the conjecture by
\begin{equation}\label{eq:anticoherent-hilbert-disk-completion}
 F=1-u-\ii\Hilb u.
\end{equation}
Moreover, with \(I_1,I_2\) evaluated at the density \(u\),
\begin{equation}\label{eq:anticoherent-hilbert-disk-functional}
 \boxed{
 \mathcal R(F)
 =\frac1{2\pi}\int_\T u\left[
   (\Lambda u)(\Lambda^2u)+u_s\Lambda u_s
 \right]\,\dd s
 =\frac{2I_1(u)+I_2(u)}{2\pi}.}
\end{equation}
Consequently, Conjecture~\ref{conj:anticoherent-schur-rellich} is equivalent
to strong hidden positivity on the constrained class
\eqref{eq:anticoherent-hilbert-disk-constraint}.  The constraint is stable
under the Poisson orbit: if \(u_t=P_tu\), then
\begin{equation}\label{eq:anticoherent-hilbert-disk-semigroup}
 u_t^2+(\Hilb u_t)^2\le2u_t
 \qquad(t\ge0).
\end{equation}
\end{proposition}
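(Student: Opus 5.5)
Taking \(F\) as in Conjecture~\ref{conj:anticoherent-schur-rellich}, I would write everything on \(\T\) through the boundary values \(F(e^{\ii s})\).

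\textbf{Step 1: the forward constraints.} Since the setting is polynomial (as in the Schur reduction), \(F\) is an analytic polynomial with \(F(0)=0\). Hence \(\Re F\) is a mean-zero real trigonometric polynomial, and \(\Im F=\Hilb\Re F\) by the normalization \(\widehat{\Hilb f}(k)=-\ii\operatorname{sgn}(k)\hat f(k)\). With \(u=1-\Re F\) we get \(\mean u=1\) and \(\Hilb u=-\Hilb\Re F=-\Im F\), so \(F=1-u-\ii\Hilb u\). This is \eqref{eq:anticoherent-hilbert-disk-completion}. The inequality \(|F|^2\le1\) on \(\T\) expands to \((1-u)^2+(\Hilb u)^2\le1\), which is exactly \(u^2+(\Hilb u)^2\le2u\). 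Finally \(F(1)=1\) gives \(u(0)=0\) and \((\Hilb u)(0)=0\).

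\textbf{Step 2: the converse.} Given a real trigonometric polynomial \(u\) satisfying \eqref{eq:anticoherent-hilbert-disk-constraint}, define \(F=1-u-\ii\Hilb u\). Its negative frequencies cancel, so \(F\) is an analytic polynomial. Its constant term is \(1-\mean u=0\), and \(F(1)=1\). The boundary inequality gives \(|F|\le1\) on \(\T\), and the maximum principle extends this to the disk.

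For the semigroup statement \eqref{eq:anticoherent-hilbert-disk-semigroup}, I would use that \(P_tu+\ii\Hilb P_tu\) is the boundary trace of \(1-F(e^{-t}\cdot)\). Since \(|F(e^{-t}\zeta)|\le1\) by the maximum principle, the same algebra as in Step 1 gives the inequality at every \(t\ge0\).

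\textbf{Step 3: the functional identity.} This is the main computational step. On \(\T\) write \(G=DF=\partial_\theta F/\ii\). With \(F=1-u-\ii\Hilb u\) this gives
\[
G=\ii(u_s+\ii\Hilb u_s)=\ii u_s-\Lambda u,
\]
so \(|G|^2=u_s^2+(\Lambda u)^2\) on the boundary.

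For the radial derivative, I would use \(\partial_r|G|^2=2\Re(\overline G\,\partial_rG)\) together with \(r\partial_rG=DG\) for analytic \(G\). At \(r=1\) this gives \(\partial_r|G|^2=2\Re(\overline G\,DG)\). Differentiating \(G\) once more on the boundary,
\[
DG=\partial_sG/\ii=u_{ss}\cdot(\text{factor})+\ldots=-\Lambda u_s-\ii\Lambda^2u
\]
up to sign conventions. This yields \(\Re(\overline G DG)=(\Lambda u)(\Lambda^2 u)+u_s\Lambda u_s\), using \(\Lambda u_s=-\Hilb u_{ss}\) and \(\Lambda^2u=-u_{ss}\). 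I will check the signs carefully with a single mode \(u=\cos s\). Multiplying by \(u\) and the prefactor \(1/(4\pi)\) produces the middle expression in \eqref{eq:anticoherent-hilbert-disk-functional}.

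To identify it with \((2I_1+I_2)/(2\pi)\), split the integral into two pieces. For the first, \(\int u\,u_s\Lambda u_s=I_1\) directly by definition. For the second, write \(\Lambda^2u=-u_{ss}\) and integrate by parts:
\[
\int u(\Lambda u)(\Lambda^2u)=\int u_s(\Lambda u)u_s+\int u(\Lambda u_s)u_s=I_2+I_1,
\]
where the last step uses \(\partial_s\Lambda u=\Lambda u_s\). Adding the two pieces gives \(2I_1+I_2\).

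The equivalence with strong hidden positivity on the constrained class then follows from Steps 1 and 2 together with this identity. The main risk is sign bookkeeping in the boundary derivative of \(G\).
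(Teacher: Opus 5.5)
Your proposal follows the paper's proof step for step: the analytic completion \(1-F=u+\ii\Hilb u\), the maximum principle for the converse and for the Poisson orbit \(1-F(e^{-t+\ii s})=u_t+\ii\Hilb u_t\), the boundary formulas for \(G\) and \(DG\), and the integration by parts \(\int_\T u(\Lambda u)(\Lambda^2u)\,\dd s=I_2(u)+I_1(u)\). The one thing to repair is the computation you flagged in Step~3: with \(D=-\ii\partial_s\) one gets \(DG=-\ii\partial_s(\ii u_s-\Lambda u)=u_{ss}+\ii\Lambda u_s=-\Lambda^2u+\ii(\Lambda u)_s\); your displayed \(-\Lambda u_s-\ii\Lambda^2u\) is \(\ii\) times this and gives the wrong real part against \(\overline G\) (it vanishes for \(u=\cos s\)), and \(\Lambda u_s=\Hilb u_{ss}\) carries no minus sign. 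With the corrected \(DG\), the identity \(\Re(DG\,\overline G)=(\Lambda u)(\Lambda^2u)+u_s\Lambda u_s\) is immediate.
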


\begin{proof}
The analytic completion of \(u\) is
\[
 1-F=u+\ii\Hilb u.
\]
Thus \((2\pi)^{-1}\int_\T u=1\) is equivalent to \(F(0)=0\), while
\[
 \abs F^2=(1-u)^2+(\Hilb u)^2\le1
 \quad\Longleftrightarrow\quad
 u^2+(\Hilb u)^2\le2u.
\]
The two point conditions are exactly \(F(1)=1\).  This proves both
directions of the boundary correspondence, using the maximum principle for
the converse.

Set \(G=DF\).  Since \(D=-\ii\partial_s\) on analytic boundary values,
\[
 G=-\Lambda u+\ii u_s,
 \qquad
 DG=-\Lambda^2u+\ii(\Lambda u)_s.
\]
Therefore
\[
 \frac12\partial_r\abs G^2
 =\Re(DG\overline G)
 =(\Lambda u)(\Lambda^2u)+u_s\Lambda u_s.
\]
The boundary formula
\eqref{eq:anticoherent-schur-rellich-boundary} gives the first equality in
\eqref{eq:anticoherent-hilbert-disk-functional}.  If \(q=\Lambda u\), then
\(\Lambda q=\Lambda^2u=-u_{ss}\), and periodic integration by parts gives
\[
 \int_\T u q\Lambda q\,\dd s
 =\int_\T q u_s^2\,\dd s
  +\int_\T u u_s\Lambda u_s\,\dd s
 =I_2(u)+I_1(u).
\]
Adding the second copy of \(I_1(u)\) proves the final equality.  Finally,
\(u_t+\ii\Hilb u_t=1-F(e^{-t+\ii s})\), so disk contractivity of \(F\)
proves \eqref{eq:anticoherent-hilbert-disk-semigroup}.
\end{proof}

The full inner boundary of the Schur class satisfies this exact target.

\begin{theorem}[Inner Schur--Rellich positivity]
\label{thm:anticoherent-inner-schur-rellich}
Let \(B\) be a finite Blaschke product satisfying
\(B(0)=0\) and \(B(1)=1\), and set \(G=DB\).  If
\[
 B(e^{\ii\theta})=e^{\ii\varphi(\theta)},
 \qquad q(\theta)=\varphi'(\theta),
\]
then \(q>0\) and
\begin{equation}\label{eq:anticoherent-inner-schur-rellich}
 \mathcal R(B)=\frac1{2\pi}\int_0^{2\pi}
 \bigl(1-\cos\varphi(\theta)\bigr)q(\theta)^3\,\dd\theta>0.
\end{equation}
Thus Conjecture~\ref{conj:anticoherent-schur-rellich} holds strictly for
every finite inner map in its normalized class.
\end{theorem}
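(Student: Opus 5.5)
The plan is to compute $\mathcal R(B)$ directly on the unit circle, using $|B|=1$ there. This avoids any use of the Hilbert-disk machinery. Since $B$ is a finite Blaschke product, it is analytic in a neighborhood of $\overline{\mathbb D}$. It is nonconstant, because $B(0)=0$ and $B(1)=1$. For a holomorphic function $H$ we have
\[
 DH=zH'(z)=r\partial_rH=-\ii\partial_\theta H
 \qquad\text{at } z=re^{\ii\theta}.
\]
Consequently, at $r=1$,
\[
 \partial_r\abs G^2=2\Re\bigl(\overline G\,\partial_rG\bigr)=2\Re\bigl(\overline G\,DG\bigr),
\]
which is the same identity used in the proof of Proposition~\ref{prop:anticoherent-hilbert-disk-equivalence}.

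First, I establish $q>0$. Write
\[
 B(z)=e^{\ii c}\prod_k\frac{z-a_k}{1-\overline{a_k}z},
 \qquad \abs{a_k}<1.
\]
Each factor contributes a Poisson kernel to the boundary argument derivative, so
\[
 q(\theta)=\varphi'(\theta)=\sum_k\frac{1-\abs{a_k}^2}{\abs{e^{\ii\theta}-a_k}^2}>0.
\]
In particular $\varphi$ is smooth and strictly increasing. It is defined modulo $2\pi$, but $\cos\varphi$ and $q$ are well defined.

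Second, I compute $G$ and $DG$ on the boundary. From $B=e^{\ii\varphi}$ we get
\[
 G=-\ii\partial_\theta e^{\ii\varphi}=q\,e^{\ii\varphi},
 \qquad
 DG=-\ii\partial_\theta\bigl(q e^{\ii\varphi}\bigr)=(q^2-\ii q')e^{\ii\varphi}.
\]
Hence $\Re(\overline G\,DG)=q^3$, and therefore $\partial_r\abs G^2|_{r=1}=2q^3$. Also $1-\Re B=1-\cos\varphi$ on the circle. Substituting these into the definition \eqref{eq:anticoherent-schur-rellich-boundary} gives exactly
\[
 \mathcal R(B)=\frac1{2\pi}\int_0^{2\pi}\bigl(1-\cos\varphi\bigr)q^3\,\dd\theta .
\]

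Third, I prove strict positivity. The integrand is continuous and nonnegative. It vanishes only where $B(e^{\ii\theta})=1$. Since $\varphi$ is strictly monotone and increases by $2\pi\deg B$ over a period, this happens at only finitely many $\theta$. The integrand is therefore positive on a set of full measure, and the integral is strictly positive. This proves the conjecture for this class.

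The only delicate point is justifying the boundary differentiation, namely that $D=-\ii\partial_\theta$ holds up to $r=1$ and that $\partial_r$ commutes with taking boundary values. Both follow from the analyticity of $B$ across the circle.
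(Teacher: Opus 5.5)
Your proposal is correct and follows essentially the same route as the paper: identify \(q=e^{\ii\theta}B'(e^{\ii\theta})/B(e^{\ii\theta})>0\), write \(G=qB\) and \(DG=(q^2-\ii q')B\) on the circle, deduce \(\partial_r\abs{G}^2=2q^3\) at \(r=1\), and substitute into the definition of \(\mathcal R\). Your explicit Poisson-kernel sum for \(q\), and your observation that the integrand vanishes only at the finitely many points where \(B=1\), simply spell out details that the paper states briefly.
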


\begin{proof}
The boundary argument of
Proposition~\ref{prop:anticoherent-disk-reduction} uses only analyticity, so
it gives \eqref{eq:anticoherent-schur-rellich-boundary} for \(B\).  A finite
Blaschke product maps the circle to itself with positive angular speed.  More
precisely,
\[
 q(\theta)=\frac{e^{\ii\theta}B'(e^{\ii\theta})}
 {B(e^{\ii\theta})}>0,
 \qquad G(e^{\ii\theta})=q(\theta)B(e^{\ii\theta}).
\]
Since \(D=-\ii\partial_\theta\) on analytic boundary values,
\[
 DG=(q^2-\ii q')B,
 \qquad
 \left.\partial_r\abs G^2\right|_{r=1}
 =2\Re(DG\overline G)=2q^3.
\]
Substitution in \eqref{eq:anticoherent-schur-rellich-boundary} proves
\eqref{eq:anticoherent-inner-schur-rellich}.  Its integrand is nonnegative;
it is not identically zero because \(B\) is nonconstant.  This proves the
strict sign.
\end{proof}

The inner theorem persists along a genuinely non-inner Clark deformation,
including an open continuation of the negative strong-defect example.

\begin{theorem}[One-Poisson Clark deformation]
\label{thm:anticoherent-one-poisson-clark}
Let \(-1<a<1\) and \(0<w<1\), and define
\begin{equation}\label{eq:anticoherent-one-poisson-herglotz}
 H_{a,w}(z)=w\frac{1+z}{1-z}
 +(1-w)\frac{1+az}{1-az},
 \qquad
 F_{a,w}=\frac{H_{a,w}-1}{H_{a,w}+1}.
\end{equation}
Then \(F_{a,w}\) is a non-inner Schur function satisfying
\(F_{a,w}(0)=0\) and \(F_{a,w}(1)=1\), and
\begin{equation}\label{eq:anticoherent-one-poisson-rellich-positive}
 \mathcal R(F_{a,w})>0.
\end{equation}
More explicitly, put
\[
 q=1-w+aw,
 \qquad b=a+w-aw,
 \qquad d=w(1-w)(1-a)^2.
\]
Then
\begin{equation}\label{eq:anticoherent-one-poisson-rational-form}
 F_{a,w}(z)=z\frac{b-az}{1-qz},
 \qquad
 [z]F_{a,w}=b,
 \qquad
 [z^n]F_{a,w}=dq^{n-2}\quad(n\ge2),
\end{equation}
and
\begin{equation}\label{eq:anticoherent-one-poisson-exact-deficit}
 \mathcal R(F_{a,w})
 =\frac{P(a,w)}{w^2(2-w+aw)^5},
\end{equation}
where
\begin{align*}
 P(a,w)={}&a^7(3w^7-3w^6)
 +a^6(-16w^7+39w^6-23w^5)\\
 &+a^5(35w^7-136w^6+165w^5-63w^4)\\
 &+a^4(-40w^7+214w^6-396w^5+295w^4-68w^3)\\
 &+a^3(25w^7-171w^6+428w^5-472w^4+214w^3-14w^2)\\
 &+a^2(-8w^7+67w^6-213w^5+312w^4-204w^3+50w^2+6w)\\
 &+a(w^7-10w^6+39w^5-73w^4+60w^3-8w^2-2w-2)\\
 &+w^4-2w^3+4w^2-4w+2.
\end{align*}
\end{theorem}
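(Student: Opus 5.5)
We verify the rational form first, then compute \(\mathcal R\) in closed form, and finally prove that the resulting polynomial is positive.

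\textbf{Rational form and Schur properties.} We clear denominators in \eqref{eq:anticoherent-one-poisson-herglotz}. Writing \(H=N/((1-z)(1-az))\), one gets \(N\pm(1-z)(1-az)\) as quadratics in \(z\). Then \(H-1\) has the common factor \(z\) and the leftover linear factor \(b-az\), while \(H+1\) simplifies to a multiple of \(1-qz\) after cancelling a constant factor. This gives \(F_{a,w}(z)=z(b-az)/(1-qz)\). Expanding the geometric series gives \([z]F=b\), and \([z^n]F=(bq-a)q^{n-2}\) for \(n\ge2\). A direct check gives \(bq-a=w(1-w)(1-a)^2=d\).

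The Schur properties come from the Herglotz representation. \(H\) is a convex combination of two Herglotz functions, so \(\Re H\ge0\) and \(H(0)=1\). Hence \(|F|\le1\) and \(F(0)=0\). The pole of \(H\) at \(z=1\) gives \(F(1)=1\). \(F\) is not inner because the measure has the absolutely continuous Poisson part \((1-w)P_a\), with \(|a|<1\) and \(w<1\). Equivalently, \(|F|<1\) on the circle away from \(z=1\).

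\textbf{Evaluating \(\mathcal R\).} Since \(|q|<1\) and \(F\) is analytic near \(\overline{\mathbb D}\), the truncation extension of Proposition~\ref{prop:anticoherent-disk-reduction} applies with \(u=1-\Re F\). We use the coefficient ledger, with \(p_n\) the coefficients above, which satisfy \(\sum p_n=F(1)=1\). This gives
\[
 \mathcal R(F)=\sum_n n^3p_n^2-\tfrac12\sum_{a',b'}(a'+b')(a'^2+a'b'+b'^2)p_{a'}p_{b'}p_{a'+b'}.
\]
The \(p_n\) may be negative (\(b\) can be negative when \(a<0\)), but the algebraic identity still holds.

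The quadratic sum is a polylog-type series of the form \(\sum n^3q^{2n}\), which is a rational function of \(q^2\). The cubic sum splits according to whether \(a'=1\) or \(b'=1\). The terms with \(a',b'\ge2\) produce \(d^3q^{a'+b'-6}\) times a cubic polynomial in \(a'+b'\) and \(a'b'\). Grouping by \(c=a'+b'\) reduces everything to sums \(\sum_c P(c)q^{c}\) with \(P\) a polynomial, so the result is rational. The denominators collect into powers of \(1-q\) and \(1-q^2\). Here \(1-q=w(1-a)\), which accounts for the factor \(w^2\), and \(1+q=2-w+aw\), which accounts for the fifth power. Simplification then gives \eqref{eq:anticoherent-one-poisson-exact-deficit}.

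As a cross-check, the boundary integral \(\frac1{4\pi}\int(1-\Re F)\partial_r|G|^2\) can be evaluated directly by residues, since \(G\) is rational with a single pole at \(1/q\). I would carry out this symbolic computation with computer algebra, consistent with the paper's ancillary scripts.

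\textbf{Positivity of \(P\) on \((-1,1)\times(0,1)\).} This is the main obstacle. The boundary values give a first orientation. At \(w\to0\), \(P\to2-2a>0\); this is the degenerate case \(F=z\). At \(w=1\), \(P\) should reduce to a positive multiple of the inner case \(F=z\), consistent with Theorem~\ref{thm:anticoherent-inner-schur-rellich}.

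The plan is to substitute \(a=1-2t\) with \(t\in(0,1)\), or alternatively \(a=(1-\sigma)/(1+\sigma)\), together with \(w=1/(1+v)\), so that the domain becomes the positive quadrant. I would then try to show that every coefficient of the transformed polynomial is nonnegative, which settles positivity by Pólya's method. If some coefficients are negative, the fallback is to multiply by \((1+\sigma+v)^k\) until all coefficients become positive, or to split the domain into boxes and use Bernstein-coefficient bounds.

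A certificate is more likely to be found in factored form. The natural first attempt is to extract factors such as \((1-a)\) and \((1-w)\), whose presence is suggested by \(d\), and then certify the remaining cofactor.
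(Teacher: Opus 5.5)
Your algebra up to the closed form is sound and follows the paper. You have the factorizations $N-(1-z)(1-az)=2z(b-az)$ and $N+(1-z)(1-az)=2(1-qz)$, the identity $bq-a=d$, and the Herglotz/Cayley argument for the Schur, normalization and non-inner properties. You also reduce to $\mathcal R(F)=E_3-\frac12\mathcal C$ for real, possibly sign-changing, coefficients, and evaluate it by geometric summation with denominator $(1-q^2)^5$ and $1-q=w(1-a)$. Deferring that summation to computer algebra matches the paper's reliance on its script.

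There are two small slips:
\begin{itemize}
\item The $(m,n)$ sector carries $d^3q^{2(m+n)-6}$, not $q^{m+n-6}$.
\item $w\to0$ is not the case $F=z$. There $q\to1$, the tail escapes to high frequency, and $\mathcal R\sim(1-a)/(16w^2)$. The case $F=z$ is $w=1$ or $a=1$, which matches $P(a,1)=(1+a)^5$ and $P(1,w)=32w^2$.
\end{itemize}

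The genuine gap is the positivity of $P$ on $(-1,1)\times(0,1)$, which is the real content of the theorem. You list strategies but prove nothing, and the first two strategies fail.

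\textbf{Factor extraction.} It fails because $P(1,w)=32w^2$ and $P(a,1)=(1+a)^5$, so neither $1-a$ nor $1-w$ divides $P$.

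\textbf{Sign pattern in your chart.} Take $a=(1-\sigma)/(1+\sigma)$ and $w=1/(1+v)$. The coefficient of $\sigma^{7-i}v^{7-j}$ in $(1+\sigma)^7(1+v)^7P$ is exactly $\binom7i\binom7j$ times the native degree-$(7,7)$ tensor Bernstein coefficient of $P(2s-1,w)$, where $s=(a+1)/2$. One of these is negative: the coefficient of $\sigma^3v^4$ is $-736$, i.e.\ $\beta_{4,3}=-736/1225$. So the naive sign check fails.

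\textbf{P\'olya fallback.} P\'olya's theorem cannot guarantee that multiplying by $(1+\sigma+v)^k$ repairs this. It needs strict positivity of the homogenized form on the closed simplex. Here $P$ vanishes at the corners $(a,w)=(1,0)$ and $(-1,1)$ of the closed square, which are points at infinity of your quadrant. Along the edges through the second corner, $P(-1,w)=(1-w)^3(128w^4-256w^3+112w^2+16w+4)$ and $P(a,1)=(1+a)^5$. The attainable target is therefore nonnegative coefficients with zeros confined to those corners, not all-positive ones.

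\textbf{What the paper does.} The paper closes the step with a specific exact certificate. It elevates $P(2s-1,w)$ to tensor Bernstein degree $(15,15)$; in your chart this is multiplying by $(1+\sigma)^8(1+v)^8$. All $256$ rational controls are then nonnegative, $244$ of them positive, and the $12$ zeros sit exactly at those two corners. Since every Bernstein basis function is strictly positive on the open square, this gives $P>0$ there.

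Your Bernstein fallback is the right family of methods. Until you exhibit such a certificate and check it in exact arithmetic, $\mathcal R(F_{a,w})>0$ is not proved.
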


\begin{proof}
Both summands in \eqref{eq:anticoherent-one-poisson-herglotz} have positive
real part on the disk, and \(H_{a,w}(0)=1\).  The Cayley transform is
therefore Schur and vanishes at zero.  The first summand has a pole at one,
so \(F_{a,w}(1)=1\).  The second summand has strictly positive boundary real
part; hence \(F_{a,w}\) is not inner.  Direct algebra gives
\eqref{eq:anticoherent-one-poisson-rational-form}; in particular
\[
 bq-a=d=(1-b)(1-q),
 \qquad -1<q<1.
\]

It remains to certify the sign without a numerical optimization.  Geometric
power summation in the sectors \((1,1)\), \((1,n)\), and
\((m,n)\), \(m,n\ge2\), gives
\eqref{eq:anticoherent-one-poisson-exact-deficit}.  For a short exact
positivity certificate, set \(s=(a+1)/2\), write
\[
 P(2s-1,w)=\sum_{k,\ell=0}^7c_{k\ell}s^kw^\ell,
 \qquad
 \mathsf B_i^{15}(t)=\binom{15}{i}t^i(1-t)^{15-i},
\]
and elevate to the tensor Bernstein basis,
\[
 P(2s-1,w)=\sum_{i,j=0}^{15}\beta_{ij}
 \mathsf B_i^{15}(s)\mathsf B_j^{15}(w),
\]
where exactly
\begin{equation}\label{eq:anticoherent-one-poisson-bernstein-coefficients}
 \beta_{ij}=\sum_{k\le i,\,\ell\le j}c_{k\ell}
 \frac{\binom{i}{k}}{\binom{15}{k}}
 \frac{\binom{j}{\ell}}{\binom{15}{\ell}}.
\end{equation}
Exact rational arithmetic shows that all \(256\) coefficients are
nonnegative.  Precisely \(244\) are positive, the smallest positive one is
\(32/4095\), and the twelve zero positions are
\[
 \begin{gathered}
 (0,13),(0,14),(0,15),(1,14),(1,15),(2,14),\\
 (2,15),(3,14),(3,15),(4,15),(15,0),(15,1).
 \end{gathered}
\]
Every Bernstein basis function is positive in the open square, so
\(P(a,w)>0\) on the stated parameter range.  The denominator in
\eqref{eq:anticoherent-one-poisson-exact-deficit} is positive as well,
proving \eqref{eq:anticoherent-one-poisson-rellich-positive}.  The ancillary
exact-arithmetic script
\path{verify_schur_rellich_clark_deformation.py} reconstructs the geometric
ledger and every Bernstein coefficient over the rationals.

\end{proof}

The preceding real diameter is not special: the Clark component may be
placed anywhere in the open disk.  The proof requires a resolved
three-variable certificate because direct tensor Bernstein coefficients
have a flat corner at the common boundary atom.

\begin{theorem}[Full-disk one-Poisson Clark positivity]
\label{thm:anticoherent-complex-one-poisson-clark}
Let \(\alpha\in\mathbb D\) and \(0<w<1\), and define
\begin{equation}\label{eq:anticoherent-complex-one-poisson-herglotz}
 H_{\alpha,w}(z)=w\frac{1+z}{1-z}
 +(1-w)\frac{1+\alpha z}{1-\alpha z},
 \qquad
 F_{\alpha,w}=\frac{H_{\alpha,w}-1}{H_{\alpha,w}+1}.
\end{equation}
Then \(F_{\alpha,w}\) is a non-inner Schur function satisfying
\(F_{\alpha,w}(0)=0\), \(F_{\alpha,w}(1)=1\), and
\begin{equation}\label{eq:anticoherent-complex-one-poisson-positive}
 \mathcal R(F_{\alpha,w})>0.
\end{equation}
Thus the Schur--Rellich conjecture holds on the complete three-real-parameter
family consisting of one boundary Clark atom and one arbitrary interior
Poisson component.
\end{theorem}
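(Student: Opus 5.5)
The plan follows the proof of Theorem~\ref{thm:anticoherent-one-poisson-clark}, with the real parameter \(a\) replaced by a complex \(\alpha\). The proof then reduces to an exact positivity certificate, now in three real variables.

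\textbf{Structural part.} Both Herglotz summands in \eqref{eq:anticoherent-complex-one-poisson-herglotz} have positive real part on \(\mathbb D\), and \(H_{\alpha,w}(0)=1\). Hence the Cayley transform \(F_{\alpha,w}\) is Schur with \(F_{\alpha,w}(0)=0\). The pole of the first summand at \(z=1\) gives \(F_{\alpha,w}(1)=1\). The second summand has strictly positive boundary real part \((1-|\alpha|^2)/|1-\alpha e^{\ii\theta}|^2\), so \(|F_{\alpha,w}|<1\) on \(\T\setminus\{1\}\) and \(F_{\alpha,w}\) is not inner. Direct algebra gives the rational form
\[
F_{\alpha,w}(z)=z\frac{b-\bar\beta z}{1-qz},
\]
with \(q=(1-w)+w\alpha\) and \(b=\alpha+w-w\alpha\), by the same computation as \eqref{eq:anticoherent-one-poisson-rational-form}. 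Explicitly, \(F=z(w(1-\alpha z)+(1-w)\alpha(1-z))/((1-\alpha z)-w z(1-\alpha)\cdots)\), so the coefficients are \([z]F=b\) and \([z^n]F=d\,q^{n-2}\) for \(n\ge2\), with complex \(d=bq-\alpha\) and \(|q|<1\).

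\textbf{Exact formula.} The boundary-flux formula of Proposition~\ref{prop:anticoherent-disk-reduction} uses only analyticity, so it applies to \(F_{\alpha,w}\). Write \(\mathcal R\) as a cubic coefficient form,
\[
\mathcal R(F)=\tfrac12\Bigl(\sum n^3|p_n|^2-\Re\sum (a+b)(a^2+ab+b^2)\,p_ap_b\overline{p_{a+b}}\Bigr),
\]
which is the complex analogue of \eqref{eq:strong-positive-frequency-form}. Summing the geometric series over the sectors \((1,1)\), \((1,n)\) and \((m,n)\) with \(m,n\ge2\) produces a closed rational expression in \(\alpha,\bar\alpha,w\). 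Rotation covariance fails because the atom is fixed at \(1\), so the result genuinely depends on \(\alpha=x+\ii y\). I expect it to take the form \(\mathcal R=P(x,y,w)/(w^2|\text{denominator}|^{k})\), with \(P\) even in \(y\). Setting \(y=0\) must reproduce the polynomial of Theorem~\ref{thm:anticoherent-one-poisson-clark}, which serves as a check on the computation.

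\textbf{Positivity certificate (main obstacle).} The remaining task is to prove \(P>0\) on \(\{x^2+y^2<1\}\times(0,1)\). A direct tensor Bernstein expansion is expected to fail near the common boundary atom \(\alpha\to1\), where \(P\) vanishes to higher order. This matches the flat corner already visible in the zero coefficients at \((15,0)\) and \((15,1)\) of the real case. I would therefore first map the disk to a box by polar-type coordinates. Set \(\alpha=1-\rho\,\sigma\) with a rational parametrization of the angle \(\arg(1-\alpha)\), for instance \(1-\alpha=\rho(1-t^2+2\ii t)/(1+t^2)\) with suitable bounds. Then I would factor out the exact power of \(\rho\) and of \(w\) that vanishes at the corner; this is the ``resolved'' coordinate system. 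On the resolved polynomial, I would degree-elevate in the Bernstein basis in all three variables and verify nonnegativity of the coefficients in exact rational arithmetic. Any residual zero coefficients should sit only on faces where the factored form is handled separately, by the one-dimensional real case or by the inner limit \(w\to1\) via Theorem~\ref{thm:anticoherent-inner-schur-rellich}. If some coefficients remain negative, the fallback is to subdivide the box (de Casteljau) near the problematic corner until all coefficients are nonnegative, recording the certificate in an ancillary script as in the real case.
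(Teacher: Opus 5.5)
Your outline follows the paper's route: the same Herglotz/Cayley structural argument, the one-geometric-tail rational form, an exact geometric ledger, and a resolved three-variable Bernstein certificate. There are two genuine problems, however. (A minor point first: the rational form should read \(F_{\alpha,w}(z)=z(b-\alpha z)/(1-qz)\), since your \(\bar\beta\) is undefined; your \(d=bq-\alpha=w(1-w)(1-\alpha)^2\) is correct.)

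\textbf{The cubic form for \(\mathcal R\) is misnormalized.} Expanding \(\frac1{4\pi}\int(1-\Re F)\,\partial_r|G|^2\,\dd\theta\) at \(r=1\) gives
\[
\mathcal R(F)=\sum_{n\ge1}n^3|p_n|^2-\frac12\Re\sum_{a,b\ge1}(a+b)(a^2+ab+b^2)\,p_ap_b\overline{p_{a+b}}.
\]
This is the paper's \(E_\alpha-\frac12\Re T_\alpha\), and it agrees with Proposition~\ref{prop:anticoherent-disk-reduction} at \(A=1\). The factor \(\frac12\) multiplies only the cubic term. Your \(\frac12\bigl(\sum n^3|p_n|^2-\Re\sum\cdots\bigr)\) is not a positive multiple of this, and it is negative inside the family. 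At \(\alpha=0\), \(w=\frac12\) one has \(F=z/(2-z)\) and \(p_n=2^{-n}\). Then \(\sum n^3p_n^2=\frac{44}{27}\) and \(\sum K_{a,b}p_ap_bp_{a+b}=\frac{584}{243}\). So your expression equals \(-\frac{94}{243}\), while the true value is \(\mathcal R=\frac{104}{243}\). Your \(y=0\) cross-check against Theorem~\ref{thm:anticoherent-one-poisson-clark} would expose this, but as written the target polynomial is wrong.

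\textbf{The certificate is only a hope, and the named fallback cannot work at a degenerate corner.} The certificate is where the theorem's content lies. Suppose the cleared polynomial vanishes at a box corner and its lowest-order form has a negative mixed coefficient; the model is \(a^2-ab+b^2\) at \((0,0)\). Then every cell containing that corner keeps a negative Bernstein coefficient under any subdivision and any degree elevation: on \([0,h]^2\) at tensor degree \((n,n)\), the \((1,1)\) control is \(-h^2/n^2\). Dividing out monomials does not change this either. What is needed is blow-up charts, and the paper supplies them:
\begin{itemize}
\item Using evenness in \(y\), it takes box coordinates \(X=(1+x)/2\) and \(t=y^2/(1-x^2)\).
\item It extracts the exact factor \(\Delta^5\mathcal R=128w^3(1-X)^3K\), where \(\Delta=1-|q|^2\).
\item It blows up \((U,t)=(0,0)\), with \(U=1-X\).
\item It then blows up a second flat corner \((v,e)=(0,0)\), with \(v=U/t\) and \(e=1-w\). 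This is where tangential approach to \(\alpha=1\) meets the inner limit \(w\to1\).
\end{itemize}
Your polar coordinates about \(\alpha=1\) roughly reproduce only the first level of this resolution. You do not anticipate the second degeneracy (tangential approach, \(\arg(1-\alpha)\to\pm\pi/2\), together with \(w\to1\)), which needs its own weighted blow-up. None of your remedies substitutes for it. Subdivision fails as explained above. Theorem~\ref{thm:anticoherent-inner-schur-rellich} only supplies the face value \(\mathcal R(z)=1\) at \(w=1\), not the local rate near that face.

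\textbf{Strictness also needs an explicit step.} The paper checks that every surviving Bernstein face carries a positive control when \(y\ne0\), and invokes the real-diameter theorem when \(y=0\).
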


\begin{proof}
Put
\[
 q=1-w+w\alpha,
 \qquad b=w+(1-w)\alpha,
 \qquad d=w(1-w)(1-\alpha)^2,
 \qquad r=\abs q^2.
\]
The same algebra as on the real diameter gives
\begin{equation}\label{eq:anticoherent-complex-one-poisson-rational-form}
 F_{\alpha,w}(z)=z\frac{b-\alpha z}{1-qz},
 \qquad [z]F_{\alpha,w}=b,
 \qquad [z^n]F_{\alpha,w}=dq^{n-2}\quad(n\ge2).
\end{equation}
Define
\begin{align*}
 \mathsf A(r)&=\frac{8-5r+4r^2-r^3}{(1-r)^4},\\
 \mathsf B(r)&=\frac{21-32r+23r^2-6r^3}{(1-r)^4},\\
 \mathsf C(r)&=\frac{2(8-3r)(r^2-2r+3)}{(1-r)^5}.
\end{align*}
Exact geometric summation of the tail gives
\begin{equation}\label{eq:anticoherent-complex-one-poisson-ledger}
\begin{aligned}
 E_\alpha&=\abs b^2+\abs d^2\mathsf A(r),\\
 T_\alpha&=6b^2\bar d
 +2b\abs d^2\bar q\,\mathsf B(r)
 +d\abs d^2\bar q^{,2}\mathsf C(r),\\
 \mathcal R(F_{\alpha,w})&=E_\alpha-\frac12\Re T_\alpha.
\end{aligned}
\end{equation}

Write \(\alpha=x+\ii y\), and set
\[
 X=\frac{1+x}{2},
 \qquad
 t=\frac{y^2}{1-x^2},
 \qquad
 \Delta=1-r.
\]
For \(\alpha\in\mathbb D\), one has
\(0<X<1\), \(0\le t<1\), and \(\Delta>0\).  Clearing the denominator in
\eqref{eq:anticoherent-complex-one-poisson-ledger} gives the exact factor
\begin{equation}\label{eq:anticoherent-complex-one-poisson-cleared}
 \Delta^5\mathcal R(F_{\alpha,w})
 =128w^3(1-X)^3K(X,t,w),
\end{equation}
where \(K\) is a polynomial of tensor degree \((9,6,7)\).

Here is a finite exact positivity certificate for \(K\).  Put
\(U=1-X\) and \(e=1-w\).  On \(t\le U\), set \(t=Uv\); on
\(U\le t\), set \(U=tv\).  In the second chart split once more according
to \(e\le v\) or \(v\le e\).  Exact division gives
\begin{align}
 K(1-U,Uv,w)&=U^2K_1(U,v,w),\notag\\
 K(1-tv,t,w)&=t^2K_2(t,v,w),\notag\\
 K_2(t,v,1-vh)&=v^2K_{21}(t,v,h),\notag\\
 K_2(t,eh,1-e)&=e^2K_{22}(t,e,h).
 \label{eq:anticoherent-complex-one-poisson-charts}
\end{align}
The two first-level charts cover the \((U,t)\)-square, and the last two
cover the flat \((v,e)\)-corner.  In tensor Bernstein bases on the unit
cube, split only the \(U\)-interval of \(K_1\) at \(1/2\).  The resulting
degrees and exact control counts are
\[
\begin{array}{c|c|c}
 \text{chart}&\text{tensor degree}&\text{number of controls}\\ \hline
 K_1,\ U\in[0,1/2]\ \text{or}\ [1/2,1]&(13,6,7)&1568\ \text{in total}\\
 K_{21}&(13,14,7)&1680\\
 K_{22}&(13,14,9)&2100.
\end{array}
\]
All \(5348\) rational controls are nonnegative: \(4917\) are positive,
\(431\) vanish, and the smallest positive control is
\(4/1288287\).  Hence \(K\ge0\) on the full cube.  The surviving
Bernstein faces in \eqref{eq:anticoherent-complex-one-poisson-charts} contain
positive controls, so the sign is strict when \(y\ne0\); when \(y=0\),
strictness is Theorem~\ref{thm:anticoherent-one-poisson-clark}.  Equation
\eqref{eq:anticoherent-complex-one-poisson-cleared} now proves
\eqref{eq:anticoherent-complex-one-poisson-positive}.

Finally, each summand in
\eqref{eq:anticoherent-complex-one-poisson-herglotz} has positive real part,
and the interior summand has strictly positive boundary real part.  The
Cayley transform is therefore Schur and non-inner.  The boundary atom at
one gives \(F_{\alpha,w}(1)=1\).  The ancillary exact-arithmetic script
\path{verify_schur_rellich_complex_clark.py} reconstructs
\eqref{eq:anticoherent-complex-one-poisson-ledger}, the polynomial \(K\),
all four resolved charts, every rational Bernstein control, and the strict
boundary-face tests.
\end{proof}

The Schur algorithm supplies a second finite-dimensional direction which is
not a Clark interpolation.  Its first nontrivial real recursion can be closed
on the whole parameter square, including both signs of the terminal Schur
parameter.

\begin{theorem}[Complete real two-step Schur recursion]
\label{thm:anticoherent-real-two-step-schur}
Let \(-1<g<1\), \(-1\le\lambda\le1\), and define
\begin{equation}\label{eq:anticoherent-real-two-step-function}
 F_{g,\lambda}(z)
 =z\frac{g+\lambda z}{1+g\lambda z}.
\end{equation}
Then \(F_{g,\lambda}\) is a normalized Schur function and
\begin{equation}\label{eq:anticoherent-real-two-step-positive}
 \mathcal R(F_{g,\lambda})\ge0.
\end{equation}
Equality holds only at \(g=\lambda=0\), where \(F_{g,\lambda}=0\).
Thus every nonzero member of the complete two-real-parameter family has
strictly positive Schur--Rellich flux.
\end{theorem}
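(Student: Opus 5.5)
The plan is to reduce \(\mathcal R(F_{g,\lambda})\) to an explicit rational function of \((g,\lambda)\) through the geometric-tail ledger already used for the Clark families, and then certify its sign by exact Bernstein expansion.

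\textbf{Step 1 (Schur class and coefficients).} The inner factor \(\varphi(z)=(g+\lambda z)/(1+g\lambda z)\) is the Schur-algorithm composition of the parameters \(g\) and \(\lambda\). It is a Möbius self-map of the disk for \(|\lambda|<1\) and a unimodular constant times a Blaschke factor for \(|\lambda|=1\). Hence \(F=z\varphi\) is Schur and \(F(0)=0\). Note that \(F(1)=(g+\lambda)/(1+g\lambda)\) equals \(1\) only when \(\lambda=1\). I therefore read ``normalized'' as \(F(0)=0\), and I interpret \(\mathcal R\) by the boundary formula \eqref{eq:anticoherent-schur-rellich-boundary}, which makes sense for every such \(F\). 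Expanding \(\varphi\) geometrically gives
\[
 [z]F=g,\qquad [z^n]F=\lambda(1-g^2)(-g\lambda)^{n-2}\quad(n\ge2).
\]
This is exactly the structure \eqref{eq:anticoherent-complex-one-poisson-rational-form} with real data \(b=g\), \(d=\lambda(1-g^2)\), \(q=-g\lambda\).

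\textbf{Step 2 (closed form).} The disk boundary-flux computation of Proposition~\ref{prop:anticoherent-disk-reduction} only uses analyticity, with \(A\) replaced by \(1\). It therefore gives \(\mathcal R(F)=E-\tfrac12\Re T\) in terms of the coefficients, so the ledger \eqref{eq:anticoherent-complex-one-poisson-ledger} applies verbatim with \(r=g^2\lambda^2\) and all quantities real. Multiplying by \((1-r)^5\) yields a polynomial \(P(g,\lambda)\) with \((1-g^2\lambda^2)^5\,\mathcal R(F_{g,\lambda})=P(g,\lambda)\). Since \(1-r>0\) on \((-1,1)\times[-1,1]\), it suffices to prove \(P\ge0\) there, with equality only at the origin.

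\textbf{Step 3 (boundary checks).} On the edges \(\lambda=\pm1\), \(F\) is a finite Blaschke product. Theorem~\ref{thm:anticoherent-inner-schur-rellich} covers \(\lambda=1\). For \(\lambda=-1\), its boundary-phase computation still gives the formula \((2\pi)^{-1}\int(1-\cos\varphi)q^3\,\dd\theta>0\), and that formula did not use \(B(1)=1\). These edges are useful consistency checks on \(P\).

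Near the origin, \(P=g^2+O(|(g,\lambda)|^3)\) plus a \(\lambda^2\)-type contribution from the \(|d|^2\) term. I will identify the quadratic part and check that it is positive definite; this gives the strict local minimum and the equality statement. If instead the quadratic part degenerates along some direction, I will factor out the corresponding power in a chart, exactly as was done for \(K\) in Theorem~\ref{thm:anticoherent-complex-one-poisson-clark}.

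\textbf{Step 4 (global certificate; the main obstacle).} I expect the global positivity certificate to be the main difficulty. The polynomial has zeros or flat behaviour at the origin, and it degenerates at the corners \(g\to\pm1\), \(\lambda=\pm1\), where \(r\to1\). A single tensor Bernstein expansion on \([-1,1]^2\) will then have vanishing or negative controls.

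I would first split the square into its four sign quadrants, substituting \(g=\pm s\) and \(\lambda=\pm t\) with \((s,t)\in[0,1]^2\). In each quadrant I would elevate the degree and check that all Bernstein controls are nonnegative. Where a control fails, I would resolve it with blow-up charts: \(t=sv\) versus \(s=tv\) near the origin, and \(1-s=(1-t)v\) versus its mirror near the corners. After each chart I would divide out the forced power and re-expand, as in \eqref{eq:anticoherent-complex-one-poisson-charts}.

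Strictness away from the origin will follow from positive controls on the surviving faces, together with the edge values from Step 3. All coefficients are rational, so the certificate is an exact finite check that can be reproduced by an ancillary script.
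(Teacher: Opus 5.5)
Your plan matches the paper's proof: the paper uses the same one-geometric-tail ledger with $b=g$, $d=(1-g^2)\lambda$, $q=-g\lambda$, $r=g^2\lambda^2$ and clears the denominator $(1-r)^5$. It then gives an exact tensor Bernstein certificate with iterated toric blow-up charts at the degenerate corner $g^2=1$, $\lambda=-1$, and gets strictness from positive face controls together with the edge identities. The one simplification you miss is that $F_{-g,\lambda}(z)=F_{g,\lambda}(-z)$, so $\mathcal R$ depends only on $G=g^2$; the paper therefore certifies a polynomial of degree $(5,11)$ in $(G,\lambda)$ on just the two half-squares $\lambda\ge0$ and $\lambda\le0$ rather than four quadrants, and your reading of ``normalized'' as $F(0)=0$ is exactly what the paper's proof establishes.
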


\begin{proof}
The function in \eqref{eq:anticoherent-real-two-step-function} is
\(z\phi_g(\lambda z)\), where
\(\phi_g(\zeta)=(g+\zeta)/(1+g\zeta)\) is a disk automorphism.  It is
therefore Schur and vanishes at the origin.  Put
\[
 G=g^2,\qquad d=(1-G)\lambda,\qquad q=-g\lambda,\qquad r=G\lambda^2.
\]
Its Taylor coefficients have the one-geometric-tail form
\begin{equation}\label{eq:anticoherent-real-two-step-coefficients}
 [z]F_{g,\lambda}=g,\qquad
 [z^n]F_{g,\lambda}=dq^{n-2}\quad(n\ge2).
\end{equation}
Using the functions \(\mathsf A,\mathsf B,\mathsf C\) from
\eqref{eq:anticoherent-complex-one-poisson-ledger}, exact geometric
summation gives
\begin{equation}\label{eq:anticoherent-real-two-step-ledger}
\begin{aligned}
 E_{g,\lambda}&=G+d^2\mathsf A(r),\\
 T_{g,\lambda}&=6Gd+2g d^2q\,\mathsf B(r)
                  +d^3q^2\mathsf C(r),\\
 \mathcal R(F_{g,\lambda})&=E_{g,\lambda}-\frac12T_{g,\lambda}.
\end{aligned}
\end{equation}
Consequently
\begin{equation}\label{eq:anticoherent-real-two-step-polynomial}
 P(G,\lambda)
 =(1-G\lambda^2)^5\mathcal R(F_{g,\lambda})
\end{equation}
is a polynomial of tensor degree \((5,11)\).  The denominator in
\eqref{eq:anticoherent-real-two-step-polynomial} is positive on the stated
parameter range.

Here is an exact certificate for \(P\ge0\).  On
\((G,\lambda)\in[0,1]^2\), the native tensor Bernstein expansion of \(P\)
has degree \((5,11)\), with \(60\) positive and \(12\) zero controls.  For
the negative half-square put
\[
 U=1-G,\qquad V=1+\lambda,\qquad
 P_-(U,V)=P(1-U,-1+V).
\]
The following three toric charts cover the whole \((U,V)\)-square:
\begin{align}
 P_-(U,Uh)&=U^3P_1(U,h),\notag\\
 P_-(Vh,V)&=V^3Q(V,h),\notag\\
 Q(hj,h)&=h^2P_2(h,j),\qquad
 Q(V,Vj)=V^2P_3(V,j).
 \label{eq:anticoherent-real-two-step-charts}
\end{align}
The first line covers \(V\le U\).  The second line covers \(U\le V\),
and its two subcharts in the third line cover \(V\le h\) and \(h\le V\),
respectively.  In tensor Bernstein bases on the unit square, the exact
degrees and controls are
\[
\begin{array}{c|c|c|c}
 \text{polynomial}&\text{degree}&\text{positive}&\text{zero}\\ \hline
 P\ \text{on}\ \lambda\ge0&(5,11)&60&12\\
 P_1&(13,11)&166&2\\
 P_2&(16,13)&235&3\\
 P_3&(16,5)&101&1.
\end{array}
\]
Thus all \(580\) rational controls are nonnegative: \(562\) are positive,
\(18\) vanish, and the smallest positive control is \(8/165\).  Every
coordinate face of every chart contains a positive control.  The boundary
identities
\begin{align*}
 P(0,\lambda)&=8\lambda^2,\qquad
 P(G,0)=G,\qquad P(1,\lambda)=(1-\lambda^2)^5,\\
 P(G,1)&=2(1-G)^3(2G^2-5G+4),\\
 P(G,-1)&=2(1-G)^3(G+4).
\end{align*}
then show that the sign is strict except at \((G,\lambda)=(0,0)\).
Equations \eqref{eq:anticoherent-real-two-step-polynomial} and
\eqref{eq:anticoherent-real-two-step-charts} prove the theorem.  The ancillary
exact-arithmetic script \path{verify_schur_rellich_two_step.py} reconstructs
the rational
ledger, all four charts, every Bernstein control, and the strict face tests.
\end{proof}

\subsection{Critical plateaux and Mellin limits}

The preceding positive families do not extend to arbitrary Herglotz
mixtures by a scalar Jensen argument.  The failure already occurs for two
interior components and can be certified over the rationals.

\begin{proposition}[Critical paraproduct--shift decomposition]
\label{prop:anticoherent-paraproduct-shift}
Let $x_n\ge0$ be finitely supported and put
\[
 y_n=n^{3/2}x_n,\qquad w_n=n^{-3/2},\qquad
 E=\sum_{n\ge1}y_n^2,
\]
where $y_n=0$ outside the support.  For $a\ge1$, define the unilateral
shift defect
\begin{equation}\label{eq:anticoherent-shift-defect}
 \mathfrak D_a(y)
 =E-\sum_{b\ge1}y_by_{a+b}
 =\frac12\sum_{b\ge1}(y_{a+b}-y_b)^2
  +\frac12\sum_{b=1}^a y_b^2.
\end{equation}
For $a,b\ge1$, set
\begin{equation}\label{eq:anticoherent-positive-residual-kernel}
 \begin{aligned}
 R_{a,b}
 &=\frac{(a+b)(a^2+ab+b^2)}{[ab(a+b)]^{3/2}}
   -a^{-3/2}-b^{-3/2}\\
 &=\frac{(a+b-\sqrt{ab})^2}
 {\sqrt{ab(a+b)}\,[a^2+ab+b^2
 +\sqrt{a+b}(a^{3/2}+b^{3/2})]}>0.
 \end{aligned}
\end{equation}
It also satisfies the scale-uniform upper bound
\begin{equation}\label{eq:anticoherent-residual-upper-bound}
 0<R_{a,b}\le
 \frac1{2\sqrt{\min\{a,b\}}(a+b)}.
\end{equation}
More sharply, for $1\le a\le b$,
\begin{equation}\label{eq:anticoherent-residual-rational-bound}
 (2-\mathbf1_{\{a=b\}})R_{a,b}
 \le
 \frac{1+\sqrt{a/b}}{1+3\sqrt{a/b}}
 \frac1{\sqrt a\,(a+b)}.
\end{equation}
Then
\begin{equation}\label{eq:anticoherent-paraproduct-shift-ledger}
 2AE_3-\mathcal C(x)
 =2\sum_{a\ge1}w_ay_a\mathfrak D_a(y)
  -\sum_{a,b\ge1}R_{a,b}y_ay_by_{a+b}.
\end{equation}
Consequently, the unrestricted sharp convolution theorem is equivalent, on
finite sequences, to the single nonlinear Hardy compensation
\begin{equation}\label{eq:anticoherent-critical-hardy-target}
 \sum_{a,b\ge1}R_{a,b}y_ay_by_{a+b}
 \le 2\sum_{a\ge1}\frac{y_a}{a^{3/2}}\mathfrak D_a(y).
\end{equation}
Moreover, as $b/a\to\infty$,
\begin{equation}\label{eq:anticoherent-residual-asymptotic}
 R_{a,b}
 =\frac1{2\sqrt a\,b}-\frac1{b^{3/2}}
  +O\left(\frac{\sqrt a}{b^2}\right).
\end{equation}
Thus the remaining positive kernel has a critical harmonic tail rather than
finite ratio support.

In particular, the rational-kernel Hardy estimate
\begin{equation}\label{eq:anticoherent-rational-hardy-target}
 \sum_{1\le a\le b}
 \frac{1+\sqrt{a/b}}{1+3\sqrt{a/b}}
 \frac{y_ay_by_{a+b}}{\sqrt a\,(a+b)}
 \le 2\sum_{a\ge1}\frac{y_a}{a^{3/2}}\mathfrak D_a(y)
\end{equation}
would imply \eqref{eq:anticoherent-critical-hardy-target}.  Estimate
\eqref{eq:anticoherent-rational-hardy-target} is a strictly stronger target
and is not asserted here.

The coarser estimate obtained by replacing the rational factor in
\eqref{eq:anticoherent-rational-hardy-target} by one is false: a direct
variational audit at cutoff $2048$ gives a left/right ratio greater than one.
Thus retaining the first high--low correction is essential.

The payment side of \eqref{eq:anticoherent-rational-hardy-target} has the
following exact triangle ledger.  Put $\mu_n=y_n/n^{3/2}$ and extend $y$ by
zero.  Then
\begin{equation}\label{eq:anticoherent-hardy-triangle-ledger}
\begin{aligned}
 2\sum_{k\ge1}\mu_k\mathfrak D_k(y)
 &=\sum_{1\le a<b}
 \left\{
 \mu_a(y_b-y_{a+b})^2
 +\mu_b(y_a-y_{a+b})^2
 +\mu_by_a^2
 \right\}\\
 &\quad+\sum_{a\ge1}
 \left\{\mu_a(y_a-y_{2a})^2+\mu_ay_a^2\right\}.
\end{aligned}
\end{equation}
Every shift edge and every unilateral boundary term occurs exactly once in
this ledger.  Individual triangle payments minus their cubic target can be
negative for separated scales, so \eqref{eq:anticoherent-hardy-triangle-ledger}
does not by itself prove \eqref{eq:anticoherent-rational-hardy-target}; it
identifies the precise blocks across which a valid multiscale transfer must
act.

If \eqref{eq:anticoherent-critical-hardy-target} is proved with a
cutoff-independent argument for every finitely supported nonnegative
sequence, then the same sharp inequality holds for every nonnegative
sequence satisfying $A<\infty$ and $E_3<\infty$.
\end{proposition}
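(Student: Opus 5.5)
The plan is to verify each displayed identity in turn, starting from the scaling substitution \(x_n=w_ny_n\), and then to derive the consequences from these identities.

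\emph{Shift defect and ledger.} The identity \eqref{eq:anticoherent-shift-defect} follows by expanding \(\sum_{b\ge1}(y_{a+b}-y_b)^2=2E-\sum_{b\le a}y_b^2-2\sum_by_by_{a+b}\). This is just the count of which squares the shifted diagonal misses.

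For \eqref{eq:anticoherent-paraproduct-shift-ledger} I would substitute \(x_n=w_ny_n\) into \(\mathcal C(x)\). Each triad then carries the weight \((a+b)(a^2+ab+b^2)[ab(a+b)]^{-3/2}\), which by the definition of \(R_{a,b}\) splits as \(a^{-3/2}+b^{-3/2}+R_{a,b}\). By symmetry in \(a,b\), the pieces \(a^{-3/2}\) and \(b^{-3/2}\) together give \(2\sum_aw_ay_a\sum_by_by_{a+b}\). Since \(A=\sum_aw_ay_a\), we have \(2AE_3=2\sum_aw_ay_aE\). Subtracting yields the ledger, and the equivalence with \eqref{eq:anticoherent-critical-hardy-target} is then immediate.

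\emph{Properties of \(R_{a,b}\).} To obtain the closed form of \(R_{a,b}\), I would put everything over the denominator \([ab(a+b)]^{3/2}\). The numerator becomes \(N=(a+b)(a^2+ab+b^2)-(a+b)^{3/2}(a^{3/2}+b^{3/2})\). Writing \(N=(P-Q)\) and rationalizing as \((P^2-Q^2)/(P+Q)\), with \(P=(a+b)(a^2+ab+b^2)\), one checks that
\[
P^2-Q^2=(a+b)^2\bigl[(a^2+ab+b^2)^2-(a+b)(a^{3/2}+b^{3/2})^2\bigr].
\]
In the variables \(s=\sqrt a\), \(t=\sqrt b\) the bracket is a symmetric polynomial of degree 8. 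I expect it to factor as \(ab\,(a+b-\sqrt{ab})^2\). This is a routine but delicate check, most easily done by setting \(t=1\) and factoring the resulting univariate polynomial. Cancelling powers of \(ab\) and \(a+b\) then gives the stated positive form.

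For the upper bounds \eqref{eq:anticoherent-residual-upper-bound} and \eqref{eq:anticoherent-residual-rational-bound}, I would normalize \(a\le b\) and put \(\theta=\sqrt{a/b}\in(0,1]\). Multiplying by \(\sqrt a(a+b)\) turns each bound into a one-variable rational-function inequality in \(\theta\) and \(\sqrt{1+\theta^2}\). I would clear the square root, isolating it and squaring with sign checks, and reduce to polynomial positivity on \([0,1]\), certified by hand or by a short Bernstein expansion. The diagonal case \(a=b\) is a separate scalar check with factor \(1\). This step is the \textbf{main obstacle}: the rational bound is sharp at \(\theta\to0\), since both sides are then \(\sim 1/(2\sqrt a\,b)\), so the polynomial has a boundary zero and must be handled with care.

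\emph{Asymptotics, triangle ledger, and remaining claims.} The expansion \eqref{eq:anticoherent-residual-asymptotic} follows by expanding the closed form in \(\theta\).

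For the triangle ledger \eqref{eq:anticoherent-hardy-triangle-ledger}, I would write \(2\mathfrak D_k\) via \eqref{eq:anticoherent-shift-defect} as the sum over \(b\) of the edges \((y_{k+b}-y_b)^2\) plus the boundary squares \(y_b^2\) for \(b\le k\). Regrouping the pairs \(\{k,b\}\) into unordered pairs with \(a<b\), \(a=b\), and the boundary terms \(\mu_by_a^2\) for \(a<b\) together with \(\mu_ay_a^2\) is then bookkeeping.

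The two falsity remarks are audits and are cited as computational. The implication from \eqref{eq:anticoherent-rational-hardy-target} to \eqref{eq:anticoherent-critical-hardy-target} follows from \eqref{eq:anticoherent-residual-rational-bound} by symmetrizing the sum over \(a\le b\). The infinite-sequence extension is the truncation and monotone convergence argument already given after Problem~\ref{prob:sharp-cubic}.
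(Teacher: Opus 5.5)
Your proposal is correct and follows the paper's proof essentially step for step. It uses the same substitution $x_n=w_ny_n$ with the symmetric splitting $a^{-3/2}+b^{-3/2}+R_{a,b}$ for the ledger, and the same rationalization through $(p^4+p^2q^2+q^4)^2-(p^2+q^2)(p^3+q^3)^2=p^2q^2(p^2-pq+q^2)^2$. It also uses the same squaring-plus-Bernstein reduction in $t=\sqrt{a/b}$ for both upper bounds, where the paper's difference polynomials carry exactly the boundary factors $t^3(1+t^2)$ and $t^4(1+t^2)$ you anticipated.

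One small slip: for $a<b$ the left side of the rational bound is $2R_{a,b}$. So both sides behave like $1/(\sqrt a\,b)$ as $\theta\to0$, not $1/(2\sqrt a\,b)$.
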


\begin{proof}
The second identity in \eqref{eq:anticoherent-shift-defect} follows by
expanding the square and using
$\sum_{b\ge1}y_{a+b}^2=E-\sum_{b=1}^a y_b^2$.
The first line of \eqref{eq:anticoherent-positive-residual-kernel} gives
\[
 \begin{aligned}
 &R_{a,b}y_ay_by_{a+b}\\
 &\quad=\bigl((a+b)(a^2+ab+b^2)
 -(a(a+b))^{3/2}-(b(a+b))^{3/2}\bigr)
 x_ax_bx_{a+b}.
 \end{aligned}
\]
After summing, symmetry in $a,b$ cancels the two fractional-power terms
against twice the correlation term in
$2\sum_aw_ay_a\mathfrak D_a(y)$.  The remaining terms are exactly
$2AE_3-\mathcal C(x)$, proving
\eqref{eq:anticoherent-paraproduct-shift-ledger}.

For positivity of the kernel, put $p=\sqrt a$ and $q=\sqrt b$.  The exact
factorization
\[
 (p^4+p^2q^2+q^4)^2-(p^2+q^2)(p^3+q^3)^2
 =p^2q^2(p^2-pq+q^2)^2
\]
and rationalization give the second line of
\eqref{eq:anticoherent-positive-residual-kernel}.  To prove
\eqref{eq:anticoherent-residual-upper-bound}, assume $a\le b$, put
$r=a/b$ and $t=\sqrt r$, and square the equivalent inequality
\[
 \frac{1+r+r^2}{\sqrt{1+r}}
 \le1+r^{3/2}+\frac{r}{2(1+r)}.
\]
After clearing the positive denominator, the difference is
\[
 t^3(1+t^2)
 (8-11t+20t^2-16t^3+12t^4-4t^5).
\]
The final polynomial is positive on $[0,1]$: its degree-five Bernstein
coefficients are
$8,29/5,28/5,29/5,36/5,9$.  This proves the upper bound; symmetry handles
$b\le a$.

For the sharper bound, again assume $a\le b$ and put $t=\sqrt{a/b}$.
After moving the proposed upper bound to the right, rationalizing, and
clearing the positive denominator, its sign is the sign of
\[
 t^4(1+t^2)
 (13+14t-35t^2+72t^3-84t^4+60t^5-36t^6).
\]
The degree-six polynomial has positive Bernstein coefficients
\[
 13,\quad\frac{46}{3},\quad\frac{46}{3},\quad\frac{83}{5},
 \quad\frac{257}{15},\quad\frac{58}{3},\quad4.
\]
This proves \eqref{eq:anticoherent-residual-rational-bound}; at $a=b$ the
same right-hand side also dominates the single diagonal copy of $R_{a,a}$.
Summing over unordered pairs gives the implication from
\eqref{eq:anticoherent-rational-hardy-target} to
\eqref{eq:anticoherent-critical-hardy-target}.  Expanding the first line of
\eqref{eq:anticoherent-positive-residual-kernel} in $a/b$ proves
\eqref{eq:anticoherent-residual-asymptotic}.

To prove \eqref{eq:anticoherent-hardy-triangle-ledger}, insert the square
form of $2\mathfrak D_k(y)$ from
\eqref{eq:anticoherent-shift-defect}.  For $a<b$, the ordered shift pairs
$(k,n)=(a,b)$ and $(b,a)$ give the first two squares, while the boundary
pair $(k,n)=(b,a)$ gives $\mu_by_a^2$.  When $a=b$, there is one shift pair
and one boundary pair, giving the second sum.  This partitions all ordered
shift pairs and all pairs $n\le k$, proving the identity.

Finally, for $x^{(N)}_n=x_n\mathbf1_{n\le N}$, all three nonnegative
quantities $A_N$, $E_{3,N}$, and $\mathcal C(x^{(N)})$ converge monotonically
to their infinite-sequence counterparts.  A uniform finite-support proof
therefore passes to the limit.  This limiting argument is uniform in the
cutoff; verifying any prescribed list of degrees is not.
\end{proof}

The critical plateau selected by the variational problem can be handled at
every cutoff.  This gives a genuinely all-degree result on the escaping
channel and also proves that the constant two in the unrestricted target
cannot be lowered.

\begin{proposition}[All-cutoff critical plateau and sharpness]
\label{prop:anticoherent-critical-plateau}
For $N\ge1$ and $\lambda\ge0$, set
\begin{equation}\label{eq:anticoherent-critical-plateau}
 x_n=\lambda n^{-3/2}\mathbf1_{\{1\le n\le N\}}.
\end{equation}
Then
\begin{equation}\label{eq:anticoherent-critical-plateau-bound}
 \mathcal C(x)<2AE_3\qquad(\lambda>0),
\end{equation}
and, with $H_M^{(s)}=\sum_{n=1}^M n^{-s}$ and $H_0^{(s)}=0$,
\begin{equation}\label{eq:anticoherent-critical-plateau-margin}
\begin{aligned}
 2AE_3-\mathcal C(x)
 \ge\lambda^3\left[
 \frac2{\sqrt N}+2H_{N-1}^{(1/2)}
 -\sqrt2\sum_{c=2}^N\frac1{\sqrt c}
 \right]>0.
\end{aligned}
\end{equation}
Moreover,
\begin{equation}\label{eq:anticoherent-critical-plateau-sharp-limit}
 \lim_{N\to\infty}\frac{\mathcal C(x)}{AE_3}=2.
\end{equation}
The same conclusions hold after dilating the support from
$\{1,\ldots,N\}$ to $\{q,2q,\ldots,Nq\}$, $q\in\mathbb N$.
Consequently, constant two is sharp even within finite critical plateaux,
although equality is attained only in the escaping infinite-cutoff limit.
\end{proposition}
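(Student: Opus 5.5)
We apply the critical paraproduct--shift ledger of Proposition~\ref{prop:anticoherent-paraproduct-shift} directly, because the plateau \eqref{eq:anticoherent-critical-plateau} is exactly flat in the variables $y_n=n^{3/2}x_n$. Here $y_n=\lambda\mathbf 1_{\{1\le n\le N\}}$ and $E=\lambda^2N$.

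\emph{Payment side.} For $1\le a\le N$ we have $\sum_{b\ge1}y_by_{a+b}=\lambda^2(N-a)$, so \eqref{eq:anticoherent-shift-defect} gives $\mathfrak D_a(y)=\lambda^2a$. Hence
\[
 2\sum_{a\ge1}w_ay_a\mathfrak D_a(y)=2\lambda^3\sum_{a=1}^N a^{-1/2}
 =\lambda^3\Bigl(\tfrac{2}{\sqrt N}+2H_{N-1}^{(1/2)}\Bigr).
\]

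\emph{Cubic residual.} The residual term in \eqref{eq:anticoherent-paraproduct-shift-ledger} is $\lambda^3\sum_{c=2}^N\sum_{a+b=c}R_{a,b}$. For each output $c$ we use the scale-uniform bound \eqref{eq:anticoherent-residual-upper-bound}:
\[
 \sum_{a+b=c}R_{a,b}\le\frac1c\sum_{a+b=c}\frac{1}{2\sqrt{\min\{a,b\}}}
 \le\frac1c\sum_{1\le a\le c/2}\frac1{\sqrt a}.
\]
In the last step each ordered pair with $a\ne b$ is counted twice and the diagonal pair once, so the bound is safe. Comparing the remaining sum with an integral gives $\sum_{a\le c/2}a^{-1/2}\le 2\sqrt{c/2}=\sqrt{2c}$. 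Therefore $\sum_{a+b=c}R_{a,b}\le\sqrt2/\sqrt c$.

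Inserting both computations into \eqref{eq:anticoherent-paraproduct-shift-ledger} gives exactly \eqref{eq:anticoherent-critical-plateau-margin}. The bracket is strictly positive: it equals $2+\sum_{c=2}^N(2-\sqrt2)c^{-1/2}$, where the $2$ comes from $c=1$. This proves \eqref{eq:anticoherent-critical-plateau-bound} for $\lambda>0$.

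\emph{Sharp limit.} We have $A=\lambda H_N^{(3/2)}\to\lambda\zeta(3/2)$ and $E_3=\lambda^2N$, so $AE_3\asymp N$. Applying the same residual bound from the other side shows $0<2AE_3-\mathcal C(x)\le2\lambda^3H_N^{(1/2)}=O(\lambda^3\sqrt N)$. Hence $\mathcal C(x)/(AE_3)=2-O(N^{-1/2})$, which gives \eqref{eq:anticoherent-critical-plateau-sharp-limit}.

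\emph{Dilation.} Set $\tilde x_{qn}=x_n$. Then $A$ is unchanged, while $E_3$ and $\mathcal C$ both acquire the factor $q^3$, because the multiplier $(a+b)(a^2+ab+b^2)$ is homogeneous of degree three and $n^3$ is as well. So every conclusion transfers unchanged.

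The only delicate step is the per-output residual bound. It must lose at most a constant strictly below $2$ against $c^{-1/2}$. The crude bound \eqref{eq:anticoherent-residual-upper-bound} together with the integral comparison gives $\sqrt2$, which suffices. If the comparison $\sum_{a\le c/2}a^{-1/2}\le\sqrt{2c}$ turns out to be off by a boundary term, I would fall back on the sharper rational bound \eqref{eq:anticoherent-residual-rational-bound}.
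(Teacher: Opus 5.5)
Your proposal is correct and follows essentially the paper's route. Specializing the paraproduct--shift ledger to the flat profile $y_n=\lambda\mathbf 1_{\{n\le N\}}$ is the same computation as the paper's split $K_{a,b}=a^{-3/2}+b^{-3/2}+R_{a,b}$ with the baseline sum; the per-shell bound $\frac1cH^{(1/2)}_{\lfloor c/2\rfloor}\le\sqrt{2/c}$ and the use of $R_{a,b}>0$ for the limit are also the paper's. Two small differences: your rewriting of the bracket as $2+(2-\sqrt2)\sum_{c=2}^Nc^{-1/2}$ is a slightly cleaner positivity check than the paper's termwise pairing, and the fallback to \eqref{eq:anticoherent-residual-rational-bound} is unnecessary because $H_m^{(1/2)}\le2\sqrt m$ holds with no boundary loss.
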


\begin{proof}
The assertion is immediate when $\lambda=0$, so assume $\lambda>0$.
For the sequence in \eqref{eq:anticoherent-critical-plateau},
\[
 A=\lambda H_N^{(3/2)},\qquad E_3=\lambda^2N.
\]
Put
\[
 K_{a,b}=\frac{(a+b)(a^2+ab+b^2)}{[ab(a+b)]^{3/2}}
 =a^{-3/2}+b^{-3/2}+R_{a,b}.
\]
The two elementary pieces sum exactly to
\begin{equation}\label{eq:anticoherent-critical-plateau-baseline}
 \sum_{a+b\le N}(a^{-3/2}+b^{-3/2})
 =2NH_{N-1}^{(3/2)}-2H_{N-1}^{(1/2)}.
\end{equation}
For a fixed output $c=a+b$, the uniform residual estimate
\eqref{eq:anticoherent-residual-upper-bound}, symmetry, and
$H_m^{(1/2)}\le2\sqrt m$ give
\begin{equation}\label{eq:anticoherent-critical-plateau-shell}
 \sum_{a=1}^{c-1}R_{a,c-a}
 \le\frac1c H_{\lfloor c/2\rfloor}^{(1/2)}
 \le\sqrt{\frac2c}.
\end{equation}
For even $c$, the unpaired diagonal term carries only half the displayed
harmonic weight, so the first inequality remains valid.
Combining \eqref{eq:anticoherent-critical-plateau-baseline} and
\eqref{eq:anticoherent-critical-plateau-shell} yields
\[
\begin{aligned}
 2AE_3-\mathcal C(x)
 &\ge\lambda^3\left[
 2NH_N^{(3/2)}-2NH_{N-1}^{(3/2)}
 +2H_{N-1}^{(1/2)}
 -\sqrt2\sum_{c=2}^Nc^{-1/2}
 \right],
\end{aligned}
\]
which is \eqref{eq:anticoherent-critical-plateau-margin}.  Its bracket is
strictly positive because
\[
 2H_{N-1}^{(1/2)}-\sqrt2\sum_{c=2}^Nc^{-1/2}
 =\sum_{c=2}^N
 \left(\frac2{\sqrt{c-1}}-\frac{\sqrt2}{\sqrt c}\right)\ge0
\]
and $2/\sqrt N>0$.

Since $R_{a,b}>0$, the baseline also gives the lower bound
\[
 \mathcal C(x)\ge
 \lambda^3\left(2NH_{N-1}^{(3/2)}
 -2H_{N-1}^{(1/2)}\right).
\]
Divide by $AE_3=\lambda^3NH_N^{(3/2)}$ and use
$H_N^{(3/2)}\to\zeta(3/2)$ and
$H_{N-1}^{(1/2)}/N\to0$.  Together with
\eqref{eq:anticoherent-critical-plateau-bound}, this proves
\eqref{eq:anticoherent-critical-plateau-sharp-limit}.  A frequency dilation
multiplies both $\mathcal C$ and $AE_3$ by $q^3$, so the ratio and the sign
are unchanged.
\end{proof}

The logarithmically negative mixed coefficient in a layer-cake expansion is
not an obstruction by itself.  For the first genuinely mixed critical family,
it is absorbed by the diagonal plateau margin at every cutoff.

\begin{proposition}[One-spike/plateau compensation at every cutoff]
\label{prop:anticoherent-spike-plateau}
Let $N\ge2$, $s,t\ge0$, and
\begin{equation}\label{eq:anticoherent-spike-plateau}
 x_n=n^{-3/2}
 \left(s\mathbf1_{\{n=1\}}+t\mathbf1_{\{1\le n\le N\}}\right).
\end{equation}
Then
\begin{equation}\label{eq:anticoherent-spike-plateau-bound}
 \mathcal C(x)<2AE_3
 \qquad\text{whenever }s+t>0.
\end{equation}
The conclusion is invariant under a common frequency dilation.  Thus it also
holds when the spike is at $q$ and the plateau is supported on
$\{q,2q,\ldots,Nq\}$.
\end{proposition}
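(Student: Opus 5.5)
The plan is to work in the critical variables $y_n=n^{3/2}x_n$ of Proposition~\ref{prop:anticoherent-paraproduct-shift}. Here $y_1=s+t$, $y_n=t$ for $2\le n\le N$, and $y_n=0$ otherwise. By homogeneity we may scale so that $s+t=1$, which leaves the single parameter $\theta=t\in[0,1]$.

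The endpoints are already settled. At $t=0$ the support is $\{1\}$, so $\mathcal C=0<2AE_3$. At $s=0$ we have the plateau of Proposition~\ref{prop:anticoherent-critical-plateau}.

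For general $s,t$, we expand $2AE_3-\mathcal C(x)$ as a cubic form in $(s,t)$,
\[
 \Phi(s,t)=\alpha_{30}s^3+\alpha_{21}s^2t+\alpha_{12}st^2+\alpha_{03}t^3,
\]
and compute each coefficient explicitly. We have $A=s+tH_N^{(3/2)}$ and $E_3=(s+t)^2+t^2(N-1)$. The triads meeting the spike are those with $a=1$ or $b=1$; each contributes $K_{1,c-1}$-type weights with one factor $s+t$ in place of $t$. The output $a+b=1$ never occurs.

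This gives the following coefficients:
\begin{itemize}
 \item $\alpha_{30}=2$.
 \item $\alpha_{21}=2H_N^{(3/2)}+4$, minus contributions from the triad $(1,1,2)$. The only $s^2$ triad is $a=b=1$, $c=2$, with weight $2\cdot3\cdot 2^{-3/2}$.
 \item $\alpha_{12}$ collects $2\cdot 2H_N^{(3/2)}+2N$ minus the $(1,b,b+1)$ triads. It is the logarithmically negative mixed coefficient: the sum $\sum_b K_{1,b}$ with $K_{1,b}\sim 2b^{-1/2}\cdot b^{-1}$ stays bounded, but it competes with terms of size $N$.
 \item $\alpha_{03}$ is the plateau margin. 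By \eqref{eq:anticoherent-critical-plateau-margin} it is at least $2/\sqrt N+2H_{N-1}^{(1/2)}-\sqrt2\sum_{c\le N}c^{-1/2}$, which grows like $(2-\sqrt2)\cdot2\sqrt N$.
\end{itemize}
I expect each coefficient except possibly $\alpha_{12}$ (and perhaps $\alpha_{21}$) to be positive with explicit margins.

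The key step is to show $\Phi>0$ on the simplex $s,t\ge0$. The plan is to pair the negative part of the mixed coefficients with the diagonal terms via AM--GM, in the forms $s^3+t^3\ge s^2t+st^2$ and more generally $\beta s^3+\delta s t^2\ge 2\sqrt{\beta\delta}\,s^2 t$ style inequalities. For this we need uniform-in-$N$ bounds:
\begin{itemize}
 \item an upper bound on $\sum_{b}K_{1,b}y$-weights, using the residual bound \eqref{eq:anticoherent-residual-upper-bound} with $\min\{a,b\}=1$, which gives $R_{1,b}\le 1/(2(b+1))$ and hence a harmonic, i.e.\ logarithmic, sum $\tfrac12\log N$;
 \item a lower bound $\alpha_{03}\gtrsim(2-\sqrt2)\sqrt N$.
\end{itemize}
Since $\sqrt N$ dominates $\log N$, while the $s^3$ and $s^2t$ terms are $O(1)$ positive, a discriminant-type condition such as $\alpha_{12}^2<c\,\alpha_{21}\alpha_{03}$ should hold for all large $N$. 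The finitely many small $N$ below an explicit threshold would then be checked by exact rational/Bernstein evaluation of the cubic in $\theta$, in the style of the earlier certificates.

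The main obstacle is this mixed-coefficient absorption. The $st^2$ coefficient carries $-\sum_{b<N}(K_{1,b}+K_{b,1})$ against $+2N$ plus plateau terms, and one must track exactly which $a^{-3/2}+b^{-3/2}$ baseline pieces cancel, as in \eqref{eq:anticoherent-critical-plateau-baseline}, so that only the $R$-residual and boundary pieces remain. I would first redo the baseline telescoping identity for the mixed term. If the crude bound $R_{1,b}\le 1/(2(b+1))$ loses too much for moderate $N$, I would switch to the sharper bound \eqref{eq:anticoherent-residual-rational-bound}.

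Dilation invariance follows because replacing $n$ by $qn$ multiplies $\mathcal C$ and $AE_3$ by the same power of $q$ after the reparametrization of $s,t$, exactly as in Proposition~\ref{prop:anticoherent-critical-plateau}.
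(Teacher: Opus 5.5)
Your outline is correct and has the same skeleton as the paper's proof. You expand the deficit as a binary cubic in $(s,t)$. In the $st^2$ coefficient, the elementary pieces of $K_{1,b}=1+b^{-3/2}+R_{1,b}$ cancel the $2N+4H_N^{(3/2)}$, leaving only $-2\sum_{b<N}R_{1,b}\ge-\log N$ via $R_{1,b}\le 1/(2(b+1))$. You bound the $t^3$ coefficient below by the critical plateau margin, and then absorb the negative term.

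The one real difference is the absorption step. The paper discards the $s^2t$ term and plays $s^3$ against $t^3$, so it needs the cubic condition $h_N^3<\tfrac{27}{4}d_N^2$. It verifies this via $(e^\tau-1)^2\ge2\tau^3$ and $54(1-1/\sqrt2)^2>1$. Your discriminant criterion writes the non-$s^3$ part as $t(\alpha_{21}s^2+\alpha_{12}st+\alpha_{03}t^2)$ and needs only $(-\alpha_{12})_+^2\le4\alpha_{21}\alpha_{03}$. This works and is slightly cheaper, for three reasons:
\begin{itemize}
\item $\alpha_{21}=2H_N^{(3/2)}+4-3/\sqrt2>4$ for $N\ge2$.
\item $\alpha_{03}\ge2(2-\sqrt2)(\sqrt N-1)$, from the termwise bound $2/\sqrt{c-1}-\sqrt2/\sqrt c\ge(2-\sqrt2)/\sqrt{c-1}$ and $H^{(1/2)}_{N-1}\ge2(\sqrt N-1)$.
\item $\log^2N\le4\sqrt N\le32(2-\sqrt2)(\sqrt N-1)$ for every $N\ge2$.
\end{itemize}
So no threshold and no finite Bernstein check of small $N$ is needed; the uniform bounds close all $N$ at once. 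Strictness follows from $\Phi\ge2s^3$ when $s>0$ and $\Phi=\alpha_{03}t^3$ when $s=0$.

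Two slips should be corrected in a write-up.
\begin{itemize}
\item $K_{1,b}$ is not of order $b^{-3/2}$, and $\sum_bK_{1,b}$ is not bounded. In fact $K_{1,b}\to1$, and the resulting $N-1$ is exactly what cancels the $2N$. In your normalization one gets
$\alpha_{12}=2\bigl(1+H_N^{(3/2)}+N^{-3/2}\bigr)-2\sum_{b<N}R_{1,b}$.
\item The displayed AM--GM inequality $\beta s^3+\delta st^2\ge2\sqrt{\beta\delta}\,s^2t$ absorbs the wrong monomial, since the possibly negative term is $st^2$. What you actually need is nonnegativity of the quadratic $\alpha_{21}s^2+\alpha_{12}st+\alpha_{03}t^2$, or the paper's weighted $s^3$--$t^3$ version.
\end{itemize}
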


\begin{proof}
Write
\[
 \mathfrak P_N(s,t)=AE_3-\frac12\mathcal C(x)
 =s^3+b_Ns^2t+c_Nst^2+d_Nt^3.
\]
Put $A_N=H_N^{(3/2)}$ and
\[
 K_{a,b}=\frac{(a+b)(a^2+ab+b^2)}{[ab(a+b)]^{3/2}}.
\]
Direct polarization gives
\begin{equation}\label{eq:anticoherent-spike-plateau-coefficients}
\begin{aligned}
 b_N&=A_N+2-\frac{3}{2\sqrt2}>0,\\
 c_N&=N+2A_N-\sum_{b=1}^{N-1}K_{1,b},\\
 d_N&=NH_N^{(3/2)}-\frac12
 \sum_{a+b\le N}K_{a,b}>0.
\end{aligned}
\end{equation}
Indeed, the $s^2t$ interaction is the single triad $(1,1,2)$, while the
$st^2$ interaction places the spike in either input of $(1,b,1+b)$.

Let $h_N=(-c_N)_+$.  Since
$K_{1,b}=1+b^{-3/2}+R_{1,b}$, the residual bound
\eqref{eq:anticoherent-residual-upper-bound} gives
\begin{equation}\label{eq:anticoherent-spike-plateau-log-bound}
 h_N
 \le\sum_{b=1}^{N-1}R_{1,b}
 \le\frac12(H_N^{(1)}-1)
 \le\frac12\log N.
\end{equation}
On the other hand, half of
\eqref{eq:anticoherent-critical-plateau-margin} and the termwise estimate
\[
 \frac1{\sqrt{c-1}}-\frac1{\sqrt{2c}}
 \ge\left(1-\frac1{\sqrt2}\right)\frac1{\sqrt{c-1}}
\]
show that, with $\alpha=1-1/\sqrt2$,
\begin{equation}\label{eq:anticoherent-spike-plateau-diagonal-bound}
 d_N\ge\frac1{\sqrt N}+\alpha H_{N-1}^{(1/2)}
 \ge2\alpha(\sqrt N-1).
\end{equation}
Set $\tau=\tfrac12\log N$, so $\sqrt N=e^\tau$.  The elementary estimate
$e^\tau-1\ge\tau+\tau^2/2$ implies
$(e^\tau-1)^2\ge2\tau^3$.  Consequently,
\[
 h_N^3\le\tau^3
 <\frac{27}{4}d_N^2,
\]
where the strict inequality follows from
$54(1-1/\sqrt2)^2>1$.

If $c_N\ge0$, every coefficient of $\mathfrak P_N$ is positive.  If
$c_N<0$, put $h_N=-c_N$.  For $s>0$ and $r=t/s$,
\[
 s^{-3}\bigl(s^3-h_Nst^2+d_Nt^3\bigr)
 =1-h_Nr^2+d_Nr^3
 \ge1-\frac{4h_N^3}{27d_N^2}>0;
\]
the minimum occurs at $r=2h_N/(3d_N)$.  The discarded term
$b_Ns^2t$ is nonnegative, and the cases $s=0$ or $t=0$ follow from the
positive endpoint coefficients.  This proves
\eqref{eq:anticoherent-spike-plateau-bound}.  Frequency dilation again
multiplies both cubic quantities by the same factor.
\end{proof}

The preceding binary argument extends to any two nested critical plateaux.
The extra ingredient is a sharper shell sum for the residual kernel.

\begin{lemma}[Subunit residual shell budget]
\label{lem:anticoherent-residual-shell-budget}
Put
\[
 \beta_*=\frac{24}{25},\qquad
 \gamma_*=1-\frac{\beta_*}2=\frac{13}{25},\qquad
 \kappa_*=2\gamma_*-\gamma_*^2=\frac{481}{625},
\]
and retain the slightly weaker auxiliary constants
\[
 \beta=\sqrt2-\frac14,\qquad
 \gamma=1-\frac\beta2=\frac98-\frac1{\sqrt2}.
\]
Then, for every $c\ge2$,
\begin{equation}\label{eq:anticoherent-residual-shell-budget}
 \sum_{a=1}^{c-1}R_{a,c-a}\le\frac{\beta_*}{\sqrt c}.
\end{equation}
If
\[
 A_L=H_L^{(3/2)},\qquad
 \Delta_L=LA_L-\frac12\sum_{a+b\le L}K_{a,b},
\]
then
\begin{equation}\label{eq:anticoherent-plateau-strong-margin}
 \Delta_L\ge\frac1{\sqrt L}+\gamma_* H_{L-1}^{(1/2)}
 \ge\kappa_*\sqrt L.
\end{equation}
Finally, for every $1\le M\le N\le2M$,
\begin{equation}\label{eq:anticoherent-nested-internal-budget}
 \Delta_M+H_M^{(1/2)}+M(A_N-A_M)
 \ge\beta\sum_{c=2}^Nc^{-1/2}.
\end{equation}
\end{lemma}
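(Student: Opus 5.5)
The three assertions will be proved in order. The last two are bookkeeping once \eqref{eq:anticoherent-residual-shell-budget} is available, so almost all of the work goes into the shell budget.

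\emph{Shell budget.} I will use the homogeneity of the residual kernel: $R_{\lambda a,\lambda b}=\lambda^{-3/2}R_{a,b}$. Set $r(x)=R_{x,1-x}$ for $0<x<1$, so that $R_{a,c-a}=c^{-3/2}r(a/c)$ and
\[
 \sum_{a=1}^{c-1}R_{a,c-a}=\frac1{\sqrt c}\cdot\frac1c\sum_{a=1}^{c-1}r(a/c).
\]
The kernel is symmetric, so $r(x)=r(1-x)$. From \eqref{eq:anticoherent-positive-residual-kernel} one has $r(x)\sim(2\sqrt x)^{-1}$ as $x\to0$, so $r$ is integrable.

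The key qualitative step is to show that $r$ is decreasing on $(0,1/2]$. I would do this with the substitution $t=\sqrt{x/(1-x)}\in(0,1]$, under which $r$ becomes an explicit algebraic function of $t$. I would then differentiate, clear the positive radicals as in the proof of Proposition~\ref{prop:anticoherent-paraproduct-shift}, and certify the resulting one-variable polynomial sign by Bernstein coefficients on $[0,1]$.

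Granted monotonicity, $\frac1c\sum_{1\le a\le c/2}r(a/c)$ is a right Riemann sum of a decreasing function, so it is at most $\int_0^{1/2}r$. When $c$ is even, the diagonal term $a=c/2$ is counted only once in the full sum, so doubling only helps. This gives
\[
 \sum_{a=1}^{c-1}R_{a,c-a}\le\frac1{\sqrt c}\int_0^1 r(x)\,\dd x.
\]
It then suffices to show $\int_0^1 r\le 24/25$. In the $t$-variable the integral is elementary but messy. I would bound it rigorously, either by a rational upper envelope for $r$ (for instance a refinement of \eqref{eq:anticoherent-residual-rational-bound} integrated in closed form) or by interval quadrature with the endpoint singularity $\tfrac12x^{-1/2}$ integrated exactly.

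I expect this step to be the main obstacle: both the monotonicity certificate and keeping the integral below $0.96$ rather than the crude $\sqrt2$. If the monotonicity of $r$ fails near $1/2$, I would fall back on a direct check of the shell sum for $c\le c_0$ together with an Euler--Maclaurin error bound for $c>c_0$.

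\emph{Strong plateau margin.} I would repeat the proof of Proposition~\ref{prop:anticoherent-critical-plateau}, with the shell estimate \eqref{eq:anticoherent-critical-plateau-shell} replaced by \eqref{eq:anticoherent-residual-shell-budget}. The baseline identity \eqref{eq:anticoherent-critical-plateau-baseline} gives
\[
 \Delta_L\ge L^{-1/2}+H_{L-1}^{(1/2)}-\frac{\beta_*}2\sum_{c=2}^Lc^{-1/2}.
\]
Since $c^{-1/2}\le(c-1)^{-1/2}$ termwise, the last sum is at most $H_{L-1}^{(1/2)}$, and the first inequality in \eqref{eq:anticoherent-plateau-strong-margin} follows.

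For the second inequality, use $H_{L-1}^{(1/2)}\ge2(\sqrt L-1)$ (integral comparison). This reduces the claim to
\[
 L^{-1/2}+(2\gamma_*-\kappa_*)\sqrt L\ge2\gamma_*.
\]
Because $2\gamma_*-\kappa_*=\gamma_*^2$, this is exactly the AM--GM inequality $L^{-1/2}+\gamma_*^2\sqrt L\ge2\gamma_*$.

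\emph{Nested budget.} Insert the first bound of \eqref{eq:anticoherent-plateau-strong-margin} for $\Delta_M$. Then split the right-hand side of \eqref{eq:anticoherent-nested-internal-budget} at $c=M$.

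For $c\le M$, the left-hand side contributes at least $(1+\gamma_*)\sum_{c=2}^M c^{-1/2}$. Since $1+\gamma_*=1.52>\beta\approx1.164$, this leaves a surplus of about $0.356\cdot2(\sqrt M-1)$.

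For $M<c\le N\le2M$, use $Mc^{-3/2}=(M/c)\,c^{-1/2}\ge\tfrac12c^{-1/2}$. The remaining deficit $(\beta-\tfrac12)\sum_{M<c\le 2M}c^{-1/2}\approx0.664\cdot2(\sqrt2-1)\sqrt M\approx0.55\sqrt M$ is then covered by the surplus from $c\le M$ plus the $1/\sqrt M$ term, once $M$ is moderately large. I would make this precise with integral comparisons for $\sum c^{-1/2}$ and settle the finitely many small $M$ (say $M\le M_0$, all $N\in[M,2M]$) by exact evaluation. The constant $\beta=\sqrt2-\tfrac14$ is evidently chosen with slack to make this final comparison routine.
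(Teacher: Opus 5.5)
Your proposal is correct. The plateau margin is handled exactly as in the paper, but the shell budget and the nested budget take different routes from the paper's proof of this lemma.

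For the shell budget, the paper never works with the exact residual. It majorizes $R_{a,c-a}$ by the rational bound \eqref{eq:anticoherent-residual-rational-bound}, whose scaled form $\frac{1+\tau}{(1+3\tau)\sqrt x}$ is trivially decreasing. The price is a tight integral $I_*=0.95597\ldots$, which must be certified below $24/25$ by an $\operatorname{arctanh}$ series. You instead need a genuine monotonicity certificate for the exact $r$, but you then have ample room. In fact $r$ is decreasing on all of $(0,1/2]$, and $\int_0^1 r\,\dd x=4-\pi\approx0.858$ exactly (substitute $x=\sin^2\theta$), so neither of your fallbacks is needed. This is precisely the argument of Lemma~\ref{lem:anticoherent-sharp-residual-shell}, and it yields the sharper constant.

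For the nested budget, the paper uses only the weaker $\gamma=\frac98-\frac1{\sqrt2}$. It combines this with a Jensen lower bound for $M(A_N-A_M)$, a two-variable reduction, and separate checks for $M\le4$. Your termwise bound $Mc^{-3/2}\ge\frac12c^{-1/2}$ is cruder, and it closes only because you insert $\gamma_*=\frac{13}{25}$. With $\gamma_*$, the surplus rate $2(1+\gamma_*-\beta)\approx0.712$ beats the deficit rate $2(\beta-\frac12)(\sqrt2-1)\approx0.550$. With the paper's $\gamma$, the surplus rate would be about $0.507$ and your chain would fail for large $M$.

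One correction to your bookkeeping: $\sum_{c=2}^Mc^{-1/2}$ is not $\ge2(\sqrt M-1)$ for large $M$, since its constant term is $\zeta(\frac12)-1\approx-2.46$. Use instead $\sum_{c=2}^Mc^{-1/2}\ge2\sqrt{M+1}-2\sqrt2$ and $\sum_{c=M+1}^{2M}c^{-1/2}\le2(\sqrt2-1)\sqrt M$, and keep the terms $1+M^{-1/2}$. The difference is then at least $M^{-1/2}-0.0064+0.161\sqrt M$, which is positive for every $M\ge1$. So no small-$M$ enumeration is required.
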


\begin{proof}
For $1\le a\le b$, put $\tau=\sqrt{a/b}$.  The rational majorant
\eqref{eq:anticoherent-residual-rational-bound} gives, after collecting the
two orientations and writing $m=\lfloor c/2\rfloor$,
\[
\begin{aligned}
 \sum_{a=1}^{c-1}R_{a,c-a}
 &\le\frac1c\sum_{a=1}^m\frac1{\sqrt a}
 \frac{1+\sqrt{a/(c-a)}}{1+3\sqrt{a/(c-a)}}.
\end{aligned}
\]
After the scaling $a=cx$, the summand is decreasing in $x$.  Its right
Riemann sum is therefore bounded by
\[
 \frac1{\sqrt c}\int_0^{1/2}
 \frac{1+\sqrt{x/(1-x)}}
 {\sqrt x\,[1+3\sqrt{x/(1-x)}]}
 \,\mathrm dx
 =\frac{I_*}{\sqrt c}.
\]
With $x=t^2/(1+t^2)$,
\[
 I_*=\int_0^1\frac{2(1+t)}{(1+3t)(1+t^2)^{3/2}}\,\mathrm dt
 =0.955973\ldots<\frac{24}{25}.
\]
For completeness, if
\[
 L_*=\frac2{\sqrt{10}}\left[
 \operatorname{arctanh}\frac{\sqrt2-4}{\sqrt{10}}
 -\operatorname{arctanh}\frac{-3}{\sqrt{10}}\right],
\]
then $I_*=(3\sqrt2-2+6L_*)/5$.  Combining the two inverse hyperbolic
tangents leaves the argument
$z_*=(2\sqrt{10}-\sqrt5)/7<3/5$.  In particular,
\[
 \operatorname{arctanh}z_*
 \le\sum_{j=0}^{11}\frac{z_*^{2j+1}}{2j+1}
 +\frac{z_*^{25}}{25(1-z_*^2)},
\]
and substitution followed by rational squaring gives $I_*<24/25$.
This proves \eqref{eq:anticoherent-residual-shell-budget}.

The exact plateau baseline gives
\[
 \Delta_L=\frac1{\sqrt L}+H_{L-1}^{(1/2)}
 -\frac12\sum_{a+b\le L}R_{a,b}.
\]
Pairing the shell $c$ with the term $(c-1)^{-1/2}$ and using
\eqref{eq:anticoherent-residual-shell-budget} proves the first inequality in
\eqref{eq:anticoherent-plateau-strong-margin}.  Since
$H_{L-1}^{(1/2)}\ge2(\sqrt L-1)$, division by $\sqrt L$ and completion of
the square give
\[
 \frac{\Delta_L}{\sqrt L}
 \ge L^{-1}+2\gamma_*(1-L^{-1/2})
 \ge2\gamma_*-\gamma_*^2=\kappa_*.
\]

It remains to record the elementary internal comparison.  We use only the
weaker consequences
$\sum_{a=1}^{c-1}R_{a,c-a}\le\beta c^{-1/2}$ and
$\Delta_M\ge M^{-1/2}+\gamma H_{M-1}^{(1/2)}$, which follow from the
stronger bounds just proved.  The estimates
\[
\begin{aligned}
 H_{M-1}^{(1/2)}&\ge2(\sqrt M-1),\\
 \sum_{c=2}^Nc^{-1/2}&\le2(\sqrt N-1),\\
 M(A_N-A_M)&\ge
 M(N-M)\left(\frac{M+N+1}{2}\right)^{-3/2}
\end{aligned}
\]
use an integral comparison in the first two lines and Jensen's inequality
in the third.  Set $u=\sqrt M$ and $r=N/M\in[1,2]$.  They reduce
\eqref{eq:anticoherent-nested-internal-budget} to
\[
\begin{aligned}
 F(u,r)
 &=\frac2u+2(1+\gamma)(u-1)
 +u(r-1)\left(\frac2{1+r+u^{-2}}\right)^{3/2}\\
 &\quad-2\beta(u\sqrt r-1)\ge0.
\end{aligned}
\]
Direct differentiation gives
\[
 \frac1u\partial_rF
 =\frac{2^{3/2}(5/2+u^{-2}-r/2)}
 {(1+r+u^{-2})^{5/2}}-\frac{\beta}{\sqrt r}<0
\]
on $u\ge1$, $1\le r\le2$.  Indeed, the first term decreases with
$u^{-2}$; at $u^{-2}=0$, squaring reduces the comparison to a ratio which
is decreasing in $r$ and equals one at $r=1$, whereas $\beta^2>1$.
Thus it is enough to put $r=2$.
For $M\ge5$,
\[
 \left(\frac2{3+M^{-1}}\right)^{3/2}\ge\frac{49}{100},
\]
and the arithmetic--geometric mean inequality gives
\[
 F(u,2)\ge
 2\sqrt{2p}+2\beta-2(1+\gamma)>0,
 \quad
 p=2(1+\gamma)+\frac{49}{100}-2\sqrt2\,\beta.
\]
The last constant is positive; the final displayed lower bound is
$0.0056\ldots$.  Direct substitution for $M=1,2,3,4$ gives respectively
lower bounds greater than $1.38,0.87,0.66,0.54$.  This proves
\eqref{eq:anticoherent-nested-internal-budget}.
\end{proof}

\begin{lemma}[Sharp residual shell budget]
\label{lem:anticoherent-sharp-residual-shell}
For every integer $c\ge2$,
\begin{equation}\label{eq:anticoherent-sharp-residual-shell}
 \sum_{a=1}^{c-1}R_{a,c-a}
 \le\frac{4-\pi}{\sqrt c}
 <\frac7{8\sqrt c}.
\end{equation}
\end{lemma}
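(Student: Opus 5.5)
The plan is to compare the shell sum with its continuum limit, which we expect to equal exactly \(4-\pi\). The kernel \(R_{a,b}\) in \eqref{eq:anticoherent-positive-residual-kernel} is positively homogeneous of degree \(-3/2\), since \(K_{a,b}\) is. Setting \(f(x)=R_{x,1-x}\) for \(0<x<1\), we therefore get
\[
 \sum_{a=1}^{c-1}R_{a,c-a}=c^{-3/2}\sum_{a=1}^{c-1}f(a/c)
 =\frac1{\sqrt c}\cdot\frac1c\sum_{a=1}^{c-1}f(a/c).
\]
It thus suffices to prove \(\frac1c\sum_{a=1}^{c-1}f(a/c)\le\int_0^1f(x)\,\dd x\) and \(\int_0^1f=4-\pi\).

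\textbf{The integral.} Substitute \(x=\sin^2\theta\), so \(\dd x=2\sin\theta\cos\theta\,\dd\theta\) and \(x(1-x)=\sin^2\theta\cos^2\theta\). Use the identity \(1/(\sin^2\theta\cos^2\theta)=1/\sin^2\theta+1/\cos^2\theta\). The three pieces of \(f\,\dd x\) then combine into
\[
 2\Bigl[\frac{1-\cos\theta}{\sin^2\theta}+\frac{1-\sin\theta}{\cos^2\theta}-1\Bigr]\dd\theta .
\]
Each fraction is integrable on \([0,\pi/2]\). Indeed, \((1-\cos\theta)/\sin^2\theta=1/(1+\cos\theta)\) has primitive \(\tan(\theta/2)\), so it integrates to \(1\), and the second term integrates to \(1\) by the symmetry \(\theta\mapsto\pi/2-\theta\). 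Hence \(\int_0^1f=2(2-\pi/2)=4-\pi\). The final numerical bound \(4-\pi<7/8\) is just \(\pi>25/8\).

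\textbf{Riemann-sum comparison.} The function \(f\) is symmetric about \(1/2\), positive, and behaves like \(1/(2\sqrt x)\) near \(0\) by \eqref{eq:anticoherent-residual-asymptotic}. Suppose \(f\) is nonincreasing on \((0,1/2]\), and let \(m=\lfloor c/2\rfloor\). Then the right Riemann sum satisfies \(\frac1c\sum_{a=1}^mf(a/c)\le\int_0^{m/c}f\le\int_0^{1/2}f\). For odd \(c\), the full sum is twice the sum over \(a\le m\), and it is bounded by \(2\int_0^{1/2}f=\int_0^1f\). For even \(c\), the full sum equals twice the sum over \(a\le m\) minus the single diagonal term \(f(1/2)/c\), so the same bound holds with room to spare. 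This is the argument already used for \(I_*\) in Lemma~\ref{lem:anticoherent-residual-shell-budget}, now applied to \(f\) itself instead of its rational majorant.

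\textbf{Main obstacle: monotonicity of \(f\) on \((0,1/2]\).} I would parametrize by \(t=\sqrt{x/(1-x)}\in(0,1]\), equivalently by \(\theta\) as above. In \(\theta\) the integrand is \(g(\theta)=2[1/(1+\cos\theta)+(1-\sin\theta)/\cos^2\theta-1]\), and \(f(x)=g(\theta)/(2\sin\theta\cos\theta)\). The goal is to show \(\frac{\dd}{\dd\theta}\bigl[g(\theta)/\sin(2\theta)\bigr]\le0\) on \((0,\pi/4]\). First I would pass to rational coordinates, for instance \(u=\tan(\theta/2)\in(0,\sqrt2-1]\), or \(t\) with \(\sqrt{1+t^2}\) cleared by squaring as in the proof of \eqref{eq:anticoherent-residual-upper-bound}. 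Then I would clear positive denominators and reduce the derivative sign to a one-variable polynomial inequality on a compact interval. That polynomial can be certified by nonnegative Bernstein coefficients, as done for the earlier residual bounds.

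If monotonicity fails near \(1/2\), the fallback is to split the sum. The part near the endpoint \(x=0\), where \(f\) clearly decreases, would be handled by the right-sum comparison. The middle range would be handled by a convexity (midpoint) comparison. Any small cutoffs \(c\) would be checked directly.
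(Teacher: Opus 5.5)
Your outline is the paper's proof. It uses the same homogeneity reduction to $c^{-3/2}f(a/c)$, the same right Riemann sum on $[0,1/2]$ with the midpoint counted once for even $c$, the same substitution $x=\sin^2\theta$ producing $4-\pi$, and $\pi>25/8$ at the end. Your reduction of the integrand to $2[1/(1+\cos\theta)+1/(1+\sin\theta)-1]$ is a slightly cleaner form of the paper's antiderivative.

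The one step you do not carry out is monotonicity of $f$ on $(0,1/2]$, and as written your argument is conditional on it. This is exactly the step the paper has to work for. It uses $t=\sqrt{x/(1-x)}$, where the derivative carries the factor $t-1$ times a remainder containing $\sqrt{1+t^2}$, and removes the square root by squaring.

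Your proposed variable $u=\tan(\theta/2)$ closes this step without a Bernstein certificate or any fallback. Use $\sin\theta=2u/(1+u^2)$, $\cos\theta=(1-u^2)/(1+u^2)$ and $f=[1/(1+\cos\theta)+1/(1+\sin\theta)-1]/(\sin\theta\cos\theta)$. Then
\[
 f=\frac{(1+u^2)^2N(u)}{4u(1-u)(1+u)^3},\qquad N(u)=u^4+2u^3+u^2+(1-u)^2>0,
\]
and direct logarithmic differentiation gives
\[
 \frac{\dd}{\dd u}\log f=\frac{(u^2+2u-1)\,S(u)}{u(1-u^4)N(u)},\qquad
 S(u)=(1+5u)(1-u)+u^3(12-u-3u^3)>0\quad(0<u<1).
\]
Hence $f$ decreases exactly for $u<\sqrt2-1=\tan(\pi/8)$. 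Since $u\mapsto x$ is increasing, this is exactly $x<1/2$. The factor $u^2+2u-1$ plays the role of the paper's $t-1$. With this computation inserted, your proof is complete.
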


\begin{proof}
Homogeneity and the elementary-residual splitting give
\[
 R_{a,c-a}=c^{-3/2}f(a/c),
 \quad
 f(x)=\frac{1-x+x^2}{[x(1-x)]^{3/2}}
       -x^{-3/2}-(1-x)^{-3/2}.
\]
The function $f$ is symmetric about $1/2$ and decreases on $(0,1/2]$.
Here is a direct algebraic check of the latter fact.  Put
$t=\sqrt{x/(1-x)}\in(0,1]$.  Differentiation gives
\[
 f'(x)=\frac{(t-1)(1+t^2)^2}{2t^5}P(t),
\]
where
\[
\begin{aligned}
 P(t)={}&3t^5+3t^4+5t^3+5t^2+3t+3\\
 &-3\sqrt{1+t^2}(t^4+t^3+t^2+t+1).
\end{aligned}
\]
If $Q$ and $A$ denote respectively the polynomial in the first line and
$t^4+t^3+t^2+t+1$, then
\[
 Q^2-9(1+t^2)A^2
 =t^2(3t^6+6t^5+t^4+14t^3+t^2+6t+3)>0.
\]
Thus $P(t)>0$ and $f'(x)\le0$.  Pair the two orientations in the shell.
The right Riemann sum on $[0,1/2]$ now gives, with the midpoint counted only
once when $c$ is even,
\[
 \frac1c\sum_{a=1}^{c-1}f(a/c)\le\int_0^1f(x)\,\dd x.
\]
Finally set $x=\sin^2\theta$.  An antiderivative of the transformed
integrand is
\[
 -2\cot\theta+2\tan\theta-2\theta
 +2\csc\theta-2\sec\theta.
\]
Its endpoint difference from $0$ to $\pi/2$ is $4-\pi$.  This proves the
first inequality in \eqref{eq:anticoherent-sharp-residual-shell}; the second
follows from $\pi>25/8$.
\end{proof}

\begin{theorem}[All-cutoff two-level critical plateau theorem]
\label{thm:anticoherent-two-level-plateau}
Let $1\le M\le N$, $s,t\ge0$, and
\begin{equation}\label{eq:anticoherent-two-level-plateau}
 x_n=n^{-3/2}\left(
 s\mathbf1_{\{1\le n\le M\}}+
 t\mathbf1_{\{1\le n\le N\}}\right).
\end{equation}
Then
\begin{equation}\label{eq:anticoherent-two-level-plateau-bound}
 \mathcal C(x)<2AE_3
 \qquad\text{unless }s=t=0.
\end{equation}
The result is uniform in both cutoffs and remains true after a common
frequency dilation.
\end{theorem}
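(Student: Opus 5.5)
The plan is to follow the binary cubic template of Proposition~\ref{prop:anticoherent-spike-plateau}. Write
\[
 \mathfrak P(s,t)=AE_3-\tfrac12\mathcal C(x)=\Delta_Ms^3+b\,s^2t+c\,st^2+\Delta_Nt^3 ,
\]
where the diagonal coefficients are the plateau margins \(\Delta_L\) of Lemma~\ref{lem:anticoherent-residual-shell-budget}. The mixed coefficients \(b,c\) come from polarizing \(A\), \(E_3\), and \(\mathcal C\). For instance, \(A=sA_M+tA_N\) and \(E_3=s^2M+2stM+t^2N\), since on \(\{1,\dots,M\}\) the two profiles coincide and \(n^3x_n^2\) is quadratic in \(s+t\). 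The triad sum over \(a+b\le N\) is split according to how many of the three indices \(a,b,a+b\) lie in \(\{1,\dots,M\}\). Using \(K_{a,b}=a^{-3/2}+b^{-3/2}+R_{a,b}\), each coefficient becomes an explicit elementary piece (harmonic sums \(H^{(1/2)}\), \(H^{(3/2)}\) and \(M(A_N-A_M)\)) minus a residual sum of \(R_{a,b}\) over a restricted region of shells. By dilation invariance (both sides scale by \(q^3\)) it suffices to treat the undilated case. The cases \(s=0\) or \(t=0\) reduce to Proposition~\ref{prop:anticoherent-critical-plateau}, so assume \(s,t>0\).

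The key steps run in the following order.
\begin{enumerate}
\item Compute \(b\) and \(c\) exactly. Bound every residual contribution above by the shell budget of Lemma~\ref{lem:anticoherent-sharp-residual-shell} (\(\sum_aR_{a,c-a}\le(4-\pi)/\sqrt c\)), or by \(\beta_*/\sqrt c\). This gives lower bounds \(b\ge -h_b\) and \(c\ge -h_c\), with \(h_b,h_c\) of order \(\sqrt N\) at worst, because the bad region is a union of at most \(N\) shells.
\item Split into the regimes \(N\le 2M\) and \(N>2M\).
\end{enumerate}

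In the comparable regime \(N\le2M\), I would avoid the discriminant and use a convexity identity instead. Writing \(\mathfrak P(s,t)\) as the plateau deficit of the total profile evaluated along the ray, one compares with the single plateau at level \(s+t\) on \(\{1,\dots,N\}\). The mismatch on \((M,N]\) is controlled by \(\Delta_M+H_M^{(1/2)}+M(A_N-A_M)\), and inequality \eqref{eq:anticoherent-nested-internal-budget} is exactly designed to pay the residual \(\beta\sum_{c\le N}c^{-1/2}\) there. Concretely, I expect that after grouping \(\mathfrak P\) as \(s\,(\cdot)^2\)-type sums of the form \(s^2(s+3t)\,[\ldots]+t^3\Delta_N+st(s+t)[\ldots]\), each bracket becomes nonnegative by the nested budget.

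In the separated regime \(N>2M\), I would use the scale gap. Here \(h_c\lesssim \sqrt M\,\log(N/M)\) and \(h_b\lesssim\sqrt M\), whereas \(\Delta_N\ge\kappa_*\sqrt N\) and \(\Delta_M\ge\kappa_*\sqrt M\). A cubic \(\Delta_Ms^3-h_bs^2t-h_cst^2+\Delta_Nt^3\) is positive on the quadrant if, for instance, \(h_b^{\,}h_c\) and \(h_b^3,h_c^3\) are dominated as in the AM--GM splitting
\[
 h_bs^2t\le\tfrac23\theta\Delta_Ms^3+\tfrac13\cdots\,t^3,\qquad
 h_cst^2\le\tfrac13\cdots\,s^3+\tfrac23\theta'\Delta_Nt^3 .
\]
This reduces to numerical inequalities like \(h_c^3\lesssim\Delta_M\Delta_N^2\), i.e.\ \(M^{3/2}\log^3(N/M)\lesssim \sqrt M\,N\). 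That holds with room for large \(N/M\), and needs the subunit constant \(\kappa_*\) only near \(N/M\approx2\). Small \(M\) (say \(M\le4\)) would be checked by direct substitution as in the lemma.

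The main obstacle is the mixed coefficient \(c\), the \(st^2\) term. Its negative part collects residuals from triads with one low input in \(\{1,\dots,M\}\) and the other input and the output up to \(N\), giving a logarithmic high--low tail. The difficulty is getting constants that close uniformly across the transition \(N\approx2M\) between the two regimes. I would first try the crude shell bound with \(\beta_*=24/25\) and handle the transition by monotonicity in \(r=N/M\), as in the proof of \eqref{eq:anticoherent-nested-internal-budget}. If that is too lossy, I would switch to the sharp \(4-\pi\) budget.
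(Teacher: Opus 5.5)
Your outline has the same skeleton as the paper's proof: the binary cubic with diagonal $\Delta_M,\Delta_N$, exact polarization, the nested budget \eqref{eq:anticoherent-nested-internal-budget} to pay the $st^2$ residual, and a logarithmic high--low tail closed by a cubic discriminant with $\kappa_*$. However, the comparable regime does not close as you set it up.

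\textbf{The regime $N\le 2M$.} Your grouping $s^2(s+3t)[\cdot]+t^3\Delta_N+st(s+t)[\cdot]$ is not an identity. Matching the coefficients of $s^3,s^2t,st^2$ forces the first bracket to equal $\Delta_M$ and the second to equal both $b-3\Delta_M$ and $c$. The single plateau at level $s+t$ cannot serve as a lower comparison either, because $\mathfrak P(s,0)=\Delta_Ms^3<\Delta_Ns^3$. No identity is needed: what is needed is coefficientwise nonnegativity. Your plan never establishes the sign of the repeated-small coefficient $b=2\Delta_M+MA_N-\tfrac12S_{M,N}$; you only carry it as a possible loss $h_b$. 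In fact $b>0$ for every $M<N$. The elementary part of $S_{M,N}$ is at most $2MA_M$, and its residual lies on shells $j\le2M$. Hence $b\ge2\Delta_M-\tfrac\beta2\sum_{j=2}^{2M}j^{-1/2}\ge(2\gamma-\beta/\sqrt2)H^{(1/2)}_{M-1}+(2-\beta/(2\sqrt2))M^{-1/2}>0$, with the constants of Lemma~\ref{lem:anticoherent-residual-shell-budget}. You correctly attribute the bound $c\ge\Delta_M+H^{(1/2)}_M+M(A_N-A_M)-\mathcal R_{M,N}\ge0$ to the nested budget. Together with $b>0$, all four coefficients are nonnegative when $N\le2M$, and that finishes this regime.

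\textbf{The regime $N>2M$.} Your Step~1 estimate, with residuals of order $\sqrt N$, would be fatal here: $h_c\sim\sqrt N$ cannot satisfy $h_c^3\lesssim\Delta_M\Delta_N^2\sim\sqrt M\,N$. The logarithmic tail appears only after two refinements.
\begin{enumerate}
\item Pay the shells $j\le2M$ by \eqref{eq:anticoherent-nested-internal-budget} at cutoff $2M$, using $M(A_N-A_M)\ge M(A_{2M}-A_M)$.
\item On the remaining shells the low input satisfies $a\le M<b$, so $R_{a,b}\le(2\sqrt a\,j)^{-1}$. This gives $(-c)_+\le\tfrac12H^{(1/2)}_M\sum_{2M<j\le N}j^{-1}\le\sqrt M\log(N/(2M))$.
\end{enumerate}
Because $b>0$, there is no $h_b$ term and no AM--GM splitting is needed; just discard $bs^2t$. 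The bound $\sup_{z\ge1}z^{-1}\log^3z=27e^{-3}$ then gives $h^3\le\tfrac{27}{2e^3}M^{3/2}(N/M)<\tfrac{27}{4}\Delta_M\Delta_N^2$, using $\kappa_*^3>2e^{-3}$. This also removes the obstacle you anticipate at $N\approx2M$: the tail vanishes there, and the binding ratio is $N/(2M)=e^3$, not $N/M\approx2$.
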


\begin{proof}
The case $M=N$ follows from
Proposition~\ref{prop:anticoherent-critical-plateau}; assume $M<N$.  Put
\[
 AE_3-\frac12\mathcal C(x)
 =a s^3+b s^2t+cst^2+d t^3.
\]
The endpoint coefficients are
\[
 a=\Delta_M,\qquad d=\Delta_N.
\]
Let
\[
\begin{aligned}
 S_{M,N}&=\sum_{\substack{a,b\le M\\a+b\le N}}K_{a,b},\\
 T_{M,N}&=\sum_{\substack{a\le M\\a+b\le N}}K_{a,b}.
\end{aligned}
\]
Polarization gives
\begin{equation}\label{eq:anticoherent-two-level-coefficients}
\begin{aligned}
 b&=2\Delta_M+MA_N-\frac12S_{M,N},\\
 c&=\Delta_M+(N-M)A_M+2MA_N-T_{M,N}.
\end{aligned}
\end{equation}

Write $K_{a,b}=a^{-3/2}+b^{-3/2}+R_{a,b}$.  The elementary pieces in
$S_{M,N}$ contribute at most $2MA_M$, while its residual part is bounded by
\[
 I_M:=\beta\sum_{j=2}^{2M}j^{-1/2}.
\]
Moreover,
\[
 \sum_{j=2}^{2M}j^{-1/2}
 \le\sqrt2 H_{M-1}^{(1/2)}+\frac1{\sqrt{2M}}.
\]
Therefore
\[
\begin{aligned}
 b&\ge2\Delta_M-\frac12I_M\\
 &\ge
 \left(2\gamma-\frac{\beta}{\sqrt2}\right)H_{M-1}^{(1/2)}
 +\left(2-\frac{\beta}{2\sqrt2}\right)\frac1{\sqrt M}>0.
\end{aligned}
\]

For the second mixed coefficient, direct summation of the two elementary
pieces in $T_{M,N}$ gives
\begin{equation}\label{eq:anticoherent-two-level-c-lower}
 c\ge
 \Delta_M+H_M^{(1/2)}+M(A_N-A_M)-\mathcal R_{M,N},
\end{equation}
where
\[
 \mathcal R_{M,N}
 =\sum_{\substack{a\le M\\a+b\le N}}R_{a,b}.
\]
If $N\le2M$, Lemma~\ref{lem:anticoherent-residual-shell-budget} gives
$\mathcal R_{M,N}\le\beta\sum_{j=2}^Nj^{-1/2}$, so
\eqref{eq:anticoherent-nested-internal-budget} implies $c\ge0$.

Suppose $N>2M$.  The shells through $2M$ are again paid by
\eqref{eq:anticoherent-nested-internal-budget}.  On a later shell
$j=a+b>2M$, one has $b>a$ and
\[
 R_{a,b}\le\frac1{2\sqrt a\,j}.
\]
Consequently, with $h=(-c)_+$,
\begin{equation}\label{eq:anticoherent-two-level-negative-tail}
 h\le\frac12H_M^{(1/2)}
 \sum_{j=2M+1}^N\frac1j
 \le\sqrt M\log\frac{N}{2M}.
\end{equation}

By \eqref{eq:anticoherent-plateau-strong-margin},
$a\ge\kappa_*\sqrt M$ and $d\ge\kappa_*\sqrt N$.  If $c<0$, set
$R=N/M>2$.  The elementary maximum
\[
 \sup_{z\ge1}\frac{\log^3z}{z}=\frac{27}{e^3}
\]
and \eqref{eq:anticoherent-two-level-negative-tail} yield
\[
 h^3\le\frac{27}{2e^3}M^{3/2}R
 <\frac{27}{4}ad^2,
\]
because $\kappa_*^3>2e^{-3}$.  Hence, for $s>0$ and $r=t/s$,
\[
 s^{-3}(as^3-hst^2+dt^3)
 =a-hr^2+dr^3
 \ge a-\frac{4h^3}{27d^2}>0.
\]
The coefficient $b$ is positive, and the endpoint cases follow from
$a,d>0$.  This proves \eqref{eq:anticoherent-two-level-plateau-bound}.
\end{proof}

The same shell constant closes an unbounded number of nested layers whenever
all cutoffs lie in one dyadic block.  This is the first layer-cake statement
below whose proof does not depend on the number of layers.

\begin{lemma}[Local triple-polarization positivity]
\label{lem:anticoherent-local-triple-polarization}
Let $1\le L<M<N\le2L$, and put
$p_J(n)=\mathbf1_{\{1\le n\le J\}}$.  If
\[
 \mathfrak D(y)=
 \left(\sum_{n\ge1}n^{-3/2}y_n\right)
 \left(\sum_{n\ge1}y_n^2\right)
 -\frac12\sum_{a,b\ge1}K_{a,b}y_ay_by_{a+b},
\]
then the coefficient of $stu$ in
$\mathfrak D(sp_L+tp_M+up_N)$ is strictly positive.
\end{lemma}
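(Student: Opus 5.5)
The approach is to write the $stu$ coefficient explicitly by polarization, split $K_{a,b}=a^{-3/2}+b^{-3/2}+R_{a,b}$, and show that the elementary part leaves a margin of order $\sqrt L$. That margin should then pay for the residual part, which Lemma~\ref{lem:anticoherent-sharp-residual-shell} bounds shell by shell.

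\emph{Step 1: polarization.} Put $y=sp_L+tp_M+up_N$ and use $p_J\cdot p_{J'}=\min(J,J')$. The coefficient of $stu$ in the product term is
\[
 2\bigl(LA_M+LA_N+MA_L\bigr),\qquad A_J=H_J^{(3/2)} .
\]
In the cubic term, each ordered assignment of $(L,M,N)$ to the slots $(a,b,a+b)$ contributes. The symmetry $K_{a,b}=K_{b,a}$ pairs the two input orders, so the factor $\tfrac12$ cancels. The $stu$ coefficient of $\mathfrak D$ is therefore
\[
 \mathfrak c=2(LA_M+LA_N+MA_L)-\sum_{J_3\in\{L,M,N\}}S(J_1,J_2;J_3),
 \qquad S(J_1,J_2;J_3)=\sum_{\substack{a\le J_1,\ b\le J_2\\ a+b\le J_3}}K_{a,b},
\]
where $\{J_1,J_2\}$ is the complementary pair of cutoffs.

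\emph{Step 2: elementary part.} Replace $K$ by $a^{-3/2}+b^{-3/2}$. Each resulting sum has the form $\sum_{a\le J_1}a^{-3/2}\,\#\{b\le J_2:\ b\le J_3-a\}$, which can be evaluated exactly. The dyadic hypothesis $N\le 2L$ enters here: it forces $J_3-a\le J_2$ or $J_3-a\ge J_2$ on explicit ranges, so all truncations are elementary. For example, output $L$ gives $2LA_{L-1}-2H_{L-1}^{(1/2)}$, and output $N$ gives roughly $MA_L+LA_M$ minus a correction $\sum_{a>N-M}a^{-3/2}(a-(N-M))$. I will show that the payment minus the elementary sums equals $2L(A_N-A_L)\ge0$ plus explicit nonnegative gains. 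These gains should be of the form $2H_{L-1}^{(1/2)}+H_{M-1}^{(1/2)}+\cdots$, or at least something $\ge\theta\sqrt N$ with an explicit $\theta$. I will obtain them through the same exact baselines as \eqref{eq:anticoherent-critical-plateau-baseline} together with the integral comparisons $H^{(1/2)}_{J-1}\ge2(\sqrt J-1)$.

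\emph{Step 3: residual part.} Each residual sum is a sub-sum of the full shells $c\le J_3\le N$, and $R>0$. Lemma~\ref{lem:anticoherent-sharp-residual-shell} bounds each of the three residual sums by $(4-\pi)\sum_{c=2}^{J_3}c^{-1/2}$, and so the total by at most
\[
 (4-\pi)\bigl(2\sqrt L+2\sqrt M+2\sqrt N-6\bigr).
\]
Where this is too crude, I will exploit the restriction $a\le J_1,\ b\le J_2$. For output $N$, that restriction removes the shells $c>L+M$ partially, and the high–low bound \eqref{eq:anticoherent-residual-upper-bound} handles the remaining separated pairs.

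\emph{Main obstacle.} The hard step is the final comparison: proving that the exact elementary gains from Step~2 dominate the residual bound from Step~3 uniformly in $L<M<N\le2L$. I would normalize $M=\mu L$ and $N=\nu L$ with $1<\mu<\nu\le2$, use integral bounds to reduce the claim to a two-variable inequality in $(\mu,\nu)$ at leading order $\sqrt L$, and then treat lower-order terms as in the proof of \eqref{eq:anticoherent-nested-internal-budget}. That means a monotonicity argument in the ratios, an asymptotic bound for large $L$, and direct substitution for the finitely many small $L$. If the constant $4-\pi$ leaves too thin a margin, my first fallback is to keep the sharper rational majorant \eqref{eq:anticoherent-residual-rational-bound} on the separated shells rather than the uniform shell bound.
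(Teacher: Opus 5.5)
Your plan is essentially the paper's proof. You polarize, split $K_{a,b}=a^{-3/2}+b^{-3/2}+R_{a,b}$, and evaluate the elementary part exactly; after dropping the two nonnegative output-$N$ corrections this gives precisely the paper's lower bound $\sum_{a\le N}a^{-3/2}f_{L,M}(a)$, which the paper reads off from the shift-defect form $B_N^{(0)}$. You then charge the three residual regions to the full shells through $L$, $M$, $N$ and check the explicit margin, whose worst case is $M=2L-1$, $N=2L$.

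Using the sharp constant $4-\pi$ instead of the paper's $\beta_*=24/25$ is legitimate, since that lemma is proved earlier. It widens the leading margin from $\delta=(102-71\sqrt2)/25\approx0.064$ to about $0.84$ in units of $\sqrt L$, as in the paper's outer-uniform variant. Your Step~3 fallback about shells $c>L+M$ is moot, because $N\le2L<L+M$.
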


\begin{proof}
Write $K_{a,b}=a^{-3/2}+b^{-3/2}+R_{a,b}$ and let $\Theta_0$ denote the
coefficient produced by the two elementary terms.  For a profile $y$
supported below $N$, the coefficient of the outer plateau in the elementary
part has the exact quadratic form
\[
\begin{aligned}
 B_N^{(0)}(y)
 &=\sum_{a=1}^Na^{-3/2}
 \left(\sum_jy_j^2-\sum_jy_jy_{a+j}\right)\\
 &\quad+\sum_{a=1}^{N-1}a^{-3/2}y_a
 \left(\sum_{j\le a}y_j+\sum_{j>N-a}y_j\right).
\end{aligned}
\]
This follows by collecting the two elementary copies in the two mixed
convolution sums.  Polarize this identity between $p_L$ and $p_M$, discard
the nonnegative final tail, and set $q=M-L$.  One obtains
\begin{equation}\label{eq:anticoherent-local-triple-baseline}
 \Theta_0\ge\sum_{a=1}^Na^{-3/2}f_{L,M}(a),
\end{equation}
where
\[
 f_{L,M}(a)=
 \begin{cases}
  3a,&1\le a\le q,\\
  4a-q,&q<a\le L,\\
  2L-q+a,&L<a\le M,\\
  2L,&M<a\le N.
 \end{cases}
\]

Put $\Sigma_J=\sum_{c=2}^Jc^{-1/2}$.  The residual contribution to the
fully mixed coefficient is the sum of three regions: one contained in the
full shells through $L$, one through $M$, and one through $N$.  Therefore
Lemma~\ref{lem:anticoherent-residual-shell-budget} gives
\begin{equation}\label{eq:anticoherent-local-triple-margin}
 \Theta\ge F(L,M,N):=
 \sum_{a=1}^Na^{-3/2}f_{L,M}(a)
 -\beta_*(\Sigma_L+\Sigma_M+\Sigma_N).
\end{equation}

It remains only to check this explicit one-dimensional margin.  A direct
difference, using that the displayed sum contains exactly $L$ terms, gives
\[
\begin{aligned}
 F(L,M+1,N)-F(L,M,N)
 &=\frac{L}{(M+1)^{3/2}}
 -\sum_{a=M-L+1}^{M}a^{-3/2}
 -\frac{\beta_*}{\sqrt{M+1}}<0.
\end{aligned}
\]
Thus the worst middle cutoff is $M=N-1$.  If
$G(L,N)=F(L,N-1,N)$, then
\[
\begin{aligned}
 G(L,N+1)-G(L,N)
 &=-\sum_{a=N-L}^{N-1}a^{-3/2}
 +\frac{L}{N^{3/2}}+\frac{2L}{(N+1)^{3/2}}\\
 &\quad-\beta_*\left(\frac1{\sqrt N}+\frac1{\sqrt{N+1}}\right)<0.
\end{aligned}
\]
Indeed, Jensen's inequality bounds the first sum from below by
$2^{3/2}L(2N-L)^{-3/2}$.  After putting $r=N/L\in[1,2]$ and using
$(N+1)^{-1/2}\ge\sqrt{2/3}\,N^{-1/2}$, the remaining comparison is the
positivity on $[1,2]$ of
\[
 \frac{2^{3/2}}{(2r-1)^{3/2}}-\frac3{r^{3/2}}
 +\frac{48}{25}\sqrt{\frac2{3r}}.
\]
The function is decreasing and its value at $r=2$ is greater than $1/2$.
For a short exact certificate, differentiate and move the two positive
terms to opposite sides; after squaring, the resulting degree-seven
polynomial has positive Bernstein coefficients on $[1,2]$.

It is consequently enough to take $M=2L-1$ and $N=2L$.  Set $u=\sqrt L$.
The integral bounds
\[
 \sum_{a=1}^{L-1}a^{-1/2}\ge2(u-1),\qquad
 \sum_{a=1}^{J}a^{-1/2}\le2\sqrt J-1
\]
and the corresponding lower integrals on $[L+1,2L]$ give
\[
 F(L,2L-1,2L)
 \ge\delta u+\frac{3-\sqrt2}{u}-\frac6{25},
 \qquad
 \delta=\frac{102-71\sqrt2}{25}>0.
\]
Finally,
$2\sqrt{\delta(3-\sqrt2)}>6/25$, so the arithmetic--geometric mean
inequality proves that the right-hand side is strictly positive.  This
proves the lemma.
\end{proof}

\begin{lemma}[Outer-uniform local triple polarization]
\label{lem:anticoherent-outer-uniform-local-triple}
Let $1\le L<M<N$ and $M\le2L$.  If $\Theta_{L,M,N}$ is the coefficient of
$stu$ in $\mathfrak D(sp_L+tp_M+up_N)$, then
\begin{equation}\label{eq:anticoherent-outer-uniform-local-triple}
 \Theta_{L,M,N}\ge\frac23\sqrt L.
\end{equation}
Thus two inner cutoffs in one dyadic block have a uniformly positive mixed
interaction with an arbitrarily distant outer plateau.
\end{lemma}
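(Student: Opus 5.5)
The plan is to reuse the decomposition from the proof of Lemma~\ref{lem:anticoherent-local-triple-polarization}: write $K_{a,b}=a^{-3/2}+b^{-3/2}+R_{a,b}$ and split $\Theta_{L,M,N}=\Theta_0-\Theta_R$. Here $\Theta_0$ is the elementary contribution and $\Theta_R\ge0$ is the residual one. The new point is that both pieces should be controlled independently of $N$ once $N>M$.

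\emph{Elementary part.} I would use the exact quadratic form $B_N^{(0)}$ of the outer-plateau coefficient, polarized between $p_L$ and $p_M$. This form remains valid for any $N$ exceeding the support of the inner profile. As before, I discard the nonnegative tail $\sum_{j>N-a}y_j$. The shift-defect part then gives
\[
 \Theta_0\ge\sum_{a=1}^{M}a^{-3/2}f_{L,M}(a)+2L\sum_{a=M+1}^{N}a^{-3/2},
\]
with the same piecewise $f_{L,M}$, and the last sum may be dropped. For $a\le M$ the profile $p_L+p_M$ sees the full shift defects, so nothing depends on $N$.

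\emph{Residual part.} Each monomial $y_ay_by_{a+b}$ contributing to the $stu$ coefficient assigns one slot to each plateau. If the output slot $a+b$ belongs to $p_L$ or $p_M$, then $a+b\le M$. If the output belongs to $p_N$, the two inputs lie in $p_L$ and $p_M$, so $a+b\le L+M\le3L$. In every case the shell $c=a+b$ is at most $L+M$, and each shell is used at most a bounded number of times. By the region counting used before, this should be at most the full shells through $M$ twice and through $L+M$ once. Lemma~\ref{lem:anticoherent-sharp-residual-shell} then gives
\[
 \Theta_R\le(4-\pi)\bigl(2\Sigma_M+\Sigma_{L+M}\bigr),\qquad \Sigma_J=\sum_{c=2}^Jc^{-1/2},
\]
uniformly in $N$. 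I would check the counting carefully; if it is worse, I would fall back on $\beta_*$ from Lemma~\ref{lem:anticoherent-residual-shell-budget} together with the sharper high--low bound $R_{a,b}\le(2\sqrt a\,c)^{-1}$ on shells beyond $M$.

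\emph{Closing the scalar inequality.} This leaves the explicit one-dimensional bound
\[
 F(L,M)=\sum_{a\le M}a^{-3/2}f_{L,M}(a)-(4-\pi)(2\Sigma_M+\Sigma_{L+M})\ge\tfrac23\sqrt L .
\]
As in the previous lemma, I would first show that $F$ decreases in $M$ by a forward difference and Jensen, which reduces to $M=2L$. I would then insert the integral comparisons $\sum a^{-1/2}\ge2(\sqrt J-1)$ and $\sum a^{-1/2}\le2\sqrt J-1$ with $u=\sqrt L$. Heuristically the leading coefficients are about $8+2(\sqrt2-1)\cdot1.5$ on the gain side against $0.86(4\sqrt2+2\sqrt3)\approx7.8$ on the loss side. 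This leaves a margin of order $u$, hopefully at least $\tfrac23u$ after lower-order terms. The finitely many small $L$ would be checked by direct substitution.

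\emph{Main obstacle.} I expect the main difficulty to be the constant. The margin $\tfrac23$ is thin, so the residual counting must be sharp, in particular not charging the output-outer region as a full shell through $L+M$ when only pairs with $a\le L$, $b\le M$ occur. The monotonicity reduction in $M$ must also be exact rather than lossy. If the crude bound fails, I would split the output-outer residual by $a\le b$ and use \eqref{eq:anticoherent-residual-rational-bound} there.
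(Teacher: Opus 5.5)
\textbf{There is a genuine gap.} Your lower bound is valid, but it is too weak, and the inequality $F(L,M)\ge\tfrac23\sqrt L$ that you reduce to is false. Take the admissible case $M=2L$. Then $q=L$, so $f_{L,2L}(a)=3a$ on $[1,L]$ and $f_{L,2L}(a)=L+a$ on $(L,2L]$. Your gain is therefore
\[
3H_L^{(1/2)}+\sum_{L<a\le2L}(L+a)a^{-3/2}=(6+\sqrt2)\sqrt L+O(1)\approx7.41\sqrt L,
\]
not the $\approx9.2\sqrt L$ of your heuristic. Your loss is
\[
(4-\pi)(2\Sigma_{2L}+\Sigma_{3L})=(4-\pi)(4\sqrt2+2\sqrt3)\sqrt L+O(1)\approx7.83\sqrt L .
\]
Hence $F(L,2L)\approx-0.41\sqrt L+O(1)$, which is negative for large $L$. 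Charging the innermost region $\{a+b\le L\}$ by $\Sigma_L$ rather than $\Sigma_M$ helps, but not enough: $(6+\sqrt2)-(4-\pi)(2+2\sqrt2+2\sqrt3)\approx0.30<\tfrac23$. So the difficulty is not only the overcharge of that region. Once you drop the outer elementary term, full-shell charging of the outer region cannot reach $\tfrac23\sqrt L$.

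\textbf{The missing idea} is precisely the term you discard, $2L\sum_{M<a\le N}a^{-3/2}$. It grows with $N$ at the same time as the outer residual region, and it is what pays for the shells beyond $2L$. The paper keeps it and argues as follows.
\begin{itemize}
\item It bounds $\Theta_{L,M,N}\ge F_\beta(L,M,N)$ using full shells through $L$, $M$ and $N$, with $\beta=7/8$.
\item It uses the exact increment $\Theta_{L,M,N}-\Theta_{L,M,N-1}=2L/N^{3/2}-\sum_a K_{a,N-a}$. This is positive once $N>L+M$, because the sum is then empty, so one may assume $N\le L+M\le3L$.
\item It pushes $M$ to $2L$ by the $M$-difference.
\item The increment $F_\beta(L,2L,N+1)-F_\beta(L,2L,N)=(N+1)^{-1/2}\bigl(2L/(N+1)-\beta\bigr)$ changes sign at most once, so only the endpoints $N=2L+1$ and $N=3L$ need checking.
\end{itemize}
At $N=3L$ the retained outer gain contributes $(2\sqrt2-4/\sqrt3)\sqrt L\approx0.52\sqrt L$, which is about what you throw away. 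This yields $\delta_3\approx0.677>\tfrac23$. The case $N\le2L$ is handled at the formal endpoint $M=N=2L$, with $\delta_2\approx0.714$.

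Your fallbacks are not carried out. These were using $\beta_*$, high--low bounds on shells beyond $M$, or splitting the output-outer region by $a\le b$. Note that $\beta_*=24/25$ is worse than $4-\pi$. A genuinely sharp evaluation of the rectangle $\{a\le L,\ b\le M\}$ might substitute for the outer gain, but the proposal does not supply one.
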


\begin{proof}
Repeat the proof of
Lemma~\ref{lem:anticoherent-local-triple-polarization}, but use
Lemma~\ref{lem:anticoherent-sharp-residual-shell} and put $\beta=7/8$.
The same three residual regions give
\begin{equation}\label{eq:anticoherent-outer-uniform-F}
 \Theta_{L,M,N}\ge F_\beta(L,M,N)
 :=\sum_{a=1}^Na^{-3/2}f_{L,M}(a)
 -\beta(\Sigma_L+\Sigma_M+\Sigma_N).
\end{equation}

First suppose $N\le2L$.  The cutoff differences used in the earlier proof
show that $F_\beta$ decreases first in $M$ and then in $N$.  For the second
claim the same Jensen reduction now ends with positivity on $1\le r\le2$
of
\[
 \frac{2^{3/2}}{(2r-1)^{3/2}}-\frac3{r^{3/2}}
 +\frac74\sqrt{\frac2{3r}}.
\]
Differentiation shows that this function decreases, and its value at $r=2$
is positive.  Hence the formal endpoint $M=N=2L$ is a lower bound.

For later use record the elementary estimates
\begin{align}
 H_L^{(1/2)}
 &\ge2\sqrt L-\frac32+\frac1{2\sqrt L},
 &
 \Sigma_J&\le2\sqrt J-2,
 \label{eq:anticoherent-sharp-sum-bounds}\\
 \sum_{a=L+1}^{2L}\frac{L+a}{a^{3/2}}
 &\ge\sqrt{2L}-\frac{c_2}{\sqrt L},
 &
 c_2&=2-\frac3{2\sqrt2}.
 \label{eq:anticoherent-sharp-middle-block}
\end{align}
The first line follows from the trapezoidal bound for the convex function
$x^{-1/2}$ and the integral upper bound.  The second follows by subtracting
at most the endpoint drop of
$Lx^{-3/2}+x^{-1/2}$ from its integral over $[L,2L]$.
At the formal endpoint, these bounds give
\[
 F_\beta(L,2L,2L)
 \ge\delta_2\sqrt L+\frac34
 +\frac{3/2-c_2}{\sqrt L},
 \quad
 \delta_2=6+\sqrt2-\frac78(2+4\sqrt2)>\frac23.
\]
This proves \eqref{eq:anticoherent-outer-uniform-local-triple} when
$N\le2L$.

Now suppose $N>2L$.  The exact outer-cutoff difference is
\begin{equation}\label{eq:anticoherent-mixed-outer-increment}
 \Theta_{L,M,N}-\Theta_{L,M,N-1}
 =\frac{2L}{N^{3/2}}
 -\sum_{a=\max\{1,N-M\}}^{\min\{L,N-1\}}K_{a,N-a}.
\end{equation}
It is positive once $N>L+M$, so it is enough to consider $N\le L+M$.
For fixed such $N$, the cutoff difference in $M$ again makes the lower bound
in \eqref{eq:anticoherent-outer-uniform-F} worst at $M=2L$.  Then
\[
 F_\beta(L,2L,N+1)-F_\beta(L,2L,N)
 =\frac1{\sqrt{N+1}}\left(\frac{2L}{N+1}-\beta\right).
\]
This changes sign at most once, from positive to negative.  On
$2L+1\le N\le3L$, the minimum is therefore at $N=2L+1$ or $N=3L$.

At the first endpoint, discard the positive final-shell term and use
$2\sqrt{2L+1}\le2\sqrt{2L}+1/\sqrt{2L}$.  Equations
\eqref{eq:anticoherent-sharp-sum-bounds}--
\eqref{eq:anticoherent-sharp-middle-block} give
\[
 F_\beta(L,2L,2L+1)
 \ge\delta_2\sqrt L+\frac34-\frac1{16\sqrt L}
 >\frac23\sqrt L.
\]
At the second endpoint, the same endpoint-drop estimate gives
\[
 2L\sum_{a=2L+1}^{3L}a^{-3/2}
 \ge\left(2\sqrt2-\frac4{\sqrt3}\right)\sqrt L
 -\frac{c_3}{\sqrt L},
 \quad c_3=\frac1{\sqrt2}-\frac2{3\sqrt3}.
\]
Consequently
\[
 F_\beta(L,2L,3L)
 \ge\delta_3\sqrt L+\frac34
 +\frac{3/2-c_2-c_3}{\sqrt L}
 >\frac23\sqrt L,
\]
where
\[
 \delta_3=6+3\sqrt2-\frac4{\sqrt3}
 -\frac74(1+\sqrt2+\sqrt3)>\frac23,
 \qquad 3/2-c_2-c_3>0.
\]
All displayed constant comparisons follow by squaring positive quantities.
This proves the lemma.
\end{proof}

\begin{theorem}[Arbitrarily many critical layers in one dyadic block]
\label{thm:anticoherent-dyadic-layer-cake}
Let $J\ge1$, let $J\le M_1<\cdots<M_r\le2J$, and let
$\lambda_1,\ldots,\lambda_r\ge0$, where $r$ is arbitrary.  Set
\begin{equation}\label{eq:anticoherent-dyadic-layer-cake}
 x_n=n^{-3/2}\sum_{j=1}^r\lambda_j
 \mathbf1_{\{1\le n\le M_j\}}.
\end{equation}
Then
\begin{equation}\label{eq:anticoherent-dyadic-layer-cake-bound}
 \mathcal C(x)<2AE_3
\end{equation}
unless every $\lambda_j$ is zero.  The estimate is uniform in $J$, in all
cutoffs, and in the number of layers, and it is invariant under a common
frequency dilation.
\end{theorem}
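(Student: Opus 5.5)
We work with the cubic form \(\mathfrak D(y)=AE_3-\tfrac12\mathcal C(x)\) in the critical variables \(y_n=n^{3/2}x_n\), so that \(y=\sum_j\lambda_jp_{M_j}\) with \(p_J=\mathbf1_{\{1\le n\le J\}}\) as in Lemma~\ref{lem:anticoherent-local-triple-polarization}. Since \(\mathfrak D\) is a cubic form, expanding it in the nonnegative variables \(\lambda_1,\dots,\lambda_r\) gives
\[
 \mathfrak D\Bigl(\sum_j\lambda_jp_{M_j}\Bigr)=\sum_{i,j,k}\lambda_i\lambda_j\lambda_k\,\Theta(M_i,M_j,M_k),
\]
where \(\Theta\) is the fully symmetric polarization. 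The plan is to show that every polarization coefficient is nonnegative and that every diagonal one is strictly positive. Strict positivity of \(\mathfrak D\) then follows immediately whenever some \(\lambda_j>0\), uniformly in \(r\).

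We sort the coefficients according to how many distinct cutoffs they involve.

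\textbf{One cutoff.} The diagonal coefficients are \(\Theta(L,L,L)=\Delta_L\). By \eqref{eq:anticoherent-plateau-strong-margin},
\[
 \Delta_L\ge\kappa_*\sqrt L>0.
\]

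\textbf{Three distinct cutoffs.} Suppose \(L<M<N\). All cutoffs lie in \([J,2J]\), so \(N\le2J\le2L\). The coefficient of \(stu\) is therefore strictly positive by Lemma~\ref{lem:anticoherent-local-triple-polarization}. Alternatively, since \(M\le2L\), Lemma~\ref{lem:anticoherent-outer-uniform-local-triple} gives the lower bound \(\tfrac23\sqrt L\).

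\textbf{Two distinct cutoffs.} Here we need the mixed coefficients \(b\) and \(c\) of the two-level plateau polynomial in Theorem~\ref{thm:anticoherent-two-level-plateau}, with \(M<N\le2M\).
\begin{itemize}
\item The proof of that theorem already shows \(b>0\) unconditionally.
\item In the regime \(N\le2M\), it shows \(c\ge0\) via \eqref{eq:anticoherent-nested-internal-budget}.
\end{itemize}
In that proof, \(b\) is the coefficient of \(s^2t\) (two copies of the inner plateau and one of the outer) and \(c\) that of \(st^2\). Both are exactly the multinomial-weighted polarizations \(3\Theta(M,M,N)\) and \(3\Theta(M,N,N)\). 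So within one dyadic block, every coefficient involving two distinct cutoffs is nonnegative, and the dyadic hypothesis is precisely what removes the logarithmic negative tail \eqref{eq:anticoherent-two-level-negative-tail}.

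Summing the three classes gives \(\mathfrak D>0\) unless all \(\lambda_j=0\). This is equivalent to \(\mathcal C(x)<2AE_3\). Dilation invariance follows, as before, because a common frequency dilation multiplies both cubic quantities by \(q^3\).

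\textbf{Main obstacle.} The delicate step is bookkeeping rather than estimation. We must confirm that the expansion of \(\mathfrak D\) over an arbitrary number of layers involves only the three coefficient types above, with the same multinomial weights as in the two- and three-layer computations. We must also confirm that the three-cutoff coefficient of Lemma~\ref{lem:anticoherent-local-triple-polarization} is the full \(stu\) coefficient, including all orderings of the inputs and the output of each triad. If the strict inequality \(c\ge0\) degenerates to equality for some \(N\le2M\), that causes no harm, since only the diagonal terms are needed for strictness.
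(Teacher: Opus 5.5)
Your proposal is correct and follows the paper's proof essentially verbatim. You expand $AE_3-\tfrac12\mathcal C(x)$ in the layer amplitudes and bound each class of coefficients:
\begin{itemize}
\item the diagonal coefficients are the plateau margins $\Delta_{M_j}>0$;
\item repeated-smaller-cutoff coefficients are handled by the unconditional $b>0$ estimate, and repeated-larger-cutoff coefficients by the $N\le 2M$ estimate for $c$, both from the two-level plateau proof;
\item fully mixed coefficients are handled by the local triple-polarization lemma, which applies because $N\le 2J\le 2L$.
\end{itemize}
Strictness then comes from the positive diagonal terms.
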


\begin{proof}
Expand $AE_3-\mathcal C(x)/2$ as a cubic polynomial in the nonnegative layer
amplitudes.  Its diagonal coefficients are the positive plateau margins
$\Delta_{M_j}$.  Coefficients with the smaller cutoff repeated are positive
by the estimate for $b$ in the proof of
Theorem~\ref{thm:anticoherent-two-level-plateau}; those with the larger
cutoff repeated are nonnegative by its $N\le2M$ estimate for $c$.  Every
coefficient with three distinct cutoffs is strictly
positive by Lemma~\ref{lem:anticoherent-local-triple-polarization}, because
the largest cutoff is at most twice the smallest.  Thus every polarized
coefficient is nonnegative, while any nonzero amplitude contributes a
strictly positive diagonal term.  This proves
\eqref{eq:anticoherent-dyadic-layer-cake-bound}.  Dilation multiplies both
cubic quantities by the same factor.
\end{proof}

The preceding theorem has a quantitative form.  This supplies enough local
budget to sum a genuinely infinite family of dyadic blocks under an explicit
decay condition on the layer jumps.

\begin{lemma}[Quantitative one-block coercivity]
\label{lem:anticoherent-quantitative-dyadic-block}
Under the assumptions of
Theorem~\ref{thm:anticoherent-dyadic-layer-cake}, one has
\begin{equation}\label{eq:anticoherent-quantitative-dyadic-block}
 AE_3-\frac12\mathcal C(x)
 \ge\frac{\sqrt J}{120}
 \left(\sum_{j=1}^r\lambda_j\right)^3.
\end{equation}
\end{lemma}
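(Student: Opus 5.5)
The plan is to expand $AE_3-\tfrac12\mathcal C(x)$ as a cubic form in the layer amplitudes,
\[
 \mathfrak P(\lambda)=\sum_{i,j,k}\Theta_{ijk}\lambda_i\lambda_j\lambda_k ,
\]
with symmetric polarized coefficients. We then bound every coefficient from below by a fixed multiple of $\sqrt J$. Since $(\sum_j\lambda_j)^3=\sum_{i,j,k}\lambda_i\lambda_j\lambda_k$ with the same multinomial counts, a uniform bound $\Theta_{ijk}\ge\sqrt J/120$ for every ordered triple (equivalently: diagonal coefficient $\ge\sqrt J/120$, coefficient of $\lambda_i^2\lambda_j$ at least $3\sqrt J/120$, coefficient of $\lambda_i\lambda_j\lambda_k$ at least $6\sqrt J/120$) gives \eqref{eq:anticoherent-quantitative-dyadic-block} at once. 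The three kinds of coefficients are exactly those already identified in the proof of Theorem~\ref{thm:anticoherent-dyadic-layer-cake}. What remains is to make each positivity statement quantitative in $\sqrt J$, using that every cutoff satisfies $J\le M_j\le2J$.

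\emph{Diagonal coefficients.} These are $\Delta_{M_j}\ge\kappa_*\sqrt{M_j}\ge\kappa_*\sqrt J$ by \eqref{eq:anticoherent-plateau-strong-margin}. Since $\kappa_*=481/625$, they comfortably exceed $\sqrt J/120$.

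\emph{Triple coefficients.} For three distinct cutoffs $L<M<N$ in the block, Lemma~\ref{lem:anticoherent-outer-uniform-local-triple} applies because $M\le2L$. It gives $\Theta_{L,M,N}\ge\tfrac23\sqrt L\ge\tfrac23\sqrt J$, well above $6\sqrt J/120=\sqrt J/20$.

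\emph{Mixed coefficients with a repeated cutoff.} These are the $b$ and $c$ coefficients of Theorem~\ref{thm:anticoherent-two-level-plateau} with $M<N\le2M$.
\begin{itemize}
\item For $b$, the proof already gives
\[
 b\ge\Bigl(2\gamma-\tfrac{\beta}{\sqrt2}\Bigr)H_{M-1}^{(1/2)}+\Bigl(2-\tfrac{\beta}{2\sqrt2}\Bigr)M^{-1/2}.
\]
Combined with $H_{M-1}^{(1/2)}\ge2(\sqrt M-1)$, and checking small $M$ separately via the $M^{-1/2}$ term, this yields a bound of the form $b\ge c_b\sqrt M$. I would instead redo the estimate with the sharper constants $\beta_*,\gamma_*$ of Lemma~\ref{lem:anticoherent-residual-shell-budget}, or $\beta=7/8$ from Lemma~\ref{lem:anticoherent-sharp-residual-shell}, to get a clean explicit $c_b$.
\item For $c$, the bound \eqref{eq:anticoherent-two-level-c-lower} together with \eqref{eq:anticoherent-nested-internal-budget} only gives $c\ge0$, so this is where the real work lies. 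I would rerun the function $F(u,r)$ from the proof of \eqref{eq:anticoherent-nested-internal-budget}, replacing $\beta=\sqrt2-\tfrac14$ by the sharp shell constant $7/8$ and $\gamma$ by the correspondingly larger value $1-7/16$. The monotonicity in $r$ should survive, so it again suffices to take $r=2$. There the gain $2(\sqrt2-\tfrac14-\tfrac78)\sqrt{2M}$, together with the improved $\gamma$-term, produces a linear-in-$u$ margin $F(u,2)\ge c_c u-C$. Small $M$ are then handled by direct evaluation, as in the original lemma.
\end{itemize}

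\emph{Main obstacle.} I expect the $c$ coefficient to be the hardest step, since it is the only one currently known to be merely nonnegative. The concern is that the constant $c_c$ might come out below $3/120$ for small $M$, where the $-C$ term dominates. If that happens, the fallback is to borrow from the diagonal: each $\lambda_i\lambda_j^2$ term can be dominated using AM--GM, $\lambda_i\lambda_j^2\le\tfrac13\lambda_i^3+\tfrac23\lambda_j^3$, which spends part of the large diagonal margin $\kappa_*\sqrt J$. This borrowing must be done with care, because the number of pairs is unbounded. A cleaner alternative is to prove instead
\[
 \mathfrak P\ge\epsilon\sqrt J\sum_{i\le j\le k}\lambda_i\lambda_j\lambda_k
\]
with all mixed coefficients positive. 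This needs only the per-coefficient bounds above, with generous room left in the constant $1/120$ to absorb the small-$M$ cases by finite checks.
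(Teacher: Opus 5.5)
Your proposal is correct and is essentially the paper's own proof: expand $AE_3-\frac12\mathcal C(x)=\mathfrak D(y)$ in the layer amplitudes and bound the diagonal, smaller-repeated ($b$), larger-repeated ($c$) and fully mixed coefficients below by multiples of $\sqrt J$, which the paper takes to be $\frac34$, $\frac35$, $\frac12$ and $\frac1{20}$, using $\beta_*,\gamma_*$ and the local-triple margin rather than your $\beta=7/8$ and Lemma~\ref{lem:anticoherent-outer-uniform-local-triple}, a difference that is immaterial. Then compare with the multinomial coefficients of $(\sum_j\lambda_j)^3/120$, exactly as you do. One caution on the $c$ step: the original proof that $\partial_rF<0$ used $\beta^2>1$, which fails for $\beta=7/8$, since for large $u$ the function $F$ increases near $r=1$. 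So either first bound $u^{-2}\le1$ as the paper does, or simply drop the nonnegative term $M(A_N-A_M)$. The latter still leaves $c\ge\bigl(\tfrac{25}{8}-\tfrac{7\sqrt2}{4}-\tfrac{121}{512}\bigr)\sqrt M>0.4\sqrt M$ for every $M$, so you need neither small-$M$ checks nor diagonal borrowing.
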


\begin{proof}
We quantify the four types of polarized coefficients.  First,
\eqref{eq:anticoherent-plateau-strong-margin} gives
$\Delta_M\ge\kappa_*\sqrt M>3\sqrt M/4$.  For the coefficient $b$ with
the smaller cutoff repeated, the proof of
Theorem~\ref{thm:anticoherent-two-level-plateau}, now using the subunit
shell constant, gives
\[
 b\ge A_*H_{M-1}^{(1/2)}+\frac{B_*}{\sqrt M},
 \quad
 A_*=2\gamma_*-\frac{\beta_*}{\sqrt2},
 \quad
 B_*=2-\frac{\beta_*}{2\sqrt2}.
\]
Since $H_{M-1}^{(1/2)}\ge2(\sqrt M-1)$, minimization in
$M^{-1/2}\in[0,1]$ gives
\[
 b\ge\left(2A_*-\frac{A_*^2}{B_*}\right)\sqrt M
 >\frac35\sqrt M.
\]

For the coefficient $c$ with the larger cutoff repeated, assume
$M<N\le2M$, put $u=\sqrt M$ and $r=N/M$, and repeat the internal-shell
calculation with $\beta_*,\gamma_*$.  After division by $u$ its lower bound
is
\[
 \frac2{u^2}-\frac{28}{25u}
 +\frac{76}{25}
 +(r-1)\left(\frac2{r+2}\right)^{3/2}
 -\frac{48}{25}\sqrt r.
\]
The first two terms are bounded below by $-98/625$.  The remaining function
of $r$ is decreasing on $[1,2]$; its value at $r=2$, after subtracting
$98/625$, is greater than $1/2$.  Hence
\begin{equation}\label{eq:anticoherent-local-c-coercivity}
 c\ge\frac12\sqrt M.
\end{equation}
Finally, the proof of
Lemma~\ref{lem:anticoherent-local-triple-polarization} gives, with
$u=\sqrt L$,
\[
 \frac{\Theta_{L,M,N}}{\sqrt L}
 \ge\delta+\frac{3-\sqrt2}{u^2}-\frac6{25u}
 \ge\delta-\frac9{625(3-\sqrt2)}>\frac1{20}.
\]
The displayed numerical comparisons are algebraic and follow by squaring
positive quantities.

Expand the cubic in the layer amplitudes.  The coefficients of
$\lambda_i^3$, $\lambda_i^2\lambda_j$,
$\lambda_i\lambda_j^2$, and
$\lambda_i\lambda_j\lambda_k$ are therefore at least, respectively,
\[
 \frac34\sqrt J,\qquad
 \frac35\sqrt J,\qquad
 \frac12\sqrt J,\qquad
 \frac1{20}\sqrt J.
\]
These dominate the corresponding coefficients
$1/120$, $3/120$, $3/120$, and $6/120$ in
$(\sum_j\lambda_j)^3/120$, proving
\eqref{eq:anticoherent-quantitative-dyadic-block}.
\end{proof}

\begin{corollary}[Outer-uniform one-block Hessian reserve]
\label{cor:anticoherent-outer-uniform-block-hessian}
Let $T$ be the symmetric trilinear polarization of $\mathfrak D$.  Fix
$N,J\ge1$, choose arbitrary cutoffs
\[
 J\le M_1<\cdots<M_r\le\min\{2J,N\},
\]
and let $\lambda_1,\ldots,\lambda_r\ge0$.  Then, for
$y=\sum_j\lambda_jp_{M_j}$,
\begin{equation}\label{eq:anticoherent-outer-uniform-block-hessian}
 3T(y,y,p_N)
 \ge\frac{\sqrt J}{3}\left(\sum_{j=1}^r\lambda_j\right)^2.
\end{equation}
The bound is uniform in the outer cutoff, including when $N$ lies far beyond
the dyadic block containing the $M_j$.
\end{corollary}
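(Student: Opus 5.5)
Expand by trilinearity: with $y=\sum_j\lambda_jp_{M_j}$,
\[
 3T(y,y,p_N)=3\sum_{i,j}\lambda_i\lambda_jT(p_{M_i},p_{M_j},p_N).
\]
Each term $3T(p_{M_i},p_{M_j},p_N)$ is a polarized coefficient of the cubic $\mathfrak D$ evaluated on plateau layers. It suffices to show that every such term is at least $\tfrac{\sqrt J}{3}$ in the appropriate combinatorial normalization, uniformly in $N$. Expanding $(\sum_j\lambda_j)^2$ produces weights $1$ on the diagonal and $2$ on the off-diagonal pairs. We therefore need two bounds: $3T(p_M,p_M,p_N)\ge\sqrt J/3$, and $6T(p_{M_i},p_{M_j},p_N)\ge 2\sqrt J/3$ for $i<j$. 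There are three cases, according to whether $N$ coincides with one of the inner cutoffs or exceeds all of them.

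\emph{Case $N=M_r$ or $N$ equal to an inner cutoff.} Here each coefficient is one of the block coefficients already quantified in Lemma~\ref{lem:anticoherent-quantitative-dyadic-block}. If all three cutoffs coincide, the coefficient is $\Delta_M\ge\tfrac34\sqrt M$. If the smaller cutoff is repeated, it is $b\ge\tfrac35\sqrt M$. If the larger cutoff is repeated, it is $c\ge\tfrac12\sqrt M$ by \eqref{eq:anticoherent-local-c-coercivity}. If the three cutoffs are distinct and lie in one block, the coefficient is at least $\tfrac1{20}\sqrt L$. Only the last bound is too weak. However, when $N$ lies inside the block we can use the outer-uniform Lemma~\ref{lem:anticoherent-outer-uniform-local-triple}, which applies whenever $L<M<N$ with $M\le2L$ and gives $\Theta\ge\tfrac23\sqrt L\ge\tfrac23\sqrt J$. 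Since all cutoffs are at least $J$, the bounds $\sqrt M\ge\sqrt J$ then give all the required inequalities.

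\emph{Case $N>M_r$, off-diagonal pairs $i<j$.} The coefficient $6T(p_{M_i},p_{M_j},p_N)$ is exactly $\Theta_{M_i,M_j,N}$ with $M_i<M_j<N$ and $M_j\le2J\le2M_i$. Lemma~\ref{lem:anticoherent-outer-uniform-local-triple} therefore yields $\ge\tfrac23\sqrt{M_i}\ge\tfrac23\sqrt J$, uniformly in $N$.

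\emph{Case $N>M_r$, diagonal pairs, the main obstacle.} We need $3T(p_M,p_M,p_N)$, which is the coefficient $c$ of Theorem~\ref{thm:anticoherent-two-level-plateau} with the roles reversed: it equals the coefficient of $s^2t$ with inner cutoff $M$ and outer cutoff $N$, namely
\[
 b=2\Delta_M+MA_N-\tfrac12S_{M,N}.
\]
The proof of that theorem bounds $S_{M,N}$ using only $a,b\le M$, which is independent of $N$, and the additional term $M(A_N-A_M)\ge0$ only helps. So the quantitative estimate $b\ge\tfrac35\sqrt M$ from Lemma~\ref{lem:anticoherent-quantitative-dyadic-block} should persist for every $N\ge M$. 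One point needs checking: that in the polarization of $b$ the only $N$-dependence is through $MA_N$ and through the constraint $a+b\le N$ in $S_{M,N}$. The elementary part of $S_{M,N}$ is at most $2MA_M\le 2MA_N$ in all cases, and the residual part is bounded by shells up to $2M$, so the estimate holds.

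Summing the diagonal bounds ($\ge\tfrac35\sqrt J$) and the off-diagonal bounds ($\ge\tfrac23\sqrt J$ for the doubled cross terms) gives \eqref{eq:anticoherent-outer-uniform-block-hessian}, since each exceeds $\tfrac13\sqrt J$ times the corresponding multinomial weight.
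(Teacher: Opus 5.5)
Your proposal is correct and is essentially the paper's proof. Both reduce to showing that every polarized entry $3T(p_{M_i},p_{M_j},p_N)$ is at least $\sqrt J/3$, using four ingredients: the $N$-independent bound $b\ge\frac35\sqrt M$ for a repeated inner cutoff, Lemma~\ref{lem:anticoherent-outer-uniform-local-triple} for two distinct inner cutoffs, \eqref{eq:anticoherent-local-c-coercivity} when $M_r=N$, and $3\Delta_N$ for the fully diagonal entry. There are two harmless slips: the repeated-inner entry is directly the coefficient $b$, with no role reversal involving $c$, and the all-equal entry is $3\Delta_N$ rather than $\Delta_N$, which only strengthens your bound.
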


\begin{proof}
It is enough to bound every polarized matrix entry.  If $M<N$, the repeated
inner-cutoff coefficient satisfies
$3T(p_M,p_M,p_N)\ge3\sqrt M/5$ by the estimate for $b$ above.  If
$L<M<N$ and $L,M$ lie in the displayed block, then
Lemma~\ref{lem:anticoherent-outer-uniform-local-triple} gives
\[
 3T(p_L,p_M,p_N)=\frac12\Theta_{L,M,N}
 \ge\frac13\sqrt L\ge\frac13\sqrt J.
\]
If one selected cutoff is the outer cutoff, say $L<N$, then $N\le2L$, and
\eqref{eq:anticoherent-local-c-coercivity} gives
$3T(p_L,p_N,p_N)\ge\sqrt L/2$.  Finally,
$3T(p_N,p_N,p_N)=3\Delta_N$ is larger still by
\eqref{eq:anticoherent-plateau-strong-margin}.  Hence every entry is at
least $\sqrt J/3$, and expansion of $3T(y,y,p_N)$ proves
\eqref{eq:anticoherent-outer-uniform-block-hessian}.
\end{proof}

\begin{theorem}[Infinite dyadic layer theorem]
\label{thm:anticoherent-infinite-dyadic-layers}
Let $y_n\ge0$ be nonincreasing with $y_n\to0$, set
\[
 \lambda_M=y_M-y_{M+1}\ge0,
\]
and fix an integer $J_0\ge1$ such that $\lambda_M=0$ for $M<J_0$.  Put
\[
 L_k=\sum_{M=2^kJ_0}^{2^{k+1}J_0-1}\lambda_M
 \qquad(k\ge0),
\]
and suppose that
\begin{equation}\label{eq:anticoherent-dyadic-jump-decay}
 L_{k+1}\le\frac1{1900}L_k
 \qquad(k\ge0).
\end{equation}
Put $x_n=n^{-3/2}y_n$.  Then $A<\infty$, $E_3<\infty$, and
\begin{equation}\label{eq:anticoherent-infinite-dyadic-layer-bound}
 \mathcal C(x)<2AE_3
\end{equation}
unless $y=0$.  More quantitatively,
\begin{equation}\label{eq:anticoherent-infinite-dyadic-layer-margin}
 AE_3-\frac12\mathcal C(x)
 \ge\frac{\sqrt{J_0}}{600}
 \sum_{k\ge0}2^{k/2}L_k^3.
\end{equation}
In particular, the theorem permits infinitely many nonzero layer jumps and
infinitely many active Fourier modes.
\end{theorem}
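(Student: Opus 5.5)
We write $y=\sum_{M\ge J_0}\lambda_Mp_M$, so $x_n=n^{-3/2}y_n$, and group the layers into dyadic blocks $Y_k=\sum_{M\in[2^kJ_0,2^{k+1}J_0)}\lambda_Mp_M$. Then $y=\sum_kY_k$, and the total amplitude of $Y_k$ is $L_k$. The hypothesis \eqref{eq:anticoherent-dyadic-jump-decay} gives $L_k\le1900^{-k}L_0$. It also gives $y_n\le\sum_{k:2^kJ_0\ge n}L_k$, so $y_n$ decays faster than any power of $n$ and $A,E_3<\infty$.

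By the truncation remark after Problem~\ref{prob:sharp-cubic}, it suffices to prove \eqref{eq:anticoherent-infinite-dyadic-layer-margin} for finitely many blocks, uniformly in the number of blocks, and then pass to the limit by monotone convergence. In the limit the right side converges because $2^{k/2}L_k^3$ decays geometrically.

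We expand $\mathfrak D(y)=AE_3-\frac12\mathcal C(x)$ by trilinearity as
\[
 \sum_k\mathfrak D(Y_k)+3\sum_{k\ne l}T(Y_k,Y_k,Y_l)+6\sum_{k<l<m}T(Y_k,Y_l,Y_m).
\]
The diagonal terms are controlled by Lemma~\ref{lem:anticoherent-quantitative-dyadic-block}: $\mathfrak D(Y_k)\ge 2^{k/2}\sqrt{J_0}\,L_k^3/120$. Terms in which two copies come from a lower block $k$ and one from a higher block $l>k$ are nonnegative. To see this, expand in the cutoffs $p_M$ and apply Corollary~\ref{cor:anticoherent-outer-uniform-block-hessian} with each outer cutoff $N$ of $Y_l$ (or use the entrywise bounds in its proof). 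So these terms may be discarded.

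The negative contributions come from entries whose pair of cutoffs is not in one dyadic block: $T(p_L,p_N,p_N)$ with $N>2L$, and $T(p_L,p_M,p_N)$ with $L,M$ in different blocks. For these we prove crude bounds of the form $|T(p_L,p_M,p_N)|\le C\sqrt{L}\,(1+\log(N/L))$, with $L$ the smallest cutoff. The tools are the decomposition $K_{a,b}=a^{-3/2}+b^{-3/2}+R_{a,b}$, the shell budget of Lemma~\ref{lem:anticoherent-sharp-residual-shell}, and the negative-tail estimate \eqref{eq:anticoherent-two-level-negative-tail} from the proof of Theorem~\ref{thm:anticoherent-two-level-plateau}. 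Each negative cutoff coefficient is a mixed coefficient, and every contribution is of the form $-h\,st^2$ or $-h\,stu$ with $h$ at most logarithmic in the cutoff ratio times $\sqrt{\text{smaller cutoff}}$.

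For a cross term between blocks $k<l$ this gives $|3T(Y_k,Y_l,Y_l)|\le C\,2^{k/2}\sqrt{J_0}\,(l-k+1)L_kL_l^2$, and analogous bounds hold for triple-block terms. We then absorb each negative term into the diagonals. Since $L_l\le1900^{-(l-k)}L_k$, we have $L_kL_l^2\le 1900^{-2(l-k)}L_k^3$, and the geometric factor dominates $(l-k+1)$ as well as the growth $2^{(l-k)/2}$ if the bound is charged to block $l$ instead. Summing over $l>k$ leaves a small multiple of $2^{k/2}\sqrt{J_0}L_k^3$, which we absorb into $\frac1{120}$, leaving $\frac1{600}$.

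The main obstacle is the constant bookkeeping. We must derive clean explicit upper bounds for the cross-block negative coefficients with a constant $C$ small enough that $1900$ suffices, and we must handle every triple-block configuration without losing uniformity in the number of layers within each block. We would first bound the cross-block negative mass per unit amplitude by $\sqrt{L}\log(N/(2L))$, exactly as in \eqref{eq:anticoherent-two-level-negative-tail}, and then check numerically that the resulting geometric series stays below $\frac1{120}-\frac1{600}$.
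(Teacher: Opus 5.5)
Your proposal follows the paper's proof in all essentials. It uses the same dyadic grouping and pays the diagonal blocks at rate $1/120$ by Lemma~\ref{lem:anticoherent-quantitative-dyadic-block}. It bounds the negative cross-block coefficients by $\sqrt{L}$ times a logarithm of a cutoff ratio, charges them to the smallest block through $L_{i+d}\le q^dL_i$, and handles the infinite case by truncating the layer jumps.

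The one real difference is how you treat terms with two cutoffs in a lower block and one in a higher block.
\begin{itemize}
\item The paper discards only the repeated-smaller coefficients. It still charges every fully mixed $\Theta_{L,M,N}$ whose blocks are not all equal, including the case $d=j-i=0$ where $L$ and $M$ share a block, with constant $C_0=\sqrt2(C_*+2\log2)$. That single term, $C_0q/(1-q)\approx0.0065$, nearly exhausts the budget $1/120-1/600=1/150\approx0.0067$, and it is what dictates the ratio $1/1900$.
\item You discard all such terms as nonnegative via Corollary~\ref{cor:anticoherent-outer-uniform-block-hessian}, that is, Lemma~\ref{lem:anticoherent-outer-uniform-local-triple} together with positivity of the repeated-smaller coefficient. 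This is legitimate, because every cutoff of $Y_l$ is at least $2^{k+1}J_0>M_r$.
\end{itemize}
The surviving losses are repeated larger cutoffs across blocks and fully mixed triples whose two smallest cutoffs lie in different blocks. These are $O(q^2)$, so the numerical check you defer holds at $q=1/1900$ with a margin of roughly three orders of magnitude. The price is dependence on the outer-uniform lemma, whereas the paper's proof needs only the cruder residual count.

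When you write out the fully mixed loss, your list of tools is missing one ingredient. The elementary part of $\Theta_{L,M,N}$ is nonnegative for all $L<M<N$; this follows from the quadratic ledger $B_N^{(0)}$ in the proof of Lemma~\ref{lem:anticoherent-local-triple-polarization}. Only the three residual regions, split at output $2L$, then need paying. They give $(-\Theta_{L,M,N})_+\le\sqrt L\,(C_*+2\log(M/L))$, with only the middle cutoff in the logarithm.

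There are also two harmless slips.
\begin{itemize}
\item The hypothesis gives only $y_n=O(n^{-\log_2 1900})$, not decay faster than every power. This still yields $E_3<\infty$.
\item Your bound for $y_n$ must include the block containing $n$, so the sum should run over $k$ with $2^{k+1}J_0>n$.
\end{itemize}
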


\begin{proof}
We first work with finitely many layers.  Partition them into the dyadic
blocks $[2^kJ_0,2^{k+1}J_0)$.  By
Lemma~\ref{lem:anticoherent-quantitative-dyadic-block}, triples whose cutoffs
all belong to block $k$ contribute at least
$\sqrt{J_0}2^{k/2}L_k^3/120$.

Only two types of remaining coefficients need to be charged.  If the larger
cutoff is repeated, 
\eqref{eq:anticoherent-two-level-negative-tail} gives
\[
 (-c_{M,N})_+
 \le\sqrt M\log\frac{N}{2M}.
\]
If $L<M<N$ are all distinct and $\Theta_{L,M,N}$ is their fully mixed
coefficient, its elementary part is nonnegative by the quadratic ledger in
the proof of Lemma~\ref{lem:anticoherent-local-triple-polarization}.  Its
three residual regions, split at output $2L$, and
\eqref{eq:anticoherent-residual-shell-budget} give
\begin{equation}\label{eq:anticoherent-cross-block-triple-tail}
 (-\Theta_{L,M,N})_+
 \le\sqrt L\left(C_*+2\log\frac ML\right),
 \qquad
 C_*=2\beta_*(1+2\sqrt2).
\end{equation}
Indeed, the shells through $L,2L,2L$ cost at most
$2\beta_*\sqrt L+4\beta_*\sqrt{2L}$.  On every later shell the low input
is at most $L$, so
$R_{a,b}\le(2\sqrt a(a+b))^{-1}$; the two harmonic tails cost at most
$\sqrt L\log(M/(2L))_+$ and
$\sqrt L\log((M+L)/(2L))$, both bounded by
$\sqrt L\log(M/L)$.

For block indices $i<k$, the first negative contribution is consequently at
most
\[
 \sqrt2\log2\,(k-i)2^{i/2}L_iL_k^2.
\]
For $i\le j\le k$, not all equal, the second is at most
\[
 C_{j-i}2^{i/2}L_iL_jL_k,
 \qquad
 C_d=\sqrt2\left[C_*+2(d+1)\log2\right].
\]
Write $q=1/1900$.  The decay assumption gives
$L_{i+d}\le q^dL_i$.  Summing first over the smallest block therefore
bounds all cross-block losses by
\[
 \sqrt{J_0}[S_2(q)+S_3(q)]\sum_i2^{i/2}L_i^3,
\]
where, with $z=q^2$,
\[
\begin{aligned}
 S_2(q)&=\sqrt2\log2\,\frac{q^2}{(1-q^2)^2},\\
 S_3(q)&=\frac{\sqrt2}{1-q}\left[
 (C_*+2\log2)q
 +C_*\frac z{1-z}
 +2\log2\,\frac{z(2-z)}{(1-z)^2}
 \right].
\end{aligned}
\]
Direct rational substitution, using $\sqrt2<99/70$ and
$\log2<7/10$, gives
\begin{equation}\label{eq:anticoherent-cross-block-clock}
 S_2(1/1900)+S_3(1/1900)<\frac1{150}.
\end{equation}
Subtracting this clock from the one-block budget $1/120$ proves the finite
version of \eqref{eq:anticoherent-infinite-dyadic-layer-margin}.

For infinitely many blocks, \eqref{eq:anticoherent-dyadic-jump-decay}
implies $L_k\le q^kL_0$.  On $[2^kJ_0,2^{k+1}J_0)$,
$y_n\le\sum_{j\ge k}L_j\le L_k/(1-q)$.  The indices below $J_0$
form a constant prefix with value at most $L_0/(1-q)$, so
\[
 E_3=\sum_ny_n^2
 \le\frac{J_0}{(1-q)^2}
 \left(L_0^2+\sum_k2^kL_k^2\right)<\infty,
\]
while $A=\sum n^{-3/2}y_n<\infty$.  Truncate the layer jumps at block $K$.
The three nonnegative quantities $A_K,E_{3,K},\mathcal C_K$ converge to
their infinite counterparts, so the finite estimate passes to the limit.
This proves the theorem.
\end{proof}

The geometric decay in Theorem~\ref{thm:anticoherent-infinite-dyadic-layers}
can be removed completely when the active scale blocks are sufficiently
lacunary.  In the next result the amplitudes of successive active blocks are
arbitrary; only finiteness of the natural energy is assumed.

\begin{theorem}[All-amplitude lacunary infinite layers]
\label{thm:anticoherent-lacunary-infinite-layers}
Let $y_n\ge0$ be nonincreasing with $y_n\to0$, put
$\lambda_M=y_M-y_{M+1}$, and fix an integer $J_0\ge1$ such that
$\lambda_M=0$ for $M<J_0$.  Define
\[
 L_k=\sum_{M=2^kJ_0}^{2^{k+1}J_0-1}\lambda_M.
\]
Suppose that the nonzero block indices can be listed as
$k_1<k_2<\cdots$ with
\begin{equation}\label{eq:anticoherent-lacunary-block-gap}
 k_{j+1}-k_j\ge120,
\end{equation}
and assume $E_3=\sum_ny_n^2<\infty$.  For
$x_n=n^{-3/2}y_n$, one has
\begin{equation}\label{eq:anticoherent-lacunary-infinite-margin}
 AE_3-\frac12\mathcal C(x)
 \ge\frac{\sqrt{J_0}}{125}
 \sum_{j\ge1}2^{k_j/2}L_{k_j}^3.
\end{equation}
Consequently $\mathcal C(x)<2AE_3$ unless $y=0$.  The theorem permits
infinitely many active blocks and imposes no comparison between their
nonzero amplitudes.
\end{theorem}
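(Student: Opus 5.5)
We follow the block-expansion scheme of Theorem~\ref{thm:anticoherent-infinite-dyadic-layers}, but the geometric decay of the jumps is replaced by the lacunarity gap \eqref{eq:anticoherent-lacunary-block-gap}. Everything is first proved for finitely many active blocks with constants uniform in the number of blocks, and then passed to the limit by monotone truncation. Each active block $B_j=[2^{k_j}J_0,2^{k_j+1}J_0)$ carries the layer vector $Y_j=\sum_{M\in B_j}\lambda_Mp_M$. Expanding $\mathfrak D(\sum_jY_j)$ trilinearly gives four kinds of terms: diagonal block terms $\mathfrak D(Y_j)$, block pairs $3T(Y_i,Y_i,Y_k)$ and $3T(Y_i,Y_k,Y_k)$ with $i<k$, and block triples $6T(Y_i,Y_j,Y_l)$ with $i<j<l$.

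Lemma~\ref{lem:anticoherent-quantitative-dyadic-block} bounds each diagonal term from below by $\sqrt{J_0}\,2^{k_j/2}L_{k_j}^3/120$. For a pair with the small block repeated, the entries are $3T(p_M,p_M,p_N)$ with $M$ in the lower block. These are bounded below by $3\sqrt M/5>0$ (the $b$ estimate), and more generally by Corollary~\ref{cor:anticoherent-outer-uniform-block-hessian}. So $3T(Y_i,Y_i,Y_k)\ge0$ can be discarded; this holds uniformly in the outer cutoff and needs no amplitude comparison.

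The dangerous terms are those where the larger cutoff is repeated, namely $3T(Y_i,Y_k,Y_k)$, and the fully mixed cross-block triples. For the first kind, \eqref{eq:anticoherent-two-level-negative-tail} gives a loss of at most $\sqrt M\log(N/2M)\lesssim\sqrt{J_0}\,2^{k_i/2}(k_k-k_i)$ per entry, so the total loss is at most $C\sqrt{J_0}\,2^{k_i/2}(k_k-k_i)L_iL_k^2$. Because the amplitudes are arbitrary, this has to be absorbed by Young's inequality:
\[
 2^{k_i/2}dL_iL_k^2\le\tfrac13\varepsilon\,2^{k_i/2}L_i^3+\tfrac23\varepsilon^{-1/2}d^{3/2}2^{k_i/2}L_k^3 .
\]
The second piece is charged to block $k$ through the factor $2^{k_i/2}=2^{k_k/2}2^{-d/2}$, with $d=k_k-k_i\ge120$. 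The first piece is charged to block $i$ with a small $\varepsilon$ chosen depending on $d$ (for instance $\varepsilon_d\propto 2^{-d/8}$). After Young the costs become $\varepsilon_d$ on block $i$ and $\varepsilon_d^{-1/2}d^{3/2}2^{-d/2}\sim d^{3/2}2^{-7d/16}$ on block $k$, and both are summable over $d\ge120$.

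Triples $i<j<l$ are handled the same way. The elementary part is nonnegative, and \eqref{eq:anticoherent-cross-block-triple-tail} bounds the loss by $\sqrt L(C_*+2\log(M/L))$ with $L$ in the lowest block. Young's inequality with three factors, $L_iL_jL_l\le$ weighted cubes, then moves the mass onto the block of largest weight, and the factor $2^{k_i/2}$ again supplies decay in the gaps.

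The main obstacle is the bookkeeping. I need to check that the total charge to each block, summed over all partners above and below (including triples), stays below $1/120-1/125$ times $\sqrt{J_0}2^{k/2}L_k^3$. Triples whose two largest blocks are adjacent-but-far are the worst case, since the weight $2^{k_i/2}$ is tiny relative to $2^{k_l/2}$ but large relative to nothing when the charge must go downward. My first choice is an explicit $\varepsilon_d=2^{-d/4}$ schedule, with the gap $120$ turned into a numerical clock via sums like $\sum_{d\ge120}d^{2}2^{-d/4}$, checked by rational bounds in the style of \eqref{eq:anticoherent-cross-block-clock}.

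Finally, $E_3<\infty$ together with $y$ nonincreasing gives $A<\infty$. For the limit, truncate at the $K$th active block; $A_K$, $E_{3,K}$ and $\mathcal C_K$ all converge monotonically, and the margin sum converges by Fatou.
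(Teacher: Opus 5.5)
Your proposal is correct and follows the paper's proof in all essentials: block expansion of $\mathfrak D$, one-block coercivity from Lemma~\ref{lem:anticoherent-quantitative-dyadic-block}, cross-block losses from \eqref{eq:anticoherent-two-level-negative-tail} and \eqref{eq:anticoherent-cross-block-triple-tail} absorbed by AM--GM using the factor $2^{k_i/2}$ and the gap $120$, then monotone truncation. The differences are minor: the paper fixes the normalization $u_i=2^{k_i/6}L_{k_i}$ so that one unweighted AM--GM gives the decays $2^{-d/3}$ and $2^{-20(2m+n)}$ (equivalent to your $d$-dependent Young weights), and it charges mixed triples with two cutoffs in the lowest active block as losses rather than discarding them, as you legitimately do, via Corollary~\ref{cor:anticoherent-outer-uniform-block-hessian}; note also that the off-diagonal entries of $3T(Y_i,Y_k,Y_k)$ (two distinct cutoffs in the upper block) are fully mixed, so their loss comes from \eqref{eq:anticoherent-cross-block-triple-tail}, not \eqref{eq:anticoherent-two-level-negative-tail}, though it has the same form $C\sqrt{J_0}\,2^{k_i/2}(k_k-k_i)L_{k_i}L_{k_k}^2$, and the clock you defer closes easily against the available $1/120-1/125=1/3000$ because every cross charge is exponentially small in the gap.
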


\begin{proof}
We first retain finitely many active blocks and put
\[
 u_i=2^{k_i/6}L_{k_i}.
\]
The one-block coercivity in
Lemma~\ref{lem:anticoherent-quantitative-dyadic-block} contributes at least
$\sqrt{J_0}\sum_i u_i^3/120$.

Let $A_0=\sqrt2\log2$.  If two active blocks have index difference $d$,
the repeated-cutoff cross loss in the proof of
Theorem~\ref{thm:anticoherent-infinite-dyadic-layers} becomes, after this
normalization,
\[
 A_0d2^{-d/3}u_i u_j^2.
\]
Using $u_i u_j^2\le(u_i^3+2u_j^3)/3$, the gap condition, and the fact that
$d2^{-d/3}$ decreases for $d\ge120$, all such losses charge at most
\begin{equation}\label{eq:anticoherent-lacunary-repeated-clock}
 T_2= A_0\sum_{m\ge1}120m\,2^{-40m}
\end{equation}
times $\sqrt{J_0}\sum_i u_i^3$.

For a fully mixed triple, let $m$ and $n$ be the numbers of active-block
gaps between its first two and last two positions.  With
$C_d=\sqrt2[C_*+2(d+1)\log2]$, note that
$d\mapsto C_d2^{-d/3}$ decreases for $d\ge120$, while the remaining
second-gap factor also decreases.  Thus its normalized coefficient is at most
\[
 C_{120m}2^{-20(2m+n)}.
\]
The arithmetic--geometric mean inequality applied to the three occurrences
of $u$ and a harmless sixfold overcount of the possible positions bound all
fully mixed losses by
\begin{equation}\label{eq:anticoherent-lacunary-triple-clock}
 T_3=6\sum_{\substack{m,n\ge0\\m+n>0}}
 C_{120m}2^{-20(2m+n)}
\end{equation}
times the same cubic norm.

For an exact rational certificate, put $r=2^{-20}$.  The bounds
$\sqrt2<3/2$, $\log2<7/10$, $C_*<8$, and $A_0<21/20$ give
\[
 T_2+T_3
 <\frac{21}{20}\frac{120r^2}{(1-r^2)^2}
 +\frac6{1-r}\left[
 15r+\frac{15r^2}{1-r^2}
 +\frac{252r^2}{(1-r^2)^2}
 \right]
 <\frac1{10000}.
\]
Since $1/120-1/10000>1/125$, this proves
\eqref{eq:anticoherent-lacunary-infinite-margin} for finitely many blocks.

For infinitely many blocks, truncate the jump measure after the first $K$
active blocks.  The resulting profiles increase pointwise to $y$.
The assumed finiteness of $E_3$, boundedness of $y$, and
$\sum_n n^{-3/2}<\infty$ give $A<\infty$.  Monotone convergence for
$A_K,E_{3,K},\mathcal C_K$ passes the finite estimate to the limit.
\end{proof}

At critical scaling, the full contiguous stack admits a uniform Mellin
description that keeps all cross-generation interactions.

\begin{proposition}[Cubic Mellin bulk reserve and modulation stability]
\label{prop:anticoherent-cubic-mellin-reserve}
Put
\[
 q=2^{-1/6},\qquad x=q^2=2^{-1/3},\qquad d_0=\pi-2,
\]
and, for \(r>1\), define
\[
 s(t)=\sqrt{1+t^2},\qquad I=4-\pi,
\]
\begin{align*}
 A_*(t)&=2t-2s(t)-2\arctan t+\frac{2t}{s(t)+1},\\
 B_*(t)&=-2t+2s(t)+\log\frac{t}{s(t)+1}-\frac1{s(t)+1},\\
 C_*(t)&=-t^2+ts(t)+\operatorname{arsinh}t-\frac{2t}{s(t)+1}.
\end{align*}
With \(t_1=(r-1)^{-1/2}\), \(t_2=r^{-1/2}\), put
\[
 U(r)=2\{\sqrt r[A_*(t_1)+2]-B_*(t_1)\},\qquad
 V(r)=2\{\sqrt rC_*(t_2)-B_*(t_2)\},
\]
\begin{equation}\label{eq:anticoherent-explicit-mellin-kernel}
 G_\infty(r)=10-\frac2{\sqrt r}
 +\frac4{\sqrt r+\sqrt{r-1}}-2I-U(r)-V(r).
\end{equation}
Define
\begin{align}
 g_0&=4\{\pi+\sqrt2-\log(1+\sqrt2)-2\},
 \label{eq:anticoherent-cubic-mellin-g0}\\
 G_{\rm rep}(r)
 &=12-\frac4{\sqrt r}
 +\frac8{\sqrt r+\sqrt{r-1}}-2(4-\pi)-2U(r),
 \label{eq:anticoherent-cubic-mellin-repeated}\\
 g_a&=G_\infty(2^a),\qquad
 h_a=G_{\rm rep}(2^a)\quad(a\ge1).
 \label{eq:anticoherent-cubic-mellin-sequences}
\end{align}
For a finitely supported nonnegative sequence $u=(u_j)_{j\in\mathbb Z}$,
set
\[
 y_u^{(R)}=\sum_j(R2^j)^{-1/6}u_jp_{R2^j},
\]
where $R2^j$ is integral on the support.  Then
\begin{equation}\label{eq:anticoherent-cubic-mellin-functional}
 \lim_{R\to\infty}\mathfrak D(y_u^{(R)})=\mathcal E(u),
\end{equation}
where
\begin{align}
 \mathcal E(u)
 &=d_0\sum_i u_i^3
 +\frac{g_0}{2}\sum_{i<k}q^{k-i}u_i^2u_k
 +\frac12\sum_{i<j}h_{j-i}x^{j-i}u_iu_j^2
 \notag\\
 &\quad
 +\sum_{i<j<k}g_{j-i}q^{(j-i)+(k-i)}u_iu_ju_k.
 \label{eq:anticoherent-cubic-mellin-expanded}
\end{align}
Equivalently, if
\begin{equation}\label{eq:anticoherent-cubic-mellin-convolutions}
 H_j=\sum_{m\ge1}q^mu_{j+m},\qquad
 A_j=\sum_{a\ge1}g_ax^au_{j-a},\qquad
 R_j=\sum_{a\ge1}h_ax^au_{j-a},
\end{equation}
then
\begin{equation}\label{eq:anticoherent-cubic-mellin-causal-form}
 \mathcal E(u)=\sum_j\left\{
 d_0u_j^3+\frac{g_0}{2}u_j^2H_j
 +\frac12u_j^2R_j+u_jH_jA_j\right\}.
\end{equation}

For the constant window $u_j^{(K)}=\mathbf1_{\{0\le j<K\}}$, the bulk
density has the strict reserve
\begin{equation}\label{eq:anticoherent-cubic-mellin-density}
 \lim_{K\to\infty}\frac{\mathcal E(u^{(K)})}{K}
 =\mathcal S\in[24.001836588,24.001965296].
\end{equation}
Moreover, the constant bi-infinite stack is strictly stable under arbitrary
periodic amplitude modulation.  More precisely, the coefficient of the
quadratic variation at Fourier frequency \(\theta\) is
\begin{align}
 \mathcal H(\theta)
 &=3d_0+\sum_{m\ge1}P_m(1+2\cos m\theta)\notag\\
 &\quad+\sum_{a,c\ge1}g_ax^aq^c
 \{\cos a\theta+\cos c\theta+\cos(a+c)\theta\},
 \label{eq:anticoherent-cubic-mellin-hessian-symbol}\\
 P_m&=\frac12(g_0q^m+h_mx^m),
\end{align}
and
\begin{equation}\label{eq:anticoherent-cubic-mellin-symbol-margin}
 \mathcal H(\theta)>12.5163>12
 \qquad(0\le\theta\le\pi).
\end{equation}
Thus no constant or log-periodic perturbation of the natural
$M^{-1/6}$ stack can extend the forty-scale Hessian obstruction to a cubic
counterexample.
\end{proposition}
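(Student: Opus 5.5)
The proof has four parts: the scaling limit \eqref{eq:anticoherent-cubic-mellin-functional}, the bulk density \eqref{eq:anticoherent-cubic-mellin-density}, the Hessian symbol \eqref{eq:anticoherent-cubic-mellin-hessian-symbol}, and the symbol margin \eqref{eq:anticoherent-cubic-mellin-symbol-margin}.

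\emph{Scaling limit.} Expand $\mathfrak D$ trilinearly in the plateaux $p_{R2^j}$, weighted by $(R2^j)^{-1/6}u_j$. Each polarized coefficient $T(p_L,p_M,p_N)$ splits, via $K_{a,b}=a^{-3/2}+b^{-3/2}+R_{a,b}$, into an elementary part and a residual part.

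\begin{itemize}
\item \emph{Elementary part.} Its exact values come from the baseline formula \eqref{eq:anticoherent-critical-plateau-baseline} and the quadratic ledger $B_N^{(0)}$ in the proof of Lemma~\ref{lem:anticoherent-local-triple-polarization}.
\item \emph{Residual part.} Write $R_{a,b}=c^{-3/2}f(a/c)$ as in Lemma~\ref{lem:anticoherent-sharp-residual-shell}. Each residual region is a Riemann sum over a scaled triangle, so it equals $\sqrt{L}$ times an explicit integral of $f$ plus $o(\sqrt L)$.
\end{itemize}

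With $L=R2^i$, $M=R2^j$, $N=R2^k$, every coefficient is $\sqrt{R}\,2^{i/2}\cdot\Phi(2^{j-i},2^{k-i})+o(\sqrt R)$. The weights contribute a total factor $R^{-1/2}2^{-(i+j+k)/6}$, so the limit exists. It yields \eqref{eq:anticoherent-cubic-mellin-expanded}, with $q^{(j-i)+(k-i)}$ coming from $2^{i/2-(i+j+k)/6}$.

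The constants are then identified as follows.
\begin{itemize}
\item $d_0=\pi-2$ is $\lim\Delta_L/\sqrt L=2-\tfrac12(4-\pi)$, using the integral $\int_0^1 f=4-\pi$ from that lemma.
\item $g_0$ arises from the doubled-small-cutoff coefficient.
\item $h_a$ and $g_a$ are the limits of the repeated-large and fully mixed coefficients at ratio $r=2^a$.
\end{itemize}
In each case the integrals of $f$ over the triangles $\{a\le c,\ c\le r\}$ are evaluated in closed form by the substitution $a/(c-a)=t^{-2}$. This produces $A_*,B_*,C_*$ at $t_1=(r-1)^{-1/2}$ and $t_2=r^{-1/2}$. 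The causal form \eqref{eq:anticoherent-cubic-mellin-causal-form} is only a regrouping of the triple sum by the middle index.

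\emph{Bulk density.} For the constant window, $\mathcal E(u^{(K)})/K$ tends to
\[
 d_0+\frac{g_0}{2}\sum_m q^m+\frac12\sum_a h_ax^a+\sum_{a,c}g_ax^aq^c,
\]
since boundary effects are $O(1)$ by geometric decay. Evaluating this rigorously requires the following:
\begin{itemize}
\item truncate the $a$-sums at some $a_0$;
\item bound the tails using the asymptotics $G_\infty(r),G_{\rm rep}(r)=O(\log r)$, which follow from \eqref{eq:anticoherent-residual-asymptotic};
\item evaluate the finitely many terms with interval arithmetic.
\end{itemize}
This gives the enclosure in \eqref{eq:anticoherent-cubic-mellin-density}.

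\emph{Hessian symbol.} Perturb $u_j=1+\epsilon v_j$ with $v$ periodic. Take the second variation of the translation-invariant cubic form; cross-terms cancel after symmetrization. Each triple $(i,j,k)$ with gaps $a,c$ contributes $\cos a\theta+\cos c\theta+\cos(a+c)\theta$. The repeated terms $u_i^2u_k$ contribute $P_m(1+2\cos m\theta)$, which gives \eqref{eq:anticoherent-cubic-mellin-hessian-symbol}.

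\emph{Symbol margin (main obstacle).} The bound $\mathcal H(\theta)>12.5163$ must hold uniformly in $\theta$. The plan is:
\begin{enumerate}
\item Separate the absolutely summable tails of the coefficients, bounded by $\sum|g_a|x^aq^c$ and $\sum|P_m|$ with $m>m_0$, as a uniform error.
\item The remaining piece is a trigonometric polynomial. Bound its derivative by the $\ell^1$ norm of the frequency-weighted coefficients.
\item Evaluate on a fine $\theta$-grid with interval arithmetic.
\end{enumerate}
The difficulty is certifying the transcendental values $g_a,h_a$ to enough digits, and showing that the near-cancellation in the closed forms of $U$ and $V$ at large $r$ is controlled. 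For the latter I would first try series expansions in $t$ for small $t$.
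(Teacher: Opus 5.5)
The overall route is the paper's: polarize $\mathfrak D$ over the plateaux with multiplicities $1,3,3,6$, pass to shell Riemann limits, obtain $\mathcal S$ by translation averaging, diagonalize the second variation in Fourier, and certify $\mathcal H$ by truncation, a derivative bound and a guarded interval grid. The gap is the one step you assert rather than prove, and \eqref{eq:anticoherent-cubic-mellin-functional} fails exactly there.

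You correctly write the normalized coefficient as $\Phi(2^{j-i},2^{k-i})$. You then read off $g_{j-i}=G_\infty(2^{j-i})$ for the fully mixed term and the constant $g_0/2$ for the repeated-small term, which discards the dependence on the outer ratio. The paper's proof justifies this by saturation of the outer residual rectangle $\{a\le L,\ b\le M,\ a+b\le N\}$ once $N\ge L+M$. That covers only the residual kernel $R_{a,b}$, not the elementary part.

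In the elementary ledger $\sum_a a^{-3/2}y_a\mathfrak D_a(y)$, the modes $M<a\le N$ of the outer plateau meet the cross energy $2\langle p_L,p_M\rangle=2L$ with no compensating contact. Two consequences in the paper's own formulas:
\begin{itemize}
\item \eqref{eq:anticoherent-mixed-outer-increment} gives $\Theta_{L,M,N}-\Theta_{L,M,N-1}=2LN^{-3/2}$ for every $N>L+M$.
\item The coefficient $b=2\Delta_M+MA_N-\frac12S_{M,N}$ of \eqref{eq:anticoherent-two-level-coefficients} keeps growing with $N$ through $MA_N$, although $S_{M,N}$ is frozen for $N\ge2M$.
\end{itemize}

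Summing these tails, with the ratios fixed, gives
\[
 \frac{\Theta_{L,M,N}}{\sqrt L}\to G_\infty\Bigl(\frac ML\Bigr)-4\sqrt{\frac LN},
 \qquad
 \frac{b}{\sqrt M}\to\frac{g_0}{2}-2\sqrt{\frac MN}.
\]
This holds because $G_\infty$ and $g_0=G_\infty(1)$ are the $N=\infty$ values: the elementary part $10-2/\sqrt r+4/(\sqrt r+\sqrt{r-1})$ of \eqref{eq:anticoherent-explicit-mellin-kernel} contains $\int_r^\infty2\alpha^{-3/2}\,\dd\alpha=4/\sqrt r$. For dyadic cutoffs the square root of the smallest-to-largest ratio is $2^{-(k-i)/2}=q^{3(k-i)}$, so the corrections survive the normalization. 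Carrying out your plan therefore yields the limit
\[
 \mathcal E(u)-2\sum_{i<k}q^{4(k-i)}u_i^2u_k-4\sum_{i<j<k}q^{(j-i)+4(k-i)}u_iu_ju_k .
\]

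A one-line check: $b$ at $(M,N)=(R,4R)$ exceeds $b$ at $(R,2R)$ by $R(A_{4R}-A_{2R})\sim(\sqrt2-1)\sqrt R$. Yet \eqref{eq:anticoherent-cubic-mellin-expanded} requires both normalized values to tend to $g_0/2$. So the identification cannot be completed as stated. With the corrected kernel, the constant-window density drops by $2q^4/(1-q^4)+4q^9/((1-q^4)(1-q^5))\approx12.1$, and the density and symbol certificates would have to be redone. Your certificate plan for the displayed $\mathcal E$ does match the paper's, but those numbers then say nothing about the discrete deficit.

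Two smaller points.
\begin{itemize}
\item Since $\frac12\sum_{a+b\le L}R_{a,b}\sim(4-\pi)\sqrt L$, you get $d_0=2-(4-\pi)$, not $2-\frac12(4-\pi)=\pi/2$.
\item An $O(\log r)$ tail bound must be made explicit before it certifies anything. The paper uses $\abs{G_\infty(r)}\le74+2\log r$ and $\abs{G_{\rm rep}(r)}\le88+2\log r$.
\end{itemize}
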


\begin{proof}
For $L\le M\le N$, the fully mixed coefficient has a continuum limit
\[
 \Theta_{L,M,N}\sim\sqrt L\,G_\triangle(M/L,N/L).
\]
If the three cutoffs are dyadic and $L<M<N$, then $N\ge2M\ge L+M$.
The outer residual rectangle has already saturated, so
$G_\triangle(M/L,N/L)=G_\infty(M/L)$, independently of the largest
cutoff.  If the largest cutoff is repeated, the same shell calculation gives
\eqref{eq:anticoherent-cubic-mellin-repeated}.  The repeated-small limit is
$G_\infty(1)=g_0$, while
$\Delta_L/\sqrt L\to\pi-2$.

The product of the three layer weights contributes
\[
 \sqrt{R2^i}\prod_{\ell\in\{i,j,k\}}(R2^\ell)^{-1/6}
 =q^{(j-i)+(k-i)},\qquad i\le j\le k.
\]
Accounting for the polarization multiplicities $1,3,3,6$ gives
\eqref{eq:anticoherent-cubic-mellin-expanded}.  Regrouping first by the
middle and largest indices gives
\eqref{eq:anticoherent-cubic-mellin-causal-form}.  In particular, the
apparently three-index all-scale tensor is exactly one future Hardy tail and
two past Mellin tails.

The elementary formulas imply, for $r\ge2$,
\begin{equation}\label{eq:anticoherent-cubic-mellin-kernel-bounds}
 |G_\infty(r)|\le74+2\log r,
 \qquad |G_{\rm rep}(r)|\le88+2\log r.
\end{equation}
For example, if $0<t\le1$, then

\[
 |A(t)+2|\le6t,\qquad |B(t)|\le7+|\log t|,
 \qquad |C(t)|\le5t.
\]
Together with
$\sqrt r(r-1)^{-1/2}\le\sqrt2$, these estimates give
\eqref{eq:anticoherent-cubic-mellin-kernel-bounds}.  Hence every series above
is absolutely convergent.

Translation averaging of
\eqref{eq:anticoherent-cubic-mellin-expanded} now gives
\begin{equation}\label{eq:anticoherent-cubic-mellin-density-series}
 \mathcal S=d_0+\frac{g_0}{2}\frac q{1-q}
 +\frac12\sum_{a\ge1}h_ax^a
 +\frac q{1-q}\sum_{a\ge1}g_ax^a.
\end{equation}
The first 80 terms, evaluated with 100-decimal outward-rounded interval
arithmetic, give
\[
 [24.0019009417874,24.0019009417875].
\]
For $n=81$, \eqref{eq:anticoherent-cubic-mellin-kernel-bounds} and the
closed formulas for
$\sum_{a\ge n}x^a$ and $\sum_{a\ge n}ax^a$ bound the omitted tail in
absolute value by $0.000064353605$.  This proves
\eqref{eq:anticoherent-cubic-mellin-density}.

Finally expand the periodic bulk functional at $u_j=1+\eps v_j$.
The coefficient of \(\eps^2\) diagonalizes in the periodic Fourier basis and
is exactly \eqref{eq:anticoherent-cubic-mellin-hessian-symbol}.  For a
reproducible global certificate, truncate at $a=80$.  The omitted symbol is
at most $0.000180675$ by
\eqref{eq:anticoherent-cubic-mellin-kernel-bounds}, and the derivative of the
full symbol is at most $4724.298$.  On the uniform 32768-interval mesh of
$[0,\pi]$, outward-rounded kernel and tail bounds together with guarded
grid evaluation give a sampled minimum $12.7429784$.  Subtracting the
omitted tail and the Lipschitz mesh correction gives $12.5163296$, proving
\eqref{eq:anticoherent-cubic-mellin-symbol-margin}.  The ancillary
interval-arithmetic scripts \path{verify_cubic_mellin_density.py} and
\path{verify_cubic_mellin_stability.py} carry out both certificates.
\end{proof}

\begin{theorem}[Arbitrary-amplitude cubic Mellin positivity]
\label{thm:anticoherent-cubic-mellin-convolution}
Let \(q,x,d_0,g_0,g_a,h_a\) be as in
Proposition~\ref{prop:anticoherent-cubic-mellin-reserve}.  Then
\begin{equation}\label{eq:anticoherent-cubic-mellin-global-positive}
 \mathcal E(u)\ge0
 \qquad\text{for every }u\in\ell^3(\mathbb Z),\quad u_j\ge0.
\end{equation}
The inequality is strict unless \(u=0\).  Thus the causal convolution target
\eqref{eq:anticoherent-cubic-mellin-causal-form} holds simultaneously for
arbitrarily many dyadic generations and, by \(\ell^3\) closure, for genuine
infinite-support amplitude profiles.  No monotonicity, lacunarity, or
geometric jump-decay condition is imposed on \(u\).
\end{theorem}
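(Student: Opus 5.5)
The cleanest route to \(\mathcal E(u)\ge0\) for arbitrary nonnegative \(u\in\ell^3(\mathbb Z)\) is to show that the diagonal cubic reserve \(d_0\sum_ju_j^3\) dominates every term with a negative coefficient. The cross terms are then controlled by weighted H\"older/AM--GM estimates that are uniform in the support. I would work with the expanded form \eqref{eq:anticoherent-cubic-mellin-expanded}. The first step is to record the signs and sizes of the kernels \(g_0\), \(h_a\), and \(g_a\), using the closed formulas of Proposition~\ref{prop:anticoherent-cubic-mellin-reserve}. Here \(g_0>0\) explicitly, and the bounds \eqref{eq:anticoherent-cubic-mellin-kernel-bounds} give \(|g_a|\le74+2a\log2\) and \(|h_a|\le88+2a\log2\). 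I would also check, by interval arithmetic on the closed forms, which of the finitely many small \(a\) have \(g_a<0\) or \(h_a<0\), using the same ancillary-script methodology as before.

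Only the negative parts need to be paid for. Write \(g_a^-=(-g_a)_+\) and \(h_a^-=(-h_a)_+\), and discard the nonnegative parts, which is legitimate since \(u\ge0\). For the repeated-cutoff terms, AM--GM gives \(u_iu_j^2\le\frac13u_i^3+\frac23u_j^3\). Summing over \(j-i=a\) bounds their total negative contribution by \(\frac12\sum_a h_a^-x^a\sum_ju_j^3\). The \(g_0\) terms are nonnegative. For the fully mixed triples \(i<j<k\), AM--GM gives \(u_iu_ju_k\le\frac13(u_i^3+u_j^3+u_k^3)\). Summing over \(a=j-i\) and \(c=k-j\), with weight \(q^{a+(a+c)}=x^aq^{a+c}\), bounds the loss by \(\sum_{a,c\ge1}g_a^-x^aq^{a+c}\sum_ju_j^3\). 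Since \(\sum_cq^c=q/(1-q)\), everything reduces to the single scalar inequality
\[
 d_0>\frac12\sum_{a\ge1}h_a^-x^a+\frac{q}{1-q}\sum_{a\ge1}g_a^-x^aq^a.
\]
Here \(d_0=\pi-2\approx1.14\). The finite sums would be certified by outward-rounded interval evaluation of the closed formulas for \(a\le80\), and the tails beyond that by the kernel bounds together with closed geometric sums, exactly as in the density certificate. Strictness for \(u\ne0\) then follows from the strict scalar margin, and the \(\ell^3\) extension follows by truncation. The cubic form is continuous on \(\ell^3\) because its coefficients are absolutely summable, as already noted after \eqref{eq:anticoherent-cubic-mellin-kernel-bounds}.

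The main obstacle is that the scalar inequality might fail. If the \(g_a\) are negative for many \(a\), the factor \(q/(1-q)\approx8.2\) amplifies the loss far beyond \(d_0\). (The stability symbol \(\mathcal H(\theta)>12\) suggests the true reserve lives in cross terms, not the diagonal.) In that case I would not discard the positive off-diagonal terms. Instead I would use the causal form \eqref{eq:anticoherent-cubic-mellin-causal-form}, written as \(\sum_ju_j\,Q_j\), with \(Q_j=d_0u_j^2+\frac{g_0}2u_jH_j+\frac12u_jR_j+H_jA_j\). The \(\frac{g_0}{2}u_j^2H_j\) terms would absorb the negative parts of \(H_jA_j\) via a Schur-type test. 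Concretely, I would look for weights \(\omega_m=q^{\sigma m}\) such that the negative part of the trilinear kernel is bounded by a convex combination of \(d_0u^3\) and the positive \(g_0\) terms, reducing to a one-parameter scalar inequality in \(\sigma\). That inequality would again be certified by interval arithmetic with the tail bounds \eqref{eq:anticoherent-cubic-mellin-kernel-bounds}.
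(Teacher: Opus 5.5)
Your reduction to a scalar inequality has a bookkeeping slip. A distinct triple with gaps $a=j-i$, $c=k-j$ carries weight $q^{(j-i)+(k-i)}=q^{2a+c}=x^aq^c$, not $x^aq^{a+c}$. The more serious problem is that the scalar inequality itself is false by a wide margin, so the first route cannot work. Evaluating the closed forms of Proposition~\ref{prop:anticoherent-cubic-mellin-reserve} gives:
\begin{itemize}
\item $g_1\approx4.16$, $g_2\approx3.13$, $g_3\approx2.13$, $g_4\approx1.06$;
\item then $g_5\approx-0.08$, $g_6\approx-1.28$, and $g_a\approx4+2\pi-4\log2-2a\log2$ for larger $a$;
\item $h_a>0$ for $a\le6$, and $h_a\approx6+2\pi-4\log2-2a\log2<0$ for $a\ge7$.
\end{itemize}
Because $x=2^{-1/3}$ and $q=2^{-1/6}$ decay so slowly, $\frac12\sum_ah_a^-x^a\approx2.7$ and $\frac q{1-q}\sum_ag_a^-x^a\approx63$. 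Compare $d_0=\pi-2\approx1.14$: even the repeated-high loss alone exceeds the diagonal reserve. As a sanity check, these values reproduce the paper's density $\mathcal S\approx24.0$ in \eqref{eq:anticoherent-cubic-mellin-density-series}.

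Your fallback does not repair this, for a structural rather than numerical reason. It bounds the negative part by a combination of $d_0u_j^3$ and the positive $g_0$ terms. That is an inequality in which the positive mixed coefficients $g_1,\dots,g_4$ do not appear, and it is false. On the window $u=\mathbf{1}_{\{0\le j<K\}}$ its right side is about $(d_0+\frac{g_0}{2}\frac q{1-q})K\approx28.5K$, or about $34.8K$ if all positive $h_a$ terms are added. The negative mixed part alone is about $63K$.

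The same obstruction defeats any termwise AM--GM, H\"older, or Schur-test domination. Such a bound for $u_iu_ju_k$ can only use monomials supported in $\{i,j,k\}$. So a positive fully mixed monomial on one triple can never pay for a negative one on another. What makes $\mathcal E\ge0$ true is cancellation between the positive small-gap interactions (about $55K$ on the window) and the negative large-gap ones. Capturing that needs a genuinely quadratic certificate.

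That is the missing idea. The paper writes $\mathcal E(u)=\sum_ku_k\langle v^{(k)},\widetilde Qv^{(k)}\rangle$ with $v^{(k)}_n=q^{|n|/2}u_{k+n}$; see \eqref{eq:anticoherent-cubic-mellin-infinite-gram-identity}. Each repeated coefficient is split through the diagonals $D_\pm$. Each distinct-triple coefficient is spread over its three possible linear factors via weights with $\alpha_a+\beta_a+\gamma_a=1$. It then proves $\widetilde Q\succ0$ in four pieces:
\begin{itemize}
\item a Toeplitz symbol floor on the negative half-line;
\item a rank-two coupling bound;
\item a rank-one Schur complement for the positive tail;
\item an interval $LDL^{\mathsf T}$ factorization of the remaining $21\times21$ block.
\end{itemize}
Strictness for $u\ne0$ comes from $\widetilde Q\succ0$ applied to a summand with $u_k>0$, not from a diagonal margin.
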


\begin{proof}
We give a single infinite Gram certificate.  Put
\[
 D_-=2.664428,\qquad D_+=1.233368.
\]
For \(1\le a\le20\), define the following exact six-decimal rational
numbers:
\begin{center}
\resizebox{\textwidth}{!}{$
\begin{array}{c|rrrrrrrrrr}
a&1&2&3&4&5&6&7&8&9&10\\ \hline
\alpha_a& .181940& .250567& .361108& .688951&-1.000000
&-.647225&-.331115&-.210711&-.158489&-.119108\\
\beta_a&-.179873&-.250567&-.361108&-.515173&1.000000
&.765263&.794318&.792851&.806990&.786040
\end{array}$}
\end{center}
\begin{center}
\resizebox{\textwidth}{!}{$
\begin{array}{c|rrrrrrrrrr}
a&11&12&13&14&15&16&17&18&19&20\\ \hline
\alpha_a&-.096960&-.080101&-.069990&-.057111&-.053880
&-.066839&-.044065&-.042821&-.039566&-.036353\\
\beta_a&.770625&.703269&.640896&.579404&.516787
&.470648&.479046&.530609&.617884&.772564
\end{array}$}
\end{center}
Set \(\gamma_a=1-\alpha_a-\beta_a\) on this range, and
\begin{equation}\label{eq:anticoherent-cubic-mellin-tail-allocation}
 (\alpha_a,\beta_a,\gamma_a)=(0,1,0)\qquad(a>20).
\end{equation}

We next define a real symmetric operator \(\widetilde Q\) on
\(\ell^2(\mathbb Z)\).  Besides the symmetric transposes, its nonzero
entries are, for \(a,c\ge1\),
\begin{align}
 \widetilde Q_{0,0}&=d_0,
 &\widetilde Q_{-a,-a}&=D_-,
 &\widetilde Q_{a,a}&=D_+,
 \label{eq:anticoherent-cubic-mellin-gram-diagonal}\\
 \widetilde Q_{0,a}
 &=\frac12\left(\frac{g_0}{2}-D_-\right)q^{a/2},
 &
 \widetilde Q_{-a,0}
 &=\frac14h_aq^{3a/2}-\frac12D_+q^{a/2},
 \label{eq:anticoherent-cubic-mellin-gram-repeat}\\
 \widetilde Q_{a,a+c}
 &=\frac12\alpha_ag_aq^{a+c/2},
 &
 \widetilde Q_{-a,c}
 &=\frac12\beta_ag_aq^{3a/2+c/2},
 &
 \widetilde Q_{-(a+c),-c}
 &=\frac12\gamma_ag_aq^{3a/2}.
 \label{eq:anticoherent-cubic-mellin-gram-distinct}
\end{align}
For each \(k\in\mathbb Z\), let
\(v_n^{(k)}=q^{\abs n/2}u_{k+n}\).  The repeated-low ledger is
\[
 2\widetilde Q_{0,a}q^{a/2}+D_-q^a
 =\frac{g_0}{2}q^a,
\]
the repeated-high ledger is
\[
 2\widetilde Q_{-a,0}q^{a/2}+D_+q^a
 =\frac12h_aq^{2a},
\]
and the three possible choices of the linear factor in a fully distinct
monomial sum to
\[
 (\alpha_a+\beta_a+\gamma_a)g_aq^{2a+c}
 =g_aq^{2a+c}.
\]
Consequently coefficient comparison in
\eqref{eq:anticoherent-cubic-mellin-expanded} gives the exact identity
\begin{equation}\label{eq:anticoherent-cubic-mellin-infinite-gram-identity}
 \mathcal E(u)
 =\sum_{k\in\mathbb Z}u_k
   \ip{v^{(k)}}{\widetilde Qv^{(k)}}
\end{equation}
for every finitely supported sequence.  It remains only to prove
\(\widetilde Q\succ0\).

On the negative half-axis, \(\widetilde Q\) has the twenty-band Toeplitz
block with diagonal \(D_-\) and off-diagonal coefficients
\begin{equation}\label{eq:anticoherent-cubic-mellin-past-symbol}
 c_a=\frac12\gamma_ag_aq^{3a/2}\quad(1\le a\le20),
 \qquad
 \tau(\theta)=D_-+2\sum_{a=1}^{20}c_a\cos(a\theta).
\end{equation}
Outward-rounded interval evaluation on a uniform \(16384\)-interval mesh,
together with the derivative majorant
\(2\sum_{a=1}^{20}a\abs{c_a}<92.798519\), gives
\begin{equation}\label{eq:anticoherent-cubic-mellin-past-floor}
 \tau(\theta)>0.3064944737>0.30
 \qquad(0\le\theta\le\pi).
\end{equation}
Hence its half-line compression \(T_-\) satisfies
\(T_-\succeq0.30\Id\).

The coupling of this block to the center and the complete positive half-axis
has rank two.  More precisely, set
\[
 p_a=\frac14h_aq^{3a/2}-\frac12D_+q^{a/2},
 \qquad
 b_a=\frac12\beta_ag_aq^{3a/2},
 \qquad F=(p,b).
\]
For the \(240\)-term rational approximate solution \(X\) used in the
certificate, with residual \(R=F-T_-X\), the variational identity
\begin{equation}\label{eq:anticoherent-cubic-mellin-inverse-gram-residual}
 F^{\mathsf T}T_-^{-1}F
 =F^{\mathsf T}X+X^{\mathsf T}F-X^{\mathsf T}T_-X
   +R^{\mathsf T}T_-^{-1}R
\end{equation}
and \eqref{eq:anticoherent-cubic-mellin-past-floor} yield the Loewner bound
\begin{equation}\label{eq:anticoherent-cubic-mellin-inverse-gram-bound}
 F^{\mathsf T}T_-^{-1}F
 \preceq
 \widehat G:=
 \begin{pmatrix}.820&.347\\.347&.407\end{pmatrix}.
\end{equation}
Indeed, interval arithmetic gives
\[
 F^{\mathsf T}X+X^{\mathsf T}F-X^{\mathsf T}T_-X
 \in
 \begin{pmatrix}
 .818279265709&.346572241075\\
 .346572241075&.404939934325
 \end{pmatrix}
 +[-10^{-12},10^{-12}]^{2\times2},
\]
\(\norm{R}_{\mathrm F}^2<2.667596\times10^{-11}\), and the two diagonal
slacks and determinant of the difference in
\eqref{eq:anticoherent-cubic-mellin-inverse-gram-bound} are respectively
greater than
\[
 1.7207342\times10^{-3},\qquad
 2.0600655\times10^{-3},\qquad
 3.3618476\times10^{-6}.
\]

It remains to eliminate the positive tail.  Put
\(r_a=q^{a/2}\),
\(e=\tfrac12(g_0/2-D_-)\), and
\(a_m=\tfrac12\alpha_mg_mq^{m/2}\) for \(1\le m\le20\).
After using \eqref{eq:anticoherent-cubic-mellin-inverse-gram-bound}, all
positive indices beyond twenty form
\(D_+\Id-\widehat G_{22}r\otimes r\).  Its rank-one Schur denominator is
\begin{equation}\label{eq:anticoherent-cubic-mellin-future-denominator}
 D_+-\widehat G_{22}\sum_{n>20}q^n
 >0.9036371583.
\end{equation}
Eliminating this tail leaves a symmetric \(21\)-dimensional matrix indexed
by the center and positive indices \(1,\ldots,20\).  Its center diagonal is
\(d_0-\widehat G_{11}\), its center--future vector is
\((e-\widehat G_{12})r\), and its future block is
\[
 D_+\Id+\bigl(a_mr_n\mathbf1_{m<n}
                   +a_nr_m\mathbf1_{n<m}\bigr)_{m,n=1}^{20}
 -\widehat G_{22}(r_mr_n)_{m,n=1}^{20},
\]
followed by the exact rank-one Schur subtraction from
\eqref{eq:anticoherent-cubic-mellin-future-denominator}.  Outward-rounded
interval \(LDL^{\mathsf T}\) elimination gives all twenty-one pivots
positive, with the smallest pivot greater than
\begin{equation}\label{eq:anticoherent-cubic-mellin-final-pivot}
 0.3215721572.
\end{equation}
Equations
\eqref{eq:anticoherent-cubic-mellin-past-floor},
\eqref{eq:anticoherent-cubic-mellin-inverse-gram-bound},
\eqref{eq:anticoherent-cubic-mellin-future-denominator}, and
\eqref{eq:anticoherent-cubic-mellin-final-pivot} prove
\(\widetilde Q\succ0\).  The ancillary interval-arithmetic script
\texttt{verify\_cubic\_mellin\_gram.py} evaluates the kernel intervals,
the infinite symbol floor, the residual bound, and the final interval
factorization.

For finitely supported \(u\ge0\), every summand in
\eqref{eq:anticoherent-cubic-mellin-infinite-gram-identity} is nonnegative,
and one is positive unless \(u=0\).  Finally, the sequences
\((q^a)_{a\ge1}\), \((g_ax^a)_{a\ge1}\), and
\((h_ax^a)_{a\ge1}\) belong to \(\ell^1\) by
\eqref{eq:anticoherent-cubic-mellin-kernel-bounds}.  Young's inequality
therefore makes \(\mathcal E\) a continuous cubic form on \(\ell^3\).
The same estimates make the right side of
\eqref{eq:anticoherent-cubic-mellin-infinite-gram-identity} absolutely
convergent.  Truncation therefore extends both the identity and
\eqref{eq:anticoherent-cubic-mellin-global-positive} to every nonnegative
\(u\in\ell^3(\mathbb Z)\).  If such a sequence is nonzero, choose
\(k\) with \(u_k>0\); strict positivity of \(\widetilde Q\) makes the
corresponding Gram summand positive.  This proves the strict statement and
completes the proof.
\end{proof}

\begin{theorem}[Coercive Mellin certificate and exact discrete lifting]
\label{thm:anticoherent-coercive-mellin-discrete-lift}
There is a constant $\delta_{\rm M}>0$ such that
\begin{equation}\label{eq:anticoherent-cubic-mellin-coercivity}
 \mathcal E(u)\ge\delta_{\rm M}\sum_{j\in\mathbb Z}u_j^3
 \qquad(u\in\ell^3(\mathbb Z),\ u_j\ge0).
\end{equation}
Moreover, there is an integer $R_0$ with the following property.  For every
integer $R\ge R_0$ and every finitely supported nonnegative sequence
$u=(u_j)_{j\ge0}$, put
\begin{equation}\label{eq:anticoherent-exact-discrete-mellin-stack}
 y_u^{[R]}=\sum_{j\ge0}(R2^j)^{-1/6}u_jp_{R2^j},
 \qquad x_n=n^{-3/2}y_u^{[R]}(n).
\end{equation}
Then the original integer-frequency deficit satisfies
\begin{equation}\label{eq:anticoherent-exact-discrete-mellin-margin}
 AE_3-\frac12\mathcal C(x)
 =\mathfrak D(y_u^{[R]})
 \ge\frac{\delta_{\rm M}}2\sum_{j\ge0}u_j^3.
\end{equation}
Thus arbitrary amplitudes on arbitrarily many contiguous dyadic generations
are positive in the exact discrete problem once their lowest scale is at
least $R_0$; this is not merely a continuum-limit statement.  The conclusion
also holds for an infinite nonnegative $u$ whenever the resulting $A$ and
$E_3$ are finite.
\end{theorem}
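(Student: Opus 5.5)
The plan has two parts: first upgrade the Gram certificate of Theorem~\ref{thm:anticoherent-cubic-mellin-convolution} to a coercive one, then transfer that coercivity from the continuum limit to the exact integer problem by a uniform error estimate.

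\emph{Coercivity.} Strict positivity $\widetilde Q\succ0$ was certified with explicit margins: the past-symbol floor \eqref{eq:anticoherent-cubic-mellin-past-floor}, the Loewner slack in \eqref{eq:anticoherent-cubic-mellin-inverse-gram-bound}, the Schur denominator \eqref{eq:anticoherent-cubic-mellin-future-denominator}, and the final pivot \eqref{eq:anticoherent-cubic-mellin-final-pivot}. I would run the same elimination for $\widetilde Q-\eta E_{00}$, where $E_{00}$ is the projection onto the center coordinate and $\eta>0$ is small, say $\eta=10^{-2}$. Only the center diagonal changes, from $d_0-\widehat G_{11}$ to $d_0-\widehat G_{11}-\eta$, so the pivots are perturbed by at most a quantity controlled by $\eta$ and the pivot product. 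The smallest pivot exceeds $0.32$, so every pivot stays positive.

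Inserting this into the identity \eqref{eq:anticoherent-cubic-mellin-infinite-gram-identity} gives
\[
\mathcal E(u)\ge\eta\sum_k u_k\bigl(v^{(k)}_0\bigr)^2=\eta\sum_k u_k^3 .
\]
Hence \eqref{eq:anticoherent-cubic-mellin-coercivity} holds with $\delta_{\rm M}=\eta$. The extension to $\ell^3$ follows by the same truncation and continuity argument used in Theorem~\ref{thm:anticoherent-cubic-mellin-convolution}.

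\emph{Discrete lifting.} The exact deficit $\mathfrak D(y_u^{[R]})$ is a cubic form in $u$. Its coefficients are the discrete polarized quantities $\Delta_{R2^i}$, $b$, $c$ and $\Theta$ from Theorem~\ref{thm:anticoherent-two-level-plateau} and Lemma~\ref{lem:anticoherent-local-triple-polarization}, evaluated at dyadic cutoffs and multiplied by the weights $(R2^\ell)^{-1/6}$. In the limit they become the coefficients $d_0,\ g_0/2,\ h_a x^a/2,\ g_a q^{(j-i)+(k-i)}$ of \eqref{eq:anticoherent-cubic-mellin-expanded}. The key step is a quantitative rate for this convergence. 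Each discrete coefficient is $\sqrt{L}\,G(M/L,N/L)$ plus a Riemann-sum error, and I would aim for
\[
\bigl|\text{discrete coefficient}-\text{limit coefficient}\bigr|\le C\,(R2^{i})^{-1/2}\bigl(1+\log\text{(scale ratio)}\bigr)\times(\text{weight}),
\]
where $R2^i$ is the smallest cutoff in the triple. This rate comes from Euler--Maclaurin for the elementary sums (as in \eqref{eq:anticoherent-critical-plateau-baseline}), together with the monotone Riemann-sum comparison for the residual kernel used in Lemma~\ref{lem:anticoherent-sharp-residual-shell}. The resulting error kernel has the same geometric decay $q^{\text{gaps}}$ times $\log$ as the kernels in \eqref{eq:anticoherent-cubic-mellin-kernel-bounds}, with an extra factor $R^{-1/2}$.

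\emph{Summing the errors.} Summing over all triples with the arithmetic–geometric mean inequality $u_iu_ju_k\le(u_i^3+u_j^3+u_k^3)/3$, exactly as in the clock estimates of Theorem~\ref{thm:anticoherent-lacunary-infinite-layers}, bounds the total error by $C'R^{-1/2}\sum_j u_j^3$. Choosing $R_0$ with $C'R_0^{-1/2}\le\delta_{\rm M}/2$ yields \eqref{eq:anticoherent-exact-discrete-mellin-margin}. The infinite case then follows by monotone truncation.

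\emph{Main obstacle.} I expect the hardest step to be the uniform error bound for the cross-generation coefficients when the outer cutoff is far away. There the residual harmonic tails \eqref{eq:anticoherent-residual-asymptotic} generate logarithms, and one must check that the discrete-minus-continuum difference still decays like $q^{d}\,d\,R^{-1/2}$ rather than merely staying bounded. If a direct Euler--Maclaurin estimate fails, I would fall back on the asymptotic \eqref{eq:anticoherent-residual-asymptotic} to handle shells beyond $2L$ in closed form.
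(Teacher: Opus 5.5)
Your coercivity step is correct, and leaner than the paper's. The paper claims full $\ell^2$-coercivity $\ip v{\widetilde Qv}\ge\delta_{\rm M}\norm v_{\ell^2}^2$ and then uses only the $n=0$ coordinate, whereas you need only $\widetilde Q-\eta E_{00}\succeq0$. None of the elimination steps (Toeplitz floor, Loewner bound for $F^{\mathsf T}T_-^{-1}F$, tail denominator) involves the center diagonal. So the final $21\times21$ block $S$ is simply replaced by $S-\eta e_0e_0^{\mathsf T}$, which is positive definite for $0<\eta<(e_0^{\mathsf T}S^{-1}e_0)^{-1}$. The specific value $\eta=10^{-2}$ does not follow from the pivot bound unless the center is eliminated last, where its pivot drops by exactly $\eta$. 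The genuine gap is in the lifting. The exact dyadic coefficients do \emph{not} converge to those of \eqref{eq:anticoherent-cubic-mellin-expanded}, so neither your $R^{-1/2}$ rate nor any $\varepsilon_R\to0$ in \eqref{eq:anticoherent-discrete-mellin-uniform-error} is available.

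The culprit is the mass of the outermost plateau, $A_N=H_N^{(3/2)}=\zeta(3/2)-2N^{-1/2}+O(N^{-3/2})$.
\begin{itemize}
\item \emph{Fully mixed coefficient.} For $L<M<N$, $\Theta_{L,M,N}$ contains $2LA_N$ (from $uA_N$ times the cross energy $2stL$). No contact sum depends on $N$ once $N\ge L+M$. This is the paper's own increment \eqref{eq:anticoherent-mixed-outer-increment}, equal to $2L/N^{3/2}>0$ for $N>L+M$. Hence $\Theta_{L,M,N}=\Theta_{L,M,\infty}-2L\sum_{n>N}n^{-3/2}$, and for $L=R2^i$, $M=R2^j$, $N=R2^k$,
\[
 \frac{\Theta_{L,M,N}}{\sqrt L}\longrightarrow g_{j-i}-4\cdot2^{-(k-i)/2}.
\]
The reason is that $G_\infty$ is the $N=\infty$ value: its constant $10$ contains $\lim_L2L(\zeta(3/2)-A_{L-1})/\sqrt L=4$.
\item \emph{Repeated-low coefficient.} In the same way, $b=2\Delta_M+MA_N-\frac12S_{M,N}$ gives $b/\sqrt M\to g_0/2-2\cdot2^{-(k-i)/2}$.
\item \emph{Repeated-high coefficient.} The kernel \eqref{eq:anticoherent-cubic-mellin-repeated} fails the check $c=3\Delta_L$ at $N=L$. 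The limit must tend to $6(\pi-2)$ as $r\downarrow1$, but the formula tends to $6\pi-8$.
\end{itemize}

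These corrections are of order one and negative, and they carry the same weights as in \eqref{eq:anticoherent-cubic-mellin-expanded}. For $u=e_0+e_1+e_2$, the single fully mixed triple already loses $2q^3=\sqrt2$. Thus $\mathfrak D(y_u^{[R]})-\mathcal E(u)$ tends to a nonzero negative cubic form, not to zero.

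The paper's proof rests on the same identification, namely the assertion in Proposition~\ref{prop:anticoherent-cubic-mellin-reserve} that the fully mixed limit is independent of the largest cutoff. So this flaw is not peculiar to you. The worry in your last paragraph is well placed, but the obstruction is this elementary mass term, not the residual logarithms; carrying out your Euler--Maclaurin program would expose it. A valid lifting needs the corrected limiting form and a new Gram certificate for it; coercivity of $\mathcal E$ as defined does not transfer.

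A minor point: even the diagonal error is $O(L^{-1/2}\log L)$, not $O(L^{-1/2})$, because each shell sum carries a $\zeta(1/2)c^{-1}$ correction. That alone would be harmless.
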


\begin{proof}
The block elimination in the proof of
Theorem~\ref{thm:anticoherent-cubic-mellin-convolution} is coercive, not only
injective.  Indeed, the negative Toeplitz half-line has floor $0.30$, the
positive tail has the strictly positive denominator
\eqref{eq:anticoherent-cubic-mellin-future-denominator}, and the remaining
finite Schur complement has strictly positive pivots.  All Schur factors and
inverse bounds used in that elimination are bounded, so the corresponding
congruence is implemented by a boundedly invertible triangular operator.
Finite-dimensional positivity of the last block therefore combines with the
two half-line floors to give, after decreasing the smallest margin if
necessary, a constant
$\delta_{\rm M}>0$ such that
\[
 \ip v{\widetilde Qv}\ge\delta_{\rm M}\norm v_{\ell^2}^2
 \qquad(v\in\ell^2(\mathbb Z)).
\]
Apply this to the exact Gram identity
\eqref{eq:anticoherent-cubic-mellin-infinite-gram-identity}.  Since
$v_0^{(k)}=u_k$, one obtains
\[
 \mathcal E(u)
 \ge\delta_{\rm M}\sum_k u_k
       \sum_nq^{\abs n}u_{k+n}^2
 \ge\delta_{\rm M}\sum_ku_k^3,
\]
first for finite support and then by $\ell^3$ closure.  This proves
\eqref{eq:anticoherent-cubic-mellin-coercivity}.

It remains to make the Mellin passage uniform in the number of generations.
Expand $\mathfrak D(y_u^{[R]})$ as a cubic form in $u$.  The four coefficient
types are the diagonal, repeated-low, repeated-high, and fully distinct
coefficients used in
\eqref{eq:anticoherent-cubic-mellin-expanded}.  If the smallest cutoff in one
coefficient is $L=R2^i$, its normalized shell sums are Riemann sums on a fixed
ratio interval.  The shell splitting in the proof of
Proposition~\ref{prop:anticoherent-cubic-mellin-reserve} therefore gives the
corresponding coefficient of $\mathcal E$ as $L\to\infty$, uniformly for
$L\ge R$.  Indeed, after normalization the mesh of the shell beginning at
$R2^i$ is at most $(R2^i)^{-1}\le R^{-1}$, so the fixed-gap Riemann error is
uniform after taking the supremum over every starting generation $i\ge0$.

We record why this coefficientwise convergence is uniform as a cubic form.
The endpoint singularities in the shell coordinates are $O(t^{-1/2})$, and
\eqref{eq:anticoherent-cubic-mellin-kernel-bounds}, together with
\eqref{eq:anticoherent-residual-upper-bound}, gives a common summable majorant
for the normalized gap kernels.  After the factors in
\eqref{eq:anticoherent-cubic-mellin-expanded} are restored, the repeated
errors are dominated by $C(1+a)(q^a+x^a)$, and the distinct errors by
$C(1+a+c)x^aq^c$, independently of $R$ and of the starting generation $i$.
All three majorants belong to $\ell^1$ in their gap variables.  Dominated
Riemann convergence consequently gives numbers $\varepsilon_R\downarrow0$
such that, for every finite $u\ge0$,
\begin{equation}\label{eq:anticoherent-discrete-mellin-uniform-error}
 \left|\mathfrak D(y_u^{[R]})-\mathcal E(u)\right|
 \le\varepsilon_R\sum_ju_j^3.
\end{equation}
Here one uses $rst\le(r^3+s^3+t^3)/3$ and translation of the gap sums;
there is no factor depending on the support length of $u$.

Choose $R_0$ so that
$\varepsilon_R\le\delta_{\rm M}/2$ for $R\ge R_0$.  Combining
\eqref{eq:anticoherent-cubic-mellin-coercivity} and
\eqref{eq:anticoherent-discrete-mellin-uniform-error} proves
\eqref{eq:anticoherent-exact-discrete-mellin-margin}.  For infinite $u$,
truncate the generations.  The associated masses, energies, and contacts
increase to their full values.  The finite estimate bounds the contacts by
$2AE_3$, so they remain finite under the stated hypothesis and monotone
convergence proves the last assertion.
\end{proof}

\begin{corollary}[All geometric cubic Mellin stacks]
\label{cor:anticoherent-geometric-cubic-mellin}
Let $0\le t<1$ and consider either of the two one-sided profiles
\[
 u_j^+=t^j\mathbf1_{\{j\ge0\}},
 \qquad
 u_j^-=t^{-j}\mathbf1_{\{j\le0\}}.
\]
Then both series in
\eqref{eq:anticoherent-cubic-mellin-expanded} converge absolutely and
\begin{equation}\label{eq:anticoherent-geometric-cubic-margin}
 \mathcal E(u^\pm)\ge(\pi-2)\sum_j(u_j^\pm)^3,
\end{equation}
with strict inequality for $t>0$.  Thus neither geometrically decreasing
nor geometrically increasing amplitudes across arbitrarily many contiguous
dyadic generations can produce a cubic Mellin obstruction.
\end{corollary}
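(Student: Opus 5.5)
The plan is to evaluate $\mathcal E(u^\pm)$ in closed form using the causal representation \eqref{eq:anticoherent-cubic-mellin-causal-form}. For a geometric profile every convolution collapses to a geometric series. Take $u^+$ first. For $j\ge0$ one gets $H_j=t^j\sum_{m\ge1}(qt)^m=t^j\,qt/(1-qt)$, and $\sum_j u_j^3=1/(1-t^3)$. The past convolutions are truncated at $a\le j$:
\[
 A_j=t^j\sum_{a=1}^{j}g_a(x/t)^a,\qquad R_j=t^j\sum_{a=1}^{j}h_a(x/t)^a .
\]
Here $x/t$ may exceed one. So instead of summing over $j$ first, I would interchange the order and sum over the gap $a$ first. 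The pair $(j,j-a)$ with $j\ge a$ contributes $t^{3j}t^{-a}x^a$, and summing over $j\ge a$ gives $t^{3a}/(1-t^3)$. Hence
\[
 \mathcal E(u^+)=\frac1{1-t^3}\Bigl[d_0+\frac{g_0}{2}\frac{qt}{1-qt}
 +\frac12\sum_{a\ge1}h_a(xt^2)^a+\frac{qt}{1-qt}\sum_{a\ge1}g_a(xt^2)^a\Bigr].
\]
This has exactly the same shape as the density formula \eqref{eq:anticoherent-cubic-mellin-density-series}, with $q$ replaced by $qt$ and $x$ by $xt^2$. Absolute convergence follows from \eqref{eq:anticoherent-cubic-mellin-kernel-bounds}, since $xt^2<1$.

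For $u^-$ the roles swap. The future tail $H_j$ is truncated while the past tails run over all of $j\le0$. Interchanging sums in the same way gives
\[
 \mathcal E(u^-)=\frac1{1-t^3}\Bigl[d_0+\frac{g_0}{2}\frac{qt^2}{1-qt^2}+\frac12\sum_a h_a(xt)^a+\frac{qt^2}{1-qt^2}\sum_a g_a(xt)^a\Bigr].
\]
This is best checked directly from the monomial form \eqref{eq:anticoherent-cubic-mellin-expanded}. In that form, a triple $i<j<k\le0$ carries the weight $t^{-i-j-k}$, and factoring out $t^{-3k}$ leaves $t^{(k-i)+(k-j)}$.

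With $d_0=\pi-2$ and the prefactor $1/(1-t^3)=\sum_j(u_j^\pm)^3$ in hand, \eqref{eq:anticoherent-geometric-cubic-margin} reduces to nonnegativity of the three bracketed tail terms, with strict positivity for $t>0$. The term $g_0\,qt/(2(1-qt))$ is positive because $g_0>0$. The remaining task is to show
\[
 \Phi_\pm(t):=\frac12\sum_a h_aX^a+Y\sum_a g_aX^a\ \ge\ 0 ,
\]
where $(X,Y)=(xt^2,\,qt/(1-qt))$ for $u^+$ and $(xt,\,qt^2/(1-qt^2))$ for $u^-$. This is the main obstacle, because individual $g_a$ and $h_a$ may be negative. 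A kernel of size $\log r$ grows like $a\log2$, and some early terms could have either sign.

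I would first try to show $g_a\ge0$ and $h_a\ge0$ for all $a\ge1$ from the explicit formulas. Here $t_1,t_2\le1$ when $r=2^a\ge2$, so one expands $A_*,B_*,C_*$ near $t=0$. The leading behaviour is $B_*(t)\approx\log(t/2)$, so $-U-V$ contributes about $+2\log r$, and $G_\infty(r)\approx 10-2I+2\log r+O(r^{-1/2})>0$; the same should hold for $G_{\rm rep}$. Concretely, I would prove monotonicity in $r$ via the bounds $|A_*+2|\le6t$, $|C_*|\le5t$ together with the logarithmic lower bound on $-B_*$, and check the value at $r=2$ exactly. If termwise positivity fails for a few small $a$, the fallback is as follows. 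Note that $\Phi_\pm$ is a power series in $X$ whose coefficients are bounded by $88+2a\log2$. Then bound the finitely many possibly negative early terms by the dominant positive $d_0$ and $g_0$ contributions, using the fact that at $t=1$ the whole bracket equals the certified density $\mathcal S\approx24.0$. A monotonicity-in-$t$ argument would then fill in the intermediate range. Strictness for $t>0$ comes from the strictly positive $g_0$ term.
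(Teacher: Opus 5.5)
Your closed forms for $\mathcal E(u^\pm)$ are correct and coincide with the paper's $\mathcal B_\pm(t)/(1-t^3)$. The gap is in the positivity step, which is where all the content of the corollary lies.

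\textbf{The main strategy rests on a sign error.} As $t\to0$ one has $B_*(t)=\tfrac32-\log2+\log t-2t+O(t^2)$, so $-B_*(t_1)\approx+\tfrac12\log r$. Hence $U(r)$ and $V(r)$ both equal $\log r+2\log2-1+2r^{-1/2}+O(r^{-1})$ and grow. This gives
\[
 G_\infty(r)=12-2(4-\pi)-4\log2-2\log r+O(r^{-1/2}),\qquad
 G_{\rm rep}(r)=14-2(4-\pi)-4\log2-2\log r+O(r^{-1/2}).
\]
So $-U-V$ contributes $-2\log r$, not $+2\log r$. The coefficients $g_a,h_a$ decrease like $-2a\log2$ and are negative for all large $a$; for instance $g_{10}\approx-6.48$ and $h_{10}\approx-4.48$. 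This is consistent with the logarithmic cross-scale loss in \eqref{eq:anticoherent-cross-block-triple-tail}, and with the fact that Theorem~\ref{thm:anticoherent-cubic-mellin-convolution} needs a nontrivial Gram allocation. Termwise positivity is therefore false. The negativity is not confined to a few early terms, as your fallback assumes; it fills the entire infinite tail in $a$.

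\textbf{The reduced target $\Phi_\pm\ge0$ is itself false.} At $t=1$ both brackets reduce to the density series \eqref{eq:anticoherent-cubic-mellin-density-series}. Therefore
\[
\Phi_\pm(1)=\mathcal S-d_0-\tfrac{g_0}{2}\tfrac{q}{1-q}\approx24.00-1.14-27.35\approx-4.49.
\]
Since the series converge uniformly for $X\le x<1$, continuity gives $\Phi_\pm(t)<0$ for $t$ near $1$. The $g_0$ term cannot be set aside: it is precisely what pays for the negative Mellin tails, and it carries the same prefactor $qt/(1-qt)$ (respectively $qt^2/(1-qt^2)$) as the $g_a$-series.

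\textbf{The fallback does not close the gap.}
\begin{itemize}
\item $d_0=\pi-2$ is exactly the constant on the right of \eqref{eq:anticoherent-geometric-cubic-margin}, so it is not available to absorb anything.
\item No monotonicity in $t$ is established.
\item Strictness does not follow from $g_0>0$ alone.
\end{itemize}

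\textbf{What is missing} is a $t$-uniform quantitative lower bound for $\mathcal B_\pm(t)-d_0$ taken as a whole. The paper obtains it by a rigorous one-dimensional interval certificate. It evaluates the first $80$ kernel terms on $4096$ outward-rounded $t$-subintervals and bounds the omitted tails via \eqref{eq:anticoherent-cubic-mellin-kernel-bounds}. The resulting bounds are:
\begin{itemize}
\item on $[0,1/5]$: $(\mathcal B_+-d_0)/t>2.98$ and $(\mathcal B_--d_0)/t>2.57$, matching the leading terms $\tfrac{g_0}{2}qt$ and $\tfrac12h_1xt$;
\item on $[1/5,1]$: $\mathcal B_+-d_0>0.86$ and $\mathcal B_--d_0>0.74$.
\end{itemize}
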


\begin{proof}
Summing first over the lowest index gives
\begin{equation}\label{eq:anticoherent-geometric-cubic-brackets}
 \mathcal E(u^\pm)=\frac{\mathcal B_\pm(t)}{1-t^3},
\end{equation}
where
\begin{align*}
 \mathcal B_+(t)
 &=d_0+\frac{g_0}{2}\frac{qt}{1-qt}
 +\frac12\sum_{a\ge1}h_a(xt^2)^a
 +\frac{qt}{1-qt}\sum_{a\ge1}g_a(xt^2)^a,\\
 \mathcal B_-(t)
 &=d_0+\frac{g_0}{2}\frac{qt^2}{1-qt^2}
 +\frac12\sum_{a\ge1}h_a(xt)^a
 +\frac{qt^2}{1-qt^2}\sum_{a\ge1}g_a(xt)^a.
\end{align*}
The kernel bounds
\eqref{eq:anticoherent-cubic-mellin-kernel-bounds} justify every
rearrangement.

For completeness, a one-dimensional interval certificate gives
\begin{align*}
 \frac{\mathcal B_+(t)-d_0}{t}&>2.983499761,
 &
 \frac{\mathcal B_-(t)-d_0}{t}&>2.568356136
 &&(0\le t\le1/5),\\
 \mathcal B_+(t)-d_0&>0.860833105,
 &
 \mathcal B_-(t)-d_0&>0.740640526
 &&(1/5\le t\le1).
\end{align*}
The first 80 terms are evaluated on 4096 outward-rounded subintervals;
the two omitted tails are bounded by the geometric-arithmetic sums obtained
from
\eqref{eq:anticoherent-cubic-mellin-kernel-bounds}.  The endpoint $t=1$
is the common Cesaro density
\eqref{eq:anticoherent-cubic-mellin-density}.  The ancillary
interval-arithmetic script
\texttt{verify\_geometric\_cubic\_mellin.py} reproduces these bounds.
Equation~\eqref{eq:anticoherent-geometric-cubic-brackets} now proves
\eqref{eq:anticoherent-geometric-cubic-margin}.
\end{proof}

\subsection{Shell localization and tail shifts}

The exact lifting does not extend to arbitrary critical profiles by requiring
every nested layer-cake polarization to be nonnegative.  The smallest
asymptotic obstruction is an endpoint chain, and its negative mixed
coefficient is nevertheless paid by the complete cubic.

\begin{theorem}[Highest-shell cubic induction]
\label{thm:anticoherent-highest-shell-induction}
Let $N\ge2$ and let $x=(x_1,\ldots,x_{N-1})$ be nonnegative.  Put
\[
 A=\sum_{n<N}x_n,
 \qquad E=\sum_{n<N}n^3x_n^2,
\]
\[
 D=D_{N-1}(x)
 =AE-\frac12\sum_{a+b<N}
 (a+b)(a^2+ab+b^2)x_ax_bx_{a+b},
\]
and define the next-shell form
\begin{equation}\label{eq:anticoherent-highest-shell-form}
 S_N(x)=\sum_{a+b=N}N(a^2+ab+b^2)x_ax_b,
 \qquad h_N(x)=\left(\frac12S_N(x)-E\right)_+.
\end{equation}
If $x^{(t)}=(x_1,\ldots,x_{N-1},t)$, then
\begin{equation}\label{eq:anticoherent-highest-shell-cubic}
 D_N(x^{(t)})
 =D+\left(E-\frac12S_N(x)\right)t
   +N^3At^2+N^3t^3.
\end{equation}
Assume $D\ge0$ and $A>0$.  If $S_N\le2E$, the right-hand side is
nonnegative for every $t\ge0$.  If $S_N>2E$, set
\[
 c=N^3A,
 \qquad d=N^3,
 \qquad \sigma=\sqrt{c^2+3d h_N}.
\]
Then $D_N(x^{(t)})\ge0$ for every $t\ge0$ if and only if
\begin{equation}\label{eq:anticoherent-highest-shell-exact-payment}
 (\sigma-c)^2(c+2\sigma)\le27D d^2.
\end{equation}
In particular, the quartic shell estimate
\begin{equation}\label{eq:anticoherent-highest-shell-quartic-target}
 h_N(x)^2\le4N^3A D_{N-1}(x)
\end{equation}
is sufficient.  Consequently, if
\eqref{eq:anticoherent-highest-shell-quartic-target} holds for every $N$
and every nonnegative prefix, then
\begin{equation}\label{eq:anticoherent-highest-shell-full-conclusion}
 D_N(x)\ge0
 \qquad(N\ge1,\ x_1,\ldots,x_N\ge0),
\end{equation}
and the same conclusion holds for every nonnegative infinite sequence with
$A<\infty$ and $E_3<\infty$.
\end{theorem}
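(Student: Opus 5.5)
The plan is to treat the adjunction of the highest mode as a one-variable cubic in $t$, prove the identity \eqref{eq:anticoherent-highest-shell-cubic} by sorting triads according to their output, and then run an elementary calculus argument on that cubic.

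\emph{Step 1: the cubic identity.} For $x^{(t)}$ the mass and energy are $A+t$ and $E+N^3t^2$, so the mass--energy product is
\[
 AE+Et+N^3At^2+N^3t^3 .
\]
For the contact sum over $a+b\le N$, no triad can have an input equal to $N$, since $a,b\ge1$ forces $a,b\le N-1$. The triads with $a+b<N$ reproduce the old contact sum. The triads with $a+b=N$ have output amplitude $t$ and contribute exactly $tS_N(x)$. Subtracting half the contact sum gives \eqref{eq:anticoherent-highest-shell-cubic} with no estimate.

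\emph{Step 2: the case $S_N\le2E$.} Write the cubic as
\[
 p(t)=D-ht+ct^2+dt^3,
 \qquad h=\Bigl(\tfrac12S_N-E\Bigr)_+ .
\]
If $S_N\le2E$, then the linear coefficient $E-\tfrac12S_N$ is nonnegative. Together with $D\ge0$, $c,d>0$, every coefficient is nonnegative, so $p\ge0$ on $t\ge0$.

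\emph{Step 3: the exact criterion when $S_N>2E$.} Now $h>0$. The derivative $p'(t)=-h+2ct+3dt^2$ is negative at $t=0$ and has exactly one positive root,
\[
 t_*=\frac{\sigma-c}{3d}.
\]
Hence $t_*$ is the global minimiser of $p$ on $[0,\infty)$. Using $3dt_*^2=h-2ct_*$ to eliminate $h$, I expect the simplification
\[
 p(t_*)=D-t_*^2(c+2dt_*)=D-\frac{(\sigma-c)^2(c+2\sigma)}{27d^2}.
\]
Thus $p\ge0$ on $t\ge0$ if and only if \eqref{eq:anticoherent-highest-shell-exact-payment} holds. The only real work here is this algebra: checking $2dt_*+c=(c+2\sigma)/3$.

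\emph{Step 4: sufficiency of the quartic target.} I will not compare the closed form with \eqref{eq:anticoherent-highest-shell-quartic-target}. Instead, drop the nonnegative term $dt^3$ and complete the square:
\[
 p(t)\ge D-ht+ct^2\ge D-\frac{h^2}{4c}.
\]
So $h_N^2\le4N^3AD_{N-1}$ gives $p\ge0$.

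\emph{Step 5: induction on $N$.} The base case is $D_1=x_1^2x_1\cdot1^3\ge0$, since there are no triads. In the inductive step, the hypothesis $A>0$ may fail if the prefix is zero. In that case $D=E=S_N=0$ and $D_N=N^3t^3\ge0$ directly. Otherwise Steps 2--4 apply with $D=D_{N-1}\ge0$ by induction, which gives \eqref{eq:anticoherent-highest-shell-full-conclusion}.

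\emph{Step 6: infinite sequences.} Truncate to $x\mathbf1_{n\le N}$. The quantities $A$, $E_3$ and $\mathcal C$ converge monotonically to their infinite counterparts, exactly as in the remark after Problem~\ref{prob:sharp-cubic}, so the finite inequality passes to the limit.

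The main obstacle is only the closed-form evaluation of $p(t_*)$ in Step 3. The rest is bookkeeping, and in particular the sufficiency of the quartic target does not depend on that closed form.
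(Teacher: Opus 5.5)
Your proposal is correct and follows the paper's proof essentially step for step: the same output-shell bookkeeping gives the cubic identity, the same stationary relation $h_N=2ct_*+3dt_*^2$ gives the exact criterion, the same completion of the square after dropping $dt^3$ gives sufficiency of the quartic target, and the same induction from $D_1=x_1^3$ with monotone truncation gives the conclusion. Your separate treatment of a zero prefix ($A=0$) in the inductive step is a small point the paper leaves implicit.
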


\begin{proof}
Only triads with output $N$ contain the new coefficient $t$.  Expanding the
two factors in $AE_3$ and collecting those triads gives
\[
 \begin{aligned}
 D_N(x^{(t)})-D_{N-1}(x)
 &=N^3t^3+N^3At^2+Et-\frac12S_N(x)t,
 \end{aligned}
\]
which proves \eqref{eq:anticoherent-highest-shell-cubic}.  When its linear
coefficient is nonnegative, every new term is nonnegative.  Otherwise write
the polynomial as
\[
 P(t)=D-h_Nt+ct^2+dt^3.
\]
Its only possible positive local minimum is
\[
 t_* =\frac{\sigma-c}{3d}.
\]
The stationary identity $h_N=2ct_*+3dt_*^2$ gives
\[
 P(t_*)
 =D-\frac{(\sigma-c)^2(c+2\sigma)}{27d^2}.
\]
This proves the exact criterion
\eqref{eq:anticoherent-highest-shell-exact-payment}.  Moreover,
\eqref{eq:anticoherent-highest-shell-quartic-target} makes the quadratic
$D-h_Nt+ct^2$ nonnegative on the whole real line; adding $dt^3\ge0$ proves
the sufficient statement.

Starting from $D_1(x_1)=x_1^3\ge0$, induction proves
\eqref{eq:anticoherent-highest-shell-full-conclusion}.  For an infinite
sequence, apply the finite conclusion to its truncations.  The quantities
$A_N$, $E_{3,N}$, and $\mathcal C(x\mathbf1_{n\le N})$ converge monotonically,
so the deficit passes to the limit.
\end{proof}

The sufficient quartic estimate is automatic unless the preceding prefix has
already spent most of its raw mass--energy budget on internal additive
contacts.  The key point is that the next-shell form is supported on one
anti-diagonal, so its positive part admits a sharp two-point estimate.

\begin{proposition}[Anti-diagonal shell bound and contact-rich reduction]
\label{prop:anticoherent-highest-shell-contact-threshold}
Retain the notation of
Theorem~\ref{thm:anticoherent-highest-shell-induction}, and assume \(A>0\).
Then
\begin{equation}\label{eq:anticoherent-highest-shell-antidiagonal-bound}
 h_N(x)\le N^{3/2}A\sqrt E,
 \qquad
 h_N(x)^2\le N^3A^2E.
\end{equation}
Consequently,
\begin{equation}\label{eq:anticoherent-highest-shell-quarter-reserve}
 D_{N-1}(x)\ge\frac14AE
\end{equation}
implies the sufficient quartic target
\eqref{eq:anticoherent-highest-shell-quartic-target}, and hence
\(D_N(x^{(t)})\ge0\) for every \(t\ge0\).

In particular, if \(N\) is the first cutoff at which a nonnegative sequence
has negative deficit, then its prefix satisfies
\begin{equation}\label{eq:anticoherent-highest-shell-contact-rich}
 0\le D_{N-1}(x)<\frac14AE,
 \qquad
 \frac1{2AE}\sum_{a+b<N}K_{a,b}x_ax_bx_{a+b}>\frac34.
\end{equation}
Thus every finite or infinite obstruction must cross a prefix whose internal
additive contacts consume more than three quarters of \(AE\); sparse or
moderately contacting prefixes cannot produce the first sign change.
\end{proposition}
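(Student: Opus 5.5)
The plan is to prove the anti-diagonal bound \eqref{eq:anticoherent-highest-shell-antidiagonal-bound} by a two-point reduction along the anti-diagonal \(a+b=N\). After that, \eqref{eq:anticoherent-highest-shell-quarter-reserve} and \eqref{eq:anticoherent-highest-shell-contact-rich} should be one-line consequences of Theorem~\ref{thm:anticoherent-highest-shell-induction}.

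\emph{Step 1: reduction to single pairs.} Group the ordered sum defining \(S_N(x)\) into unordered pairs \(\{a,b\}\) with \(a+b=N\), together with the diagonal \(a=b=N/2\) when \(N\) is even. Use \(a^2+ab+b^2=N^2-ab\). Then
\[
 \tfrac12 S_N(x)-E\le\sum_{\{a,b\}}\Bigl(N(N^2-ab)x_ax_b-a^3x_a^2-b^3x_b^2\Bigr),
\]
with the diagonal term read as \(\tfrac12N\cdot 3(N/2)^2x_{N/2}^2-(N/2)^3x_{N/2}^2\). This inequality holds because each index \(n<N\) lies in at most one pair, so the subtracted energies \(n^3x_n^2\) are counted at most once inside \(E\). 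Taking positive parts termwise gives \(h_N\le\sum_{\{a,b\}}(\Pi_{a,b})_+\), where \(\Pi_{a,b}\) denotes the displayed pair term.

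\emph{Step 2: the two-point estimate (the main obstacle).} Fix one pair and write \(p=x_a\), \(q=x_b\), \(e=a^3p^2+b^3q^2\le E\). The target is
\[
 N(N^2-ab)pq-e\le N^{3/2}(p+q)\sqrt e .
\]
Summing this over pairs then gives \(h_N\le N^{3/2}\sqrt E\sum(p+q)\le N^{3/2}A\sqrt E\).
\begin{itemize}
\item First try: assume \(b\ge N/2\), so that \(q\le b^{-3/2}\sqrt e\le 2^{3/2}N^{-3/2}\sqrt e\). Use \(\sqrt e\ge b^{3/2}q\) and \(\sqrt e\ge a^{3/2}p\) separately to split \(N^3pq\) into a part absorbed by \(N^{3/2}p\sqrt e\) and a part absorbed by \(e\) itself.
\item Scaling: by homogeneity (degree \(2\) in \((p,q)\), with \(a=sN\)) the inequality reduces to a two-variable inequality in \(s\in(0,1/2]\) and \(\rho=q/p\).
\item Verification: I expect an elementary calculus or AM--GM verification of that two-variable inequality, possibly with the crude bound \(N^2-ab\le N^2\). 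The delicate regime is \(a\) small, because there the factor \(\sqrt e\) is dominated by \(b^{3/2}q\). This is where the subtraction of \(e\) must be used, not merely dropped.
\item Fallback: if the pairwise constant fails, allow a slightly larger constant; any constant at most \(2\) in the resulting bound \(h_N^2\le CN^3A^2E\) still makes the argument in Step~3 close, with the reserve fraction adjusted.
\end{itemize}
The diagonal pair is handled by the same computation with \(p=q\).

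\emph{Step 3: consequences.} Squaring the bound from Step~2 gives \(h_N^2\le N^3A^2E\). If \(D_{N-1}\ge\frac14AE\), then
\[
 4N^3AD_{N-1}\ge N^3A^2E\ge h_N^2,
\]
which is exactly the quartic target \eqref{eq:anticoherent-highest-shell-quartic-target}. Theorem~\ref{thm:anticoherent-highest-shell-induction} then yields \(D_N(x^{(t)})\ge0\) for all \(t\ge0\). For the first sign change at cutoff \(N\), minimality gives \(D_{N-1}\ge0\), and the contrapositive of the above gives \(D_{N-1}<\frac14AE\). Finally, rewriting
\[
 D_{N-1}=AE\Bigl(1-\tfrac1{2AE}\textstyle\sum_{a+b<N}K_{a,b}x_ax_bx_{a+b}\Bigr)
\]
turns \(D_{N-1}<\frac14AE\) into the stated contact fraction exceeding \(\frac34\). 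Here the sum is in the normalized \(K\)-notation of the paper, so this uses \(y_n=n^{3/2}x_n\). The infinite-sequence case follows from the finite one by truncation.
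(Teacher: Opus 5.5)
Your Steps~1 and~3 match the paper's framework. The paper also partitions \(\{1,\ldots,N-1\}\) into complementary pairs \(\{a,N-a\}\) plus the singleton \(N/2\). It then uses \(h_N^2\le N^3A^2E\le4N^3AD_{N-1}\) and the first-cutoff contrapositive.

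The gap is Step~2, which is the whole content of \eqref{eq:anticoherent-highest-shell-antidiagonal-bound} and which you leave as an expectation. The pair inequality you aim for is true, but the mechanism you sketch does not prove it. The bound \(x_b\le2^{3/2}N^{-3/2}\sqrt e\) only gives \(N^3x_ax_b\le2^{3/2}N^{3/2}x_a\sqrt e\), which is a factor \(2^{3/2}\) too weak. You give no argument that the energy \(e\) makes up the difference.

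The fallback cannot rescue the statement either. Equation \eqref{eq:anticoherent-highest-shell-antidiagonal-bound} itself asserts the constant \(1\). A bound \(h_N^2\le CN^3A^2E\) with \(C>1\) only yields the reserve \(CAE/4\) and the contact threshold \(1-C/4\), which is a weaker proposition than the one stated. Your guess that the subtraction of \(e\) must be exploited when \(a\) is small also points in the wrong direction.

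The missing ingredient is the sharp two-point inequality
\[
 N^{3/2}x_ax_b\le(x_a+x_b)\sqrt{a^3x_a^2+b^3x_b^2}\qquad(a+b=N),
\]
which needs no help from \(-e\). The paper obtains it from the exact identity
\[
 (x_a+x_b)^2(a^3x_a^2+b^3x_b^2)-N^3x_a^2x_b^2=(ax_a-bx_b)^2\{ax_a^2+2Nx_ax_b+bx_b^2\}.
\]
Equivalently, put \(\alpha=ax_a\) and \(\beta=bx_b\), so that \(Nx_ax_b=\alpha x_b+\beta x_a\). Convexity of \(t\mapsto t^3\) with weights proportional to \(x_b\) and \(x_a\) gives \((\alpha x_b+\beta x_a)^3\le(x_a+x_b)^2(\alpha^3x_b+\beta^3x_a)\), which is the same statement after division by \(x_ax_b\).

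Since \(K_{a,b}=N(N^2-ab)\le N^3\), you can simply discard \(-(a^3x_a^2+b^3x_b^2)\le0\). Each pair then contributes at most \(K_{a,b}x_ax_b\le N^3x_ax_b\le N^{3/2}(x_a+x_b)\sqrt E\). Equality in the two-point bound holds at \(ax_a=bx_b\), so constant \(1\) is exactly what this step delivers.

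Two smaller points. First, the singleton \(N/2\) should not be treated as a pair with \(p=q\), since that would double its mass and energy. Bound its contribution directly: \(2(N/2)^3x_{N/2}^2\le2\sqrt2\,(N/2)^3x_{N/2}^2=N^{3/2}x_{N/2}\sqrt{(N/2)^3x_{N/2}^2}\). Second, in \eqref{eq:anticoherent-highest-shell-contact-rich} the kernel is \(K_{a,b}=(a+b)(a^2+ab+b^2)\) acting on \(x\) itself, so no \(y_n=n^{3/2}x_n\) renormalization is involved.
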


\begin{proof}
Partition \(\{1,\ldots,N-1\}\) into its complementary blocks
\(\{a,b\}\), where \(a+b=N\) and \(a<b\), together with the singleton
\(\{N/2\}\) when \(N\) is even.  For a two-point block put
\[
 A_{a,b}=x_a+x_b,
 \qquad E_{a,b}=a^3x_a^2+b^3x_b^2.
\]
The exact identity
\begin{equation}\label{eq:anticoherent-antidiagonal-two-point-identity}
\begin{aligned}
 &(x_a+x_b)^2(a^3x_a^2+b^3x_b^2)
   -N^3x_a^2x_b^2\\
 &\qquad=(ax_a-bx_b)^2
 \{ax_a^2+2N x_ax_b+bx_b^2\}\ge0
\end{aligned}
\end{equation}
gives
\(N^{3/2}x_ax_b\le A_{a,b}\sqrt{E_{a,b}}\).
Moreover,
\[
 K_{a,b}=N(N^2-ab)\le N^3.
\]
The contribution of this block to
\(S_N/2-E\) is therefore bounded above by
\[
 K_{a,b}x_ax_b-a^3x_a^2-b^3x_b^2
 \le N^{3/2}A_{a,b}\sqrt{E_{a,b}}.
\]
For the singleton \(a=b=N/2\), its contribution is
\(2a^3x_a^2\), whereas
\[
 N^{3/2}x_a\sqrt{a^3x_a^2}
 =2\sqrt2\,a^3x_a^2.
\]
Summing the block estimates and using
\(\sqrt{E_{a,b}}\le\sqrt E\) yields
\[
 h_N(x)\le N^{3/2}\sum_{\{a,b\}}A_{a,b}\sqrt{E_{a,b}}
 \le N^{3/2}A\sqrt E,
\]
which proves \eqref{eq:anticoherent-highest-shell-antidiagonal-bound}.
Under \eqref{eq:anticoherent-highest-shell-quarter-reserve}, it gives
\[
 h_N(x)^2\le N^3A^2E\le4N^3A D_{N-1}(x),
\]
so Theorem~\ref{thm:anticoherent-highest-shell-induction} applies.

At a first negative cutoff the preceding deficit is nonnegative.  If it
were at least \(AE/4\), the conclusion just proved would prevent the sign
change.  Hence \(D_{N-1}<AE/4\), and the identity
\[
 D_{N-1}=AE-\frac12\sum_{a+b<N}K_{a,b}x_ax_bx_{a+b}
\]
gives \eqref{eq:anticoherent-highest-shell-contact-rich}.  An infinite
negative limit has a negative finite truncation, so the same first-cutoff
argument applies.
\end{proof}

The coefficient four in
\eqref{eq:anticoherent-highest-shell-quartic-target} is enough for induction.
Computations select the stronger constant two, and the following exact
endpoint-chain limit shows that this candidate cannot be improved.  It also
identifies the large-cutoff configuration that a proof must control.

\begin{proposition}[Sharp endpoint-chain calibration]
\label{prop:anticoherent-highest-shell-endpoint-chain}
Fix $m\ge1$, $\alpha>0$, and $z_1,\ldots,z_m\ge0$, with $z_1>0$.  For
$N>2m+2$, define a prefix below $N$ by
\begin{equation}\label{eq:anticoherent-highest-shell-endpoint-profile}
 x_1^{(N)}=N\alpha,
 \qquad x_{N-j}^{(N)}=z_j\quad(1\le j\le m),
 \qquad x_n^{(N)}=0\quad\hbox{otherwise}.
\end{equation}
Then, with $A_N=\sum_{n<N}x_n^{(N)}$ and with $D_{N-1}$ and $h_N$
evaluated on this prefix,
\begin{equation}\label{eq:anticoherent-highest-shell-endpoint-limit}
 \lim_{N\to\infty}
 \frac{h_N(x^{(N)})^2}{2N^3A_ND_{N-1}(x^{(N)})}
 =\frac{z_1^2}{2Q_m(z)},
 \qquad
 Q_m(z)=\sum_{j=1}^mz_j^2-\sum_{j=2}^mz_{j-1}z_j.
\end{equation}
Moreover,
\begin{equation}\label{eq:anticoherent-highest-shell-endpoint-poincare}
 Q_m(z)\ge\frac{m+1}{2m}z_1^2,
 \qquad
 \frac{z_1^2}{2Q_m(z)}\le\frac{m}{m+1}.
\end{equation}
Equality holds for
$z_j=(m+1-j)z_1/m$.  Hence the constant two in the candidate estimate
\begin{equation}\label{eq:anticoherent-highest-shell-sharp-candidate}
 h_N(x)^2\le2N^3A D_{N-1}(x)
\end{equation}
would be sharp after first taking $N\to\infty$ and then $m\to\infty$.
\end{proposition}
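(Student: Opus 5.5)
The plan is to compute the three quantities $A_N$, $h_N$ and $D_{N-1}$ on the endpoint profile to leading order in $N$, and then treat the discrete Poincaré inequality as a separate quadratic-form estimate.

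\emph{Mass and energy.} We have $A_N=N\alpha+\sum_j z_j$. For the energy,
\[
 E=(N\alpha)^2+\sum_j (N-j)^3z_j^2=N^3\Bigl(\sum_j z_j^2\Bigr)+O(N^2).
\]

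\emph{Contacts inside the prefix.} The support is $\{1\}\cup\{N-m,\dots,N-1\}$, and $N>2m+2$. Hence a sum of two support points lying below $N$ must use the low mode $1$. The admissible triads are $(1,N-j-1)\to N-j$ for $2\le j\le m$, together with their transposes, and $(1,1)\to2$ when $2$ lies in the support (it does not for large $N$). The triad $(1,N-m-1)$ also needs $N-m-1$ in the support, which it is not. Each admissible triad contributes $K_{1,b}x_1x_bx_{b+1}$ with $K_{1,b}=(b+1)(b^2+b+1)\sim N^3$, and it is counted twice in the ordered sum. This gives
\[
 \tfrac12\mathcal C=N^3\cdot N\alpha\sum_{j=2}^m z_{j-1}z_j+O(N^3),
 \qquad
 D_{N-1}=N^4\alpha\,Q_m(z)+O(N^3).
\]
Here the leading parts of $A_NE$ and of the contacts are both of order $N^4$; they combine into $Q_m$, and $N\alpha$ dominates $A_N$.

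\emph{Next-shell form.} $S_N$ only sees pairs $a+b=N$ inside the support, which are $(1,N-1)$ and its transpose. Therefore
\[
 S_N=2N(N^2-N+1)\,N\alpha z_1\sim 2N^4\alpha z_1,
 \qquad
 h_N=\bigl(\tfrac12S_N-E\bigr)_+=N^4\alpha z_1+O(N^3).
\]
This is positive for large $N$ because $z_1>0$.

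\emph{The limit.} Combining these,
\[
 \frac{h_N^2}{2N^3A_ND_{N-1}}\sim\frac{N^8\alpha^2z_1^2}{2N^3\cdot N\alpha\cdot N^4\alpha Q_m}=\frac{z_1^2}{2Q_m}.
\]
This needs $Q_m>0$, which follows from the inequality proved in the next step. The main care point is bookkeeping the $O(N^3)$ corrections, and in particular checking that no other triads occur (e.g.\ sums of two high modes exceed $N$). I expect this triad enumeration to be the only real obstacle.

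\emph{Poincaré inequality.} Write
\[
 2Q_m=z_1^2+\sum_{j=2}^m(z_j-z_{j-1})^2+z_m^2,
\]
which is a path Dirichlet form with both endpoints pinned, at $z_0$ free to $z_1$ and at $z_{m+1}=0$. With $d_j=z_j-z_{j+1}$ for $1\le j\le m$ and $z_{m+1}=0$, we have $\sum_j d_j=z_1$. Cauchy--Schwarz then gives
\[
 \sum_{j=2}^m(z_j-z_{j-1})^2+z_m^2=\sum_{j=1}^m d_j^2\ge\frac{z_1^2}{m}.
\]
So $2Q_m\ge z_1^2(1+1/m)$, i.e.\ $Q_m\ge\frac{m+1}{2m}z_1^2$. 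Equality holds exactly when all $d_j$ are equal, i.e.\ $z_j=(m+1-j)z_1/m$. The ratio therefore tends to $m/(m+1)\to1$, so the constant $2$ in the candidate estimate cannot be lowered.
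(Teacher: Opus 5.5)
Your proposal is correct and follows the paper's proof essentially step for step. You use the same leading-order expansions $A_N=N\alpha+O(1)$, $D_{N-1}=N^4\alpha Q_m(z)+O(N^3)$, and $h_N=N^4\alpha z_1+O(N^3)$, the last coming from the single next-shell pair $(1,N-1)$. You also use the same Cauchy--Schwarz on the increments $z_1=(z_1-z_2)+\cdots+(z_{m-1}-z_m)+z_m$.

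One small indexing slip: written as $(1,N-j-1)\to N-j$, the internal triads run over $1\le j\le m-1$, not $2\le j\le m$. Equivalently they are $(1,N-j,N-j+1)$ for $2\le j\le m$, as in the paper. Your resulting contact sum $N\alpha\sum_{j=2}^m z_{j-1}z_j$ is nevertheless correct.
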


\begin{proof}
For the support in
\eqref{eq:anticoherent-highest-shell-endpoint-profile}, the only active
triads below $N$ are
$(1,N-j,N-j+1)$ with $2\le j\le m$.  Since
$K_{1,N-j}=N^3+O_m(N^2)$, direct expansion gives
\[
 \begin{aligned}
 A_N&=N\alpha+O_m(1),\\
 D_{N-1}(x^{(N)})
 &=N^4\alpha\left(
     \sum_{j=1}^mz_j^2-\sum_{j=2}^mz_{j-1}z_j
   \right)+O_{m,\alpha,z}(N^3).
 \end{aligned}
\]
The next shell contains only the complementary pair $(1,N-1)$.  Therefore
\[
 h_N(x^{(N)})=N^4\alpha z_1+O_{m,\alpha,z}(N^3),
\]
which proves \eqref{eq:anticoherent-highest-shell-endpoint-limit}.  Finally,
\[
 Q_m(z)=\frac12\left\{
 z_1^2+z_m^2+\sum_{j=1}^{m-1}(z_j-z_{j+1})^2
 \right\}.
\]
Applying Cauchy--Schwarz to
\[
 z_1=(z_1-z_2)+\cdots+(z_{m-1}-z_m)+z_m
\]
proves \eqref{eq:anticoherent-highest-shell-endpoint-poincare}.  Equality
requires all $m$ increments in the last display to be equal, which gives the
stated linear chain.
\end{proof}

The endpoint calibration can be made nonasymptotic for packets whose distance
from the next shell dominates their chain length.  This closes the exact
finite profiles selected by the quartic variational problem, not merely their
limit as \(N\to\infty\).

\begin{theorem}[Finite arithmetic endpoint-packet quartic theorem]
\label{thm:anticoherent-finite-endpoint-packet}
Let \(a,N,m\) be positive integers, let
\begin{equation}\label{eq:anticoherent-endpoint-packet-separation}
 \frac Na\ge\max\{4,2m^2\},
\end{equation}
and let \(u,z_1,\ldots,z_m\ge0\).  Define a prefix below \(N\) by
\begin{equation}\label{eq:anticoherent-finite-endpoint-packet}
 x_a=u,\qquad x_{N-ja}=z_j\quad(1\le j\le m),
 \qquad x_n=0\quad\hbox{otherwise}.
\end{equation}
Then the sharp candidate shell estimate holds:
\begin{equation}\label{eq:anticoherent-finite-endpoint-packet-quartic}
 h_N(x)^2\le2N^3A D_{N-1}(x).
\end{equation}
Consequently \(D_N(x^{(t)})\ge0\) for every \(t\ge0\).  In particular, the
result includes every integer frequency dilation of the unit-anchor packet,
but it does not require \(a\) to divide \(N\).
\end{theorem}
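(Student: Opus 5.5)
The plan is to reduce \eqref{eq:anticoherent-finite-endpoint-packet-quartic} to an explicit weighted quadratic inequality in \(z\), with \(u\) as a parameter, and then prove that inequality by a weighted version of the endpoint Poincaré argument from Proposition~\ref{prop:anticoherent-highest-shell-endpoint-chain}. Once \eqref{eq:anticoherent-finite-endpoint-packet-quartic} holds, the last assertion follows from Theorem~\ref{thm:anticoherent-highest-shell-induction}: \eqref{eq:anticoherent-finite-endpoint-packet-quartic} implies the weaker quartic target \eqref{eq:anticoherent-highest-shell-quartic-target}, provided \(D_{N-1}\ge0\), and that sign will come out of the same lower bound.

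\emph{Step 1: triad bookkeeping.} Put \(n_j=N-ja\). I first use \eqref{eq:anticoherent-endpoint-packet-separation} to list every additive contact in the support \(\{a\}\cup\{n_1,\dots,n_m\}\).
\begin{itemize}
\item Two packet frequencies sum to at least \(2N-2ma>N\).
\item The triad \((a,a,2a)\) would require \(N=(j+2)a\le(m+2)a\), which is excluded by \(N/a\ge\max\{4,2m^2\}\).
\end{itemize}
So the only triads below \(N\) are the chain triads \((a,n_{j+1},n_j)\) for \(1\le j\le m-1\). On the next shell, the only complementary pair is \((a,n_1)\). Writing \(Z=\sum_j z_j\) and \(E=a^3u^2+\sum_j n_j^3z_j^2\), this gives
\[
 D_{N-1}=(u+Z)E-u\sum_{j=1}^{m-1}K_{a,n_{j+1}}z_jz_{j+1},
 \qquad
 h_N=\bigl(K_{a,n_1}uz_1-E\bigr)_+ ,
\]
with the exact kernels \(K_{a,n_{j+1}}=n_j(n_j^2-an_{j+1})\) and \(K_{a,n_1}=N(N^2-an_1)\). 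If \(h_N=0\) there is nothing to prove. Otherwise I discard the \(-E\) inside \(h_N\) except where it is needed, so that \(h_N\le K_{a,n_1}uz_1\).

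\emph{Step 2: isolate a weighted chain form.} On the right-hand side I keep the term \(2N^3u\cdot D_{N-1}\). In \(D_{N-1}\) I drop the nonnegative contributions \(ZE\) and \(ua^3u^2\) only when they are not needed. The target then becomes, after dividing by \(u^2\),
\[
 K_{a,n_1}^2z_1^2\le 2N^3\Bigl[\sum_j n_j^3z_j^2-\sum_{j<m}K_{a,n_{j+1}}z_jz_{j+1}\Bigr]+(\text{retained positive terms}).
\]
The \(Z\)-parts \(2N^3ZD_{N-1}\) are nonnegative as soon as the bracket is, and the bracket's nonnegativity also gives \(D_{N-1}\ge0\).

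\emph{Step 3: weighted Poincaré.} Set \(w_j=n_j^3\) and \(v_j=K_{a,n_{j+1}}\). The key algebraic facts are
\[
 v_j\le n_j^3,\qquad v_j\le \sqrt{w_jw_{j+1}}\,(1+O(a/N)).
\]
With these, I split each cross term \(v_jz_jz_{j+1}\le\tfrac12\bigl(\theta_jz_j^2+\theta_j^{-1}v_j^2z_{j+1}^2/\cdots\bigr)\) using a telescoping choice of \(\theta_j\) modelled on the extremal linear chain \(z_j\propto m+1-j\). This should yield
\[
 \text{bracket}\ \ge\ \tfrac12\Bigl(1+\tfrac1m\Bigr)w_1z_1^2-\mathrm{err},\qquad \mathrm{err}\lesssim m\,\tfrac aN\,w_1z_1^2 .
\]
The left-hand side is at most \(K_{a,n_1}^2z_1^2\le N^6(1-a/N+\dots)^2z_1^2\), and \(w_1=n_1^3=N^3(1-a/N)^3\). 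The margin \(1/m\) from \eqref{eq:anticoherent-highest-shell-endpoint-poincare} must therefore absorb a relative loss of order \(a/N\) from the first weight, plus the accumulated chain-weight drift of order \(m^2a/N\)... and it does so precisely because \(N/a\ge2m^2\). If needed, I also reinsert the discarded \(-E\) in \(h_N\), together with the \(ZE\) and \(a^3u^3\) terms, to cover the \(O(a/N)\) discrepancies.

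\emph{Main obstacle.} The hard step is Step 3. The weights \(n_j^3\) decay across the chain by a relative amount of up to \(3ma/N\), which is comparable to \(3/(2m)\) and hence of the same size as the Poincaré margin \(1/(2m)\). A crude comparison with constant weights \(N^3\) therefore fails. I would first try to run the Cauchy--Schwarz chain argument with exact weights, bounding each \(v_j\) by the geometric mean of neighbouring diagonal weights through the identity \(n_j(n_j^2-an_{j+1})\le n_j^{3/2}n_{j+1}^{3/2}\). This uses the fact that the chain coefficients shrink together with the diagonal. I would then check that the resulting optimal constant, a weighted \(Q_m\), still beats \(K_{a,n_1}^2/(2N^3)\) under \(N/a\ge\max\{4,2m^2\}\); the case \(N/a\ge4\) with small \(m\) is handled by direct verification.
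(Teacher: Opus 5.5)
Your Steps 1 and 2 are correct and coincide with the paper's reduction. The only contacts below \(N\) are the chain triads \((a,n_{j+1},n_j)\), and only \((a,n_1)\) reaches the next shell. Hence \(h_N\le K_{a,n_1}uz_1\), \(A\ge u\), and \(D_{N-1}\ge u\,\mathcal Q(z)\) with \(\mathcal Q(z)=\sum_jn_j^3z_j^2-\sum_{j<m}K_{a,n_{j+1}}z_jz_{j+1}\). Everything therefore rests on the Jacobi floor \(\mathcal Q(z)\ge K_{a,n_1}^2z_1^2/(2N^3)\), and Step~3 does not prove it.

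First, the inequality \(n_j(n_j^2-an_{j+1})\le(n_jn_{j+1})^{3/2}\) is false. With \(b=n_{j+1}\) it is equivalent to \((a^2+ab+b^2)^2\le b^3(a+b)\), but the difference of the two sides is \(a^4+2a^3b+3a^2b^2+ab^3>0\). Every link coupling exceeds the geometric mean of its two diagonal weights by a relative amount \(\epsilon_j\approx a/(2n_{j+1})>0\). So in the variables \(\tilde z_j=n_j^{3/2}z_j\) you face the slightly supercritical chain \(\sum_j\tilde z_j^2-\sum_j(1+\epsilon_j)\tilde z_j\tilde z_{j+1}\), not \(Q_m(\tilde z)\). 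If your inequality held, the argument would close under \(N/a\gtrsim m\) alone; the excess coupling is exactly what the parabolic hypothesis \(N/a\ge2m^2\) is there to control.

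Second, the linear profile is not a supersolution of the true chain. Weights \(\theta_j\) read off from \(z_j\propto m+1-j\) therefore leave negative coefficients on \(z_2^2,\dots,z_m^2\). Take \(a=1\), let \(M\) be the tridiagonal matrix of \(\mathcal Q\), and put \(W_j=m+1-j\). For \(2\le j\le m\) and \(n=N-j\) one gets \((MW)_j=-\tfrac12(n^2+3n+1)W_j-\tfrac12(3n^2+n+1)<0\), because \(K_{1,n}=n^3+2n^2+2n+1>n^3\). After symmetrization the last row is still \(-\epsilon_{m-1}<0\).

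Paying these rows would need a further weighted Hardy-type estimate with constants uniform in \(m\), which you do not supply. The retained terms \(ZE\), \(a^3u^3\) and the \(-E\) inside \(h_N\) are of lower order in the endpoint regime of Proposition~\ref{prop:anticoherent-highest-shell-endpoint-chain}, and they give no control of the individual \(z_j^2\). So your asserted error bound \(\lesssim m(a/N)w_1z_1^2\) is unsupported.

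The missing idea is a \emph{curved} supersolution. After normalizing to \(a=1\) (the outer scale \(N/a\) need not be an integer), the paper takes \(W_j=U_{m-j}(q)\), \(W_{m+1}=0\), with Chebyshev polynomials of the second kind and \(q=1-\frac{7}{3N}\). It then checks three things.
\begin{enumerate}
\item \(W>0\), because \(q\ge1-\frac{7}{6m^2}>\cos(\pi/m)\).
\item Each interior row satisfies \((MW)_j\ge(n^3-qK_{1,n})W_j\). With \(L=m+1-j\) this follows from \(U_L=qU_{L-1}+T_L\) and \(qU_{L-1}-T_L=U_{L-2}\ge0\). Moreover \(3N(n^3-qK_{1,n})\) has positive coefficients in \(p=j-2\), \(t=n-(2j^2-j)\) once \(N\ge2j^2\).
\item The first row satisfies \((MW)_1/W_1\ge K_{1,N-1}^2/(2N^3)\). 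This reduces to \(1-W_2/W_1\ge\frac{3N^2-3N+1}{N(N-1)(N^2-3N+3)}\). It is proved for \(m\ge5\) via \(\theta\le\frac{39}{25m}\) (with \(q=\cos\theta\)), and for \(m\le4\) by exact polynomial checks.
\end{enumerate}
The Jacobi ground-state identity then gives the floor.

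The curvature \(1-q\) must be large enough to beat the excess coupling in the interior rows. It must also be small enough to keep \(W\) positive and to preserve the boundary gain \(1-W_2/W_1\). The hypothesis \(N/a\ge\max\{4,2m^2\}\) is what makes this window nonempty. Your telescoping AM--GM split is equivalent to choosing a positive \(W\), so your outline can be completed. It needs a curved profile of this kind and these three verifications, not the linear chain or the geometric-mean comparison.
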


\begin{proof}
The kernel and the cubic frequency energy are homogeneous of degree three in
the frequencies.  Dividing every active frequency by \(a\) therefore gives
one factor \(a^3\) in both \(D_{N-1}\) and \(h_N\), while the shell factor
\(N^3\) gives the second factor \(a^3\) in the quartic estimate.  It is thus
enough to prove the normalized statement with low anchor one and outer scale
\(R=N/a\).  The argument below is finite-dimensional polynomial algebra in
\(R\), so integrality of the normalized frequencies is unnecessary.  To
avoid a second outer-scale symbol, write this normalized \(R\) as \(N\).

Set
\[
 E_z=\sum_{j=1}^m(N-j)^3z_j^2,\qquad Z=\sum_{j=1}^mz_j
\]
and introduce the finite Jacobi form
\begin{equation}\label{eq:anticoherent-finite-endpoint-jacobi}
 \mathcal Q_{N,m}(z)
 =E_z-\sum_{j=2}^mK_{1,N-j}z_{j-1}z_j.
\end{equation}
The separation in \eqref{eq:anticoherent-endpoint-packet-separation}
keeps the low mode apart from the endpoint packet and makes the displayed
chain the complete list of internal additive contacts.  Hence
\begin{equation}\label{eq:anticoherent-finite-endpoint-deficit}
\begin{aligned}
 D_{N-1}(x)
 &=(u+Z)(u^2+E_z)
   -u\sum_{j=2}^mK_{1,N-j}z_{j-1}z_j\\
 &\ge u\mathcal Q_{N,m}(z).
\end{aligned}
\end{equation}

Let \(M_{N,m}\) be the tridiagonal matrix of
\(\mathcal Q_{N,m}\).  Put
\[
 q=1-\frac7{3N}
\]
and let \(U_k\) denote the Chebyshev polynomial of the second kind, with
\(U_{-1}=0\), \(U_0=1\), and
\(U_{k+1}(q)=2qU_k(q)-U_{k-1}(q)\).  We claim that the positive vector
\[
 W_j=U_{m-j}(q)\quad(1\le j\le m),\qquad W_{m+1}=0,
\]
is an exact supersolution:
\begin{equation}\label{eq:anticoherent-endpoint-packet-supersolution}
 (M_{N,m}W)_j\ge0\quad(2\le j\le m),\qquad
 \frac{(M_{N,m}W)_1}{W_1}
 \ge\frac{K_{1,N-1}^2}{2N^3}.
\end{equation}

To prove positivity, write \(q=\cos\theta\), where
\(0<\theta<\pi\).  If \(m\ge2\), then
\[
 q\ge1-\frac7{6m^2}>\cos\frac\pi m;
\]
indeed,
\(1-\cos(\pi/m)=2\sin^2(\pi/(2m))>2/m^2\).
Thus \(m\theta<\pi\) and
\[
 W_j=\frac{\sin((m+1-j)\theta)}{\sin\theta}>0.
\]
The case \(m=1\) is immediate from \(W_1=1\).

For an interior row \(2\le j\le m\), set
\(n=N-j\) and \(L=m+1-j\).  If \(T_L\) is the Chebyshev polynomial of
the first kind, the three-term identities give
\[
\begin{aligned}
 (M_{N,m}W)_j
 &=\left[n^3-q\left\{
      \frac{K_{1,n}}2+\frac{K_{1,n-1}}2
    \right\}\right]W_j
   -\left\{\frac{K_{1,n}}2-\frac{K_{1,n-1}}2\right\}T_L(q)\\
 &\ge\{n^3-qK_{1,n}\}W_j.
\end{aligned}
\]
Here we used
\(qU_{L-1}(q)-T_L(q)=U_{L-2}(q)\ge0\).  Since
\(N=n+j\ge2j^2\), put
\(p=j-2\ge0\) and \(t=n-(2j^2-j)\ge0\).  A direct expansion gives
\[
\begin{aligned}
 &3N\{n^3-qK_{1,n}\}\\
 &=t^3+(6p^2+15p+14)t^2
 +(12p^4+60p^3+119p^2+118p+59)t\\
 &\quad+8p^6+60p^5+182p^4+285p^3+258p^2+158p+67,
\end{aligned}
\]
which is positive.

It remains to check the first row.  For \(m\ge2\), put
\(r=W_2/W_1\).  The exact identity
\[
 (N-1)^3-\frac{K_{1,N-2}}2
 -\frac{K_{1,N-1}^2}{2N^3}
 =-\frac{3N^2-3N+1}{2N}
\]
shows that the boundary claim follows once
\begin{equation}\label{eq:anticoherent-endpoint-chebyshev-boundary-ratio}
 1-r\ge
 \frac{3N^2-3N+1}{N(N-1)(N^2-3N+3)}.
\end{equation}
For \(m\ge5\), the Taylor bounds for cosine and
\(N\ge2m^2\) imply
\[
 \theta\le\frac{39}{25m}.
\]
Indeed,
\[
 \cos\frac{39}{25m}
 \le1-\frac1{2}\left(\frac{39}{25m}\right)^2
       +\frac1{24}\left(\frac{39}{25m}\right)^4
 \le1-\frac7{6m^2}\le q,
\]
where the middle inequality already holds for \(m\ge3\).  Moreover,
the alternating Taylor lower bound gives
\[
 \cos\frac{39}{25}
 \ge1-\frac12\left(\frac{39}{25}\right)^2
      +\frac1{24}\left(\frac{39}{25}\right)^4
      -\frac1{720}\left(\frac{39}{25}\right)^6
 >\frac1{101}.
\]
Consequently,
\[
\begin{aligned}
 1-r
 &=\frac{2\sin(\theta/2)
   \cos((2m-1)\theta/2)}{\sin(m\theta)}
 >\frac1{101m},
\end{aligned}
\]
where \(\sin(m\theta)\le m\sin\theta\) was used.  On the other hand,
the right-hand side of
\eqref{eq:anticoherent-endpoint-chebyshev-boundary-ratio} is at most
\(4/N^2\) for \(N\ge12\).  Finally,
\(N^2\ge4m^4\ge404m\) for \(m\ge5\), proving the boundary claim.

For \(m=1,2,3,4\), substitute respectively
\(N=4+s,8+s,18+s,32+s\), \(s\ge0\), into the first-row difference in
\eqref{eq:anticoherent-endpoint-packet-supersolution}.  After clearing its
positive denominator, the numerator has respectively \(5,6,7,8\) positive
integer coefficients, with least coefficients \(1,3,9,27\).  The ancillary
exact-arithmetic script
\texttt{verify\_endpoint\_packet\_supersolution.py} constructs these four
polynomials, checks the interior expansion, and verifies all auxiliary
rational constants exactly.  This completes
\eqref{eq:anticoherent-endpoint-packet-supersolution}.

For completeness, the Jacobi ground-state identity gives, for every real
\(z\),
\[
\begin{aligned}
 z^{\mathsf T}M_{N,m}z
 &=\sum_{j=2}^m\frac{K_{1,N-j}}2W_{j-1}W_j
 \left(\frac{z_{j-1}}{W_{j-1}}-\frac{z_j}{W_j}\right)^2\\
 &\quad+\sum_{j=1}^m\frac{(M_{N,m}W)_j}{W_j}z_j^2.
\end{aligned}
\]
Using \eqref{eq:anticoherent-endpoint-packet-supersolution}, we obtain
\begin{equation}\label{eq:anticoherent-finite-endpoint-jacobi-floor}
 \mathcal Q_{N,m}(z)
 \ge\frac{K_{1,N-1}^2}{2N^3}z_1^2.
\end{equation}

Only the pair \((1,N-1)\) reaches the next shell.  Since
\(K_{1,N-1}=N(N^2-N+1)\),
\[
 h_N(x)\le K_{1,N-1}uz_1.
\]
Combining \(A\ge u\),
\eqref{eq:anticoherent-finite-endpoint-deficit}, and
\eqref{eq:anticoherent-finite-endpoint-jacobi-floor} gives
\[
 2N^3A D_{N-1}(x)
 \ge2N^3u^2\mathcal Q_{N,m}(z)
 \ge K_{1,N-1}^2u^2z_1^2
 \ge h_N(x)^2.
\]
Scaling back gives the displayed quartic estimate at the original integer
cutoff.  The highest-shell induction proves the adjoining-mode statement.
\end{proof}

The preceding chain argument can be run simultaneously for every active low
frequency.  The energy is not counted repeatedly: its coefficient in the
full deficit is exactly the total low mass.  This gives a genuinely
multi-anchor theorem with arbitrary data in the terminal band.

\begin{theorem}[Two-band Chebyshev shell theorem]
\label{thm:anticoherent-two-band-chebyshev}
Let \(N,L,W\) be positive integers such that
\begin{equation}\label{eq:anticoherent-two-band-separation}
 2L<N-W,\qquad 2W<N.
\end{equation}
Let \(x_1,\ldots,x_{N-1}\ge0\) satisfy
\begin{equation}\label{eq:anticoherent-two-band-support}
 x_n=0\qquad(L<n<N-W),
\end{equation}
and suppose that its isolated low prefix has nonnegative deficit,
\begin{equation}\label{eq:anticoherent-two-band-low-deficit}
 D_L(x_1,\ldots,x_L)\ge0.
\end{equation}
For every \(a\le\min\{L,W\}\) with \(x_a>0\), put
\(m_a=\lceil W/a\rceil\) and assume
\begin{equation}\label{eq:anticoherent-two-band-parabolic-width}
 \frac Na\ge
 \begin{cases}
  31/10,&m_a=1,\\
  2m_a^2,&m_a\ge2.
 \end{cases}
\end{equation}
Then
\begin{equation}\label{eq:anticoherent-two-band-sharp-shell}
 h_N(x)^2\le2N^3A D_{N-1}(x).
\end{equation}
Consequently adjoining an arbitrary \(x_N=t\ge0\) preserves nonnegativity
of the deficit.
\end{theorem}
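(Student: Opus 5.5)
The plan is to reduce \eqref{eq:anticoherent-two-band-sharp-shell} to the one-anchor Chebyshev argument of Theorem~\ref{thm:anticoherent-finite-endpoint-packet}, applied once for each active low frequency \(a\), and then to recombine the anchors by Cauchy--Schwarz.

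\textbf{Step 1: triad bookkeeping.} Split \(A=A_{\rm lo}+A_{\rm hi}\) and \(E=E_{\rm lo}+E_{\rm hi}\) according to the two bands \(\{1,\dots,L\}\) and \(\{N-W,\dots,N-1\}\). I would first catalogue which triads \((a,b,a+b)\) with \(a+b<N\) can occur.
\begin{itemize}
\item Since \(2L<N-W\), a low--low sum never reaches the high band. Low--low contacts are therefore exactly those counted in \(D_L\).
\item Since \(2W<N\), two high inputs always sum to at least \(2(N-W)>N\). High--high contacts are therefore absent.
\item The remaining contacts are low--high--high. Their ordered-pair multiplicity \(2\) cancels the factor \(\tfrac12\).
\end{itemize}
Discarding the nonnegative cross terms \(A_{\rm hi}E_{\rm lo}\) and \(A_{\rm hi}E_{\rm hi}\), this gives
\[
 D_{N-1}(x)\ge D_L+\sum_{a\le L}x_a\,\mathcal Q^{(a)},
 \qquad
 \mathcal Q^{(a)}=E_{\rm hi}-\sum_{b}K_{a,b}x_bx_{a+b},
\]
where \(b\) and \(a+b\) both lie in the high band. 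The key point is that \(E_{\rm hi}\) is charged once per anchor, with total weight exactly \(A_{\rm lo}\). Hypothesis \eqref{eq:anticoherent-two-band-low-deficit} then lets me drop \(D_L\).

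\textbf{Step 2: chain decomposition and Chebyshev floors.} For a fixed anchor \(a\), the high band splits into residue classes modulo \(a\). Each class is a chain \(N-r,\,N-r-a,\,N-r-2a,\dots\) with \(1\le r\le a\) and length at most \(m_a=\lceil W/a\rceil\). The form \(\mathcal Q^{(a)}\) is then the direct sum of tridiagonal Jacobi forms, one per chain.

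Each chain has the same structure as \eqref{eq:anticoherent-finite-endpoint-jacobi}: divide frequencies by \(a\), and use the outer scale \(N_r=N-r+a\), so the chain top sits one step below \(N_r\). The earlier proof explicitly allows non-integral normalized scales. For every chain I would invoke the supersolution \eqref{eq:anticoherent-endpoint-packet-supersolution} to get \(\mathcal Q\ge0\). For the single chain \(r=a\), whose top \(N-a\) pairs with \(a\) on the shell \(N\), I would use the full floor \eqref{eq:anticoherent-finite-endpoint-jacobi-floor}, giving
\[
 \mathcal Q^{(a)}\ge\frac{K_{a,N-a}^2}{2N^3}x_{N-a}^2 .
\]
This needs \(N_r/a\ge\max\{4,2m_a^2\}\), which is where \eqref{eq:anticoherent-two-band-parabolic-width} enters. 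The non-top chains have \(N_r>N-a+1\) and should inherit the condition with room to spare. For \(a>W\) there is no shell pair, and nonnegativity suffices.

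\textbf{Step 3: recombination across anchors.} Because \(2L<N\) and \(2W<N\), the shell \(N\) receives only pairs \((a,N-a)\) with \(a\le\min\{L,W\}\). Hence
\[
 h_N\le\sum_a K_{a,N-a}x_ax_{N-a}.
\]
Cauchy--Schwarz with weights \(x_a\) gives
\[
 h_N^2\le\Bigl(\sum_a x_a\Bigr)\Bigl(\sum_a x_aK_{a,N-a}^2x_{N-a}^2\Bigr)
 \le A\cdot 2N^3\sum_a x_a\mathcal Q^{(a)}
 \le2N^3AD_{N-1}.
\]
The adjoining statement then follows from Theorem~\ref{thm:anticoherent-highest-shell-induction}, since condition \eqref{eq:anticoherent-highest-shell-quartic-target} with constant \(2\) is stronger than the required constant \(4\).

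\textbf{Main obstacle.} I expect the hardest step to be the case \(m_a=1\), where the threshold is \(N/a\ge31/10\) rather than the \(4\) used in Theorem~\ref{thm:anticoherent-finite-endpoint-packet}. Here each chain is a single point and the boundary inequality reduces to
\[
 (N-a)^3\ge K_{a,N-a}^2/(2N^3).
\]
After normalizing \(a=1\), this is a one-variable polynomial inequality in \(N\). I would check it directly by substituting \(N=31/10+s\) and verifying that all coefficients are positive.

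A secondary check is that non-top chains with \(r<a\) really satisfy the separation hypothesis at their shifted scale \(N_r\). I would verify this from \(N_r\ge N\) together with the interior-row expansion, which is monotone in the outer scale.
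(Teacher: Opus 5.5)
Your proposal is correct and follows the paper's proof essentially step for step: the same exact ledger $D_{N-1}=D_L+\sum_{a\le L}x_aQ_a+A_HE_L+A_HE_H$, the same residue-class Jacobi chains closed by the Chebyshev supersolution at normalized scale $(T+a)/a\ge N/a$, the same direct check $R^4-4R^3+3R^2-1\ge0$ at $R=31/10+s$ for the one-step case, and the same weighted Cauchy--Schwarz over the low anchors. One small slip: the non-top chains satisfy $N_r\ge N+1$, not merely $N_r>N-a+1$, and this already gives the packet hypothesis $N_r/a\ge 2m_a^2$ directly, so you do not need any monotonicity of the interior-row expansion in the outer scale.
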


\begin{proof}
Write
\[
 A_L=\sum_{a=1}^Lx_a,\qquad
 E_L=\sum_{a=1}^La^3x_a^2,\qquad
 A_H=\sum_{n=N-W}^{N-1}x_n,\qquad
 E_H=\sum_{n=N-W}^{N-1}n^3x_n^2.
\]
For \(a\le L\), define the step-\(a\) terminal-band form
\begin{equation}\label{eq:anticoherent-two-band-tail-form}
 Q_a(x)=E_H-\sum_{n=N-W}^{N-1-a}K_{a,n}x_nx_{n+a}.
\end{equation}
We first claim the uniform boundary floor
\begin{equation}\label{eq:anticoherent-two-band-tail-floor}
 Q_a(x)\ge
 \frac{K_{a,N-a}^2}{2N^3}x_{N-a}^2
 \qquad(x_a>0).
\end{equation}

If \(a>W\), the contact sum in \(Q_a\) is empty and \(x_{N-a}=0\), so
\eqref{eq:anticoherent-two-band-tail-floor} is immediate.  Now let
\(a\le W\).  Pad the terminal band by zero down to \(N-m_a a\) and
partition it into residue classes modulo \(a\).  Each class is a Jacobi
chain of length \(m_a\).  If \(T\in\{N-a,\ldots,N-1\}\) is the top of one
class, division of its frequencies by \(a\) gives normalized outer scale
\[
 R_T=\frac{T+a}{a}\ge\frac Na.
\]
If \(m_a\ge2\), then \(R_T\ge2m_a^2\), and the Chebyshev supersolution in
the proof of Theorem~\ref{thm:anticoherent-finite-endpoint-packet} makes
every such chain nonnegative.  If \(m_a=1\), the required one-step floor is
equivalent to
\[
 R_T^4-4R_T^3+3R_T^2-1\ge0.
\]
Writing \(R_T=31/10+s\), its left-hand side becomes
\[
 s^4+\frac{42}{5}s^3+\frac{1173}{50}s^2
 +\frac{5611}{250}s+\frac{10181}{10000}>0.
\]
Thus the conclusion also holds for every one-step class.  The residue class
with top \(T=N-a\) has outer scale \(N/a\) and gives exactly the right-hand
side of
\eqref{eq:anticoherent-two-band-tail-floor}; summing the residue classes
proves the claim.  Notice that the zero padding remains inside the gap in
\eqref{eq:anticoherent-two-band-support}, by
\eqref{eq:anticoherent-two-band-separation}.

There are no high--high contacts below \(N\), and no low--low contact reaches
the terminal band.  Every remaining cross contact has the form
\((a,n,n+a)\), with \(a\le L\) and \(n,n+a\ge N-W\).  Expanding the full
deficit therefore gives the exact ledger
\begin{equation}\label{eq:anticoherent-two-band-deficit-ledger}
 D_{N-1}(x)
 =D_L(x)+\sum_{a=1}^Lx_aQ_a(x)+A_HE_L+A_HE_H.
\end{equation}
All terms on the right are nonnegative.  In particular,
\begin{equation}\label{eq:anticoherent-two-band-ledger-lower-bound}
 D_{N-1}(x)\ge\sum_{a=1}^Lx_aQ_a(x).
\end{equation}

The same separation shows that every complementary pair reaching the next
shell consists of one low and one high frequency.  Hence
\[
 h_N(x)\le
 \sum_{a=1}^LK_{a,N-a}x_ax_{N-a}.
\]
Using \eqref{eq:anticoherent-two-band-tail-floor} and then weighted
Cauchy--Schwarz,
\[
\begin{aligned}
 h_N(x)
 &\le\sqrt{2N^3}\sum_{a=1}^Lx_a\sqrt{Q_a(x)},\\
 h_N(x)^2
 &\le2N^3A_L\sum_{a=1}^Lx_aQ_a(x)
 \le2N^3A D_{N-1}(x).
\end{aligned}
\]
This proves \eqref{eq:anticoherent-two-band-sharp-shell}; the highest-shell
induction gives the final assertion.
\end{proof}

\begin{corollary}[No parabolically separated first obstruction]
\label{cor:anticoherent-no-two-band-first-obstruction}
Let \(N\) be the first cutoff at which a nonnegative sequence has negative
deficit.  Then its prefix cannot satisfy
\eqref{eq:anticoherent-two-band-separation},
\eqref{eq:anticoherent-two-band-support}, and
\eqref{eq:anticoherent-two-band-parabolic-width} for any \(L,W\).
Consequently every finite or infinite obstruction must either populate the
intermediate gap, violate the parabolic width condition for an active low
anchor no larger than \(W\), or fail the terminal-band separation \(2W<N\).
\end{corollary}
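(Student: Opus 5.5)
The plan is to argue by contraposition, combining the first-cutoff characterization of Proposition~\ref{prop:anticoherent-highest-shell-contact-threshold} with the adjoining-mode conclusion of Theorem~\ref{thm:anticoherent-two-band-chebyshev}. Let \(N\) be the first cutoff at which the sequence has negative deficit. Then \(D_{N-1}(x_1,\ldots,x_{N-1})\ge0\), while \(D_N(x_1,\ldots,x_N)<0\). Suppose, for contradiction, that the prefix \(x_1,\ldots,x_{N-1}\) satisfies \eqref{eq:anticoherent-two-band-separation}, \eqref{eq:anticoherent-two-band-support} and \eqref{eq:anticoherent-two-band-parabolic-width} for some \(L,W\).

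First, the remaining hypothesis \eqref{eq:anticoherent-two-band-low-deficit} of Theorem~\ref{thm:anticoherent-two-band-chebyshev} must be checked. Since \(L<N-1\) and \(N\) is the first negative cutoff, \(D_L(x_1,\ldots,x_L)\ge0\). Here one has to be careful that the deficit of the truncation \(x\mathbf1_{n\le L}\) equals \(D_L\) of the prefix as defined in Theorem~\ref{thm:anticoherent-highest-shell-induction}. This holds because both \(A\), \(E\) and the triads with output at most \(L\) only involve the first \(L\) modes, and "first cutoff" refers exactly to these truncations.

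With this in hand, Theorem~\ref{thm:anticoherent-two-band-chebyshev} applies. It gives \(h_N(x)^2\le2N^3AD_{N-1}(x)\), which is stronger than the quartic target \eqref{eq:anticoherent-highest-shell-quartic-target}. Two degenerate cases need a short remark. If \(A=0\), the prefix vanishes and \(D_N=N^3t^3\ge0\). If \(S_N\le2E\), the linear coefficient in \eqref{eq:anticoherent-highest-shell-cubic} is already nonnegative. In all cases Theorem~\ref{thm:anticoherent-highest-shell-induction} gives \(D_N(x^{(t)})\ge0\) for every \(t\ge0\), in particular for \(t=x_N\). This contradicts the choice of \(N\).

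For the "consequently" clause, negate the three conditions. Condition \eqref{eq:anticoherent-two-band-separation} also contains \(2L<N-W\), which the stated trichotomy does not list separately. I would handle it by noting that if the low band reaches too high, the support description can be rephrased with a smaller \(L\) or with intermediate modes counted as gap population, so failure of \(2L<N-W\) falls under "populate the intermediate gap." Likewise, only active anchors with \(a\le\min\{L,W\}\) enter \eqref{eq:anticoherent-two-band-parabolic-width}, which gives the stated alternatives. For an infinite sequence with negative deficit, the monotone truncation argument of Proposition~\ref{prop:anticoherent-highest-shell-contact-threshold} yields a negative finite truncation and hence a first negative cutoff, so the same argument applies.

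The main obstacle is only this bookkeeping: matching the first-cutoff notion to the prefix deficit \(D_L\), and phrasing the negated alternatives faithfully. No new analytic estimate is required.
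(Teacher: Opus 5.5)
Your argument is the paper's own proof: first-cutoff minimality gives \(D_L\ge0\) and \(D_{N-1}\ge0\), Theorem~\ref{thm:anticoherent-two-band-chebyshev} then prevents the sign change at \(N\), and an infinite obstruction reduces to its first negative finite truncation. Two points you add are left implicit in the paper, and both are correct. One is the degenerate case \(A=0\). The other is the observation that the stated trichotomy tacitly requires \(2L<N-W\), which you handle by shrinking \(L\).
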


\begin{proof}
At a first negative cutoff, the truncation through \(L\) has
\(D_L\ge0\).  Theorem~\ref{thm:anticoherent-two-band-chebyshev} then prevents
the sign change.  An infinite negative limit has a first negative finite
truncation.
\end{proof}

\begin{corollary}[Forced half-scale occupancy]
\label{cor:anticoherent-forced-intermediate-mode}
Let \(N\ge4\) be the first negative cutoff.  For every integer \(W\ge1\)
with \(2W^2\le N\), put
\[
 L_W=\left\lfloor\frac{N-W-1}{2}\right\rfloor.
\]
Then there is an active frequency \(n_W\) such that
\begin{equation}\label{eq:anticoherent-forced-intermediate-window}
 L_W<n_W<N-W,\qquad x_{n_W}>0.
\end{equation}
The same conclusion holds at the first negative truncation of any infinite
obstruction.
\end{corollary}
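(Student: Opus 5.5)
This follows by contradiction from Corollary~\ref{cor:anticoherent-no-two-band-first-obstruction}, i.e.\ from Theorem~\ref{thm:anticoherent-two-band-chebyshev} with $L=L_W$. Fix $W\ge1$ with $2W^2\le N$ and suppose that $x_n=0$ for every $n$ with $L_W<n<N-W$. I will show that all hypotheses of Theorem~\ref{thm:anticoherent-two-band-chebyshev} then hold with $L=L_W$. Since $N$ is the first negative cutoff, $D_{L}(x_1,\dots,x_L)\ge0$, because $L<N$. The theorem would then give $D_N(x)\ge0$, a contradiction.

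The separation conditions are arithmetic. By definition, $2L_W\le N-W-1<N-W$. Also $2W<N$, since $2W\le 2W^2\le N$ with equality only if $W=1$ and $N=2$, and this is excluded by $N\ge4$. The support condition \eqref{eq:anticoherent-two-band-support} is exactly the vanishing assumption. The only real work is the parabolic width condition \eqref{eq:anticoherent-two-band-parabolic-width} for every active $a\le\min\{L_W,W\}$, with $m_a=\lceil W/a\rceil$.

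If $m_a\ge2$, then $W/a>1$ and $m_a<W/a+1$. I need $N/a\ge 2m_a^2$, and it suffices to check $2m_a^2a\le 2W^2$. Since $m_a\le W$, we have $m_a a\le W$ when $a\le W/m_a$. In general, however, $m_a a$ can exceed $W$, up to $W+a-1$. So the key check is $2m_a^2a\le N$. I would use $m_a a<W+a\le 2W$, which gives $m_a^2a\le m_a(W+a-1)$. This looks delicate, and I expect it to be the main obstacle.

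The intended fix is to use $a\le W$ and $m_a\le W$ more carefully. The reduction is $m_a^2a=m_a\cdot m_a a\le m_a(W+a-1)$. For $a\le W/2$ we have $m_a\le W/a+1$, so this is at most $(W/a+1)(W+a-1)$. This need not be $\le W^2$, so the bound $2W^2\le N$ might not suffice directly. I would then try the alternative route of comparing with the width that actually matters. If the naive bound fails, I would reduce to $W'=m_a a$ and shrink the band instead.

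If $m_a=1$, then $a\ge W$, so $a=W$, and $N/a=N/W\ge 2W\ge2$. This falls short of $31/10$ when $W=1$. In that case $a=1$, and I need $N\ge4>31/10$, which holds by hypothesis. For $W\ge2$, $2W\ge4$. So the case $m_a=1$ is covered, and this is where $N\ge4$ is used.

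The infinite-support statement then follows as in Corollary~\ref{cor:anticoherent-no-two-band-first-obstruction}. An infinite negative limit has a first negative finite truncation, and the argument above applies to it.
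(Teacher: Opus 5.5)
Your overall strategy matches the paper's proof. You suppose the window $(L_W,N-W)$ is empty, use minimality to get $D_{L_W}\ge0$, and apply Theorem~\ref{thm:anticoherent-two-band-chebyshev} with $L=L_W$. Your check of the two separation conditions is correct. So is the case $m_a=1$, including the use of $N\ge4>31/10$ when $W=1$. The gap is the case $m_a\ge2$, which is the only nontrivial verification, and you leave it open. You even suggest that $2W^2\le N$ might not suffice, and you propose a fallback (``reduce to $W'=m_aa$ and shrink the band'') that is not available. The theorem uses one common $W$, both for the support gap and for every anchor's $m_a=\lceil W/a\rceil$, so the band cannot be adjusted anchor by anchor. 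As written, the width condition \eqref{eq:anticoherent-two-band-parabolic-width} is not established, and the argument is incomplete.

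The missing ingredient is the integer inequality $a\lceil W/a\rceil^2\le W^2$ for $1\le a\le W$. Your estimate $m_a\le W/a+1$ throws away integrality and is too lossy; it already fails at $a=1$, where in fact $m_aa=W$ exactly. Instead write $W=qa+r$ with $q\ge1$ and $0\le r<a$. If $r=0$, then $am_a^2=W^2/a\le W^2$. If $r\ge1$, then $m_a=q+1$ and $W\ge qa+1$, so
\[
 W^2-a m_a^2\ \ge\ (qa+1)^2-a(q+1)^2=(a-1)(aq^2-1)\ge0 .
\]
Hence $2am_a^2\le2W^2\le N$, that is, $N/a\ge2m_a^2$, which is exactly what the theorem requires. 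With this step inserted, your proof coincides with the paper's.
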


\begin{proof}
By definition, \(2L_W<N-W\).  Also \(2W<N\): this follows from
\(2W^2\le N\) when \(W\ge2\), while \(W=1\) uses \(N\ge4\).

It remains to verify the width condition for
\(a\le\min\{L_W,W\}\).  Put \(m=\lceil W/a\rceil\).  If \(a=W\), then
\(m=1\); one has \(N/a\ge4\) for \(W=1\), and
\(N/a\ge2W\ge4\) for \(W\ge2\).  If \(a<W\), then \(m\ge2\), and the
elementary integer inequality
\begin{equation}\label{eq:anticoherent-ceiling-square-lemma}
 a\left\lceil\frac Wa\right\rceil^2\le W^2
 \qquad(1\le a\le W)
\end{equation}
gives \(2am^2\le2W^2\le N\).  To prove
\eqref{eq:anticoherent-ceiling-square-lemma}, write \(W=qa+r\).
For \(r=0\) it is immediate.  If \(1\le r<a\), its smallest right-hand side
occurs at \(r=1\), and
\[
 (qa+1)^2-a(q+1)^2=(a-1)(aq^2-1)\ge0.
\]
If the interval in
\eqref{eq:anticoherent-forced-intermediate-window} were empty, the prefix
would satisfy the two-band support condition with low cutoff \(L_W\).
Its low deficit is nonnegative by first-cutoff minimality, so
Theorem~\ref{thm:anticoherent-two-band-chebyshev} prevents the sign change.
\end{proof}

\begin{lemma}[Lowest-mode cubic ledger]
\label{lem:anticoherent-lowest-mode-ledger}
For \(x_1,\ldots,x_N\ge0\), set
\[
 K_{a,b}=(a+b)(a^2+ab+b^2),
\]
\[
 D_N(x)=
 \left(\sum_{k\le N}x_k\right)
 \left(\sum_{k\le N}k^3x_k^2\right)
 -\frac12\sum_{a+b\le N}K_{a,b}x_ax_bx_{a+b}.
\]
Fix \(n<N\), let \(z\) be supported in
\(\{n+1,\ldots,N\}\), and put
\[
 A=\sum_{k>n}z_k,\qquad E=\sum_{k>n}k^3z_k^2,\qquad
 \Delta=D_N(z),
\]
\[
 J_n(z)=\sum_{k=n+1}^{N-n}K_{n,k}z_kz_{n+k},\qquad
 \ell_n=E-J_n(z),\qquad q_n=n^3(A-3z_{2n}).
\]
Then
\begin{equation}\label{eq:anticoherent-lowest-mode-cubic}
 D_N(z+se_n)=\Delta+\ell_ns+q_ns^2+n^3s^3
 \qquad(s\ge0).
\end{equation}
If \(h=(-\ell_n)_+\), \(r=(-q_n)_+\), and \(0<\theta<1\), then
\begin{equation}\label{eq:anticoherent-lowest-mode-split-payment}
 D_N(z+se_n)\ge
 \Delta-\frac{2h^{3/2}}{3\sqrt{3\theta n^3}}
 -\frac{4r^3}{27(1-\theta)^2n^6}.
\end{equation}
When \(q_n\ge0\), the sharper first-loss bound is obtained by taking the
whole cubic against the linear term:
\begin{equation}\label{eq:anticoherent-lowest-mode-linear-payment}
 D_N(z+se_n)\ge
 \Delta-\frac{2h^{3/2}}{3\sqrt{3n^3}}.
\end{equation}
\end{lemma}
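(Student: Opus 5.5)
The plan has two parts: first derive the exact cubic \eqref{eq:anticoherent-lowest-mode-cubic} by expanding $D_N(z+se_n)$ in $s$, then obtain the two payment bounds by elementary one-variable minimization.

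\emph{The exact cubic.} Since $z$ lives on $\{n+1,\ldots,N\}$ and $n$ is the new lowest mode, I expand the mass factor as $A+s$ and the energy factor as $E+n^3s^2$. Their product gives
\[
\Delta_0+sE+An^3s^2+n^3s^3,
\]
where $\Delta_0=AE$ is the $z$-only part. Next I list the triads $(a,b,a+b)$ with $a+b\le N$ that contain the index $n$ at least once, using that $z$ vanishes at and below $n$.
\begin{itemize}
\item $n$ appears as exactly one input, the other input is $k>n$, and the output is $n+k$. The two orderings give $2K_{n,k}sz_kz_{n+k}$. After the factor $-\tfrac12$ this contributes $-sJ_n(z)$; the range $k\le N-n$ comes from $n+k\le N$.
\item $n$ appears as both inputs, with output $2n$. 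This gives $K_{n,n}s^2z_{2n}=6n^3s^2z_{2n}$. After $-\tfrac12$ this contributes $-3n^3z_{2n}s^2$.
\item $n$ appears as the output. This is impossible, since it would need inputs below $n$, where $z$ vanishes.
\item Two inputs equal to $n$ with output $n$, or any triad using $n$ three times. Neither occurs.
\end{itemize}
The remaining $z$-only triads recombine with $AE$ to give $\Delta=D_N(z)$. Collecting powers of $s$ yields the linear coefficient $\ell_n=E-J_n$, the quadratic coefficient $q_n=n^3(A-3z_{2n})$, and the cubic coefficient $n^3$. The only care needed is the triad multiplicities: the ordered-sum convention already used in $\mathcal C$ counts $(n,k)$ and $(k,n)$ separately, and counts $(n,n)$ once.

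\emph{The split payment \eqref{eq:anticoherent-lowest-mode-split-payment}.} I write $\ell_ns\ge -hs$ and $q_ns^2\ge -rs^2$, and split the cubic as $n^3s^3=\theta n^3s^3+(1-\theta)n^3s^3$.

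For the linear loss I use
\[
\min_{s\ge0}\bigl(\theta n^3s^3-hs\bigr).
\]
The minimizer is $s=\sqrt{h/(3\theta n^3)}$, and the minimum value is $-\tfrac{2h^{3/2}}{3\sqrt{3\theta n^3}}$.

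For the quadratic loss I use
\[
\min_{s\ge0}\bigl((1-\theta)n^3s^3-rs^2\bigr).
\]
The minimizer is $s=\tfrac{2r}{3(1-\theta)n^3}$, and the minimum value is $-\tfrac{4r^3}{27(1-\theta)^2n^6}$.

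Adding the two minima to $\Delta$ gives \eqref{eq:anticoherent-lowest-mode-split-payment}.

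\emph{The linear payment \eqref{eq:anticoherent-lowest-mode-linear-payment}.} When $q_n\ge0$ I drop the quadratic term and run the linear minimization with $\theta=1$.

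\emph{Main obstacle.} The minimizations are routine, so the main risk is the triad bookkeeping in the first step: confirming that no triad has $n$ as output, and getting the diagonal coefficient $K_{n,n}=6n^3$ with the factor $\tfrac12$ to produce exactly $3z_{2n}$.
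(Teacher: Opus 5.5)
Your proposal is correct and follows the paper's proof. You expand $D_N(z+se_n)$ in $s$, then bound the linear and quadratic losses by the two one-variable minima $\min_{s\ge0}\{ds^3-hs\}$ and $\min_{s\ge0}\{ds^3-rs^2\}$ after splitting $n^3s^3$ into the fractions $\theta$ and $1-\theta$; your triad bookkeeping (no triad with output $n$, both orderings of $(n,k)$, and $K_{n,n}=6n^3$ yielding $-3n^3z_{2n}$) correctly supplies the expansion details the paper leaves implicit.
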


\begin{proof}
Expanding \(D_N(z+se_n)\) gives
\eqref{eq:anticoherent-lowest-mode-cubic}.  The remaining estimates follow
from
\[
 \min_{s\ge0}\{ds^3-hs\}
 =-\frac{2h^{3/2}}{3\sqrt{3d}},\qquad
 \min_{s\ge0}\{ds^3-rs^2\}
 =-\frac{4r^3}{27d^2},
\]
after splitting \(n^3s^3\) into the fractions \(\theta\) and
\(1-\theta\).
\end{proof}

\begin{theorem}[Tail-shift interpolation and sum-free closure]
\label{thm:anticoherent-tail-shift-interpolation}
Use the notation of
Lemma~\ref{lem:anticoherent-lowest-mode-ledger}, assume $A>0$, and
put $h=(-\ell_n)_+$ and $r=(-q_n)_+$.  Then
\begin{equation}\label{eq:anticoherent-tail-shift-interpolation}
 h\le
 \frac{3n}{2\,4^{1/3}}A^{2/3}E^{2/3}.
\end{equation}
Consequently, either of the following conditions is sufficient for
$D_N(z+se_n)\ge0$ for every $s\ge0$:
\begin{align}
 q_n\ge0,\qquad
 &\Delta\ge\frac{AE}{2\sqrt2};
 \label{eq:anticoherent-tail-shift-positive-q}\\
 &\Delta\ge
 \frac23AE.
 \label{eq:anticoherent-tail-shift-general}
\end{align}
In particular, suppose that the active support of $z$ is internally
sum-free:
\begin{equation}\label{eq:anticoherent-internally-sum-free}
 z_az_bz_{a+b}=0\qquad(a,b>n).
\end{equation}
Then
\begin{equation}\label{eq:anticoherent-sum-free-tail-closure}
 D_N(z+se_n)\ge0\qquad(s\ge0).
\end{equation}
This includes every tail supported in an interval $[M,N]$ with $N<2M$,
with arbitrary amplitudes and no bound on $M$, $N$, or the number of active
modes.  The conclusion also holds for a genuine infinite internally
sum-free tail whenever $A<\infty$ and $E<\infty$.
\end{theorem}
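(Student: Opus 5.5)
The plan has three layers: prove the interpolation bound \eqref{eq:anticoherent-tail-shift-interpolation}, feed it into Lemma~\ref{lem:anticoherent-lowest-mode-ledger} to get the two sufficient conditions, and then observe that an internally sum-free tail has no internal contact loss.

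\emph{Step 1: the interpolation bound.} Only the triads $(n,k,n+k)$ with $k>n$ enter $J_n$, so
\[
 -\ell_n=\sum_kK_{n,k}z_kz_{n+k}-E .
\]
For each $k$, one copy of the shift correlation is absorbed by the energy. Since $(1+t)^{3/2}\ge1+\tfrac32t$, we have
\[
 k^{3/2}(n+k)^{3/2}\ge k^3+\tfrac32nk^2 .
\]
Cauchy--Schwarz along the shift gives
\[
 \sum_kk^{3/2}(n+k)^{3/2}z_kz_{n+k}\le E .
\]
Hence $h\le\sum_k\rho_{n,k}z_kz_{n+k}$, where the excess kernel is
\[
 \rho_{n,k}=K_{n,k}-k^{3/2}(n+k)^{3/2}\le n^3+2n^2k+\tfrac12nk^2 .
\]
Because $k>n$, this excess is $O(nk^2)$. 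It then remains to bound the bilinear form $\sum nk^2z_kz_{n+k}$ by $A^{2/3}E^{2/3}$, and the exponents already match:
\[
 k^2z_kz_{n+k}=\bigl(z_kz_{n+k}\bigr)^{1/3}\bigl(k^3z_k^2\bigr)^{1/3}\bigl(k^3z_{n+k}^2\bigr)^{1/3}.
\]
I would not use crude Hölder here. For each pair I would instead use the weighted AM--GM bound
\[
 \rho z_kz_{n+k}\le\tfrac{\lambda}{2}\bigl(\text{mass terms}\bigr)+\tfrac{1}{2\lambda}\bigl(\text{energy terms}\bigr)
\]
with one free parameter $\lambda$, sum over $k$, and optimize $\lambda$. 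An optimization of the form $\min_\lambda(a\lambda^{-2}+b\lambda)$ produces exactly factors like $3/(2\cdot4^{1/3})$ with powers $A^{2/3}E^{2/3}$.

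\emph{Main obstacle.} Getting the \emph{sharp} constant $\tfrac{3n}{2\cdot 4^{1/3}}$, rather than some constant, is where I expect the real work. I would first test the candidate extremal configuration, a single pair $(z_k,z_{n+k})$ with $k\to\infty$, to see which splitting of $\rho_{n,k}$ between mass and energy is forced. Only after that would I fix $\lambda$.

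\emph{Step 2: the two sufficient conditions.} Assume first $q_n\ge0$. By \eqref{eq:anticoherent-lowest-mode-linear-payment}, the bound of Step 1 gives
\[
 \frac{2h^{3/2}}{3\sqrt{3n^3}}\le\frac{2}{3\sqrt3}\Bigl(\frac32\Bigr)^{3/2}\frac{AE}{2}=\frac{AE}{2\sqrt2},
\]
and this is exactly \eqref{eq:anticoherent-tail-shift-positive-q}. In the general case I would use \eqref{eq:anticoherent-lowest-mode-split-payment}. Here $r=n^3(3z_{2n}-A)_+\le2n^3z_{2n}$, so
\[
 \frac{r^3}{n^6}\le8n^3z_{2n}^3=z_{2n}\,(2n)^3z_{2n}^2 ,
\]
and this last quantity is bounded by $AE$. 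I would then choose $\theta$ to make
\[
 \frac{\theta^{-1/2}}{2\sqrt2}AE+\frac{4}{27(1-\theta)^2}\cdot(\text{that bound})
\]
at most $\tfrac23AE$. If the plain estimate is too lossy, I would keep $z_{2n}$ explicit. The point is that the mode $2n$ is charged both in $A$ and in $E$, so the $r$-loss and the $h$-loss cannot both be extremal at once. I would re-optimize jointly in $\theta$ and $z_{2n}/A$.

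\emph{Step 3: sum-free closure.} Under \eqref{eq:anticoherent-internally-sum-free} the tail has no internal contacts, so
\[
 \Delta=D_N(z)=AE\ge\tfrac23AE ,
\]
and \eqref{eq:anticoherent-tail-shift-general} applies; note that the contacts with the new mode $n$ are not excluded and are exactly what $h$ and $r$ account for. A tail supported in $[M,N]$ with $N<2M$ is sum-free, since $a+b\ge2M>N$. For an infinite sum-free tail with $A,E<\infty$, I would apply the result to the finite truncations and pass to the limit by monotone convergence, exactly as in the earlier truncation arguments.
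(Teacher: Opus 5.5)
Your Step~1 carries the whole theorem, and it is not proved. The route you sketch also cannot reach the stated constant. That constant is not claimed to be sharp; it is the output of a specific chain of estimates, so probing extremal pairs will not locate it.

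\emph{First problem: the absorption is too weak.} You absorb $E$ through the geometric-mean bound $\sum_k[k(k+n)]^{3/2}z_kz_{k+n}\le E$. This leaves the residual kernel $\rho_{n,k}=K_{n,k}-[k(k+n)]^{3/2}$. With $t=n/k$,
\[
\frac{\rho_{n,k}}{nk(k+n)}=\frac{1+t+t^2-\sqrt{1+t}}{t},
\]
which tends to $3-\sqrt2>\tfrac32$ as $k/n\to1^+$. Relative to your $nk^2$ normalization the ratio is about $6-2\sqrt2$. So even the best H\"older or AM--GM step on this residual yields $(3-\sqrt2)4^{-1/3}n$, not $\tfrac32\,4^{-1/3}n$.

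\emph{Second problem: the mass bound is missing.} The factor $4^{-1/3}$ requires the edge-mass bound $W=\sum_kz_kz_{k+n}\le A^2/4$, which you never identify. The trivial bound $W\le A^2/2$ costs a factor $2^{1/3}$.

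\emph{What the paper uses instead.} It uses the translation ledger
\[
Q_n(z)=\sum_k\bigl(k+\tfrac n2\bigr)(a_{k+n}-a_k)^2\ge0,\qquad a_k=kz_k .
\]
This ledger has no boundary wedge because $z$ vanishes at and below $n$. It absorbs $E$ with the larger arithmetic-mean weight $k+\tfrac n2\ge\sqrt{k(k+n)}$. Coefficient comparison then gives exactly $\ell_n=\tfrac12Q_n-L_n$, where $L_n=\tfrac n2\sum_k(k+n)(k+2n)z_kz_{k+n}$. Since $k>n$ gives $(k+n)(k+2n)\le3k(k+n)$, you get $h\le L_n\le\tfrac{3n}{2}\sum_kk(k+n)w_k$, with $w_k=z_kz_{k+n}$. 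H\"older then gives $\sum_kk(k+n)w_k\le W^{1/3}P^{2/3}$. Here $P\le E$ by your own shift Cauchy--Schwarz. The bound $W\le A^2/4$ holds because the shift graph $k\leftrightarrow k+n$ is a disjoint union of paths. On each path the edge sum is at most the product of the masses of its two parity classes, and $B(A-B)\le A^2/4$.

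\emph{Step~2, general branch.} The plain estimate fails, as you suspected:
\[
\min_\theta\Bigl[\tfrac{1}{2\sqrt{2\theta}}+\tfrac{4}{27(1-\theta)^2}\Bigr]\approx0.945>\tfrac23 .
\]
The missing coupling does not come from $E$ alone. As $t=z_{2n}/A\to1$, the $r$-loss stays at $\tfrac{4}{27(1-\theta)^2}AE$, so the $h$-loss has to vanish. The paper gets this from the same bipartite argument. Orient the path containing $2n$ so that $2n$ lies in the first class. Then $W\le B(A-B)$ with $B\ge tA$, hence $W\le A^2t(1-t)$ for $t\ge\tfrac12$. Keep $W$ inside $h\le\tfrac{3n}{2}W^{1/3}E^{2/3}$, and use $E\ge 8n^3A^2t^2$ to control the $r$-loss. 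With $\theta=3/8$, the total loss divided by $AE$ is at most $\sqrt{8/3}\,\lambda(t)+\tfrac{64}{25}\tfrac{(3t-1)^3}{54t^2}$. Here $\lambda(t)=1/(2\sqrt2)$ for $t\le\tfrac12$ and $\lambda(t)=\sqrt{t(1-t)/2}$ for $t\ge\tfrac12$. This is below $\tfrac23$ after a Bernstein positivity check on $[\tfrac12,1]$.

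\emph{Step~3.} It is fine. Note that for the sum-free conclusion alone, where $\Delta=AE$, the plain estimate $\approx0.945\,AE<AE$ would already suffice, but only once Step~1 is established with the stated constant.
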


\begin{proof}
Extend $z$ by zero and put $a_k=kz_k$.  The translation block at shift $n$
satisfies
\[
 Q_n(z)=2E-\sum_{k>n}(2k+n)k(k+n)z_kz_{k+n}.
\]
Because $z_k=0$ for $k\le n$, its difference-square representation has no
boundary wedge:
\[
 Q_n(z)=\sum_{k\ge1}\left(k+\frac n2\right)
 (a_{k+n}-a_k)^2\ge0.
\]
Define
\[
 L_n=\frac n2\sum_{k>n}(k+n)(k+2n)z_kz_{k+n}.
\]
Direct coefficient comparison gives
\begin{equation}\label{eq:anticoherent-tail-shift-Q-identity}
 Q_n(z)=2(\ell_n+L_n),
 \qquad h\le L_n.
\end{equation}

We estimate $L_n$ without a cutoff.  Set
\[
 w_k=z_kz_{k+n},\qquad
 W=\sum_{k>n}w_k,\qquad
 P=\sum_{k>n}[k(k+n)]^{3/2}w_k.
\]
The graph with edges $k\leftrightarrow k+n$ is a disjoint union of paths.
Splitting every path into its two parity classes gives
\begin{equation}\label{eq:anticoherent-tail-shift-edge-mass}
 W\le\frac{A^2}{4}.
\end{equation}
Indeed, the edge sum on each path is bounded by the product of the masses in
its two classes; summing the paths and then maximizing the product of the two
total class masses proves the claim.  On the other hand, with
$y_k=k^{3/2}z_k$, the inequality $2uv\le u^2+v^2$ on every path gives
\begin{equation}\label{eq:anticoherent-tail-shift-edge-energy}
 P=\sum_{k>n}y_ky_{k+n}\le\sum_{k>n}y_k^2=E.
\end{equation}
Since $k>n$,
\[
 (k+n)(k+2n)\le3k(k+n).
\]
H\"older's inequality now yields
\[
 \begin{aligned}
 \sum_{k>n}(k+n)(k+2n)w_k
 &\le3\sum_{k>n}[k(k+n)]w_k\\
 &\le3W^{1/3}P^{2/3}
 \le\frac3{4^{1/3}}A^{2/3}E^{2/3}.
 \end{aligned}
\]
Together with \eqref{eq:anticoherent-tail-shift-Q-identity}, this proves
\eqref{eq:anticoherent-tail-shift-interpolation}.

The exact linear minimum from
Lemma~\ref{lem:anticoherent-lowest-mode-ledger} and
\eqref{eq:anticoherent-tail-shift-interpolation} give
\begin{equation}\label{eq:anticoherent-tail-shift-linear-loss}
 \frac{2h^{3/2}}{3\sqrt{3n^3}}
 \le\frac{AE}{2\sqrt2}.
\end{equation}
This proves \eqref{eq:anticoherent-tail-shift-positive-q}.

For the mixed branch, write \(t=z_{2n}/A\).  The edge-mass estimate can be
sharpened when \(t\) is large.  Orient the bipartition of the path containing
\(2n\) so that \(2n\) belongs to its first class, and orient all other paths
arbitrarily.  If the total mass in the first classes is \(B\), then
\[
 W\le B(A-B),\qquad B\ge tA.
\]
Consequently,
\begin{equation}\label{eq:anticoherent-tail-shift-refined-edge-mass}
 W\le A^2
 \begin{cases}
  1/4,&0\le t\le1/2,\\
  t(1-t),&1/2\le t\le1.
 \end{cases}
\end{equation}
Indeed, the second bound follows because \(B(A-B)\) decreases for
\(B\ge A/2\).

Before using the split cubic, retain \(W\) in the linear estimate.  The
preceding H\"older argument gives
\[
 h\le\frac{3n}{2}W^{1/3}E^{2/3},
\]
and hence
\begin{equation}\label{eq:anticoherent-tail-shift-refined-linear-loss}
 \frac{2h^{3/2}}{3\sqrt{3n^3}}
 \le E\sqrt{\frac W2}
 \le\lambda(t)AE,
 \qquad
 \lambda(t)=
 \begin{cases}
  1/(2\sqrt2),&t\le1/2,\\
  \sqrt{t(1-t)/2},&t\ge1/2.
 \end{cases}
\end{equation}

If \(r>0\), then \(1/3<t\le1\), \(r=n^3A(3t-1)\), and
\[
 E\ge(2n)^3z_{2n}^2=8n^3A^2t^2.
\]
Therefore
\begin{equation}\label{eq:anticoherent-tail-shift-quadratic-loss}
 \frac{4r^3}{27n^6AE}
 \le b(t):=\frac{(3t-1)^3}{54t^2}.
\end{equation}
The function \(b\) is increasing on \([1/3,1]\).

Use the split-cubic payment with \(\theta=3/8\).  By
\eqref{eq:anticoherent-tail-shift-refined-linear-loss} and
\eqref{eq:anticoherent-tail-shift-quadratic-loss}, its total loss divided by
\(AE\) is at most
\[
 \sqrt{\frac83}\lambda(t)+\frac{64}{25}b(t).
\]
For \(1/3\le t\le1/2\), this is at most
\[
 \frac1{\sqrt3}+\frac{16}{675}<\frac23.
\]
For \(1/2\le t\le1\), put
\[
 B_*(t)=\frac{32(3t-1)^3}{675t^2}.
\]
The function \(B_*\) is increasing and
\(B_*(1)=256/675<2/3\).  Therefore it is legitimate to square the desired
inequality.  Direct expansion gives
\[
\left(\frac23-B_*(t)\right)^2-\frac43t(1-t)
=\frac{4P(t)}{455625t^4},
\]
where
\[
\begin{aligned}
 P(t)={}&338499t^6-719523t^5+556065t^4-203040t^3\\
 &+41760t^2-4608t+256.
\end{aligned}
\]
The polynomial is strictly positive on \([1/2,1]\).  Indeed, on the three
intervals
\[
 \left[\frac12,\frac34\right],\qquad
 \left[\frac34,\frac78\right],\qquad
 \left[\frac78,1\right],
\]
write \(u=(t-\alpha)/(\beta-\alpha)\) and expand \(P\) in the degree-six
Bernstein basis
\(\binom6j u^j(1-u)^{6-j}\).  All seven coefficients are positive; their
respective exact minima are
\[
 \frac{304375}{4096},\qquad
 \frac{13075}{16384},\qquad
 \frac{261527179}{262144}.
\]
Thus
\[
 \frac2{\sqrt3}\sqrt{t(1-t)}+B_*(t)<\frac23,
\]
which proves
\eqref{eq:anticoherent-tail-shift-general}.

Finally, \eqref{eq:anticoherent-internally-sum-free} makes the internal
cubic contact vanish, so $\Delta=AE$.  Condition
\eqref{eq:anticoherent-tail-shift-general} applies and proves
\eqref{eq:anticoherent-sum-free-tail-closure}.  If $z$ is supported in
$[M,N]$ with $N<2M$, the sum of any two active indices exceeds $N$, so its
support is internally sum-free.  For an infinite internally sum-free tail,
apply the finite result to its truncations.  The masses and energies converge,
while \eqref{eq:anticoherent-tail-shift-interpolation} bounds the only new
shift contact uniformly; hence the cubic coefficients and deficits pass to
the limit.
\end{proof}

\subsection{Direct criteria and moment families}

\begin{proposition}[Floor subtraction and unit-root reduction]
\label{prop:strong-floor-unit-root-reduction}
Let \(\rho\) be a nonconstant smooth positive density, put
\(\kappa=\min_{\T}\rho\), and set \(\sigma=\rho-\kappa\).  Then
\begin{equation}\label{eq:strong-floor-subtraction}
  2I_1(\rho)+I_2(\rho)
  =2I_1(\sigma)+I_2(\sigma)
   +2\kappa\norm{\Lambda^{3/2}\sigma}_2^2.
\end{equation}
If \(\rho\) is a trigonometric polynomial and
\(\sigma=|P|^2\) is a Fej\'er--Riesz factorization, then every minimum point
\(\zeta=e^{is_0}\) of \(\rho\) gives
\[
  P(\zeta)=0,
  \qquad
  P(z)=(z-\zeta)\widetilde P(z).
\]
Consequently, the unrestricted strong sign for trigonometric polynomials is
equivalent to its restriction to densities of the form
\begin{equation}\label{eq:strong-unit-root-target}
  \sigma(e^{is})=|e^{is}-\zeta|^2
  |\widetilde P(e^{is})|^2,
  \qquad |\zeta|=1.
\end{equation}
A degree-independent proof for \eqref{eq:strong-unit-root-target} would extend
by positive trigonometric approximation to every smooth positive density.
\end{proposition}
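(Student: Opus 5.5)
The plan is to expand $\mathfrak H(\rho)=2I_1(\rho)+I_2(\rho)$ with $\rho=\kappa+\sigma$ and track how the constant enters. In $I_2(\rho)=\int(\Lambda\rho)\rho_s^2\,\dd s$ neither factor sees the constant, because $\Lambda\kappa=0$ and $\partial_s\kappa=0$; hence $I_2(\rho)=I_2(\sigma)$. For $I_1$ I would use the third-order form \eqref{eq:I1-third-order}, $I_1(\rho)=\frac12\int\rho^2\Lambda^3\rho\,\dd s$. Since $\Lambda^3\rho=\Lambda^3\sigma$, expanding $\rho^2=\kappa^2+2\kappa\sigma+\sigma^2$ gives three pieces:
\begin{itemize}
\item $\kappa^2\int\Lambda^3\sigma\,\dd s=0$, because $\Lambda^3\sigma$ has zero mean;
\item $2\kappa\int\sigma\Lambda^3\sigma\,\dd s=2\kappa\norm{\Lambda^{3/2}\sigma}_2^2$, by self-adjointness;
\item $\int\sigma^2\Lambda^3\sigma\,\dd s=2I_1(\sigma)$.
\end{itemize}
So $I_1(\rho)=I_1(\sigma)+\kappa\norm{\Lambda^{3/2}\sigma}_2^2$. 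Doubling and adding $I_2$ yields \eqref{eq:strong-floor-subtraction}. The extra term is nonnegative, so the strong sign for $\rho$ follows from the strong sign for $\sigma$, and $\sigma$ satisfies $\sigma\ge0$ with $\min\sigma=0$.

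For the root statement, let $\rho$ be a trigonometric polynomial. Then $\sigma$ is a nonnegative trigonometric polynomial, and Fej\'er--Riesz gives an algebraic polynomial $P$ with $\sigma(e^{is})=|P(e^{is})|^2$. At a minimum point $s_0$ of $\rho$ we have $\sigma(s_0)=0$, so $P(\zeta)=0$ with $\zeta=e^{is_0}$. Factoring out this root gives $P=(z-\zeta)\widetilde P$, and hence the form \eqref{eq:strong-unit-root-target}.

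The equivalence then has two directions. Restricting the unrestricted statement trivially covers the special densities \eqref{eq:strong-unit-root-target}. Conversely, any positive trigonometric $\rho$ reduces via \eqref{eq:strong-floor-subtraction} to its floor-subtracted $\sigma$, which has the required form. Nonconstancy of $\rho$ only guarantees $\sigma\ne0$; the constant case is trivial anyway.

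The step that needs care is the final extension to smooth positive densities. I would approximate a smooth $\rho>0$ by positive trigonometric polynomials, for instance Fej\'er means, which preserve positivity and converge in every $C^k$. Then I would check that $\mathfrak H$ is continuous in a fixed Sobolev norm, say $H^{2}$, which suffices for both $\int\rho^2\Lambda^3\rho\,\dd s$ (after integrating by parts) and the cubic $\int(\Lambda\rho)^3\,\dd s$ in \eqref{eq:I2-cubic-Lambda}. Nonnegativity then passes to the limit. The main obstacle is conceptual rather than technical: this passage is valid only if the unit-root result is degree-independent, which is why the proposition states the extension conditionally.
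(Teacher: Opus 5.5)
Your proposal is correct and follows the paper's proof essentially step for step: floor subtraction, the Fej\'er--Riesz root at a minimum point, the two-way equivalence via \eqref{eq:strong-floor-subtraction}, and Fej\'er-mean approximation with continuity of the cubic forms for the smooth extension. The only cosmetic difference is in computing the \(\kappa\)-term: the paper reads it off the definition \(I_1=\int_\T\rho(\Lambda\rho_s)\rho_s\,\dd s\), which is linear in the weight \(\rho\), whereas you expand \(\rho^2\) in the third-order form \eqref{eq:I1-third-order}; both give \(I_1(\rho)=I_1(\sigma)+\kappa\norm{\Lambda^{3/2}\sigma}_2^2\).
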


\begin{proof}
Constants disappear from \(I_2\), while
\[
 I_1(\sigma+\kappa)
 =I_1(\sigma)+\kappa\int_{\T}(\Lambda\sigma_s)\sigma_s\,\dd s
 =I_1(\sigma)+\kappa\norm{\Lambda^{3/2}\sigma}_2^2.
\]
This proves \eqref{eq:strong-floor-subtraction}.  At a minimum point
\(s_0\), \(\sigma(s_0)=|P(e^{is_0})|^2=0\), so the factor theorem gives the
displayed unit-circle divisor.  Conversely, every density in
\eqref{eq:strong-unit-root-target} is nonnegative and touches zero.  The
equivalence now follows from \eqref{eq:strong-floor-subtraction}; the final
extension uses Fej\'er means and continuity of the displayed cubic forms in
the smooth topology.
\end{proof}

The semigroup identity also turns a half-Laplacian slope bound into a
degree-independent criterion.  The mechanism is convexity along the full
Poisson orbit, rather than a finite Fourier argument.

We first record the sharp mixed estimate that drives the criterion.  Let
\[
  \mathsf B_*=\operatorname B\left(\frac14,\frac12\right),
  \qquad
  \mathsf C_*=\frac{9\pi^2}{4\mathsf B_*^2}
  =0.8074912624\ldots,
\]
where \(\operatorname B\) is Euler's beta function.

\begin{lemma}[Sharp mixed periodic Wirtinger estimate]
\label{lem:mixed-periodic-wirtinger}
Every real-valued mean-zero \(q\in H^1(\T)\) satisfies
\begin{equation}\label{eq:mixed-periodic-wirtinger}
  \int_\T q^4\,\dd s
  \le \mathsf C_*\norm{q}_{L^\infty}^2
       \int_\T q_s^2\,\dd s.
\end{equation}
The constant \(\mathsf C_*\) is optimal.
\end{lemma}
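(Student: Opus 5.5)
By homogeneity and the reflection \(q\mapsto-q\), I would normalize so that \(\norm{q}_{L^\infty}=1\); the claim is then \(\int_\T q^4\le\mathsf C_*\int_\T q_s^2\) for mean-zero \(q\) with \(-1\le q\le1\). The first step is a symmetrization on the circle. Replace \(q\) by its symmetric decreasing rearrangement \(q^\star\) about a point. This preserves the distribution function, so it preserves \(\int q^4\), the mean (hence the mean-zero condition), and \(\norm{q}_\infty\). By the P\'olya--Szeg\H{o} inequality on \(\T\) it does not increase \(\int q_s^2\). After this step I may assume \(q\) is even, nonincreasing on \([0,\pi]\), and takes the values \(q(0)=\max q\) and \(q(\pi)=\min q\), with \(\max q\ge0\ge\min q\) and one of them equal to \(\pm1\) in absolute value.

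Next I would pass to the inverse parametrization on the half-circle. Write \(s=\sigma(v)\) for \(v\in[b,a]=[\min q,\max q]\), with \(\sigma'(v)=1/|q_s|\ge0\) and \(\int_b^a\sigma'\,\dd v=\pi\). The two integrals become \(\int_\T q^4=2\int_b^a v^4\sigma'\,\dd v\) and \(\int_\T q_s^2=2\int_b^a\dd v/\sigma'\). Flat pieces of \(q\) correspond to atoms of \(\dd\sigma\) and only help the numerator. Because they cost no Dirichlet energy, they must be allowed, so I would treat \(\dd\sigma\) as a measure. The problem is now a one-dimensional maximization of \(\int v^4\,\dd\sigma\) subject to three constraints: fixed total mass \(\pi\), \(\int v\,\dd\sigma=0\), and \(\int\dd v/\sigma'\) fixed.

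I would handle this by calibration. For a parameter \(\tau>0\) and a nonnegative weight \(\omega(v)=1-v^4\) on \([-1,1]\), the AM--GM inequality gives \(\omega\sigma'+\tau^2/\sigma'\ge2\tau\sqrt\omega\) pointwise. Together with the mean-zero constraint, which I would use to force the range to straddle zero, this bounds \(\int v^4\,\dd\sigma=\pi-\int\omega\,\dd\sigma\) in terms of \(\tau\), \(\int\dd v/\sigma'\), and \(\int_b^a\sqrt{1-v^4}\,\dd v\). The Euler--Lagrange equation predicts the extremal profile \(\sigma'\propto(1-v^4)^{-1/2}\) on the full range \([-1,1]\), which is odd about the quarter-points and therefore automatically mean zero. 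Its normalization involves \(\int_{-1}^1(1-v^4)^{-1/2}\,\dd v=\tfrac12\operatorname B(\tfrac14,\tfrac12)\). Optimizing \(\tau\) should produce exactly \(\mathsf C_*=9\pi^2/(4\mathsf B_*^2)\), with the \(9\) coming from the identity \(\int(1-v^4)^{1/2}=\tfrac23\int(1-v^4)^{-1/2}\) obtained by integrating by parts. For optimality, I would take the profile \(q\) defined by \(q_s=c\sqrt{1-q^4}\) on a quarter period, extended oddly and evenly. It is Lipschitz and attains equality, or is approximated by smooth mean-zero functions.

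The main obstacle is the case distinction on the range. If \(q\) does not reach both \(\pm1\), say the range is \([b,1]\) with \(b>-1\), then the calibration weight must be \(\mu+\nu v-v^4\) with a nonzero Lagrange multiplier \(\nu\) from the mean constraint. I would first try to show that the resulting ratio is monotone in \(b\), so that it is maximized at \(b=-1\); this is an explicit comparison of two Beta-type integrals. If that fails, I would fall back on a direct argument. Since \(\norm{q}_\infty=1\), the function \(q^4\le q^2\) on the lower part of the range, and this loss can be absorbed.
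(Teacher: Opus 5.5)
Your framework is the paper's: normalize \(\norm{q}_{L^\infty}=1\), use the calibration \(\mathsf C_*q_s^2+W(q)\ge2\sqrt{\mathsf C_*}\,\abs{q_s}\sqrt{W(q)}\) with a nonnegative quartic weight vanishing at both ends of the range, let the mean-zero condition remove the linear part of \(W\), and bound \(\int_\T\abs{q_s}\sqrt{W(q)}\,\dd s\) below by twice \(\int\sqrt{W}\) over the range. The rearrangement and P\'olya--Szeg\H{o} step is correct but unnecessary, since the two arcs joining a maximum point to a minimum point already give that lower bound.

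Your treatment of the symmetric range \([-1,1]\) is fine, and it uses no mean constraint at all. The gap is the case you flag yourself, \(\min q=-b\) with \(0<b<1\), and this case is the entire content of the lemma.

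\begin{itemize}
\item There the weight \(1-v^4\) only yields the constant \(\pi^2/(4I_b^2)\) with \(I_b=\int_{-b}^1\sqrt{1-v^4}\,\dd v<\mathsf B_*/3\). This exceeds \(\mathsf C_*\) for every \(b<1\).
\item The paper's weight is \(W_b(y)=K_b+\lambda_by-y^4\), where \(K_b=b-b^2+b^3\) and \(\lambda_b=1-b+b^2-b^3\) are fixed by \(W_b(-b)=W_b(1)=0\). Optimizing the calibration then gives the constant \(\pi^2/J(b)^2\) with \(J(b)=\frac{2}{K_b}\int_{-b}^1\sqrt{W_b(y)}\,\dd y\).
\item So the sharp inequality comes down to \(J(b)\ge J(1)=2\mathsf B_*/3\) on \((0,1]\). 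You only propose to ``try'' this.
\item It is not a comparison of Beta integrals. For \(0<b<1\), \(W_b\) has two real and two complex roots, and \(\int\sqrt{W_b}\) is a genuinely elliptic function of \(b\).
\end{itemize}

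The paper proves the inequality by differentiating in \(b\). With \(G_b(y)=-\frac23(3b^2-2b+1)y\), one has \(K_b\partial_bW_b-2K_b'W_b=\sqrt{W_b}\,\partial_y(G_b\sqrt{W_b})-\frac{3b^2-2b+1}{3}(K_b+3y)\). The exact derivative integrates to zero because \(W_b\) vanishes at \(-b\) and \(1\). Hence \(J'(b)\) has the sign of \(-\int_{-b}^1(K_b+3y)W_b^{-1/2}\,\dd y\). Positivity of that integral then needs an affine change of variables, pairing \(x\) with \(-x\), and checking two explicit quartics.

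Your fallback cannot replace this step. Mean-zero perturbations of the extremal whose normalized minimum is \(-b\), with \(b\uparrow1\), have ratio \(\int q^4/\int q_s^2\to\mathsf C_*\). So any argument for \(b<1\) must become lossless as \(b\to1\). Replacing \(q^4\) by \(q^2\) where \(q<0\) adds about \(\int_{q<0}(q^2-q^4)\,\dd s\). This stays bounded below near the extremal, because it is positive on the whole negative transition arc. The enlarged inequality is therefore false there, and nothing can absorb the loss.

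Separately, check your equality profile. Equality in the AM--GM step forces \(c=\mathsf C_*^{-1/2}\) in \(q_s=c\sqrt{1-q^4}\). Then the rise from \(0\) to \(1\) has length \(3\pi/8\), and each quarter period must end with a plateau of length \(\pi/8\) at \(\pm1\). The plateau-free quarter-period profile gives only the ratio \(2\pi^2/\mathsf B_*^2<\mathsf C_*\).
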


\begin{proof}
By homogeneity, suppose that \(\norm q_{L^\infty}=1\).  Replacing \(q\)
by \(-q\) if necessary, write
\[
  \max_\T q=1,\qquad \min_\T q=-b,\qquad 0<b\le1.
\]
Set
\[
  K_b=b-b^2+b^3,\qquad
  \lambda_b=1-b+b^2-b^3,
\]
and
\begin{equation}\label{eq:mixed-wirtinger-calibration}
  W_b(y)=K_b-y^4+\lambda_by
  =(1-y)(y+b)\bigl(y^2+(1-b)y+1-b+b^2\bigr).
\end{equation}
The quadratic factor has negative discriminant, so \(W_b\ge0\) on
\([-b,1]\).  Since \(\int_\T q\,\dd s=0\),
the arithmetic--geometric mean inequality and weighted total variation
on the circle give
\begin{align}
  \mathsf C_*\norm{q_s}_2^2-\int_\T q^4\,\dd s
  &=
  \int_\T\bigl(\mathsf C_*q_s^2+W_b(q)\bigr)\,\dd s-2\pi K_b
  \notag\\
  &\ge
  2\sqrt{\mathsf C_*}\int_\T |q_s|\sqrt{W_b(q)}\,\dd s-2\pi K_b
  \notag\\
  &\ge
  4\sqrt{\mathsf C_*}\int_{-b}^1\sqrt{W_b(y)}\,\dd y-2\pi K_b.
  \label{eq:mixed-wirtinger-calibrated-lower}
\end{align}
The last step uses the two arcs of the circle joining a maximum to a
minimum.

It remains to prove the one-variable bound
\begin{equation}\label{eq:mixed-wirtinger-one-variable}
  J(b):=\frac{2}{K_b}\int_{-b}^1\sqrt{W_b(y)}\,\dd y
  \ge J(1)=\frac{2\mathsf B_*}{3}.
\end{equation}
We give the monotonicity argument because it is what makes the constant
sharp.  Differentiation under the integral sign is legitimate since
\(W_b\) vanishes at both endpoints.  If
\[
  G_b(y)=-\frac23(3b^2-2b+1)y,
\]
direct algebra gives
\begin{align*}
  K_b\partial_bW_b-2K_b'W_b
  &=(\partial_yG_b)W_b+\frac12G_b\partial_yW_b\\
  &\quad-\frac{3b^2-2b+1}{3}(K_b+3y).
\end{align*}
The first two terms integrate to zero after division by \(\sqrt{W_b}\).
Consequently,
\begin{equation}\label{eq:mixed-wirtinger-J-derivative}
  J'(b)=
  -\frac{3b^2-2b+1}{3K_b^2}
  \int_{-b}^1\frac{K_b+3y}{\sqrt{W_b(y)}}\,\dd y.
\end{equation}

The remaining integral is positive.  Indeed, put
\[
  y=\frac{1-b}{2}+\frac{1+b}{2}x
\]
and
\[
  S_b(x)=(1+b)^2x^2+4(1-b^2)x+7(1+b^2)-10b.
\]
Then
\[
  W_b(y)=\frac{(1+b)^2}{16}(1-x^2)S_b(x),
\]
and, after pairing \(x\) with \(-x\), the sign in
\eqref{eq:mixed-wirtinger-J-derivative} reduces for \(0\le x\le1\) to
\[
  \frac{A_b+D_bx}{\sqrt{S_b(x)}}
  +\frac{A_b-D_bx}{\sqrt{S_b(-x)}}>0,
\]
where
\[
  A_b=\frac{1+b}{2}(2b^2-4b+3),
  \qquad D_b=\frac{3(1+b)}2.
\]
This is immediate when \(A_b-D_bx\ge0\).  Otherwise one may square the
two positive sides.  Their squared difference is
\[
  (A_b+D_bx)^2S_b(-x)-(D_bx-A_b)^2S_b(x)
  =x(1+b)^2P_b(x),
\]
where \(P_b\) is affine in \(x^2\) and
\begin{align*}
  P_b(0)
  &=(2b^2-4b+3)
    (4b^4-8b^3+23b^2-22b+15)>0,\\
  P_b(1)
  &=4(b^2-2b+3)
    (2b^4-4b^3+10b^2-8b+3)>0.
\end{align*}
The last two quartics are positive because they equal, respectively,
\[
  4b^2(1-b)^2+19b^2-22b+15
  \quad\hbox{and}\quad
  2b^2(1-b)^2+8\left(b-\frac12\right)^2+1.
\]
Hence \(J'(b)<0\), proving \eqref{eq:mixed-wirtinger-one-variable}.

Finally,
\[
  \int_{-1}^1\sqrt{1-y^4}\,\dd y=\frac{\mathsf B_*}{3},
  \qquad
  \frac{\pi}{\sqrt{\mathsf C_*}}=\frac{2\mathsf B_*}{3}.
\]
Substitution into \eqref{eq:mixed-wirtinger-calibrated-lower} proves
\eqref{eq:mixed-periodic-wirtinger}.

For sharpness, take a normalized periodic profile which has one
\(+1\) plateau and one \(-1\) plateau, each of total length \(\pi/4\),
joined by two monotone arcs satisfying
\[
  |q_s|=\mathsf C_*^{-1/2}\sqrt{1-q^4}.
\]
Each transition has length
\[
  \sqrt{\mathsf C_*}\int_{-1}^1\frac{\dd y}{\sqrt{1-y^4}}
  =\frac{3\pi}{4}.
\]
The transition arcs have zero mean, and equality holds in every step
above.  Thus the profile is mean zero and attains
\eqref{eq:mixed-periodic-wirtinger} with equality.
\end{proof}

\begin{theorem}[Strong hidden positivity under a half-Laplacian slope bound]
\label{thm:strong-half-laplacian-slope}
Let \(\rho\in C^\infty(\T)\) be positive, and put
\(\kappa=\min_\T\rho\).  If
\begin{equation}\label{eq:half-laplacian-slope-condition}
  \norm{\Lambda\rho}_{L^\infty}
  \le\frac{2}{\sqrt{\mathsf C_*}}\kappa
  =\frac{4\mathsf B_*}{3\pi}\kappa
  =2.2256715778\ldots\,\kappa,
\end{equation}
then
\begin{equation}\label{eq:half-laplacian-slope-coercivity}
  2I_1+I_2
  \ge
  \left(
    \kappa-\frac{\mathsf C_*\norm{\Lambda\rho}_{L^\infty}^2}{4\kappa}
  \right)
  \norm{\Lambda^{1/2}\rho}_2^2
  \ge0.
\end{equation}
The inequality is strict unless \(\rho\) is constant.
\end{theorem}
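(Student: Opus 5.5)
The plan is to use the Poisson-orbit form of Lemma~\ref{lem:strong-equivalent-forms} rather than a fixed-time estimate, because the target norm \(\norm{\Lambda^{1/2}\rho}_2^2\) has one fewer half-derivative than the natural energy of \(I_1\). The time integral \(\int_0^\infty\norm{\Lambda^{a}P_t\rho}_2^2\,\dd t=\tfrac12\norm{\Lambda^{a-1/2}\rho}_2^2\) supplies exactly that half-derivative loss. Put \(u=P_t\rho\) and \(\Phi(t)=\int_\T u\,u_s^2\,\dd s\). By \eqref{eq:strong-semigroup-form}, \(2I_1+I_2=-\Phi'(0)\). Since \(\Phi(t)\to0\) and \(\Phi'(t)\to0\) exponentially as \(t\to\infty\) (the nonconstant modes decay like \(e^{-t}\)), one has \(-\Phi'(0)=\int_0^\infty\Phi''(t)\,\dd t\). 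Thus it suffices to prove a pointwise-in-\(t\) lower bound on \(\Phi''\) whose time integral is the right-hand side of \eqref{eq:half-laplacian-slope-coercivity}.

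Two facts about the orbit will be used uniformly in \(t\). First, the Poisson kernel is a positive probability kernel, so \(\min u(t)\ge\kappa\). Second, \(\Lambda u=P_t\Lambda\rho\), so \(\norm{\Lambda u(t)}_{L^\infty}\le\norm{\Lambda\rho}_{L^\infty}\). Write \(q=\Lambda u\), which is mean zero and satisfies \(\Hilb q=-u_s\), and differentiate \(\Phi'(t)=-\int q u_s^2-2\int u\,u_s\Lambda u_s\) once more using \(u_t=-\Lambda u\). I expect \(\Phi''\) to reorganize, after the Tricomi identity \(q^2-u_s^2=2\Hilb(u_sq)\) and the manipulations of Lemma~\ref{lem:strong-equivalent-forms}, into two parts. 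The first is a positive \(u\)-weighted part, controlled from below by the fourth-difference representation \eqref{eq:I1-positive-representation} with weight at least \(6\kappa\), giving at least \(c\,\kappa\norm{\Lambda^{2}u}_2^2\)-type terms. The second is a cubic-in-derivatives remainder which, after integration by parts, is bounded by an expression of the form \(\bigl(\int q^4\bigr)^{1/2}\norm{q_s}_2\) (equivalently, a cross term between \(q^2\) and \(q_s\)).

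On the remainder I apply Lemma~\ref{lem:mixed-periodic-wirtinger} to the mean-zero function \(q\): \(\int q^4\le\mathsf C_*\norm{q}_{L^\infty}^2\norm{q_s}_2^2\le\mathsf C_*\norm{\Lambda\rho}_{L^\infty}^2\norm{q_s}_2^2\). Combined with an arithmetic--geometric mean split of the cross term against the \(\kappa\)-weighted positive part, this should give \(\Phi''(t)\ge2\bigl(\kappa-\mathsf C_*\norm{\Lambda\rho}_\infty^2/(4\kappa)\bigr)\norm{\Lambda u(t)}_2^2\) (up to the exact power of \(\Lambda\) the computation produces). Integrating in \(t\) and using \(\int_0^\infty\norm{\Lambda P_t\rho}_2^2\,\dd t=\tfrac12\norm{\Lambda^{1/2}\rho}_2^2\) then yields \eqref{eq:half-laplacian-slope-coercivity}. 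Under \eqref{eq:half-laplacian-slope-condition} the prefactor is nonnegative. Strictness comes either from \(\norm{\Lambda^{1/2}\rho}_2>0\) when the prefactor is positive, or, at the endpoint, from the strict case of Theorem~\ref{thm:third-order-hidden-positivity} applied to the discarded positive part, which vanishes only for constant \(\rho\).

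The main obstacle is the second derivative computation: showing that \(\Phi''\) really splits as a \(\kappa\)-coercive term plus a remainder controlled by exactly \(\int q^4\) and \(\norm{q_s}_2^2\), with constants matching so that the threshold comes out as \(2/\sqrt{\mathsf C_*}\). If the direct computation produces mismatched powers, my fallback is to apply the estimate at fixed time instead. In that version I would bound \(\abs{I_2}=\tfrac13\abs{\int q^3}\) by Cauchy--Schwarz and the Wirtinger lemma, absorb it into \(2I_1\ge2\kappa\norm{\Lambda^{3/2}\rho}_2^2\), and only then transfer to the \(\Lambda^{1/2}\) norm through the semigroup average.
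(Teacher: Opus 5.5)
Your outline is the paper's argument: the Poisson orbit, $2I_1+I_2=-\Phi'(0)=\int_0^\infty\Phi''\,\dd t$, the orbit bounds $\min u\ge\kappa$ and $\norm{\Lambda u}_{L^\infty}\le\norm{\Lambda\rho}_{L^\infty}$, Lemma~\ref{lem:mixed-periodic-wirtinger}, and $2\int_0^\infty\norm{\Lambda P_t\rho}_2^2\,\dd t=\norm{\Lambda^{1/2}\rho}_2^2$. The gap is that the step you defer carries all the content, and the tools you expect it to need are not the ones involved. Use $u_t=-q$, $q_t=-\Lambda q=u_{ss}$ and $\int_\T u_{ss}u_s^2\,\dd s=0$. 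Differentiating once more and integrating by parts gives
\[
 \tfrac12\Phi''(t)=\int_\T u\bigl((\Lambda q)^2+q_s^2\bigr)\,\dd s+\int_\T q^2\Lambda q\,\dd s .
\]
Neither the Tricomi identity nor the fourth-difference representation enters. Both quadratic terms are pointwise $u$-weighted squares of the same size $X=\norm{\Lambda q}_2^2=\norm{q_s}_2^2$, so $\frac12\Phi''\ge 2\kappa X-\norm{q^2}_2\norm{\Lambda q}_2$.

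Your AM--GM split, with weight $\kappa$ on $\norm{\Lambda q}_2^2$, then leaves $\kappa X-\frac1{4\kappa}\int_\T q^4\,\dd s$. Lemma~\ref{lem:mixed-periodic-wirtinger} and $X\ge\norm q_2^2$ then give exactly $\Phi''\ge 2(\kappa-\mathsf C_*A^2/(4\kappa))\norm{\Lambda u}_2^2$, where $A=\norm{\Lambda\rho}_{L^\infty}$. The paper does the same thing by completing the square $\int_\T u(\Lambda q+q^2/(2u))^2\,\dd s$. It is the presence of two $\kappa$-weighted squares that makes the threshold come out as $2/\sqrt{\mathsf C_*}$, so this has to be computed, not anticipated.

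Your endpoint strictness argument also needs replacing. Theorem~\ref{thm:third-order-hidden-positivity} concerns $I_1$ at fixed time and is not a term in $\Phi''$. Either argue as the paper does ($\Phi'(0)=0$, with $\Phi'$ nondecreasing and tending to $0$, forces $\Phi'\equiv0$, hence $\Phi\equiv0$). Or note that $\min u(t)>\kappa$ for $t>0$ when $\rho$ is nonconstant, which makes the lower bound for $\Phi''(t)$ strictly positive.

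Your fallback fails as written. At $t=0$, Cauchy--Schwarz and Lemma~\ref{lem:mixed-periodic-wirtinger} bound $\abs{\int_\T q^3\,\dd s}$ by $\sqrt{\mathsf C_*}A\norm{\Lambda^2\rho}_2\norm{\Lambda\rho}_2$. By interpolation this dominates $\norm{\Lambda^{3/2}\rho}_2^2$ rather than being controlled by it, so it cannot be absorbed into $2I_1$.

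The cruder bound does work, however. Combine $\abs{\int_\T q^3\,\dd s}\le\norm q_{L^\infty}\norm q_2^2$ with \eqref{eq:I2-cubic-Lambda}, $\norm{\Lambda\rho}_2\le\norm{\Lambda^{3/2}\rho}_2$ and \eqref{eq:I1-coercivity}. This gives
\[
 2I_1+I_2\ge\Bigl(2\kappa-\tfrac13\norm{\Lambda\rho}_{L^\infty}\Bigr)\norm{\Lambda^{3/2}\rho}_2^2 .
\]
For every $A$ one has $2\kappa-A/3\ge\kappa-\mathsf C_*A^2/(4\kappa)$, because the discriminant $\tfrac19-\mathsf C_*$ is negative. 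Also $\norm{\Lambda^{3/2}\rho}_2\ge\norm{\Lambda^{1/2}\rho}_2$. So this fixed-time estimate already implies the theorem, including strictness. It even gives $2I_1+I_2\ge0$ whenever $\norm{\Lambda\rho}_{L^\infty}\le6\kappa$, with an $\dot H^{3/2}$ rather than $\dot H^{1/2}$ lower bound. The Poisson-orbit argument is therefore not needed for this particular statement.
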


\begin{proof}
Set \(u(t)=P_t\rho\), \(q(t)=\Lambda u(t)\), and
\[
  E(t)=\int_\T u(t)u_s(t)^2\,\dd s.
\]
Since \(u_t=-q\), \(u_{tt}=-u_{ss}\), and
\(\Lambda q=-u_{ss}\), two differentiations and periodic integration by
parts give
\begin{equation}\label{eq:poisson-energy-second-variation}
\begin{aligned}
  \frac12E''(t)
  & =\int_\T u\bigl((\Lambda q)^2+q_s^2\bigr)\,\dd s
      +\int_\T q^2\Lambda q\,\dd s\\
  & =\int_\T u\left(\Lambda q+\frac{q^2}{2u}\right)^2\,\dd s
      +\int_\T u q_s^2\,\dd s
      -\frac14\int_\T\frac{q^4}{u}\,\dd s.
\end{aligned}
\end{equation}
Let \(\kappa_t=\min_\T u(t)\).  Positivity preservation, commutation with
\(\Lambda\), and the \(L^\infty\) contraction of \(P_t\) imply
\[
  \kappa_t\ge\kappa,
  \qquad
  \norm{q(t)}_{L^\infty}
  \le\norm{\Lambda\rho}_{L^\infty}
  \le\frac{2}{\sqrt{\mathsf C_*}}\kappa
  \le\frac{2}{\sqrt{\mathsf C_*}}\kappa_t.
\]
Moreover, \(q\) has zero mean.  Lemma~\ref{lem:mixed-periodic-wirtinger}
therefore gives
\[
  \frac14\int_\T\frac{q^4}{u}\,\dd s
  \le\frac{\mathsf C_*\norm q_{L^\infty}^2}{4\kappa_t}
       \norm{q_s}_2^2
  \le\kappa_t\norm{q_s}_2^2
  \le\int_\T u q_s^2\,\dd s.
\]
Writing \(A=\norm{\Lambda\rho}_{L^\infty}\), the same estimate retains the
quantitative remainder
\[
  \frac12E''(t)
  \ge
  \left(
    \kappa_t-\frac{\mathsf C_*\norm{q(t)}_{L^\infty}^2}{4\kappa_t}
  \right)\norm{q_s(t)}_2^2
  \ge
  \left(\kappa-\frac{\mathsf C_*A^2}{4\kappa}\right)\norm{q(t)}_2^2.
\]
Here the last step also uses the ordinary mean-zero Wirtinger inequality.
In particular, \(E''(t)\ge0\) for every \(t\ge0\).  Since
\(E(t),E'(t)\to0\) as \(t\to\infty\), integration in \(t\), followed by
the Fourier identity
\[
  2\int_0^\infty\norm{\Lambda P_t\rho}_2^2\,\dd t
  =\norm{\Lambda^{1/2}\rho}_2^2,
\]
and \eqref{eq:strong-semigroup-form} prove
\eqref{eq:half-laplacian-slope-coercivity}.

If \(2I_1+I_2=0\), then \(E'(0)=0\).  The derivative of a convex function
is nondecreasing, while convergence of \(E\) to zero forces it to be
nonpositive.  Hence \(E'\equiv0\), so \(E\equiv0\), and \(\rho\) is
constant.
\end{proof}

The square discarded in the preceding argument has a forced nonzero mean.
Retaining it yields a stronger criterion when the mean-to-floor ratio is
known.  Define, for \(M\ge1\),
\begin{equation}\label{eq:mean-sensitive-Phi}
  \Phi(M)=
  \begin{cases}
    \displaystyle \frac{\pi^2M}{108},
      &1\le M\le M_*,
      \\[6pt]
    \displaystyle \mathsf C_*-
      \frac{27\mathsf C_*^2}{\pi^2M},
      &M\ge M_*,
  \end{cases}
  \qquad
  M_*=\frac{54\mathsf C_*}{\pi^2}
  =4.4180624065\ldots.
\end{equation}
The function \(\Phi\) is increasing and
\(\Phi(M)<\mathsf C_*\) for every finite \(M\).

\begin{theorem}[Mean-sensitive half-Laplacian slope criterion]
\label{thm:mean-sensitive-half-laplacian-slope}
Let \(\rho\in C^\infty(\T)\) be positive, set
\[
  m=\mean\rho,\qquad
  \kappa=\min_\T\rho,\qquad
  M=\frac m\kappa,
  \qquad
  A=\norm{\Lambda\rho}_{L^\infty}.
\]
If
\begin{equation}\label{eq:mean-sensitive-slope-condition}
  A\le\frac{2\kappa}{\sqrt{\Phi(M)}},
\end{equation}
then
\begin{equation}\label{eq:mean-sensitive-slope-coercivity}
  2I_1+I_2
  \ge
  \left(\kappa-\frac{\Phi(M)A^2}{4\kappa}\right)
  \norm{\Lambda^{1/2}\rho}_2^2
  \ge0.
\end{equation}
The inequality is strict unless \(\rho\) is constant.
\end{theorem}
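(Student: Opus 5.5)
The plan is to rerun the Poisson-orbit convexity argument from the proof of Theorem~\ref{thm:strong-half-laplacian-slope}, but to keep the square in \eqref{eq:poisson-energy-second-variation} instead of discarding it. Set \(u=P_t\rho\), \(q=\Lambda u\) and \(E(t)=\int_\T uu_s^2\,\dd s\), and write
\[
 X=\Lambda q+\frac{q^2}{2u},\qquad S=\norm q_2^2,\qquad D=\norm{q_s}_2^2,\qquad Q=\int_\T q^4\,\dd s,\qquad a=\norm{q(t)}_{L^\infty}\le A.
\]
The Poisson semigroup preserves the mean, so \(\int_\T u\,\dd s=2\pi m\) for all \(t\), and it also preserves \(\kappa_t\ge\kappa\). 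The point of keeping \(X\) is that its \(u\)-weighted mean is forced. Indeed, \(\Lambda q=\Lambda^2u\), so \(\int_\T u\Lambda q\,\dd s=\norm{\Lambda u}_2^2=S\), and therefore
\[
 \int_\T uX\,\dd s=\tfrac32 S .
\]
Cauchy--Schwarz with weight \(u\) then gives
\[
 \int_\T uX^2\,\dd s\ge\frac{(\int_\T uX)^2}{\int_\T u}=\frac{9S^2}{8\pi m}.
\]
This is the only place where the mean enters, and it is the mechanism behind the improvement of \(\mathsf C_*\) to \(\Phi(M)\).

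Next I would combine this reserve with the loss \(\frac14\int_\T q^4/u\le Q/(4\kappa)\). The constraints available for \(Q,S,D\) are the following:
\begin{itemize}
\item \(Q\le a^2S\);
\item \(S\le D\), by Wirtinger;
\item \(Q\le\mathsf C_*a^2D\), by Lemma~\ref{lem:mixed-periodic-wirtinger};
\item \(D\ge 2a^2/\pi\). This holds because \(q\) has zero mean, so its total variation is at least twice its oscillation, which is at least \(2a\), and Cauchy--Schwarz on the circle then bounds \(D\) below.
\end{itemize}
Write \(Q=\theta a^2D\) with \(0\le\theta\le\mathsf C_*\). Then \(S\ge\theta D\), and hence
\[
 \frac{9S^2}{8\pi m}\ge c_0\,\frac{\theta^2a^2D}{m}
\]
for an explicit constant \(c_0\). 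The required inequality
\[
 \frac{Q}{4\kappa}\le\frac{9S^2}{8\pi m}+\frac{\Phi(M)a^2D}{4\kappa}
\]
reduces to the one-variable bound
\[
 \Phi(M)\ge\sup_{0\le\theta\le\mathsf C_*}\Bigl(\theta-\frac{c_1\theta^2}{M}\Bigr),
\]
where \(M=m/\kappa\). The supremum is attained either at the interior critical point, \(\theta\propto M\), which gives the linear branch \(\pi^2M/108\), or at the endpoint \(\theta=\mathsf C_*\), which gives \(\mathsf C_*-27\mathsf C_*^2/(\pi^2M)\). The switch between branches occurs at \(M_*\), exactly as in \eqref{eq:mean-sensitive-Phi}.

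This yields
\[
 \tfrac12E''(t)\ge\Bigl(\kappa-\frac{\Phi(M)a^2}{4\kappa}\Bigr)D .
\]
Since \(\Phi\) is increasing and \(a\le A\), the coefficient is at least \(\kappa-\Phi(M)A^2/(4\kappa)\ge0\) under \eqref{eq:mean-sensitive-slope-condition}. Wirtinger replaces \(D\) by \(S\). The rest then proceeds verbatim as in the earlier proof: integrate in \(t\) using \(E,E'\to0\), apply \(2\int_0^\infty\norm{\Lambda P_t\rho}_2^2\,\dd t=\norm{\Lambda^{1/2}\rho}_2^2\), and use \eqref{eq:strong-semigroup-form}. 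Strictness follows from the same convexity argument: \(E'(0)=0\) forces \(E\equiv0\), so \(\rho\) is constant.

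The main obstacle is getting the constants in the \(Q,S,D\) optimization exactly right, so that the two branches of \(\Phi\) and the threshold \(M_*=54\mathsf C_*/\pi^2\) come out precisely. This depends on which lower bound for \(S\) is paired with the Cauchy--Schwarz reserve (\(S\ge Q/a^2\) versus Wirtinger) and on the sharp form of the oscillation bound for \(D\). My first choice would be the pairing above. If it loses a constant factor relative to \(\pi^2/108\), I would instead apply Cauchy--Schwarz with the reserve measured against \(\int_\T q^2/u\), which uses \(\kappa_t\) rather than \(m\), and re-optimize.
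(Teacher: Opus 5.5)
Your architecture is the paper's: keep the square, bound it below by weighted Cauchy--Schwarz using $\int_\T uX\,\dd s=\tfrac32S$ and $\int_\T u\,\dd s=2\pi m$, control $Q$ by $\min\{\mathsf C_*a^2D,\,a^2S\}$, and optimize one quadratic. Your $\theta=Q/(a^2D)$ with $S\ge\theta D$ is equivalent to the paper's $r=S/D$ and $\min\{\mathsf C_*,r\}-\alpha r^2$. The gap is in your fourth constraint. With $D\ge 2a^2/\pi$ as you state it, the reserve is only
\[
 \frac{9S^2}{8\pi m}\ge\frac{9\theta^2a^2D}{4\pi^2m},
\]
so $c_1=9/\pi^2$, and your optimization yields
\[
 \sup_{0\le\theta\le\mathsf C_*}\Bigl(\theta-\frac{9\theta^2}{\pi^2M}\Bigr)
 =\begin{cases}
 \pi^2M/36, & M\le 18\mathsf C_*/\pi^2,\\
 \mathsf C_*-9\mathsf C_*^2/(\pi^2M), & M\ge18\mathsf C_*/\pi^2.
 \end{cases}
\]
This strictly exceeds $\Phi(M)$ for every $M$. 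So your assertion that the critical point gives $\pi^2M/108$, with the switch at $M_*=54\mathsf C_*/\pi^2$, does not follow from your stated ingredients; as written you prove a strictly weaker criterion. The loss is genuine, not bookkeeping. The chain ``total variation $\ge2\,$osc, osc $\ge a$, $\norm{q_s}_{L^1}\le\sqrt{2\pi D}$'' is off by exactly a factor $3$ on the periodic parabola $q(s)=\sum_{n\ne0}e^{\ii ns}/n^2$. Its oscillation is $\tfrac32a$, and its slope $|s-\pi|$ is not constant.

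What you need is the sharp point-evaluation inequality the paper uses. For mean-zero $q$, with $\hat q(n)=\frac1{2\pi}\int_\T qe^{-\ii ns}\,\dd s$, combine $|q(s)|\le\sum_{n\ne0}|\hat q(n)|$, Cauchy--Schwarz, $\sum_{n\ne0}n^{-2}=\pi^2/3$ and $\norm{q_s}_2^2=2\pi\sum_nn^2|\hat q(n)|^2$. This gives $D\ge6a^2/\pi$, with equality for the parabola above. Then $\frac{9S^2}{8\pi m}\ge\frac{27\theta^2a^2D}{4\pi^2m}$, so $c_1=27/\pi^2$:
\begin{itemize}
\item the interior maximizer is $\theta=\pi^2M/54$, with value $\pi^2M/108$;
\item it reaches $\mathsf C_*$ exactly at $M=M_*$;
\item the endpoint value is $\mathsf C_*-27\mathsf C_*^2/(\pi^2M)$.
\end{itemize}
These are precisely the two branches of $\Phi$. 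With that replacement the rest of your argument is correct and coincides with the paper's proof: the bounds via $\kappa_t\ge\kappa$ and $a\le A$, Wirtinger, integration in $t$, and the strictness argument. Your suggested fallback of measuring the reserve against $\int_\T q^2/u$ is unnecessary. Since it trades $m$ for $\kappa_t$, it would not reproduce the dependence of $\Phi$ on $M=m/\kappa$.
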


\begin{proof}
Keep the notation \(u=P_t\rho\), \(q=\Lambda u\), and \(E(t)\) from the
proof of Theorem~\ref{thm:strong-half-laplacian-slope}.  At a fixed \(t\),
write
\[
  \kappa_t=\min_\T u,\quad
  A_t=\norm q_{L^\infty},\quad
  X_t=\norm{q_s}_2^2,\quad
  Y_t=\norm q_2^2,\quad
  M_t=\frac m{\kappa_t},\quad
  a_t=\frac{A_t}{\kappa_t}.
\]
The square in \eqref{eq:poisson-energy-second-variation} satisfies
\begin{align}
  \int_\T u\left(\Lambda q+\frac{q^2}{2u}\right)^2\,\dd s
  &\ge
  \frac{
    \left(\int_\T\left(u\Lambda q+\frac12q^2\right)\,\dd s\right)^2
  }{\int_\T u\,\dd s}
  \notag\\
  &=\frac{9Y_t^2}{8\pi m},
  \label{eq:poisson-square-mean-lower}
\end{align}
because \(\Lambda u=q\) and
\(\int_\T u\Lambda q\,\dd s=Y_t\).

Let \(N_t=\int_\T q^4\,\dd s\).  The sharp mixed estimate and the
pointwise bound give
\[
  N_t\le\mathsf C_*A_t^2X_t,
  \qquad
  N_t\le A_t^2Y_t.
\]
Moreover, \(q\) has mean zero, so \(Y_t\le X_t\).  If
\(r_t=Y_t/X_t\in[0,1]\), then
\begin{equation}\label{eq:q4-two-scale-minimum}
  N_t\le A_t^2X_t\min\{\mathsf C_*,r_t\}.
\end{equation}
The point-evaluation form of the periodic Sobolev inequality is
\begin{equation}\label{eq:mean-zero-point-evaluation}
  X_t\ge\frac{6}{\pi}A_t^2.
\end{equation}
Indeed, this follows from Parseval, Cauchy--Schwarz, and
\(\sum_{n\ne0}n^{-2}=\pi^2/3\).

Combining \eqref{eq:poisson-energy-second-variation},
\eqref{eq:poisson-square-mean-lower},
\eqref{eq:q4-two-scale-minimum}, and
\eqref{eq:mean-zero-point-evaluation} gives
\begin{equation}\label{eq:mean-sensitive-second-variation}
  \frac12E''(t)
  \ge
  \kappa_tX_t
  \left[
    1-\frac{a_t^2}{4}
    \left(
      \min\{\mathsf C_*,r_t\}
      -\frac{27r_t^2}{\pi^2M_t}
    \right)
  \right].
\end{equation}
For \(\alpha=27/(\pi^2M)\), elementary optimization shows
\[
  \max_{0\le r\le1}
  \left(\min\{\mathsf C_*,r\}-\alpha r^2\right)
  =\Phi(M).
\]
Indeed, the maximum of \(r-\alpha r^2\) lies at
\(r=(2\alpha)^{-1}\) until this point reaches \(\mathsf C_*\), and
thereafter the maximum is attained at \(r=\mathsf C_*\).  This gives
exactly the two branches in \eqref{eq:mean-sensitive-Phi}.

Along the Poisson orbit,
\[
  \kappa_t\ge\kappa,\qquad
  A_t\le A,\qquad
  M_t\le M,\qquad
  a_t\le\frac A\kappa.
\]
Since \(\Phi\) is increasing, \eqref{eq:mean-sensitive-second-variation}
therefore yields
\[
  \frac12E''(t)
  \ge
  \left(\kappa-\frac{\Phi(M)A^2}{4\kappa}\right)Y_t.
\]
The conclusion now follows by the same integration in \(t\) as in
Theorem~\ref{thm:strong-half-laplacian-slope}.  The equality argument is
unchanged.
\end{proof}

The preceding identities admit a second decomposition that isolates a
weighted Dirichlet form.  This gives a physical-space sufficient condition
which is independent of Fourier degree.

\begin{proposition}[Strong hidden positivity under spectral-order alignment]
\label{prop:strong-order-alignment}
Let \(\rho\in C^\infty(\T)\) be positive, set
\[
  q=\Lambda\rho,
  \qquad
  J=\int_\T \rho q\Lambda q\,\dd s,
\]
and suppose that
\begin{equation}\label{eq:spectral-order-alignment}
  \bigl(\rho(s)q(s)-\rho(t)q(t)\bigr)
  \bigl(q(s)-q(t)\bigr)\ge0
  \qquad (s,t\in\T).
\end{equation}
Then
\begin{equation}\label{eq:strong-mixed-order-decomposition}
  2I_1+I_2=I_1+J
  \ge I_1
  \ge \min_\T\rho\,\norm{\Lambda^{3/2}\rho}_2^2.
\end{equation}
In particular, the strong inequality is strict unless \(\rho\) is constant.
Condition \eqref{eq:spectral-order-alignment} holds, for example, if
\(\rho=\Phi(q)\) on \(\T\) and the scalar function
\(z\mapsto z\Phi(z)\) is nondecreasing on the range of \(q\).
\end{proposition}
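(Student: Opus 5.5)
The plan is to first derive an exact identity for \(2I_1+I_2\), then show that the extra term \(J\) is a nonnegative Dirichlet-type form under \eqref{eq:spectral-order-alignment}, and finally invoke Theorem~\ref{thm:third-order-hidden-positivity}.

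\textbf{Step 1: the identity \(2I_1+I_2=I_1+J\).} This was essentially computed in the proof of Proposition~\ref{prop:anticoherent-hilbert-disk-equivalence}. Put \(q=\Lambda\rho\), so that \(\Lambda q=\Lambda^2\rho=-\rho_{ss}\). Periodic integration by parts gives
\[
 J=-\int_\T\rho q\rho_{ss}\,\dd s
 =\int_\T q\rho_s^2\,\dd s+\int_\T\rho\rho_s q_s\,\dd s .
\]
The first term is \(I_2\). In the second, \(q_s=\Lambda\rho_s\), so it is \(I_1\). Hence \(J=I_1+I_2\), and adding one more copy of \(I_1\) gives the first equality in \eqref{eq:strong-mixed-order-decomposition}.

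\textbf{Step 2: \(J\ge0\).} I would use the periodic singular-integral representation of \(\Lambda\),
\[
 \int_\T f\Lambda g\,\dd s=\frac12\int_\T\!\int_\T
 \bigl(f(s)-f(t)\bigr)\bigl(g(s)-g(t)\bigr)K_0(s-t)\,\dd s\,\dd t,
\]
with the strictly positive kernel \(K_0(h)=\bigl(4\pi\sin^2(h/2)\bigr)^{-1}\). This is the same symmetric kernel used in the proof of Theorem~\ref{thm:large-global}, and the identity is justified for smooth \(f,g\) by a principal-value limit. Taking \(f=\rho q\) and \(g=q\) writes \(J\) as an integral of the product in \eqref{eq:spectral-order-alignment} against \(K_0\ge0\). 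Under the alignment hypothesis the integrand is pointwise nonnegative, so \(J\ge0\) and therefore \(2I_1+I_2\ge I_1\).

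The only delicate point is the normalization and convergence of this bilinear form. Since the integrand is \(O(|s-t|^2)\times|s-t|^{-2}\) near the diagonal, the integral converges absolutely for smooth data. As a check on the constant, one can test the formula on exponentials \(e^{\ii ks}\), which must return the symbol \(|k|\).

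\textbf{Step 3: the lower bound, strictness, and the example.} Theorem~\ref{thm:third-order-hidden-positivity} gives \(I_1\ge\min_\T\rho\,\norm{\Lambda^{3/2}\rho}_2^2\). If \(\rho\) is nonconstant, then \(\norm{\Lambda^{3/2}\rho}_2>0\), which gives the strict sign.

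For the example, suppose \(\rho=\Phi(q)\) and \(\psi(z)=z\Phi(z)\) is nondecreasing on the range of \(q\). Then \(\rho q=\psi(q)\), so the two factors \(\psi(q(s))-\psi(q(t))\) and \(q(s)-q(t)\) always have the same sign. This is exactly \eqref{eq:spectral-order-alignment}.

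I expect no real obstacle here. The only step needing care is fixing the kernel representation of \(\Lambda\) in Step~2.
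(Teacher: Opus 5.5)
Your proposal is correct and follows the paper's proof essentially step for step. The same integration by parts gives \(J=I_1+I_2\), and the same half-Laplacian Dirichlet form gives \(J\ge0\); your prefactor \(\tfrac12\cdot\tfrac1{4\pi}\) is exactly the paper's \(\tfrac1{8\pi}\). Theorem~\ref{thm:third-order-hidden-positivity} then supplies the coercive bound. Your strictness argument, using \(\min_\T\rho>0\) together with \(\norm{\Lambda^{3/2}\rho}_2>0\) for nonconstant \(\rho\), is a slightly more direct form of the paper's appeal to the equality characterization in that theorem.
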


\begin{proof}
Since \(\Lambda q=\Lambda^2\rho=-\rho_{ss}\), integration by parts gives
\[
\begin{aligned}
  J
  &=-\int_\T\rho q\rho_{ss}\,\dd s\\
  &=\int_\T q\rho_s^2\,\dd s
    +\int_\T\rho\rho_s q_s\,\dd s
   =I_2+I_1.
\end{aligned}
\]
This proves the identity in \eqref{eq:strong-mixed-order-decomposition}.
The periodic half-Laplacian Dirichlet form also gives
\[
  J=\frac{1}{8\pi}\iint_{\T^2}
  \frac{\bigl(\rho(s)q(s)-\rho(t)q(t)\bigr)
  \bigl(q(s)-q(t)\bigr)}
  {\sin^2((s-t)/2)}\,\dd s\,\dd t.
\]
Thus \eqref{eq:spectral-order-alignment} implies \(J\ge0\), and the
  coercive estimate follows from Theorem~\ref{thm:third-order-hidden-positivity}.
Strictness follows from the equality characterization in that theorem.
The final assertion is immediate because then
\(\rho q=q\Phi(q)\) is a nondecreasing function of \(q\).
\end{proof}

The degree-independent theorem above combines with the
C\'ordoba--C\'ordoba defect to settle the full sign for arbitrary-frequency
densities in a quantitative amplitude regime.

\begin{theorem}[Strong hidden positivity under a two-to-one amplitude ratio]
\label{thm:strong-amplitude-ratio}
Let \(\rho\in C^\infty(\T)\) satisfy
\[
  0<\kappa:=\min_\T\rho,
  \qquad M:=\max_\T\rho.
\]
Then
\begin{equation}\label{eq:strong-amplitude-lower-bound}
  2I_1+I_2
  \ge 2(2\kappa-M)\norm{\Lambda^{3/2}\rho}_2^2.
\end{equation}
Consequently, \(2I_1+I_2\ge0\) whenever \(M\le2\kappa\), with strict
inequality unless \(\rho\) is constant.
\end{theorem}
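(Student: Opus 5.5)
The plan is to write the strong bracket as a difference of two quantities that are each controlled by \(\norm{\Lambda^{3/2}\rho}_2^2\):
\[
 2I_1+I_2=4I_1-(2I_1-I_2).
\]
The first term is bounded below by Theorem~\ref{thm:third-order-hidden-positivity}, and the second is bounded above using the pointwise C\'ordoba--C\'ordoba defect from \eqref{eq:cordoba-I1-I2}. With \(f=\rho_s\) and \(\mathcal D=2f\Lambda f-\Lambda(f^2)\), the C\'ordoba--C\'ordoba inequality gives \(\mathcal D\ge0\) pointwise. The identity in \eqref{eq:cordoba-I1-I2} then reads \(2I_1-I_2=\int_\T\rho\,\mathcal D\,\dd s\).

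First I bound the defect term. Since \(\mathcal D\ge0\) and \(\rho\le M\), we get \(\int_\T\rho\,\mathcal D\le M\int_\T\mathcal D\). The mean of \(\Lambda(f^2)\) vanishes, so \(\int_\T\mathcal D=2\int_\T f\Lambda f=2\norm{\Lambda^{1/2}\rho_s}_2^2=2\norm{\Lambda^{3/2}\rho}_2^2\). Hence
\[
 2I_1-I_2\le 2M\norm{\Lambda^{3/2}\rho}_2^2 .
\]

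Next, the coercive bound \eqref{eq:I1-coercivity} gives \(4I_1\ge4\kappa\norm{\Lambda^{3/2}\rho}_2^2\). Subtracting the two bounds yields \eqref{eq:strong-amplitude-lower-bound} directly, and nonnegativity follows when \(M\le2\kappa\).

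The main obstacle is strictness in the borderline case \(M=2\kappa\); when \(M<2\kappa\) the right side is already positive for nonconstant \(\rho\). For \(M=2\kappa\) I would show that the coercive bound for \(I_1\) is strict for nonconstant \(\rho\). This uses the positive representation \eqref{eq:I1-positive-representation}, in which the weight \(\rho(s-h)+4\rho(s)+\rho(s+h)\) equals \(6\kappa\) only when \(\rho(s)=\kappa\). That is a closed set where \(\rho\) attains its minimum, and it is a proper subset because \(\rho\) is nonconstant. For some \(h\), the function \(\delta_h^2\rho\) is a nonzero continuous function. If it vanished wherever \(\rho>\kappa\), then \(\delta_h^2\rho\) would be supported in the minimum set. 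I would rule this out by choosing \(h\) small, or by averaging over \(h\) to reach the open set \(\{\rho>\kappa\}\). On that set the weight exceeds \(6\kappa\) while \((\delta_h^2\rho)^2>0\) on a set of positive \((s,h)\)-measure, since \(\delta_h^2\rho=h^2\rho_{ss}+O(h^4)\) and \(\rho_{ss}\not\equiv0\) on the open set \(\{\rho>\kappa\}\) unless \(\rho\) is constant there. This forces \(I_1>\kappa\norm{\Lambda^{3/2}\rho}_2^2\), and strictness follows.
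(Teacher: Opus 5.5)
Your proof is correct and is essentially the paper's: bounding \(\int_\T\rho\,\mathcal D\,\dd s\le M\int_\T\mathcal D\,\dd s=2M\norm{\Lambda^{3/2}\rho}_2^2\) is the same as the paper's step of integrating the C\'ordoba--C\'ordoba defect against the weight \(M-\rho\), and both arguments then apply \eqref{eq:I1-coercivity}. For strictness at \(M=2\kappa\), the paper avoids your Taylor localization on \(\{\rho>\kappa\}\) with a shorter observation: the weight equals \(6\kappa\) only when \(\rho(s-h)=\rho(s)=\rho(s+h)=\kappa\), which forces \(\delta_h^2\rho(s)=0\), so the excess \(\bigl(\rho(s-h)+4\rho(s)+\rho(s+h)-6\kappa\bigr)(\delta_h^2\rho)^2\) is strictly positive wherever \(\delta_h^2\rho\ne0\). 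That observation also spares you from justifying that \(\rho_{ss}\not\equiv0\) on \(\{\rho>\kappa\}\), which needs the extra remark that \(\rho\) affine on an arc with both endpoint values \(\kappa\) would be identically \(\kappa\).
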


\begin{proof}
Apply the pointwise C\'ordoba--C\'ordoba inequality to \(f=\rho_s\),
multiply by the nonnegative weight \(M-\rho\), and integrate.  Since
\(\int_\T\Lambda(f^2)\,\dd s=0\),
\[
  0\le\int_\T(M-\rho)\bigl(2f\Lambda f-\Lambda(f^2)\bigr)\,\dd s
  =2M\norm{\Lambda^{3/2}\rho}_2^2-2I_1+I_2.
\]
It follows that
\[
  2I_1+I_2
  \ge4I_1-2M\norm{\Lambda^{3/2}\rho}_2^2.
\]
Using \eqref{eq:I1-coercivity} proves
\eqref{eq:strong-amplitude-lower-bound}.  Moreover, the representation
\eqref{eq:I1-positive-representation} shows that
\(I_1>\kappa\norm{\Lambda^{3/2}\rho}_2^2\) for every nonconstant \(\rho\):
wherever \(\delta_h^2\rho\ne0\), its weight is strictly larger than
\(6\kappa\).  This proves strictness also in the endpoint case
\(M=2\kappa\).
\end{proof}

There is a complementary arbitrary-degree class in which the sign follows
from spectral phase coherence rather than amplitude control.

\begin{proposition}[Strong hidden positivity for phase-coherent spectra]
\label{prop:strong-phase-coherent}
Write
\[
  \rho(s)=m+\sum_{n\ne0}r_ne^{\ii ns},
  \qquad r_{-n}=\overline{r_n},\qquad m>0,
\]
and suppose
\begin{equation}\label{eq:phase-coherence-condition}
  \Re\bigl(r_ar_b\overline{r_{a+b}}\bigr)\ge0
  \qquad\text{for all }a,b\ge1.
\end{equation}
Then \(2I_1+I_2\ge0\), with strict inequality unless \(\rho\) is constant.
In particular, this holds when, after a translation, all positive Fourier
coefficients of \(\rho\) are nonnegative real numbers.
\end{proposition}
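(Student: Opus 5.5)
The plan is to read the sign directly off the Fourier representation in Lemma~\ref{lem:strong-fourier-representation}. By \eqref{eq:strong-positive-frequency-form},
\[
 \mathfrak H(\rho)=2I_1+I_2
 =8\pi\Bigl[m\sum_{n\ge1}n^3\abs{r_n}^2
 +\sum_{a,b\ge1}(a+b)(a^2+ab+b^2)\Re\bigl(r_ar_b\overline{r_{a+b}}\bigr)\Bigr].
\]
Under \eqref{eq:phase-coherence-condition} every cubic term is nonnegative, because the multiplier \((a+b)(a^2+ab+b^2)\) is positive. The quadratic term is nonnegative because \(m>0\). Hence \(2I_1+I_2\ge 8\pi m\sum_{n\ge1}n^3\abs{r_n}^2\ge0\).

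For strictness, if \(\rho\) is nonconstant then some \(r_n\ne0\) with \(n\ge1\), using \(r_{-n}=\overline{r_n}\). The lower bound \(8\pi m\sum n^3\abs{r_n}^2=4m\norm{\Lambda^{3/2}\rho}_2^2\cdot(\text{normalization})\) is then strictly positive.

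The \emph{in particular} clause follows from translation invariance. A translation \(s\mapsto s+s_0\) multiplies \(r_n\) by \(e^{\ii ns_0}\), and so it multiplies each triple \(r_ar_b\overline{r_{a+b}}\) by \(e^{\ii(a+b-(a+b))s_0}=1\). The triple products are therefore translation invariant, and so is \(2I_1+I_2\). It thus suffices to treat \(r_n\ge0\) real for \(n\ge1\), and then each triple product is a product of nonnegative reals.

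The only point needing care is the validity of the representation for a smooth \(\rho\) with infinitely many modes. Since \(\rho\in C^\infty\), the coefficients \(r_n\) decay rapidly. Both the quadratic series and the cubic double series then converge absolutely, and the symmetrization over triples summing to zero in Lemma~\ref{lem:strong-fourier-representation} goes through without rearrangement issues. I expect no genuine obstacle here: the content is entirely carried by the earlier lemma, and the proof is a termwise sign check. Positivity of \(\rho\) is not even used beyond \(m>0\).
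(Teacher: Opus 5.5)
Your proof is correct and is the paper's own argument: under the coherence hypothesis every term of \eqref{eq:strong-positive-frequency-form} is nonnegative, and the mean term $8\pi m\sum_{n\ge1}n^3\abs{r_n}^2=2m\norm{\Lambda^{3/2}\rho}_2^2$ is strictly positive for nonconstant $\rho$. This fixes the normalization you left vague. Your translation-invariance argument for the final clause and your absolute-convergence check are valid details that the paper leaves implicit.
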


\begin{proof}
Every term on the right of
\eqref{eq:strong-positive-frequency-form} is nonnegative under
\eqref{eq:phase-coherence-condition}; the mean term is positive for every
nonconstant \(\rho\).
\end{proof}

The phases of three moments in one additive triad cannot become independently
anticoherent.  The following Toeplitz-minor estimate records a uniform form of
this constraint.

\begin{lemma}[Toeplitz triad floor]\label{lem:toeplitz-triad-floor}
Let \(\mu\) be a probability measure on \(\T\), and set
\(u_n=\widehat\mu(n)\).  If \(a,b\ge1\) and \(c=a+b\), then
\begin{equation}\label{eq:toeplitz-triad-floor}
  \Re\bigl(u_au_b\overline{u_c}\bigr)
  \ge-\frac16\bigl(\abs{u_a}^2+\abs{u_b}^2+\abs{u_c}^2\bigr).
\end{equation}
\end{lemma}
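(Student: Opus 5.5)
The plan is to combine two elementary consequences of positive-definiteness of the Toeplitz matrix of $\mu$, one effective for large moments and one for small ones. Write
\[
  S=\abs{u_a}^2+\abs{u_b}^2+\abs{u_c}^2,
\]
and split into the cases $S\le 3/4$ and $S\ge3/4$.

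\emph{Small moments, $S\le3/4$.} Here I would use only the crude bound $\Re(u_au_b\overline{u_c})\ge-\abs{u_a}\abs{u_b}\abs{u_c}$. By the arithmetic--geometric mean inequality applied to the three squares,
\[
  \abs{u_a}\abs{u_b}\abs{u_c}\le (S/3)^{3/2}.
\]
It then suffices that $(S/3)^{3/2}\le S/6$, which is equivalent to $(S/3)^{1/2}\le1/2$, that is, $S\le 3/4$. So this case closes immediately.

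\emph{Large moments, $S\ge3/4$.} Here I would use the $3\times3$ Toeplitz minor on the frequency set $\{0,a,c\}$, whose pairwise differences are exactly $a$, $b=c-a$ and $c$. For complex $w_0,w_1,w_2$ we have
\[
  0\le\int_\T\abs{w_0+w_1e^{\ii as}+w_2e^{\ii cs}}^2\,\dd\mu(s).
\]
Hence the Hermitian matrix with unit diagonal and off-diagonal entries $u_a,u_b,u_c$ (or their conjugates, depending on the Fourier sign convention) is positive semidefinite. Its determinant is therefore nonnegative:
\[
  1+2\Re\bigl(u_au_b\overline{u_c}\bigr)-\abs{u_a}^2-\abs{u_b}^2-\abs{u_c}^2\ge0.
\]
The point to check is that the cyclic triple product in the determinant is $u_au_b\overline{u_c}$ up to a global conjugation. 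This holds because $e^{\ii as}e^{\ii bs}e^{-\ii cs}=1$, and conjugation does not change the real part. The inequality gives $\Re(u_au_b\overline{u_c})\ge (S-1)/2$. Finally, $(S-1)/2\ge -S/6$ is equivalent to $S\ge3/4$, which closes the second case.

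The two regimes meet exactly at $S=3/4$, so together they give \eqref{eq:toeplitz-triad-floor} with no further work. The only step needing care is the sign and conjugation bookkeeping in the determinant expansion; the remaining verifications are one-line scalar inequalities.
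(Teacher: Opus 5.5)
Your proposal is correct and is essentially the paper's own proof. Both split at $S=3/4$: the principal Toeplitz minor on $\{0,a,c\}$ gives $1-S+2\Re(u_au_b\overline{u_c})\ge0$ for $S\ge3/4$, and the modulus bound with AM--GM gives $\Re(u_au_b\overline{u_c})\ge-(S/3)^{3/2}\ge-S/6$ for $S\le3/4$.
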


\begin{proof}
Put \(x=\abs{u_a}\), \(y=\abs{u_b}\), \(z=\abs{u_c}\), and
\(S=x^2+y^2+z^2\).  The principal Toeplitz minor with indices
\(0,a,c\) is positive semidefinite, so its determinant gives
\[
  1-S+2\Re\bigl(u_au_b\overline{u_c}\bigr)\ge0.
\]
If \(S\ge3/4\), this implies \eqref{eq:toeplitz-triad-floor}.  If
\(S\le3/4\), the modulus bound and the arithmetic--geometric mean inequality
give
\[
  \Re\bigl(u_au_b\overline{u_c}\bigr)
  \ge-xyz
  \ge-\left(\frac S3\right)^{3/2}
  \ge-\frac S6.
\]
\end{proof}

The same positive-frequency formula also gives an arbitrary-degree criterion
which allows completely incoherent phases.  It is naturally expressed through
the Wiener size of the positive spectrum.

\begin{theorem}[Strong hidden positivity under a Wiener spectral threshold]
\label{thm:strong-wiener-threshold}
With the notation of Proposition~\ref{prop:strong-phase-coherent}, set
\[
  A_W:=\sum_{n\ge1}\abs{r_n}.
\]
Then
\begin{equation}\label{eq:strong-wiener-lower-bound}
  2I_1+I_2
  \ge
  2\left(m-\frac3{\sqrt2} A_W\right)
  \norm{\Lambda^{3/2}\rho}_2^2.
\end{equation}
Consequently, \(2I_1+I_2\ge0\) whenever
\[
  \frac3{\sqrt2} A_W\le m.
\]
If the inequality is strict, then the strong form is positive for every
nonconstant \(\rho\).
\end{theorem}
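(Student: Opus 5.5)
The plan is to work from the positive-frequency formula of Lemma~\ref{lem:strong-fourier-representation} and to bound the cubic triad sum by the cubic energy, using the Wiener mass as the linear factor. Put $x_n=\abs{r_n}$ for $n\ge1$, $E_3=\sum_{n\ge1}n^3x_n^2$ and $K_{a,b}=(a+b)(a^2+ab+b^2)$. By Parseval,
\[
 \norm{\Lambda^{3/2}\rho}_2^2=2\pi\sum_{n\ne0}\abs n^3\abs{r_n}^2=4\pi E_3 .
\]
So \eqref{eq:strong-positive-frequency-form} gives
\[
 2I_1+I_2\ge 8\pi\Bigl[mE_3-\sum_{a,b\ge1}K_{a,b}x_ax_bx_{a+b}\Bigr].
\]
Hence \eqref{eq:strong-wiener-lower-bound} follows once we prove the phase-free cubic bound
\[
 \sum_{a,b\ge1}K_{a,b}x_ax_bx_{a+b}\le\frac{3}{2\sqrt2}\cdot 2\,A_WE_3 .
\]
Multiplying this by $8\pi$ and using $8\pi E_3=2\norm{\Lambda^{3/2}\rho}_2^2$ gives exactly the factor $2(m-\tfrac3{\sqrt2}A_W)$.

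To prove the cubic bound, write $c=a+b$ and $y_n=n^{3/2}x_n$. Split the symmetric kernel as $K_{a,b}=\varphi(a,b)+\varphi(b,a)$ with $\varphi(a,b)\le\lambda\,(bc)^{3/2}$ and $\lambda=3/(2\sqrt2)$. A concrete choice is
\[
 \varphi(a,b)=K_{a,b}\,\frac{b^{3/2}}{a^{3/2}+b^{3/2}} .
\]
The required bound on $\varphi$ is then equivalent to the two-variable inequality
\[
 a^2+ab+b^2\le\lambda\sqrt{a+b}\,\bigl(a^{3/2}+b^{3/2}\bigr),
\]
since $(bc)^{3/2}+(ac)^{3/2}=c^{3/2}(a^{3/2}+b^{3/2})$.

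Assuming that inequality, symmetry of the ordered triad sum gives
\[
 \sum_{a,b}K_{a,b}x_ax_bx_c=2\sum_{a,b}\varphi(a,b)x_ax_bx_c\le2\lambda\sum_{a\ge1}x_a\sum_{b\ge1}y_by_{a+b}.
\]
For each fixed shift $a$, Cauchy--Schwarz (equivalently the nonnegativity of the shift defect $\mathfrak D_a$ in \eqref{eq:anticoherent-shift-defect}) gives $\sum_by_by_{a+b}\le E_3$. Therefore the triad sum is at most $2\lambda A_WE_3=\tfrac3{\sqrt2}A_WE_3$, as needed.

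The main step is the scalar inequality. By homogeneity put $t=\sqrt{a/b}\in(0,1]$, using symmetry to assume $a\le b$. The inequality becomes
\[
 \frac{1+t^2+t^4}{\sqrt{1+t^2}\,(1+t^3)}\le\frac{3}{2\sqrt2}.
\]
Equality holds at $t=1$, and the left side tends to $1$ as $t\to0$. I would square both sides and clear denominators, reducing to positivity of a polynomial on $[0,1]$ with a double root at $t=1$. I would then certify that polynomial in the same way as the residual bounds in Proposition~\ref{prop:anticoherent-paraproduct-shift}: factor out $(1-t)^2$ and check that the cofactor has positive Bernstein coefficients. If that certificate turns out messier than expected, the fallback is to show directly that the left side is increasing in $t$.

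Strictness follows in two ways. If $\tfrac3{\sqrt2}A_W<m$, the lower bound in \eqref{eq:strong-wiener-lower-bound} is a positive multiple of $\norm{\Lambda^{3/2}\rho}_2^2$, which vanishes only for constant $\rho$. The borderline nonnegativity statement is simply the case where the prefactor is zero.
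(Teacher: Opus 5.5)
Your proof is correct and follows the paper's argument. Both use the same multiplier bound $a^2+ab+b^2\le\frac{3}{2\sqrt2}\sqrt{a+b}\,(a^{3/2}+b^{3/2})$ and then $\sum_{a,b}x_ay_by_{a+b}\le A_WE_3$, with your shift-by-shift Cauchy--Schwarz doing the job of the paper's Young inequality $\ell^1*\ell^2\to\ell^2$; for the scalar step, which the paper only attributes to elementary differentiation, your squared inequality has the exact certificate $9(1+t^2)(1+t^3)^2-8(1+t^2+t^4)^2=(1-t)^2(1-t+t^2)^2(1+4t+t^2)$, valid for all $t\ge0$ without any symmetry reduction.
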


\begin{proof}
Write \(x_n=\abs{r_n}\) and
\[
  E_3=\sum_{n\ge1}n^3x_n^2.
\]
For \(n=a+b\), put
\[
  W_{a,b}=n(a^2+ab+b^2).
\]
With \(t=a/b>0\), elementary differentiation gives
\[
  \frac{W_{a,b}}
  {n^{3/2}(a^{3/2}+b^{3/2})}
  =\frac{t^2+t+1}
  {\sqrt{1+t}(t^{3/2}+1)}
  \le\frac3{2\sqrt2}.
\]
Set \(u_n=n^{3/2}x_n\), and extend \(x\) and \(u\) by zero to the
nonpositive integers.  The preceding multiplier bound, Cauchy--Schwarz, and
the \(\ell^1*\ell^2\to\ell^2\) Young inequality give
\[
\begin{aligned}
  \sum_{a,b\ge1}W_{a,b}x_ax_bx_{a+b}
  &\le\frac3{2\sqrt2}
  \sum_{a,b\ge1}
  \bigl(u_ax_b+x_au_b\bigr)u_{a+b}\\
  &=\frac3{\sqrt2}\langle u*x,u\rangle_{\ell^2}\\
  &\le\frac3{\sqrt2}\norm{u*x}_{\ell^2}\norm{u}_{\ell^2}\\
  &\le\frac3{\sqrt2}A_W E_3.
\end{aligned}
\]
\begin{equation}\label{eq:wiener-triad-bound}
  \sum_{a,b\ge1}W_{a,b}x_ax_bx_{a+b}
  \le\frac3{\sqrt2} A_WE_3.
\end{equation}
Apply \eqref{eq:wiener-triad-bound} to the second term in
\eqref{eq:strong-positive-frequency-form}.  Since
\(\norm{\Lambda^{3/2}\rho}_2^2=4\pi E_3\), we obtain
\eqref{eq:strong-wiener-lower-bound}.
\end{proof}

\begin{corollary}[Universal infinite-mode Poisson-smoothing regime]
\label{cor:universal-poisson-smoothing}
Let \(\mu\) be an arbitrary probability measure on \(\T\), let \(m>0\), and
define
\[
  \rho(s)=m\int_\T\mathcal P_q(s-\theta)\,\dd\mu(\theta),
  \qquad
  0<q\le\frac12.
\]
Then \(\rho\) is smooth and positive and satisfies
\[
  2I_1+I_2\ge0.
\]
The inequality is strict unless \(\rho\) is constant.  The measure \(\mu\)
may have arbitrary finite or infinite support and may be continuous or
singular; in particular, the conclusion does not impose a Fourier cutoff.
\end{corollary}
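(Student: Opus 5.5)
The plan is to verify the hypothesis of Theorem~\ref{thm:strong-wiener-threshold} directly from the Fourier coefficients of the Poisson smoothing. Here I read \(\mathcal P_q\) as the Poisson kernel with radius \(q\), so that \(\widehat{\mathcal P_q}(n)=q^{\abs n}\). Writing \(u_n=\widehat\mu(n)\) with the normalization \(\widehat\mu(0)=1\), one gets
\[
 \rho(s)=m\sum_{n\in\mathbb Z}q^{\abs n}u_ne^{\ii ns},
 \qquad r_n=mq^{\abs n}u_n,
\]
and \(\abs{u_n}\le1\) because \(\mu\) is a probability measure. The coefficients decay geometrically, so \(\rho\) is smooth. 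It is strictly positive because \(\mathcal P_q>0\) and the mean is \(m>0\); in fact \(\rho\ge m(1-q)/(1+q)\).

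The key step is the Wiener bound:
\[
 A_W=\sum_{n\ge1}\abs{r_n}\le m\sum_{n\ge1}q^n=\frac{mq}{1-q}\le m
 \qquad(0<q\le\tfrac12).
\]
This is weaker than what Theorem~\ref{thm:strong-wiener-threshold} requires, namely \(\tfrac3{\sqrt2}A_W\le m\). For \(q=1/2\), that threshold would force \(\sum_{n\ge1}\abs{u_n}2^{-n}\le\sqrt2/3\approx0.471\). A Dirac mass gives the sum \(1\), so the crude threshold fails, and this is the main obstacle.

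I would overcome it by combining the Wiener-type estimate with the Toeplitz triad floor of Lemma~\ref{lem:toeplitz-triad-floor}, applied to the positive-frequency formula \eqref{eq:strong-positive-frequency-form}. The cubic term is \(m^3\sum W_{a,b}q^{2(a+b)}\Re(u_au_b\overline{u_{a+b}})\), using \(q^aq^bq^{a+b}=q^{2(a+b)}\). The lemma bounds the negative part of each triad by \(\tfrac16(\abs{u_a}^2+\abs{u_b}^2+\abs{u_{a+b}}^2)\). The mean term is \(m\cdot m^2\sum n^3q^{2n}\abs{u_n}^2\).

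So it suffices to prove, for every \(n\), that the total weight charged to \(\abs{u_n}^2\) is at most \(n^3q^{2n}\). That weight is
\[
 \tfrac16\sum_{\text{triads containing }n} W_{a,b}\,q^{2(a+b)} ,
\]
where \(n\) may appear as the output \(c=n\) or as an input with \(a+b=c>n\). I would bound these two contributions separately.

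First, the output contribution is \(\tfrac16 q^{2n}\sum_{a+b=n}W_{a,b}\). Since \(W_{a,b}\le n^3\), this is at most \(\tfrac16 n^4q^{2n}\). This is too large for big \(n\), so I would sharpen it by splitting the triad floor asymmetrically. Pairing \(-\Re(u_au_b\overline{u_c})\le\abs{u_a}\abs{u_b}\abs{u_c}\) with weighted AM--GM lets the damping \(q^{2c}\) on the output be shared with \(q^{a}q^{b}\), and then the high output is charged mainly to the low inputs, which carry \(q^{2a}\gg q^{2c}\).

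Second, the input contribution is \(\sum_{c>n}c^3q^{2c}\) times constants. It is bounded by a geometric tail of order \(n^3q^{2n}\cdot q^2/(1-q^2)^{4}\), which is small for \(q\le1/2\).

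The final inequality is then an explicit check of one-variable geometric series in \(q\) at the worst case \(q=1/2\), using monotonicity in \(q\). I expect this to go through only with a careful AM--GM split; if it fails, I would fall back on Theorem~\ref{thm:strong-half-laplacian-slope}, bounding \(\norm{\Lambda\rho}_{L^\infty}\le m\sum nq^n\abs{u_n}\cdot2\) against \(\kappa\ge m(1-q)/(1+q)\). Strictness in either route follows from the strict mean term once \(\rho\) is nonconstant.
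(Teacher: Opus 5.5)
Your setup is correct and follows the same framework as the paper. You normalize to $\sum_n n^3X^n\abs{u_n}^2+\sum_{a,b}W_{a,b}X^{a+b}\Re(u_au_b\overline{u_{a+b}})$ with $X=q^2$. You then charge each triad's negative part to diagonal terms through triad floors, and use monotonicity in $X$ to reduce to $X=\frac14$.

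\textbf{The gap: the closing step is missing and the tight spot is misidentified.} You leave the closing step as ``a careful AM--GM split,'' but the difficulty is not where you place it.
\begin{itemize}
\item The high outputs are harmless. Dropping the output factor via $\abs{u_{a+b}}\le1$ (the floor $\frac12(\abs{u_a}^2+\abs{u_b}^2)$) leaves a margin of order $n^3$.
\item The lowest modes are the problem. Mode $1$ is never an output. Yet under the Toeplitz floor it receives $\frac13\sum_{b\ge1}W_{1,b}X^{1+b}=\frac{13}{9}X$ at $X=\frac14$, which exceeds its diagonal weight $X$. So your claim that the input contributions are ``small for $q\le\frac12$'' is false at $n=1$.
\item Your remedy of charging high outputs mainly to low inputs makes modes $1$ and $2$ worse, not better.
\item Even the natural hybrid baseline still fails. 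This is the Toeplitz floor when $\min\{a,b\}=1$ and the output dropped when $a,b\ge2$. It leaves $D_1=-\frac49$ and $D_2=-\frac{17}{54}$, while $D_n=(27n^3-12n^2-80n-79)/54>0$ only for $n\ge3$.
\end{itemize}

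\textbf{What the paper does instead.} It uses an explicit redistribution on thirteen low triads. Each is assigned a specific convex combination of the Toeplitz vector $(\frac16,\frac16,\frac16)$ and the three modulus floors $(\frac12,\frac12,0)$, $(\frac12,0,\frac12)$, $(0,\frac12,\frac12)$. In particular, part of $(1,2,3)$, $(1,3,4)$, $(1,4,5)$ is moved onto $(0,\frac12,\frac12)$, which charges nothing to mode $1$. Exact rational arithmetic then shows every corrected margin is positive. The resulting relative margin is only about $10^{-3}$. At $q=\frac12$ this is a genuinely tight certificate, not something a qualitative AM--GM heuristic supplies.

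\textbf{The fallback does not cover the range.} Take $\mu=\delta_0$ and $q=\frac12$. Then $\norm{\Lambda\rho}_{L^\infty}=2mq/(1-q)^2=4m$ and $\kappa=m(1-q)/(1+q)=m/3$. Theorem~\ref{thm:strong-half-laplacian-slope} requires $\norm{\Lambda\rho}_{L^\infty}\le 2\kappa/\sqrt{\mathsf C_*}\approx0.74m$, and the mean-sensitive version only allows about $1.27m$. That route reaches only $q\lesssim0.3$, and the Wiener threshold only $q\lesssim0.32$.

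\textbf{Strictness.} You need strictly positive margins at every mode. Showing the total charge is at most $n^3q^{2n}$ is not enough, since the strict mean term is exactly what the charging consumes.
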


\begin{proof}
Put \(u_n=\widehat\mu(n)\), \(X=q^2\), and
\(W_{a,b}=(a+b)(a^2+ab+b^2)\).  Formula
\eqref{eq:strong-positive-frequency-form} gives
\begin{equation}\label{eq:hybrid-poisson-bracket}
  \frac{2I_1+I_2}{8\pi m^3}
  =\sum_{n\ge1}n^3X^n\abs{u_n}^2
   +\sum_{a,b\ge1}W_{a,b}X^{a+b}
    \Re\bigl(u_au_b\overline{u_{a+b}}\bigr).
\end{equation}
For \(c=a+b\), write
\(z_{a,b}=(\abs{u_a}^2,\abs{u_b}^2,\abs{u_c}^2)\).  The Toeplitz floor
\eqref{eq:toeplitz-triad-floor} and the three modulus bounds obtained by
discarding one of the factors give
\begin{equation}\label{eq:four-triad-floors}
 \Re\bigl(u_au_b\overline{u_c}\bigr)\ge-v_k\mathbin{\boldsymbol\cdot}z_{a,b},
 \qquad
 \begin{aligned}
 v_0&=(1/6,1/6,1/6),&v_1&=(1/2,1/2,0),\\
 v_2&=(1/2,0,1/2),&v_3&=(0,1/2,1/2).
 \end{aligned}
\end{equation}
Indeed, for example,
\(-\abs{u_au_bu_c}\ge-(\abs{u_a}^2+\abs{u_c}^2)/2\)
because \(\abs{u_b}\le1\); the other two cases are identical.  Every convex
combination of the four vectors in \eqref{eq:four-triad-floors} is therefore
also a valid floor.

We first use \(v_0\) when \(\min\{a,b\}=1\) and \(v_1\) when \(a,b\ge2\).
At the endpoint \(X=1/4\), the exact geometric moments are
\[
 S_0=\frac1{12},\qquad S_1=\frac7{36},\qquad
 S_2=\frac{53}{108},\qquad S_3=\frac{149}{108},
 \qquad S_j:=\sum_{b\ge2}b^jX^b.
\]
Collecting the resulting charges against \(X^n\abs{u_n}^2\) gives the
baseline margins
\begin{equation}\label{eq:half-poisson-baseline-margins}
 D_1=-\frac49,\qquad D_2=-\frac{17}{54},\qquad
 D_n=\frac{27n^3-12n^2-80n-79}{54}\quad(n\ge3).
\end{equation}
Thus the whole infinite tail is already positive; only the first two margins
need to be redistributed.

For the unordered triads in the following table, replace the baseline vector
by \(\sum_{k=0}^3\lambda_kv_k\); for all unlisted triads keep the baseline
choice.  The listed integer vector is \(10^6(\lambda_0,\lambda_1,
\lambda_2,\lambda_3)\).  Each row has nonnegative entries summing to
\(10^6\), so every replacement is a valid convex combination.
\begin{center}
\small
\begin{tabular}{c|c}
\((a,b)\)&\(10^6(\lambda_0,\lambda_1,\lambda_2,\lambda_3)\)\\ \hline
\((1,2)\)&\((288841,0,0,711159)\)\\
\((1,3)\)&\((734857,0,0,265143)\)\\
\((1,4)\)&\((542836,0,0,457164)\)\\
\((2,2)\)&\((812184,187816,0,0)\)\\
\((2,3)\)&\((1000000,0,0,0)\)\\
\((2,5)\)&\((1000000,0,0,0)\)\\
\((2,7)\)&\((822662,177338,0,0)\)\\
\((3,3)\)&\((1000000,0,0,0)\)\\
\((3,4)\)&\((1000000,0,0,0)\)\\
\((3,5)\)&\((1000000,0,0,0)\)\\
\((3,6)\)&\((876294,123706,0,0)\)\\
\((3,7)\)&\((1000000,0,0,0)\)\\
\((5,5)\)&\((534065,465935,0,0)\)
\end{tabular}
\end{center}
Both ordered occurrences are changed when \(a<b\).  Exact rational charge
collection then gives corrected margins \(\widetilde D_n\) satisfying
\begin{equation}\label{eq:half-poisson-certificate-margin}
 \min_{1\le n\le10}\frac{\widetilde D_n}{n^3}
 =\frac{149274521}{149299200000}>0.
\end{equation}
No listed triad contains a frequency above ten.  Hence for every \(n\ge11\)
the corrected margin is still the cubic in
\eqref{eq:half-poisson-baseline-margins}; its numerator and derivative are
positive at \(n=11\), and its derivative is increasing thereon.  This proves
strict positivity for all infinitely many modes at \(X=1/4\).  The finite
table is only a low-mode redistribution, not a Fourier-degree cutoff.  The
ancillary exact-arithmetic script
\path{verify_half_poisson_radius.py} performs the charge collection in
\eqref{eq:half-poisson-certificate-margin} and verifies the uniform tail.

Finally, after division by \(X^n\), every charge placed on mode \(n\) is a
nonnegative constant times \(X^{a+b-n}\).  This exponent is zero for the
output mode \(n=a+b\) and positive for either input mode.  Thus every margin
is nonincreasing in \(X\), and endpoint positivity proves
\begin{equation}\label{eq:hybrid-poisson-diagonal-lower}
  \frac{2I_1+I_2}{8\pi m^3}
  \ge\sum_{n\ge1}X^n\widetilde D_n(X)\abs{u_n}^2\ge0
  \qquad(0<X\le1/4).
\end{equation}
All endpoint margins are strict.  Therefore equality forces \(u_n=0\) for
every \(n\ge1\), which is equivalent to \(\rho\) being constant.
\end{proof}

For later use, define the positive matrix
\[
 K(q)_{a,b}:=q^{a+b}
 \frac{(a+b)(a^2+ab+b^2)}{(ab)^{3/2}},
 \qquad X=q^2.
\]
Its Hilbert--Schmidt norm is
\begin{align}\label{eq:poisson-HS-exact}
 \mathcal K_{\mathrm{HS}}(X):=\norm{K(q)}_{\mathrm{HS}}^2
 ={}&2\operatorname{Li}_{-3}(X)\operatorname{Li}_3(X)
 +8\operatorname{Li}_{-2}(X)\operatorname{Li}_2(X)\\
 &+16\operatorname{Li}_{-1}(X)\operatorname{Li}_1(X)
 +10\operatorname{Li}_0(X)^2.
\end{align}
Termwise comparison of the polylogarithm series gives
\begin{equation}\label{eq:poisson-HS-majorant}
 \norm{K(q)}_{\mathrm{op}}^2
 \le\mathcal K_{\mathrm{HS}}(X)
 \le\mathcal J(X)
 :=\frac{X^2(144-279X+196X^2-55X^3)}{4(1-X)^5}.
\end{equation}

\begin{corollary}[All-radius diffuse-moment regime]
\label{cor:all-radius-diffuse-moments}
Let \(\mu\) be an arbitrary probability measure on \(\T\), let
\(0<q<1\), and define \(\rho=m(\mathcal P_q*\mu)\), where \(m>0\).  Set
\[
  \sigma_2(\mu):=\sup_{n\ge2}\abs{\widehat\mu(n)}.
\]
Here \(\mathcal K_{\mathrm{HS}}\) is defined by
\eqref{eq:poisson-HS-exact}.  If
\begin{equation}\label{eq:all-radius-diffuse-condition}
  \sigma_2(\mu)\sqrt{\mathcal K_{\mathrm{HS}}(q^2)}\le1,
\end{equation}
then \(2I_1+I_2\ge0\). More quantitatively,
\begin{equation}\label{eq:all-radius-diffuse-lower-bound}
  2I_1+I_2
  \ge2m\left(1-\sigma_2(\mu)\sqrt{\mathcal K_{\mathrm{HS}}(q^2)}\right)
       \norm{\Lambda^{3/2}\rho}_2^2.
\end{equation}
The purely rational condition
\[
  \sigma_2(\mu)\sqrt{\mathcal J(q^2)}\le1
\]
is sufficient as well, with \(\mathcal J\) given by
\eqref{eq:poisson-HS-majorant}.
Thus the conclusion is strict for every nonconstant \(\rho\) when the
inequality in \eqref{eq:all-radius-diffuse-condition} is strict.  The result
allows arbitrary finite, infinite, continuous, or singular support and has no
Fourier cutoff.
\end{corollary}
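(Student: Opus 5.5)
We start from the positive-frequency formula \eqref{eq:strong-positive-frequency-form} applied to \(\rho=m(\mathcal P_q*\mu)\). Here \(r_n=mq^{n}u_n\) with \(u_n=\widehat\mu(n)\), so
\[
 \frac{2I_1+I_2}{8\pi m^3}
 =\sum_{n\ge1}n^3X^n\abs{u_n}^2
 +\sum_{a,b\ge1}W_{a,b}X^{a+b}\Re\bigl(u_au_b\overline{u_{a+b}}\bigr),
\]
exactly as in \eqref{eq:hybrid-poisson-bracket}, with \(X=q^2\) and \(W_{a,b}=(a+b)(a^2+ab+b^2)\). Put \(y_n=n^{3/2}q^n\abs{u_n}\). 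The diagonal term is then \(\sum_n y_n^2\), which equals \(\norm{\Lambda^{3/2}\rho}_2^2/(4\pi m^2)\). The goal is therefore to bound the cubic term below by \(-\sigma_2\sqrt{\mathcal K_{\mathrm{HS}}}\sum_n y_n^2\).

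The key step is to treat each triad as a bilinear form in the two input amplitudes, with the output modulus removed by the diffuse bound. Since \(a+b\ge2\), we have \(\abs{u_{a+b}}\le\sigma_2\). Hence
\[
 \abs{W_{a,b}X^{a+b}u_au_b\overline{u_{a+b}}}
 \le\sigma_2\,W_{a,b}q^{2a+2b}\abs{u_a}\abs{u_b}
 =\sigma_2\,K(q)_{a,b}\,y_ay_b .
\]
This identity holds because \(K(q)_{a,b}\,y_ay_b=q^{a+b}W_{a,b}(ab)^{-3/2}\cdot a^{3/2}b^{3/2}q^{a+b}\abs{u_a}\abs{u_b}\), which matches the left-hand side exactly; the definition of \(K\) was chosen so that the weights cancel. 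Summing over \(a,b\) gives
\[
 \Bigl|\sum_{a,b}\cdots\Bigr|\le\sigma_2\ip{y}{K(q)y}
 \le\sigma_2\norm{K(q)}_{\mathrm{op}}\norm y_2^2
 \le\sigma_2\sqrt{\mathcal K_{\mathrm{HS}}(X)}\norm y_2^2 .
\]
Multiplying back by \(8\pi m^3\) and using \(\norm y_2^2=\norm{\Lambda^{3/2}\rho}_2^2/(4\pi m^2)\) yields \eqref{eq:all-radius-diffuse-lower-bound}. The rational condition follows from \eqref{eq:poisson-HS-majorant}, and strictness follows because \(\norm{\Lambda^{3/2}\rho}_2>0\) for nonconstant \(\rho\).

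The one step that needs real checking is the exact Hilbert--Schmidt formula \eqref{eq:poisson-HS-exact}, together with its majorant. If we may assume it, since it is stated before the corollary, nothing further is needed. To verify it, expand
\[
 \frac{W_{a,b}^2}{(ab)^3}=\frac{(a+b)^2(a^2+ab+b^2)^2}{a^3b^3}
\]
into monomials \(a^ib^j\) with \(i+j=3\) and \(i,j\in\{-3,\dots,3\}\). Each monomial sums against \(X^{a+b}\) to a product \(\operatorname{Li}_{-i}(X)\operatorname{Li}_{-j}(X)\), and matching coefficients should reproduce \(2,8,16,10\). Finally, we must make sure the rearrangements and absolute convergence are justified for infinite support. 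This holds for \(q<1\), since \(y\in\ell^2\) decays geometrically and \(K\) is Hilbert--Schmidt.
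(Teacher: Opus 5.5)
Your proof is correct and is essentially the paper's argument. You bound the output moment $|\widehat\mu(a+b)|$ by $\sigma_2(\mu)$ using $a+b\ge2$, identify the remaining triad sum as $\langle y,K(q)y\rangle$ with $y_n=n^{3/2}q^n|\widehat\mu(n)|$, and close with $\norm{K(q)}_{\mathrm{op}}\le\norm{K(q)}_{\mathrm{HS}}$ together with $\norm{y}_{\ell^2}^2=\norm{\Lambda^{3/2}\rho}_2^2/(4\pi m^2)$. One small slip in your optional check of \eqref{eq:poisson-HS-exact}: the monomials $a^ib^j$ in $W_{a,b}^2/(ab)^3$ have total degree $i+j=0$, not $3$; their coefficients $1,4,8,10,8,4,1$ do yield $2,8,16,10$.
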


\begin{proof}
Retain the third moment modulus in the triad estimate.  Since \(a+b\ge2\),
\[
  \Re\bigl(\widehat\mu(a)\widehat\mu(b)
       \overline{\widehat\mu(a+b)}\bigr)
  \ge-\sigma_2(\mu)x_ax_b.
\]
The proof of Corollary~\ref{cor:universal-poisson-smoothing} then gives the
normalized lower bound
\[
  \norm{v}_{\ell^2}^2
  -\sigma_2(\mu)\langle v,K(q)v\rangle_{\ell^2}
  \ge\left(1-\sigma_2(\mu)\sqrt{\mathcal K_{\mathrm{HS}}(q^2)}\right)
       \norm{v}_{\ell^2}^2,
\]
because \(\norm{K(q)}_{\mathrm{op}}\le\norm{K(q)}_{\mathrm{HS}}\).
Restoring the normalization proves \eqref{eq:all-radius-diffuse-lower-bound}.
\end{proof}

The Wiener threshold does not cover concentrated Poisson profiles.  The next
result gives an unrestricted-amplitude family with infinitely many active
Fourier modes.

\begin{theorem}[Strong hidden positivity for two Poisson kernels]
\label{thm:two-poisson-hidden-positivity}
For \(0<q<1\), let
\[
  \mathcal P_q(s):=\frac{1-q^2}{1-2q\cos s+q^2}
  =1+2\sum_{n\ge1}q^n\cos(ns).
\]
If \(m>0\), \(0\le p\le1\), and \(\alpha,\beta\in\T\), then the smooth
positive density
\[
  \rho(s)=m\bigl[p\mathcal P_q(s-\alpha)
  +(1-p)\mathcal P_q(s-\beta)\bigr]
\]
satisfies
\[
  2I_1+I_2>0.
\]
Thus the strong inequality holds for this family uniformly in its infinite
Fourier degree, with no separation or amplitude restriction on the two
components.
\end{theorem}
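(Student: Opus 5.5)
Put the first center at the origin by translation invariance, write $\varphi=\beta-\alpha$ and $X=q^2$, and let $\mu=p\,\delta_0+(1-p)\,\delta_\varphi$. Then $\rho=m(\mathcal P_q*\mu)$. Identity \eqref{eq:hybrid-poisson-bracket} from the proof of Corollary~\ref{cor:universal-poisson-smoothing} gives
\[
 \frac{2I_1+I_2}{8\pi m^3}
 =\sum_{n\ge1}n^3X^n\abs{u_n}^2
 +\sum_{a,b\ge1}W_{a,b}X^{a+b}\Re\bigl(u_au_b\overline{u_{a+b}}\bigr),
 \qquad u_n=p+(1-p)e^{-\ii n\varphi}.
\]
For a two-atom measure every moment expression is exactly computable. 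With $t=p(1-p)\in[0,1/4]$ one finds
\[
 \abs{u_n}^2=1-2t+2t\cos n\varphi,
\]
\[
 \Re\bigl(u_au_b\overline{u_{a+b}}\bigr)
 =1-3t+t\bigl(\cos a\varphi+\cos b\varphi+\cos(a+b)\varphi\bigr),
\]
where $p^3+(1-p)^3=1-3t$ was used. Hence the normalized bracket is affine in $t$:
\[
 \mathcal B(t)=(1-3t)\,S(X)+t\,G(X,\varphi).
\]
Here $S(X)$ is the bracket of a single Poisson kernel. Its spectrum $u_n\equiv1$ is phase coherent, so $S>0$ by Proposition~\ref{prop:strong-phase-coherent}. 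The term $G$ collects the $\cos(\cdot\,\varphi)$-weighted sums together with the coefficients $2$ and $-2$ coming from $\abs{u_n}^2$.

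Since $\mathcal B$ is affine in $t$ on $[0,1/4]$, it suffices to check the two endpoints. At $t=0$ the bracket is $S>0$. At $t=1/4$ it equals $\tfrac14\bigl(S+G\bigr)$, which is exactly the bracket for the equal-weight pair $p=1/2$. So the theorem reduces to a single claim: the symmetric two-kernel density $\tfrac12[\mathcal P_q(s)+\mathcal P_q(s-\varphi)]$ has strictly positive bracket for every $q\in(0,1)$ and every $\varphi$. When $\varphi=0$ this is the single-kernel case again.

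For that claim I will sum everything in closed form. The diagonal sums are
\[
 \sum_n n^3X^n\cos n\varphi=\Re\operatorname{Li}_{-3}(Xe^{\ii\varphi}),
\]
and the triad sums $\sum_{a,b}W_{a,b}X^{a+b}\cos(a\varphi)$ and $\sum_{a,b}W_{a,b}X^{a+b}\cos((a+b)\varphi)$ split into products of the geometric moment sums $\sum a^jX^ae^{\ii a\varphi}$, $j\le3$. Each is a rational function of $X$ and $c=\cos\varphi$ with denominator a power of $1-2Xc+X^2$ and of $1-X$. Clearing these positive denominators should leave a polynomial $P(X,c)$ on $[0,1]\times[-1,1]$, and I will certify $P>0$ (away from $X=0$) by tensor Bernstein expansion in exact rational arithmetic, in the same style as Theorems~\ref{thm:anticoherent-one-poisson-clark} and~\ref{thm:anticoherent-real-two-step-schur}. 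Some extracted factors will be degenerate, for instance a power of $X$ from the lowest mode, and these will be divided out exactly.

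The main obstacle is the corner $X\to1$, $c\to1$. There the two kernels are concentrated and nearly coincide, the denominators $(1-X)$ and $1-2Xc+X^2$ vanish at comparable rates, and a naive Bernstein expansion will have a flat or negative corner. I will first try toric charts in $(1-X,\,1-c)$, covering $1-c\le(1-X)^2$ and its complement as in the proof of Theorem~\ref{thm:anticoherent-complex-one-poisson-clark}, factoring out the exact vanishing order in each chart before expanding. As an independent check of the regime $c\to-1$ with $X$ small, Corollary~\ref{cor:universal-poisson-smoothing} already covers all $X\le1/4$. So the certificate only needs to cover $X\in[1/4,1)$, which shrinks the box.
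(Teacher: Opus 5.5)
Your reduction is correct and is the paper's own: with $t=p(1-p)$ the bracket is affine in $t$, the one-kernel endpoint is positive, and everything comes down to strict positivity of the equal-weight bracket. The paper writes this as $\mathcal B=(1-4h)B_0+h(4B_0-C)$ with $h=t$, so it needs $4B_0-C>0$. The gap is that you never prove this remaining claim, and it is the whole substance of the theorem. You say the cleared numerator ``should'' be a polynomial $P(X,c)$ and that you ``will'' certify it by a Bernstein expansion in toric charts. But no polynomial, chart or control is exhibited, so nothing beyond the affine reduction has been established.

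This step is not a formality. In the paper's variables $y=\tan^2(\varphi/2)$ and $H=(1-X)^2+(1+X)^2y=2(1-2Xc+X^2)/(1+c)$ one has
\[
 4B_0-C=\frac{2X(1+X)}{(1-X)^5H^5}\sum_{j=0}^5q_j(X)\,y^j .
\]
So your $P$ is, up to a positive factor, $X(1+X)\sum_jq_j(X)(1-c)^j(1+c)^{5-j}$.

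Each $q_j$ carries the factor $(1-X)^{10-2j}$. This confirms your guess that the corner is governed by the scaling $1-c\sim(1-X)^2$. However, $q_1=8(1-X)^8(X^4+2X^3-19X^2+2X+1)$ is negative whenever $X+X^{-1}<\sqrt{22}-1$. That is, it is negative for $X$ from about $0.29$ all the way up to the corner, covering most of your reduced box. The sign therefore rests on a genuine cancellation among the $y^0,y^1,y^2$ terms, uniformly as $X\to1$, and an unexecuted expansion proves nothing.

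The paper closes this without Bernstein charts and without appealing to the $X\le1/4$ corollary:
\begin{itemize}
\item $q_2,\dots,q_5\ge0$, checked through $X+X^{-1}\ge2$;
\item the exact discriminant identity $4q_0q_2-q_1^2=32(1-X)^{16}(\cdots)>0$ makes $q_0+q_1y+q_2y^2$ positive for every real $y$.
\end{itemize}
Hence the remainder is strictly positive for $0<X<1$. You should either adopt this substitution or actually carry out and report your chart certificate, including the strict sign.
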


\begin{proof}
By translation and cubic homogeneity, take \(m=1\), \(\alpha=0\), and set
\(\theta=\beta-\alpha\).  The positive Fourier coefficients are
\[
  r_n=q^nu_n,
  \qquad
  u_n=p+(1-p)e^{-\ii n\theta}.
\]
Put \(x=q^2\), \(h=p(1-p)\), and
\(L_n=1-\cos(n\theta)\).  Direct expansion gives
\[
  |u_n|^2=1-2hL_n,
  \qquad
  \operatorname{Re}(u_au_b\overline{u_{a+b}})
  =1-h(L_a+L_b+L_{a+b}).
\]
Consequently, the bracket on the right of
\eqref{eq:strong-positive-frequency-form}, divided by \(m^3\), is
\begin{equation}\label{eq:two-poisson-bracket}
  \mathcal B=B_0(x)-hC(x,\theta),
\end{equation}
where
\begin{align*}
  B_0(x)&=\sum_{n\ge1}n^3x^n
  +\sum_{a,b\ge1}W_{a,b}x^{a+b},\\
  C(x,\theta)&=2\sum_{n\ge1}n^3x^nL_n
  +\sum_{a,b\ge1}W_{a,b}x^{a+b}
  (L_a+L_b+L_{a+b}),
\end{align*}
and \(W_{a,b}=(a+b)(a^2+ab+b^2)\).

For \(A_k(z)=\sum_{n\ge1}n^kz^n\), summation over \(a+b=n\) gives
\[
  \sum_{a,b\ge1}W_{a,b}z^{a+b}
  =\frac{5A_4(z)-6A_3(z)+A_2(z)}6.
\]
Using the standard rational forms of \(A_2,A_3,A_4\), one obtains
\begin{equation}\label{eq:two-poisson-B0}
  B_0(x)=\frac{x(1+x)(x^2+8x+1)}{(1-x)^5}>0.
\end{equation}
It remains to prove \(4B_0-C\ge0\).  Set
\[
  y=\tan^2\frac\theta2,
  \qquad
  H=(1-x)^2+(1+x)^2y.
\]
Exact summation of the cosine series yields
\begin{equation}\label{eq:two-poisson-remainder}
  4B_0(x)-C(x,\theta)
  =\frac{2x(1+x)}{(1-x)^5H^5}
  \sum_{j=0}^5q_j(x)y^j,
\end{equation}
where
\begin{align*}
q_0={}&2(1-x)^{10}(x^2+8x+1),\\
q_1={}&8(1-x)^8(x^4+2x^3-19x^2+2x+1),\\
q_2={}&4(1-x)^6(3x^6-8x^5+7x^4+268x^3+7x^2-8x+3),\\
q_3={}&8(1-x)^4(x^8-4x^7+73x^6+164x^5-28x^4
  +164x^3+73x^2-4x+1),\\
q_4={}&2(1-x)^2(1+x)^2(x^8+6x^7+220x^6-198x^5
  +774x^4-198x^3+220x^2+6x+1),\\
q_5={}&16x(1+x)^4(1+x^2)(x^4+8x^2+1).
\end{align*}
The only coefficient which can be negative is \(q_1\).  Indeed, with
\(t=x+x^{-1}\ge2\), the unfactored positive polynomials in \(q_2,q_3,q_4\),
after division by \(x^3,x^4,x^4\), respectively, become
\[
  3t^3-8t^2-2t+284,
  \quad
  t^4-4t^3+69t^2+176t-172,
  \quad
  t^4+6t^3+216t^2-216t+336.
\]
The first two are increasing on \([2,\infty)\) and take the positive values
\(272\) and \(440\) at \(t=2\); the third is manifestly positive there.
Moreover,
\begin{align*}
  4q_0q_2-q_1^2
  ={}&32(1-x)^{16}
  (x^8+8x^7+14x^6+460x^5+1416x^4\\
  &\hspace{39mm}+460x^3+14x^2+8x+1)>0.
\end{align*}
Hence \(q_0+q_1y+q_2y^2>0\) for every real \(y\), while
\(q_3y^3+q_4y^4+q_5y^5\ge0\) for \(y\ge0\).  Formula
\eqref{eq:two-poisson-remainder} follows at \(\theta=\pi\) by continuity and
is strictly positive for \(0<x<1\).

Finally, \(0\le h\le1/4\), so \eqref{eq:two-poisson-bracket} can be written
as
\[
  \mathcal B=(1-4h)B_0+h(4B_0-C)>0.
\]
The positive-frequency identity then gives \(2I_1+I_2=8\pi m^3\mathcal B>0\).
\end{proof}

The preceding infinite-series theorem has a stronger cutoff-level form.  This
is useful because it controls all partial moment brackets at once, rather than
only their Poisson-weighted sum.

\begin{theorem}[All-cutoff positivity for two-point moment sequences]
\label{thm:two-point-all-cutoff}
Let
\[
  u_n=p+(1-p)e^{-\ii n\theta},
  \qquad 0\le p\le1,\quad \theta\in\R,
\]
and, for \(N\ge1\), define
\begin{equation}\label{eq:two-point-partial-bracket}
  \mathfrak B_N(u)
  :=\sum_{n=1}^N n^3\abs{u_n}^2
  +\sum_{\substack{a,b\ge1\\a+b\le N}}
  (a+b)(a^2+ab+b^2)
  \Re\bigl(u_au_b\overline{u_{a+b}}\bigr).
\end{equation}
Then
\[
  \mathfrak B_N(u)\ge0
  \qquad\text{for every }N\ge1.
\]
Consequently, if \(\mathfrak b_N=\mathfrak B_N-\mathfrak B_{N-1}\), with
\(\mathfrak B_0=0\), then
\begin{equation}\label{eq:two-point-abel-bracket}
  \sum_{N\ge1}\mathfrak b_Nx^N
  =(1-x)\sum_{N\ge1}\mathfrak B_Nx^N\ge0,
  \qquad 0<x<1.
\end{equation}
Thus every Fourier cutoff of a two-point moment sequence is positive, and the
Poisson smoothing follows by Abel summation without any degree restriction.
\end{theorem}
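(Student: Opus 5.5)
The first step is to linearize in the mixing parameter, exactly as in the proof of Theorem~\ref{thm:two-point-all-cutoff}'s infinite-series predecessor, Theorem~\ref{thm:two-poisson-hidden-positivity}. With \(h=p(1-p)\in[0,1/4]\) and \(L_n=1-\cos(n\theta)\), the identities
\[
 \abs{u_n}^2=1-2hL_n,\qquad
 \Re\bigl(u_au_b\overline{u_{a+b}}\bigr)=1-h(L_a+L_b+L_{a+b})
\]
hold termwise, so for each fixed \(N\) the partial bracket is affine in \(h\):
\[
 \mathfrak B_N(u)=\mathfrak B_N^0-h\,\mathfrak C_N(\theta).
\]
It therefore suffices to check the two endpoints \(h=0\) and \(h=1/4\). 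At \(h=0\), \(\mathfrak B_N^0=\sum_{n\le N}n^3+\sum_{a+b\le N}W_{a,b}>0\) trivially. At \(h=1/4\) we have \(p=1/2\), and the problem becomes a single one-parameter family.

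At \(p=1/2\) we use \(u_n=e^{-\ii n\varphi}\cos(n\varphi)\) with \(\varphi=\theta/2\). The phases cancel on every additive triad, so with \(c_n=\cos(n\varphi)\),
\[
 \mathfrak B_N=\sum_{n\le N}n^3c_n^2+\sum_{a+b\le N}W_{a,b}c_ac_bc_{a+b}.
\]
Using \(c_n^2=\tfrac12(1+\cos n\psi)\) and \(\cos A\cos B\cos(A+B)=\tfrac14(1+\cos2A+\cos2B+\cos2(A+B))\), where \(\psi=2\varphi=\theta\), the bracket becomes an explicit cosine polynomial
\[
 \mathfrak B_N=S_N+\sum_{n=1}^N\gamma_{N,n}\cos(n\psi),
\]
with all \(\gamma_{N,n}>0\). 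Here
\[
 S_N=\tfrac12\sum_{n\le N} n^3+\tfrac14\sum_{a+b\le N}W_{a,b},\qquad
 \gamma_{N,n}=\tfrac12n^3+\tfrac14\sum_{a+b=n}W_{a,b}+\tfrac12\sum_{b\le N-n}W_{n,b}.
\]
The crude bound \(\cos\ge-1\) fails, because \(\sum_n\gamma_{N,n}\) exceeds \(S_N\) by \(\tfrac14\sum_{a+b\le N}W_{a,b}\). So nonnegativity of this trigonometric polynomial in \(\psi\) must be proved genuinely, uniformly in \(N\).

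For that step I would sum in closed form. The coefficients \(\gamma_{N,n}\) are polynomials in \(n\) and \(N\) of degree four: the shell sum is \((5n^4-6n^3+n^2)/6\) by the computation in Theorem~\ref{thm:two-poisson-hidden-positivity}, and the cross sum is a polynomial in \((n,N-n)\). Hence \(\sum_n\gamma_{N,n}\cos n\psi\) can be obtained by differentiating the Dirichlet kernel \(\sum_{n\le N}e^{\ii n\psi}\) up to four times. This yields an expression
\[
 \frac{P_N(\cos\psi,\cos N\psi,\sin N\psi)}{\sin^{5}(\psi/2)}
\]
with \(P_N\) polynomial in \(N\). With \(y=\tan^2(\psi/2)\) as in the two-Poisson proof, the target is to show that \(\sin^{10}(\psi/2)\,\mathfrak B_N\) equals a manifest sum of squares plus a nonnegative remainder. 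The natural candidate is a Fejér-type representation
\[
 \mathfrak B_N=\sum_k w_k\Bigl|\sum_{n}\beta_{k,n}e^{\ii n\psi}\Bigr|^2
\]
with positive weights, guided by the fact that \(\mathfrak B_N\) is itself a cubic form in the vector \((c_n)\). An alternative route, which I would try in parallel, is to apply the highest-shell increment of Theorem~\ref{thm:anticoherent-highest-shell-induction} to the signed sequence \(c_n\). The increment
\[
 \mathfrak B_N-\mathfrak B_{N-1}=N^3c_N^2+c_N\sum_{a+b=N}W_{a,b}c_ac_b
\]
is quadratic in \(c_N\), so induction closes if \(\bigl(\sum_{a+b=N}W_{a,b}c_ac_b\bigr)^2\le4N^3\mathfrak B_{N-1}\). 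The anti-diagonal two-point identity of Proposition~\ref{prop:anticoherent-highest-shell-contact-threshold} gives a starting bound for the left side.

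The main obstacle is this uniform-in-\(N\) positivity of the \(p=1/2\) cosine polynomial near \(\psi=\pi\) and near the resonant angles where \(\cos N\psi\) is unfavourable. There the leading \(N^4\) terms nearly cancel and positivity must come from lower-order terms. I would first verify the closed form and locate the minimum numerically for moderate \(N\), then try to split the closed form into an \(N\)-independent positive part, an analogue of \(q_0+q_1y+q_2y^2\) with a discriminant check, and oscillatory terms bounded by \(\abs{\cos N\psi},\abs{\sin N\psi}\le1\) in the spirit of a Bernstein certificate. Small \(N\) would be handled by exact arithmetic.

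Finally, the Abel statement \eqref{eq:two-point-abel-bracket} follows by summation by parts: \(\sum_N\mathfrak b_Nx^N=(1-x)\sum_N\mathfrak B_Nx^N\), and each \(\mathfrak B_N\ge0\).
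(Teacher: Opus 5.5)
Your affine reduction in $h=p(1-p)$ to the equal-weight case, the phase cancellation $u_n=e^{-\ii n\varphi}\cos(n\varphi)$, and the Abel summation step are correct and agree with the paper. The whole content of the theorem, however, is the uniform-in-$N$ positivity of the $p=1/2$ bracket. For that step you only list strategies to try: a closed form via four derivatives of the Dirichlet kernel, a hoped-for Fej\'er-type sum of squares, and numerics followed by a Bernstein-style split. None of them is carried out, so this is a genuine gap rather than a proof.

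The inductive alternative also fails as formulated. Your sufficient condition $\bigl(\sum_{a+b=N}W_{a,b}c_ac_b\bigr)^2\le4N^3\mathfrak B_{N-1}$ is already false at $N=2$, $\varphi=0$: there $\mathfrak B_1=1$ and the shell sum is $W_{1,1}=6$, so $36>32$. Since $c_N=\cos(N\varphi)$ is not a free variable, any induction would have to exploit its sign and size, which you do not address. (A minor slip: $\sum_n\gamma_{N,n}-S_N=\tfrac12\sum_{a+b\le N}W_{a,b}$, not $\tfrac14\sum_{a+b\le N}W_{a,b}$.)

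The missing idea is that at $p=1/2$ the cubic form collapses to a Hermitian quadratic form. With $z=e^{-\ii\theta}$ and $u_n=(1+z^n)/2$, one has $\Re\bigl(u_au_b\overline{u_{a+b}}\bigr)=\Re\bigl(\overline{u_a}u_{a+b}\bigr)=\Re\bigl(\overline{u_b}u_{a+b}\bigr)$. Symmetrizing the weights $W_{a,b}$ then gives $\mathfrak B_N^{(1/2)}=\sum_{m,n\le N}mn^2\,\Re\bigl(\overline{u_m}u_n\bigr)$. Equivalently, $4\mathfrak B_N^{(1/2)}=\Re\bigl(\overline{A_N}H_N\bigr)$ with $A_N=\sum_{n\le N}n(1+z^n)$ and $H_N=\sum_{n\le N}n^2(1+z^n)$.

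Write $H_N=NA_N-C_N$ with $C_N=\sum_{n\le N}n(N-n)(1+z^n)$. Then $4\mathfrak B_N^{(1/2)}\ge N\abs{A_N}^2-\abs{A_N}\abs{C_N}$, so it suffices to show $\abs{C_N}\le N\abs{A_N}$. Apply Cauchy--Schwarz using $\abs{1+z^n}^2=2(1+\cos n\theta)$ and $\sum_{n\le N}n(N-n)^2=N^2(N^2-1)/12$, and then use $\Re A_N\le\abs{A_N}$. This reduces everything to the single one-dimensional inequality $\sum_{n\le N}n\bigl(1+\cos n\theta\bigr)\ge(N^2-1)/6$.

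That inequality still needs a real argument. The paper rewrites it as the Fej\'er-kernel increment bound $F_N(t)-F_{N+1}(t)\le(2N+1)/(3N)$, where $F_j(t)=\sin^2(jt)/(j\sin^2t)$ and $t=\theta/2$. For $N\ge6$ this is proved by a $2\times2$ matrix bound when $\sin^2t$ is not small and a Taylor estimate otherwise; for $N\le5$ the paper gives explicit Markov--Lukacs certificates. The gain is that this one-variable estimate replaces your degree-$N$ cosine polynomial, whose leading $N^4$ terms cancel, with a far more tractable problem.
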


\begin{proof}
Put \(h=p(1-p)\) and \(L_n=1-\cos(n\theta)\).  As in the proof of
Theorem~\ref{thm:two-poisson-hidden-positivity},
\[
  \abs{u_n}^2=1-2hL_n,
  \qquad
  \Re(u_au_b\overline{u_{a+b}})
  =1-h(L_a+L_b+L_{a+b}).
\]
The bracket is therefore affine in \(h\):
\begin{equation}\label{eq:two-point-convex-reduction}
  \mathfrak B_N(u)
  =(1-4h)\mathfrak B_N^{(1)}+4h\mathfrak B_N^{(1/2)},
\end{equation}
where the superscripts denote the one-point and equal-weight two-point
sequences.  Since \(0\le h\le1/4\) and
\(\mathfrak B_N^{(1)}>0\), it remains to treat equal weights.

Set \(z=e^{-\ii\theta}\) and
\[
  A_N(z)=\sum_{n=1}^Nn(1+z^n),
  \qquad
  H_N(z)=\sum_{n=1}^Nn^2(1+z^n),
\]
and put
\[
  C_N(z)=NA_N(z)-H_N(z)
  =\sum_{n=1}^Nn(N-n)(1+z^n).
\]
Expanding \eqref{eq:two-point-partial-bracket} gives the exact identity
\begin{equation}\label{eq:two-point-AH-identity}
  4\mathfrak B_N^{(1/2)}
  =\Re\bigl(\overline{A_N}H_N\bigr)
  =N\abs{A_N}^2-\Re\bigl(\overline{A_N}C_N\bigr).
\end{equation}
We prove \(\abs{C_N}\le N\abs{A_N}\).  Write
\[
  R_N:=\Re A_N=\sum_{n=1}^Nn(1+\cos(n\theta)).
\]
The required input is the uniform trigonometric estimate
\begin{equation}\label{eq:weighted-cosine-lower-bound}
  R_N\ge\frac{N^2-1}{6}.
\end{equation}
Indeed, Cauchy--Schwarz and an elementary power sum give
\begin{align}
  \abs{C_N}^2
  &\le
  \left(\sum_{n=1}^Nn\abs{1+z^n}^2\right)
  \left(\sum_{n=1}^Nn(N-n)^2\right)\notag\\
  &=2R_N\frac{N^2(N^2-1)}{12}
  \le N^2R_N^2
  \le N^2\abs{A_N}^2.\label{eq:two-point-Cauchy}
\end{align}
Together with \eqref{eq:two-point-AH-identity}, this proves the theorem once
\eqref{eq:weighted-cosine-lower-bound} is established.

We now prove that estimate for all \(N\).  Let \(\xi=\cos\theta\) and
\[
  Q_N(\xi):=R_N-\frac{N^2-1}{6}
  =\frac{(N+1)(2N+1)}6+\sum_{n=1}^NnT_n(\xi),
\]
where \(T_n\) is the Chebyshev polynomial.  The cases \(N\le5\) have the
following exact Markov--Lukacs certificates.  With
\(v_2=(1,\xi)^\top\) and \(v_3=(1,\xi,\xi^2)^\top\),
\[
  Q_1=1+\xi,
  \qquad
  Q_2=4\left(\xi+\frac18\right)^2+\frac7{16},
\]
and
\begin{align*}
  Q_3&=(1+\xi)v_2^\top M_3^+v_2+(1-\xi)v_2^\top M_3^-v_2,\\
  Q_4&=v_3^\top M_4v_3+(1-\xi^2)v_2^\top N_4v_2,\\
  Q_5&=(1+\xi)v_3^\top M_5^+v_3+(1-\xi)v_3^\top M_5^-v_3,
\end{align*}
where
\[
  M_3^+=\begin{pmatrix}2&-14/3\\-14/3&38/3\end{pmatrix},
  \quad M_3^-=\frac23\Id,
\]
\[
  M_4=\begin{pmatrix}
  17/2&-7/2&-63/4\\-7/2&3&11/2\\-63/4&11/2&67/2
  \end{pmatrix},
  \quad
  N_4=\begin{pmatrix}1&-1/2\\-1/2&3/2\end{pmatrix},
\]
and
\[
  M_5^+=\begin{pmatrix}
  117/10&37/10&-1013/40\\
  37/10&15&-527/20\\
  -1013/40&-527/20&329/4
  \end{pmatrix},
\]
\[
  M_5^-=\begin{pmatrix}
  13/10&-2/5&-7/8\\
  -2/5&6/5&-1/10\\
  -7/8&-1/10&9/4
  \end{pmatrix}.
\]
All six matrices are positive definite by Sylvester's criterion, so
\(Q_N\ge0\) on \([-1,1]\) for \(N\le5\).

For the uniform part, assume \(N\ge6\), put \(t=\theta/2\), and use the
Fejer kernels
\[
  F_j(t):=\frac{\sin^2(jt)}{j\sin^2t}.
\]
Their Fourier coefficients give
\[
  \sum_{n=1}^Nn\cos(n\theta)
  =\frac{N(N+1)}2\bigl(F_{N+1}(t)-F_N(t)\bigr).
\]
Hence \(Q_N\ge0\) is equivalent to
\begin{equation}\label{eq:fejer-increment-bound}
  F_N(t)-F_{N+1}(t)\le c_N,
  \qquad c_N:=\frac{2N+1}{3N}.
\end{equation}
By symmetry take \(0\le t\le\pi/2\), and set \(y=\sin^2t\) and
\[
  y_N:=\frac{3(5N+1)}{(N+1)(2N+1)^2}.
\]
If \(y\ge y_N\), let \(v=(\sin Nt,\cos Nt)^\top\).  The numerator in
\(F_N-F_{N+1}\) is \(v^\top Mv\), where
\[
  M=\frac1N\begin{pmatrix}1&0\\0&0\end{pmatrix}
  -\frac1{N+1}
  \begin{pmatrix}\cos t\\\sin t\end{pmatrix}
  \begin{pmatrix}\cos t&\sin t\end{pmatrix}.
\]
Here
\[
  \tr M=\frac1{N(N+1)},
  \qquad
  \det M=-\frac{y}{N(N+1)}.
\]
For \(L=c_Ny\),
\[
  \det(L\Id-M)
  =\frac{y\bigl((N+1)(2N+1)^2y-3(5N+1)\bigr)}
  {9N^2(N+1)}\ge0,
\]
and \(L>\tr M\).  Thus \(M\le L\Id\), which proves
\eqref{eq:fejer-increment-bound} in this region.

It remains to consider \(y<y_N\).  Put \(u=Nt\).  Since \(t\le\tan t\),
\[
  u^2\le\frac{N^2y_N}{1-y_N}<4.
\]
For \(0\le u\le2\), the elementary inequality
\begin{equation}\label{eq:finite-sine-square-bound}
  \sin^2u-u\sin(2u)\le\frac{13}{20}u^2
\end{equation}
holds.  To see this, subtract the left side from \(13u^2/20\); its derivative
is \(2u(13/20+\cos(2u))\).  This derivative changes sign only once on
\([0,2]\), and both endpoint values are nonnegative.  For the endpoint
\(u=2\), write \(d=4-\pi<43/50\); the alternating Taylor bounds give
\(\cos d>13/20\) and \(\sin d<4/5\), which verify both assertions.

Since \((\sin^2s)''\ge-2\), Taylor's formula and
\eqref{eq:finite-sine-square-bound} yield
\begin{align*}
  &(N+1)\sin^2u-N\sin^2(u+t)\\
  &\qquad\le \sin^2u-u\sin(2u)+Nt^2
  \le\left(\frac{13}{20}+\frac1N\right)u^2.
\end{align*}
Finally, set \(q_N=y_N/(1-y_N)\).  The inequalities
\(t^2\le q_N\) and \(\sin t/t\ge1-t^2/6\) imply
\[
  \frac{(N+1)(2N+1)}{3N^2}
  \left(\frac{\sin t}{t}\right)^2
  \ge
  \frac{(N+1)(2N+1)}{3N^2}
  \left(1-\frac{q_N}{6}\right)^2
  \ge\frac{13}{20}+\frac1N.
\]
The last inequality is purely algebraic: after subtraction its numerator is
\[
  P(N)=16N^8+64N^7-496N^6-1856N^5-1362N^4-565N^3
  +3769N^2+1385N+125,
\]
over a positive denominator, and \(P(N)>0\) for \(N\ge6\) after expansion in
\(N-6\), where all coefficients are positive.  This proves
\eqref{eq:fejer-increment-bound}, hence
\eqref{eq:weighted-cosine-lower-bound}, for every \(N\).

The Abel identity \eqref{eq:two-point-abel-bracket} is the telescoping formula
for the partial sums \(\mathfrak B_N\).  For Poisson Fourier coefficients,
the shell of total frequency \(N\) is multiplied by \(x^N\), so the final
assertion follows.
\end{proof}

\section{Conclusion}
The exact slope equation \eqref{eq:exact-slope-equation} is the decisive
structure of the active-line model.  Its maximum principle, combined with
critical nonlocal regularity, excludes finite-time blow-up for every strictly
positive smooth initial density, without a smallness assumption.  This
conclusion concerns the reduced equation and does not establish a reduction
theorem for the full two-dimensional Oldroyd--B system.

The fourth-difference factorization in
Theorem~\ref{thm:third-order-hidden-positivity} also proves
\[
  \int_\T\rho^2\Lambda^3\rho\,\dd s\ge0
\]
for every smooth nonnegative density.  The unrestricted sharp \(H^1\)
monotonicity question is precisely Problem~\ref{prob:sharp-cubic}.

The results of Section~\ref{sec:hidden-positivity-open} cover several
cutoff-uniform and infinite-support classes but do not settle
Problem~\ref{prob:sharp-cubic}.  This open problem is logically independent
of global regularity, which is already established by
Theorem~\ref{thm:large-global}.  The canonical phase-opposed density
\(\rho_x\) in Lemma~\ref{lem:strong-fourier-representation} lies on the
boundary of the positive cone because \(\rho_x(0)=0\); this is another reason
to keep the sharp boundary problem separate from the strictly positive PDE
theorem.

\appendix
\section{Supporting Estimates for the Reduced Equation}
\label{sec:reduced-energy-details}

\begin{lemma}[Positive-coefficient order-one energy estimate]
\label{lem:positive-coefficient-energy}
Let \(r>3/2\).  Suppose that \(a,b\in H^r(\T)\) are real valued and
\(a\ge\kappa>0\).  Then every mean-zero
\(f\in H^{r+1/2}(\T)\) satisfies
\begin{equation}\label{eq:positive-coefficient-energy}
\begin{aligned}
 \ip{\Lambda^r(a\Lambda f+b f_s)}{\Lambda^r f}
 \ge \frac\kappa4\norm{f}_{\dot H^{r+1/2}}^2
 -C_{r,\kappa}
 \bigl(1+\norm a_{H^r}+\norm b_{H^r}\bigr)^2
 \norm f_{H^r}^2.
\end{aligned}
\end{equation}
\end{lemma}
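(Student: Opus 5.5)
We split the left-hand side of \eqref{eq:positive-coefficient-energy} into a principal part and commutators. Write
\[
 \Lambda^r(a\Lambda f)=a\Lambda^{r+1}f+[\Lambda^r,a]\Lambda f,\qquad
 \Lambda^r(bf_s)=b\,\partial_s\Lambda^rf+[\Lambda^r,b]f_s ,
\]
and set $g=\Lambda^rf$. This gives four terms:
\[
 \ip{a\Lambda g}{g},\qquad \ip{b g_s}{g},\qquad \ip{[\Lambda^r,a]\Lambda f}{g},\qquad \ip{[\Lambda^r,b]f_s}{g}.
\]
The first term carries the coercivity. The other three are lower order once we use the regularity $r>3/2$, which gives $H^r\subset C^{1,\epsilon}$, so $a,b\in W^{1,\infty}$ with $\norm{a_s}_\infty\lesssim\norm a_{H^r}$.

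\emph{Principal term.} We symmetrize: $\ip{a\Lambda g}{g}=\ip{a\Lambda^{1/2}g}{\Lambda^{1/2}g}+\ip{[\Lambda^{1/2},a]\Lambda^{1/2}g}{g}$. The first piece is at least $\kappa\norm{g}_{\dot H^{1/2}}^2=\kappa\norm f_{\dot H^{r+1/2}}^2$. For the commutator we use the standard Kato--Ponce/Calder\'on estimate $\norm{[\Lambda^{1/2},a]h}_{2}\lesssim\norm{a}_{W^{1,\infty}}\norm{\Lambda^{-1/2}h}_2$ (order $1/2-1$ gain with a Lipschitz coefficient, valid on $\T$ via the Fourier/kernel representation). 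This bounds the commutator piece by $C\norm a_{H^r}\norm g_2^2$.

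\emph{Transport term.} Integrating by parts, $\ip{bg_s}{g}=-\tfrac12\ip{b_sg}{g}$, so this term is bounded by $\norm{b_s}_\infty\norm f_{H^r}^2$.

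\emph{Commutator terms.} These are the main obstacle. We need
\[
 \norm{[\Lambda^r,a]\Lambda f}_{\dot H^{-1/2}}\lesssim \norm a_{H^r}\norm f_{H^{r+1/2}}^{1/2}\norm f_{H^r}^{1/2}
\]
or a comparable bound, because the crude $L^2$ Kato--Ponce bound $\norm{[\Lambda^r,a]\Lambda f}_2\lesssim\norm{a_s}_\infty\norm{\Lambda^r f}_2+\norm{\Lambda^r a}_2\norm{\Lambda f}_\infty$ costs $\norm{\Lambda f}_\infty$. That quantity is not controlled by $H^r$ when $r<3/2+$, and even for $r>3/2$ it needs interpolation. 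We will do a Bony paraproduct decomposition of $a\Lambda f$, and likewise of $bf_s$:

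\begin{itemize}
\item[(i)] Low--high interactions, $T_a\Lambda f$: the commutator symbol gains one derivative on $a$, giving $\norm{a_s}_\infty\norm f_{H^r}$ in $L^2$.
\item[(ii)] High--low interactions, $T_{\Lambda f}a$: these are bounded by $\norm a_{H^r}\norm{\Lambda f}_{L^\infty}$. Since $r>3/2$ we have $H^{r}\subset W^{1,\infty}$, so $\norm{\Lambda f}_\infty\lesssim\norm f_{H^r}$ (the case $\Lambda$ versus $\partial_s$ is handled by the embedding into $C^{1,\epsilon}$ together with boundedness of $\Hilb$ on H\"older spaces).
\item[(iii)] High--high remainders: these are similar.
\end{itemize}

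Each commutator term is then bounded by $C(\norm a_{H^r}+\norm b_{H^r})\norm f_{H^r}^2$. Alternatively, where the bound only holds with $\norm f_{H^{r+1/2}}$ to the power $1$, we obtain $C(\cdots)\norm f_{H^{r+1/2}}\norm f_{H^r}$ and absorb it by Young's inequality into $\tfrac{3\kappa}{4}\norm f_{\dot H^{r+1/2}}^2$, at a cost of $C_\kappa(\cdots)^2\norm f_{H^r}^2$. This Young absorption is where the squared factor $(1+\norm a_{H^r}+\norm b_{H^r})^2$ and the $\kappa$-dependence of the constant come from.

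Collecting all four terms leaves $\tfrac\kappa4\norm f_{\dot H^{r+1/2}}^2$ on the right-hand side, which is the claimed estimate \eqref{eq:positive-coefficient-energy}.
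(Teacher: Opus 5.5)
Your proposal is correct and is essentially the paper's proof: symmetrize the principal term through $\Lambda^{1/2}$ plus a commutator, integrate the transport term by parts to get $-\frac12\int_\T b_s g^2\,\dd s$, and bound $[\Lambda^r,a]\Lambda f$ and $[\Lambda^r,b]f_s$ by Kato--Ponce. The only difference in the principal term is that the paper bounds $\norm{[\Lambda^{1/2},a]g}_2$ and absorbs by Young, whereas you use the slightly stronger $\norm{[\Lambda^{1/2},a]\Lambda^{1/2}g}_2\lesssim\norm{a_s}_\infty\norm g_2$; both work.

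Correct your diagnosis of the ``main obstacle'': since $r-1>1/2$, one has $\Lambda f\in H^{r-1}\hookrightarrow L^\infty$. So the crude Kato--Ponce bound already gives $C_r\norm a_{H^r}\norm f_{H^r}$ with no interpolation. This is what the paper uses and what your own step (ii) invokes, so the paraproduct decomposition and the Young-absorption fallback are unnecessary.
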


\begin{proof}
Put \(g=\Lambda^r f\).  Self-adjointness of \(\Lambda^{1/2}\) gives
\[
 \ip{a\Lambda g}{g}
 =\int_\T a\abs{\Lambda^{1/2}g}^2\,\dd s
 +\ip{[\Lambda^{1/2},a]g}{\Lambda^{1/2}g}.
\]
The kernel formula for \(\Lambda^{1/2}\) and the Lipschitz bound for \(a\)
give
\[
 \norm{[\Lambda^{1/2},a]g}_2
 \le C_r\norm a_{H^r}\norm g_2.
\]
Together with the lower bound for \(a\) and Young's inequality, this yields
\[
 \ip{a\Lambda g}{g}
 \ge\frac\kappa2\norm{\Lambda^{1/2}g}_2^2
 -C_{r,\kappa}\norm a_{H^r}^2\norm g_2^2.
\]
Commuting \(\Lambda^r\) through \(a\) adds
\(\ip{[\Lambda^r,a]\Lambda f}{g}\).  The Kato--Ponce commutator estimate and
\(H^r(\T)\hookrightarrow W^{1,\infty}(\T)\) give
\[
 \norm{[\Lambda^r,a]\Lambda f}_2
 \le C_r\norm a_{H^r}\norm f_{H^r}.
\]
Similarly,
\[
 \ip{b g_s}{g}=-\frac12\int_\T b_sg^2\,\dd s,
\]
and
\[
 \norm{[\Lambda^r,b]f_s}_2
 \le C_r\norm b_{H^r}\norm f_{H^r}.
\]
Combining these bounds proves \eqref{eq:positive-coefficient-energy}.
\end{proof}

\begin{lemma}[Basic commutator estimate]\label{lem:commutator-short}
Let \(r>3/2\).  For every mean-zero \(f\in H^{r+1/2}(\T)\),
\[
  \abs{\ip{\Lambda^r B(f,f)}{\Lambda^r f}}
  \le C_r\norm{f}_{H^r}\norm{f}_{H^{r+1/2}}^2.
\]
The same estimate holds for differences:
\[
 \norm{B(f,f)-B(g,g)}_{H^{r-1}}
 \le C_r(\norm{f}_{H^r}+\norm{g}_{H^r})\norm{f-g}_{H^r}.
\]
\end{lemma}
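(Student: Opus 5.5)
The plan is to work on the Fourier side, using the null structure of Lemma~\ref{lem:null}, and to split the bilinear form into a transport-type part plus a smoother remainder. Write
\[
B(f,f)=P_0\{f\Lambda f-(\Hilb f)f_s\},
\]
and pair it with \(\Lambda^{2r}f\). By Lemma~\ref{lem:null}, the symbol of an interaction \((k,\ell)\to n=k+\ell\) is \(|\ell|-\operatorname{sgn}(k)\ell\). This vanishes for \(k\ell>0\) and equals \(2|\ell|\) for \(k\ell<0\). Hence
\[
\ip{\Lambda^rB(f,f)}{\Lambda^rf}=2\pi\sum_{k+\ell=n,\,k\ell<0}2|\ell|\,|n|^{2r}\hat f(k)\hat f(\ell)\overline{\hat f(n)}.
\]
Only opposite-sign pairs contribute, and since \(|n|=\bigl||k|-|\ell|\bigr|\), the output frequency never exceeds the larger input.

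I then split the frequency sum by comparing \(|k|\) and \(|\ell|\).

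In the region \(|k|\ge|\ell|\) (the derivative sits on the low mode), \(|n|\le|k|\). I bound
\[
|\ell|\,|n|^{2r}\le|\ell|\,|n|^{r+1/2}|k|^{r-1/2}.
\]
Cauchy--Schwarz plus Young's convolution inequality with \(\sum|\ell||\hat f(\ell)|\le C\norm f_{H^r}\) (valid since \(r>3/2\)) then gives
\[
\lesssim\norm f_{H^r}\norm f_{\dot H^{r+1/2}}\norm f_{\dot H^{r-1/2}},
\]
which is acceptable.

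In the region \(|\ell|>|k|\) (the derivative falls on the high mode), the naive count \(|\ell||n|^{2r}\) is one derivative too many. Here I symmetrize, exchanging the roles of \(\ell\) and \(-n\) via \(\overline{\hat f(n)}=\hat f(-n)\). The triple \((k,\ell,-n)\) sums to zero. When \(\ell\) and \(-n\) have the same sign (opposite to \(k\)), the combined multiplier is
\[
|\ell|\,|n|^{2r}-|n|\,|\ell|^{2r}.
\]
This is the commutator cancellation. By the mean value theorem,
\[
\bigl||\ell||n|^{2r}-|n||\ell|^{2r}\bigr|\lesssim|k|\,\max(|\ell|,|n|)^{2r},
\]
which moves one derivative onto the low frequency \(k\). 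Distributing as \(|k|\cdot|\ell|^{r+1/2}|n|^{r-1/2}\) or symmetrically, and using \(\sum|k||\hat f(k)|\lesssim\norm f_{H^r}\) again, gives
\[
\lesssim\norm f_{H^r}\norm f_{H^{r+1/2}}^2.
\]

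I expect the \textbf{main obstacle} to be the bookkeeping of this symmetrization. One must check that every high--high opposite-sign triple appears with its symmetric partner, so the cancellation is exact rather than approximate. The near-diagonal case \(|k|\approx|\ell|\) must also be handled: there \(|n|\) may be small, but then all weights are comparable and the first-region bound applies directly.

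For the difference estimate, bilinearity gives
\[
B(f,f)-B(g,g)=B(f-g,f)+B(g,f-g).
\]
The symbol of Lemma~\ref{lem:null} is bounded by \(2|\ell|\), so
\[
\norm{B(u,v)}_{H^{r-1}}\lesssim\norm u_{H^r}\norm v_{H^r}.
\]
This follows by the algebra property of \(H^{r-1}\) for \(r-1>1/2\), applied after noting that \(B(u,v)\) is a sum of products of \(u\) or \(\Hilb u\) with \(\Lambda v\) or \(v_s\), all in \(H^{r-1}\). This gives the second estimate.
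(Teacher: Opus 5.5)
Your argument for the region $|\ell|>|k|$ fails as written. On opposite-sign pairs the multiplier is $+2|\ell|\,|n|^{2r}$. After writing $\overline{\hat f(n)}=\hat f(-n)$, the pairing becomes the trilinear form $2\pi\sum_{k+\ell+m=0}2|\ell|\,|m|^{2r}\mathbf 1_{\{k\ell<0\}}\hat f(k)\hat f(\ell)\hat f(m)$, whose multiplier is nonnegative. Symmetrizing it can only add nonnegative terms, so the difference $|\ell||n|^{2r}-|n||\ell|^{2r}$ never arises.

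Concretely, in this region $n=k+\ell$ has the sign of $\ell$, so $\ell$ and $-n$ have opposite signs and the case you describe is empty. The swapped term $(k,-n)\mapsto-\ell$ has $k\cdot(-n)>0$, so Lemma~\ref{lem:null} annihilates it. The symmetric partner you flagged as the main obstacle is therefore absent. There is no transport-type commutator gain here, because the combined symbol $2|\ell|$ is even in the high frequency.

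No cancellation is needed, however, because the target already allows $\norm f_{H^{r+1/2}}^2$. In this region $|n|=|\ell|-|k|<|\ell|$, so
$|\ell|\,|n|^{2r}=|\ell|^{r+1/2}|n|^{r+1/2}(|n|/|\ell|)^{r-1/2}\le|\ell|^{r+1/2}|n|^{r+1/2}$.
Cauchy--Schwarz in $\ell$ at fixed $k$ then bounds the contribution by a constant times $\sum_k|\hat f(k)|\,\norm f_{\dot H^{r+1/2}}^2\lesssim\norm f_{H^r}\norm f_{H^{r+1/2}}^2$.

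With this one-line replacement, your region $|k|\ge|\ell|$ bound and your difference estimate give a complete proof. The difference estimate is exactly the paper's polarization plus product estimate. The result is a purely Fourier-side alternative to the paper's argument, which commutes $\Lambda^r$ through $f$ and $\Hilb f$ and invokes Kato--Ponce and fractional commutator bounds. Your version avoids those tools. It does not need the null structure either, since $|n|\le|k|+|\ell|$ already suffices at this level of derivative loss.
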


\begin{proof}
Write \(B(f,f)=P_0(f\Lambda f-(\Hilb f)f_s)\) and put
\(G=\Lambda^r f\).  Commuting \(\Lambda^r\) through the two coefficients gives
\[
\begin{aligned}
 \Lambda^r(f\Lambda f)&=f\Lambda G+[\Lambda^r,f]\Lambda f,\\
 \Lambda^r((\Hilb f)f_s)&=(\Hilb f)G_s
       +[\Lambda^r,\Hilb f]f_s.
\end{aligned}
\]
As in Lemma~\ref{lem:positive-coefficient-energy}, the fractional commutator
formula and integration by parts give
\[
 \abs{\ip{f\Lambda G}{G}}
 +\abs{\ip{(\Hilb f)G_s}{G}}
 \le C_r\norm f_{H^r}
 \left(\norm{\Lambda^{1/2}G}_2^2+\norm G_2^2\right).
\]
The Kato--Ponce estimates for the two commutators have the same upper bound.
Because \(f\) is mean zero, \(\norm G_2\le\norm{\Lambda^{1/2}G}_2\), which
proves the energy estimate.

Finally, the one-dimensional product estimate gives
\[
 \norm{B(u,v)}_{H^{r-1}}
 \le C_r\norm u_{H^r}\norm v_{H^r}.
\]
Apply this after polarizing
\(B(f,f)-B(g,g)=B(f-g,f)+B(g,f-g)\) to obtain the difference
estimate.
\end{proof}

\begin{lemma}[Lower-barrier identity]\label{lem:barrier-short}
Let \(\rho\) be a smooth positive solution of \eqref{eq:rho}.  If \(s_t\) is a point where \(\rho(t,\cdot)\) attains its minimum, then
\[
  \frac{\dd}{\dd t}\rho(t,s_t)+\gamma(\rho(t,s_t)-m)
  =-c\rho(t,s_t)\Lambda\rho(t,s_t)
  +c\mean{\rho\Lambda\rho-(\Hilb\rho)\rho_s}\ge0.
\]
Consequently Corollary~\ref{cor:no-rupture} holds.
\end{lemma}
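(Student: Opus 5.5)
At a point \(s_t\) where \(\rho(t,\cdot)\) attains its minimum, I would evaluate the equation \eqref{eq:rho} directly. The plan has three steps: compute the equation at \(s_t\) using the smooth solution, identify the sign of each term at a minimum, and then integrate the resulting differential inequality to obtain Corollary~\ref{cor:no-rupture}.

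\textbf{Step 1: the equation at the minimum.} Expanding the projection, \(P_0 f=f-\mean f\) with \(f=\rho\Lambda\rho-(\Hilb\rho)\rho_s\). At \(s_t\) one has \(\rho_s(t,s_t)=0\), so the transport term drops out and
\[
 \rho_t(t,s_t)+\gamma(\rho(t,s_t)-m)=-c\rho\Lambda\rho(t,s_t)+c\mean{\rho\Lambda\rho-(\Hilb\rho)\rho_s}.
\]
Strictly speaking, \(\frac{\dd}{\dd t}\rho(t,s_t)\) should be read as the derivative of \(\rho_{\min}(t)=\min_s\rho(t,s)\). By Danskin's lemma, at every time where \(\rho_{\min}\) is differentiable this equals \(\rho_t\) at any minimizer. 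In general the lower Dini derivative is bounded below by \(\min\rho_t\) over the set of minimizers, and this is the form the proof of Theorem~\ref{thm:lwp} uses.

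\textbf{Step 2: signs.} First, \(\Lambda\rho(s_t)\le0\) at a minimum. This follows from the kernel representation \(\Lambda\rho(s)=\frac1{4\pi}\,\mathrm{p.v.}\int(\rho(s)-\rho(s'))\sin^{-2}((s-s')/2)\,\dd s'\), whose integrand is nonpositive at a minimum. Since \(\rho>0\), the term \(-c\rho\Lambda\rho(s_t)\ge0\).

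Second, the mean term must be shown nonnegative. I would compute it via Lemma~\ref{lem:strong-fourier-representation}-type Fourier bookkeeping, or more simply by integration by parts. The first piece is \(\int\rho\Lambda\rho=\norm{\Lambda^{1/2}\rho}_2^2\ge0\). For the second piece, \(-\int(\Hilb\rho)\rho_s=\int\rho\,\partial_s\Hilb\rho=\int\rho\Lambda\rho\), because \(\partial_s\Hilb=\Hilb\partial_s=\Lambda\). Hence the mean equals \(2\norm{\Lambda^{1/2}\rho}_2^2/(2\pi)\ge0\).

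\textbf{Step 3: integration.} Combining the two signs gives \(D^-\rho_{\min}+\gamma(\rho_{\min}-m)\ge0\). Setting \(\phi(t)=e^{\gamma t}(\rho_{\min}(t)-m)\), this says the lower Dini derivative of \(\phi\) is nonnegative. A continuous function with nonnegative lower Dini derivative is nondecreasing, so \(\rho_{\min}(t)\ge m+e^{-\gamma t}(\rho_{\min}(0)-m)\). Because \(\rho_{\min}(0)>0\) and \(m>0\), the right side stays positive for all \(t\), which is Corollary~\ref{cor:no-rupture}.

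The main obstacle is the rigor of the time derivative at a moving minimizer, since the Dini-derivative argument is needed where \(s_t\) is not unique. The sign computations themselves are routine.
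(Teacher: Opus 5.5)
Your proposal is correct and follows the paper's argument. You evaluate the equation at a minimizer where $\rho_s=0$, use the singular-integral kernel to get $\Lambda\rho(s_t)\le0$, and show the mean term equals $2\mean{\rho\Lambda\rho}\ge0$; you integrate by parts where the paper invokes skew-adjointness of $\Hilb$, which is the same computation. You then compare with $y'+\gamma(y-m)=0$, as the paper does. Your Danskin/Dini-derivative treatment of non-unique minimizers makes explicit what the paper leaves implicit in this lemma and only states in the proof of Theorem~\ref{thm:lwp}.
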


\begin{proof}
At a minimum, \(\rho_s(t,s_t)=0\), so the transport contribution
\(-c(\Hilb\rho)\rho_s\) vanishes.  Moreover,
\[
 \mean{\rho\Lambda\rho-(\Hilb\rho)\rho_s}
 =2\mean{\rho\Lambda\rho}\ge0
\]
by skew-adjointness of \(\Hilb\).  The periodic representation
\[
  \Lambda\rho(s)=c_\T\,\operatorname{p.v.}\int_\T \frac{\rho(s)-\rho(y)}{\sin^2((s-y)/2)}\,\dd y
\]
shows \(\Lambda\rho(s_t)\le0\) at a minimum.  This gives the differential inequality and hence the explicit lower bound by comparison with \(y'+\gamma(y-m)=0\).
\end{proof}

\section*{Acknowledgements}
The author was supported by the National Natural Science Foundation of China
(Grant No. 12501602), the Education Department of Hunan Province (Grant No.
24C0055), the Science and Technology Department of Hunan Province (Grant No.
2025JJ60052), and the Scientific Research Start-up Fund of Xiangtan University
(Grant No. KZ0810769).

\begingroup
\makeatletter
\let\oldthebibliography\thebibliography
\let\endoldthebibliography\endthebibliography
\renewenvironment{thebibliography}[1]{%
  \oldthebibliography{#1}%
  \footnotesize
  \setlength{\itemsep}{0pt}%
  \setlength{\parskip}{0pt}%
  \setlength{\parsep}{0pt}%
}{\endoldthebibliography}
\makeatother
\bibliographystyle{unsrt}
\bibliography{references}

\begin{thebibliography}{10}

\bibitem{Oldroyd1950}
J.~G. Oldroyd.
\newblock On the formulation of rheological equations of state.
\newblock {\em Proceedings of the Royal Society of London. Series A},
  200(1063):523--541, 1950.

\bibitem{BirdArmstrongHassager1987}
R.~Byron Bird, Robert~C. Armstrong, and Ole Hassager.
\newblock {\em Dynamics of Polymeric Liquids. Vol. 1: Fluid Mechanics}.
\newblock Wiley, New York, 2 edition, 1987.

\bibitem{Renardy2000}
Michael Renardy.
\newblock {\em Mathematical Analysis of Viscoelastic Flows}.
\newblock SIAM, Philadelphia, 2000.

\bibitem{GuillopeSaut1990}
C.~Guillope and J.-C. Saut.
\newblock Existence results for the flow of viscoelastic fluids with a
  differential constitutive law.
\newblock {\em Nonlinear Analysis}, 15:849--869, 1990.

\bibitem{LionsMasmoudi2000}
P.-L. Lions and N.~Masmoudi.
\newblock Global solutions for some {Oldroyd} models of non-{Newtonian} flows.
\newblock {\em Chinese Annals of Mathematics}, 21B:131--146, 2000.

\bibitem{CheminMasmoudi2001}
J.-Y. Chemin and N.~Masmoudi.
\newblock About lifespan of regular solutions of equations related to
  viscoelastic fluids.
\newblock {\em SIAM Journal on Mathematical Analysis}, 33:84--112, 2001.

\bibitem{BarrettBoyaval2011}
J.~W. Barrett and S.~Boyaval.
\newblock Existence and approximation of a regularized {Oldroyd--B} model.
\newblock {\em Mathematical Models and Methods in Applied Sciences},
  21:1783--1837, 2011.

\bibitem{ConstantinKliegl2012}
P.~Constantin and M.~Kliegl.
\newblock Note on global regularity for two-dimensional {Oldroyd--B} fluids
  with diffusive stress.
\newblock {\em Archive for Rational Mechanics and Analysis}, 206:725--740,
  2012.

\bibitem{ElgindiRousset2015}
T.~M. Elgindi and F.~Rousset.
\newblock Global regularity for some {Oldroyd--B} type models.
\newblock {\em Communications on Pure and Applied Mathematics}, 68:2005--2021,
  2015.

\bibitem{RenardyThomases2021}
Michael Renardy and Becca Thomases.
\newblock A mathematician's perspective on the {Oldroyd B} model: Progress and
  future challenges.
\newblock {\em Journal of Non-Newtonian Fluid Mechanics}, 293:104573, 2021.

\bibitem{FattalKupferman2004}
R.~Fattal and R.~Kupferman.
\newblock Constitutive laws for the matrix-logarithm of the conformation
  tensor.
\newblock {\em Journal of Non-Newtonian Fluid Mechanics}, 123:281--285, 2004.

\bibitem{FattalKupferman2005}
R.~Fattal and R.~Kupferman.
\newblock Time-dependent simulation of viscoelastic flows at high {Weissenberg}
  number using the log-conformation representation.
\newblock {\em Journal of Non-Newtonian Fluid Mechanics}, 126:23--37, 2005.

\bibitem{ConstantinLaxMajda1985}
Peter Constantin, Peter~D. Lax, and Andrew Majda.
\newblock A simple one-dimensional model for the three-dimensional vorticity
  equation.
\newblock {\em Communications on Pure and Applied Mathematics}, 38(6):715--724,
  1985.

\bibitem{CordobaCordobaFontelos2005}
Diego C{\'o}rdoba, Antonio C{\'o}rdoba, and Marco~A. Fontelos.
\newblock Formation of singularities for a transport equation with nonlocal
  velocity.
\newblock {\em Annals of Mathematics}, 162(3):1377--1389, 2005.

\bibitem{ChangLaraDavila2016}
H.~A. Chang-Lara and G.~D{\'a}vila.
\newblock H{\"o}lder estimates for non-local parabolic equations with critical
  drift.
\newblock {\em Journal of Differential Equations}, 260(5):4237--4284, 2016.

\bibitem{DongJinZhang2018}
H.~Dong, T.~Jin, and H.~Zhang.
\newblock Dini and {Schauder} estimates for nonlocal fully nonlinear parabolic
  equations with drifts.
\newblock {\em Analysis \& PDE}, 11(6):1487--1534, 2018.

\end{thebibliography}
\endgroup

\end{document}